\numberwithin{equation}{section}
\definecolor{darkblue}{rgb}{0,0,0.5}
\newenvironment{subproof}[1][\proofname]{%
  \begin{proof}[#1]%
}{%
  \end{proof}%
}
\def\COMMENT#1{}
\def\O{\mathcal{O}}
\theoremstyle{plain}
\newtheorem{theorem}{Theorem}[section]
\newtheorem{lemma}[theorem]{Lemma}
\newtheorem{corollary}[theorem]{Corollary}
\newtheorem{conjecture}[theorem]{Conjecture}
\newtheorem{proposition}[theorem]{Proposition}
\newtheorem{claim}{Claim}
\theoremstyle{definition}
\newtheorem{defn}[theorem]{Definition}
\theoremstyle{remark}
\title[]{New bounds on the size of Nearly Perfect Matchings \\ in almost regular hypergraphs}
\date{\today}
\author[Dong~Yeap~Kang]{Dong~Yeap~Kang}
\address{Dong Yeap Kang, School of Mathematics,
	University of Birmingham,
	Edgbaston,
	Birmingham,
	United Kingdom.}
\email{\texttt{D.Y.Kang.1@bham.ac.uk}} 
\author[Daniela~K\"uhn]{Daniela~K\"uhn}
\address{Daniela K\"uhn, School of Mathematics,
	University of Birmingham,
	Edgbaston,
	Birmingham,
	United Kingdom.}
\email{\texttt{d.kuhn@bham.ac.uk}}
\author[Abhishek~Methuku]{Abhishek~Methuku}
\address{Abhishek~Methuku, Department of Mathematics, ETH Z\"urich, Z\"urich, Switzerland}
\email{\texttt{abhishekmethuku@gmail.com}}
\author[Deryk~Osthus]{Deryk~Osthus}
\address{Deryk Osthus, School of Mathematics,
	University of Birmingham,
	Edgbaston,
	Birmingham,
	United Kingdom.}
\email{\texttt{d.osthus@bham.ac.uk}}
\subjclass[2010]{05B05, 05C65, 05D40}
\thanks{This project has received partial funding from the European Research
Council (ERC) under the European Union's Horizon 2020 research and innovation programme (grant agreement no. 786198, D.~Kang, D.~K\"uhn and D.~Osthus).
The research leading to these results was also partially supported by the EPSRC, grant nos. EP/N019504/1 (D.~K\"uhn) and EP/S00100X/1 (A.~Methuku and D.~Osthus).}
\begin{document}
	\begin{abstract} \noindent
		Let $H$ be a $k$-uniform $D$-regular simple hypergraph on $N$ vertices. Based on an analysis of the R\"odl nibble, Alon, Kim and Spencer (1997) proved that if $k \ge 3$, then $H$ contains a matching covering all but at most $ND^{-1/(k-1)+o(1)}$ vertices, and asked whether this bound is tight. In this paper we improve their bound by showing that for all $k > 3$, $H$ contains a matching covering all but at most $ND^{-1/(k-1)-\eta}$ vertices for some $\eta = \Theta(k^{-3}) > 0$, when $N$ and $D$ are sufficiently large. Our approach consists of showing that the R\"odl nibble process not only constructs a large matching but it also produces many well-distributed `augmenting stars' which can then be used to significantly improve the matching constructed by the R\"odl nibble process.

		Based on this, we also improve the results of Kostochka and R\"odl (1998) and Vu (2000) on the size of matchings in almost regular hypergraphs with small codegree. As a consequence, we improve the best known bounds on the size of large matchings in combinatorial designs with general parameters. Finally, we improve the bounds of Molloy and Reed (2000) on the chromatic index of hypergraphs with small codegree (which can be applied to improve the best known bounds on the chromatic index of Steiner triple systems and more general designs).

	\end{abstract}
	\maketitle
	
	\section{Introduction}
	
	\subsection{Background} The problem of finding nearly perfect matchings in regular hypergraphs has a long history in discrete mathematics and such results have applications to other areas e.g., \cite{ford2018long}. One such application is R\"odl's resolution of a famous conjecture of Erd\H{o}s and Hanani \cite{ErdosHanani} on the existence of partial Steiner systems. Here a \emph{partial Steiner system} with parameters $(t, k, n)$ is a $k$-uniform hypergraph on $n$ vertices such that every set of $t$ vertices is contained in at most one edge. It is easy to see that any such system has at most $\binom{n}{t}/\binom{k}{t}$ edges. 
	Erd\H{o}s and Hanani \cite{ErdosHanani} conjectured that for every fixed $t < k$ and for every sufficiently large $n$, there exists a partial Steiner system with parameters $(t, k, n)$ having $(1-o(1))\binom{n}{t}/\binom{k}{t}$ edges. R\"odl \cite{rodl1985packing} confirmed this conjecture by introducing a by now celebrated technique called the `R\"odl nibble' which is a versatile approach for finding large matchings in a semi-random manner. In fact, in his paper \cite{rodl1985packing} R\"odl used this technique to prove the existence of a nearly perfect matching in a specific hypergraph. Frankl and R\"odl \cite{FranklRodl} extended this result to matchings in $D$-regular $n$-vertex hypergraphs with codegree at most $D/(\log n)^4$. (Here a hypergraph $H$ is $D$-regular if every vertex of $H$ lies in precisely $D$ hyperedges. The \emph{codegree} of $H$ is the maximum over the codegrees of all pairs of distinct vertices of $H$, where the codegree of a pair $\{x, y\}$ of distinct vertices is defined as the number of edges of $H$ containing both $x$ and $y$.)
	
	One of the early extensions of R\"odl's work is a result of Pippenger showing that if $H$ is a $k$-uniform $D$-regular hypergraph on $N$ vertices with codegree $o(D)$, then there is a matching in $H$ covering all but at most $o(N)$ vertices.

	However, his proof does not supply an explicit estimate for the error term $o(N)$. Sharpening this error term, which is useful for many applications, is a challenging problem which attracted the attention of several researchers \cite{grable1996nearly,grable1999more,AKS1997,kuzjurin1997almost,kuzjurin1995difference,KR1998}. Far reaching generalizations and further extensions were given e.g., in \cite{ford2018long, PippengerSpencer, kahn1991recent, gordon1996asymptotically, kahn1996asymptotically,kahn1996linear,spencer1995asymptotic,rodl1996asymptotic, EGJ2020}.

	In 1997, Alon, Kim and Spencer \cite{AKS1997} proved the following theorem, which improved all previous bounds for simple hypergraphs. Here a hypergraph is called \emph{simple} (or \emph{linear}) if its codegree is at most $1$.

	\begin{theorem}[Alon, Kim and Spencer~\cite{AKS1997}]\label{thm:aks}
		Let $H$ be a simple $k$-uniform $D$-regular hypergraph on $N$ vertices. If $k > 3$, there is a matching in $H$ covering all but at most $\O(ND^{-\frac{1}{k-1}})$ vertices. If $k = 3$, there is a matching in $H$ covering all but at most $\O(ND^{-1/2} \ln^{3/2}D)$ vertices. 
	\end{theorem}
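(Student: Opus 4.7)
The plan is to apply the Rödl nibble method, exploiting the simplicity (codegree at most $1$) of $H$ for strong concentration throughout many rounds. Iteratively build a matching $M$ in rounds $i = 0, 1, \ldots, i^\ast$. Let $H_i$ be the subhypergraph induced on the vertices not yet saturated by $M$; write $N_i := |V(H_i)|$ and let $D_i$ denote its maximum degree. In round $i+1$, sample each edge of $H_i$ independently with probability $\gamma/D_i$ for a small constant $\gamma > 0$; from the sample, keep the edges that do not share a vertex with any other sampled edge, yielding a matching $M_{i+1}$ which is added to $M$. Standard first-moment calculations give $\mathbb{E}[N_{i+1}] \approx e^{-\gamma} N_i$ and $\mathbb{E}[D_{i+1}] \approx e^{-\gamma(k-1)} D_i$. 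Iterating, after $i^\ast \approx \frac{\ln D}{\gamma(k-1)}$ rounds one reaches $D_{i^\ast} = \O(1)$ and $N_{i^\ast} \approx N D^{-1/(k-1)}$, giving the desired bound when $k > 3$ (the unsaturated vertices of $H$ are a subset of $V(H_{i^\ast})$).

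The main work is concentration. For a fixed vertex $v \in V(H_i)$, the random variables ``is $v$ saturated in round $i+1$?'' and ``degree of $v$ in $H_{i+1}$'' are low-degree polynomials in the independent edge-sampling indicators. Applying a Kim--Vu polynomial concentration inequality (or equivalently a Talagrand-type bound with appropriate certificates), one obtains sharp concentration of $N_{i+1}$ and $D_{i+1}$ around their expectations. The crucial structural input is that the higher-moment/variance contributions are controlled by sums over pairs of edges through $v$ that share another vertex; in a simple hypergraph any two edges through $v$ intersect in exactly $\{v\}$, so these contributions are negligible. This lets one push the nibble all the way down to $D_i = \O(1)$ while keeping a uniform bound on the relative error at each step. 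For $k = 3$ the variance bounds are only just enough, and one is forced to stop the nibble slightly earlier, which produces the extra $\ln^{3/2} D$ factor in the uncovered set size.

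The principal obstacle is propagating sharp concentration of $D_i$ through the late rounds, when $D_i$ has dropped far below $D$. A naive Azuma/Chernoff estimate would give per-round deviations of order $\sqrt{D_i}$, which, once accumulated over $i^\ast = \Theta(\log D)$ rounds (and unioned over all $N_i$ vertices of $H_i$), would eventually swamp the expectation and destroy the recursion. To avoid this one needs the Kim--Vu/Talagrand estimate, which (together with the codegree-$1$ assumption) yields deviations of order $D_i^{1/2 - \delta}$ per round, and one must then carefully bookkeep how the multiplicative error terms $(1 \pm \varepsilon_i)$ in the recursions for $D_{i+1}$ and $N_{i+1}$ in terms of $D_i$, $N_i$ compound across all $i^\ast$ rounds. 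Ensuring that $\sum_i \varepsilon_i$ remains $o(1)$ so that the final estimate $N_{i^\ast} = \O(N D^{-1/(k-1)})$ actually holds is the bulk of the technical effort.
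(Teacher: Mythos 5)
Theorem~\ref{thm:aks} is \emph{not} proved in this paper --- it is a cited result of Alon, Kim, and Spencer \cite{AKS1997}, quoted as motivation; the paper only re-derives (and improves) related bounds via a different mechanism. So there is no in-paper proof to compare against directly, but the AKS argument is well documented and this paper's Section 5 is built on the same nibble machinery, so a comparison is still possible.

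Your sketch takes the right general route (R\"odl nibble iterated until the leftover degree collapses, with simplicity used to tame co-dependencies), but the key technical claims do not hold up. First, AKS do not use Kim--Vu polynomial concentration; their tool is a martingale inequality with total-variance control (restated in this paper as Lemma~\ref{cor:martingale}), combined with a ``waste vertex'' device (the sets $W_i$ in Section~\ref{sec:simple}) that equalises the per-vertex survival probability across each round. Without this waste-vertex normalisation the surviving hypergraph is not close to regular, and the recursion you set up does not self-sustain; you do not mention this device at all. Second, the assertion that Kim--Vu/Talagrand ``yields deviations of order $D_i^{1/2-\delta}$ per round'' is unjustified: for the degree random variable the total variance is $\Theta(D_i)$ and the achievable per-round deviation is $\Theta(\sqrt{D_i}\,\cdot\mathrm{polylog})$, exactly as the martingale bound gives (see the proof of $(\mathrm{L1})_i$ here), with no polynomial saving available from simplicity alone. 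Consequently the accumulated relative error behaves like $\sum_j \log N/\sqrt{D_j}$, which is dominated by its last term and forces the nibble to halt while $D_i\gg\mathrm{polylog}$, leaving $N D^{-1/(k-1)}\cdot\mathrm{polylog}$ vertices --- not $\O(ND^{-1/(k-1)})$. The hard part of the AKS proof is precisely the endgame once degree control with multiplicative $o(1)$ error is no longer available, and your sketch declares victory at exactly the point where the genuine difficulty starts. Your heuristic for where the $\ln^{3/2}D$ in the $k=3$ case comes from is roughly correct in spirit, but it does not address why, for $k>3$, one can eliminate \emph{all} polylogarithmic losses; that cancellation is the heart of the AKS argument and is not recovered here.
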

	
	Kostochka and R\"odl \cite{KR1998} extended Theorem ~\ref{thm:aks} to hypergraphs with small codegrees. Their proof consists a reduction to the case when $H$ is simple, i.e., to Theorem~\ref{thm:aks}.
	
	\begin{theorem}[Kostochka and R\"odl \cite{KR1998}]\label{thm:kr}
		Let $k \ge 3$ and let $0 < \delta, \gamma < 1$. Then there exists $D_0$ such that the following holds for $D \ge D_0$.
		
		Let $H$ be a $k$-uniform $D$-regular hypergraph on $N$ vertices with codegree at most $C < D^{1- \gamma}$. Then there is a matching in $H$ covering all but at most $\O\left(N(\frac{D}{C})^{-\frac{1-\delta}{k-1}}\right)$ vertices.
	\end{theorem}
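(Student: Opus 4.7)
The plan is to reduce to the simple hypergraph case already covered by Theorem~\ref{thm:aks} via a random sparsification: I shall find a sub-hypergraph $H' \subseteq H$ which is simple and approximately $D'$-regular with $D' \approx D/C$, apply Theorem~\ref{thm:aks} to $H'$ to extract a near-perfect matching, and observe that this matching is automatically a matching in $H$.

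Fix a small constant $c = c(k,\delta) > 0$ and set $p := c/C$. Let $H_1$ be the random sub-hypergraph of $H$ obtained by including each edge independently with probability $p$, so every vertex has expected degree $D' := pD = cD/C$; the assumption $C < D^{1-\gamma}$ ensures $D' \ge cD^\gamma$ is large. Call an edge $e \in H_1$ \emph{bad} if some pair $\{u,v\} \subseteq e$ is contained in another edge of $H_1$, and let $H_2$ be $H_1$ with all bad edges removed; then $H_2$ is simple. For any vertex $x$, the expected number of bad edges through $x$ is at most $\binom{k}{2} D C p^2 = \O(k^2 c)\cdot D'$, which is a small fraction of $D'$ once $c$ is small. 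Standard concentration (Chernoff for the raw degree; a Lov\'asz Local Lemma or Talagrand-type argument for the number of bad edges, should a union bound over $N$ be unaffordable) therefore implies that with positive probability every vertex of $H_2$ has degree $(1 \pm \eta) D'$, for any prescribed $\eta = \eta(k,\delta) > 0$.

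Applying Theorem~\ref{thm:aks} to $H_2$ (either by first deleting a small further set of edges to make it exactly $\lfloor(1-2\eta)D'\rfloor$-regular, or by invoking an almost-regular version of Theorem~\ref{thm:aks} obtained from the same R\"odl nibble analysis) yields a matching in $H_2 \subseteq H$ leaving at most $\O(N(D')^{-1/(k-1)})$ vertices uncovered if $k > 3$, and at most $\O(N(D')^{-1/2}\ln^{3/2}D')$ if $k = 3$. Substituting $D' = cD/C$ and using $D/C \ge D^\gamma$, for $D \ge D_0(k,\delta,\gamma)$ both the constant $c^{-1/(k-1)}$ and, in the $k = 3$ case, the $\ln^{3/2} D'$ factor are dominated by $(D/C)^{\delta/(k-1)}$, yielding the claimed bound $\O\bigl(N(D/C)^{-(1-\delta)/(k-1)}\bigr)$. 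The main technical hurdle is the regularization step: one must either verify that the nibble proof of Theorem~\ref{thm:aks} extends to almost-regular simple hypergraphs (which is expected, as nibble arguments typically tolerate small degree fluctuations), or execute a clean discard of edges to produce an exactly regular sub-hypergraph without losing more than an $\O(\eta)$ fraction of the degree. The key numerical point that makes the whole reduction work is that $Cp \ll 1$, so that the bad-edge deletion costs only a constant fraction of $D'$.
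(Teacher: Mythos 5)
You correctly identify the high-level strategy the paper attributes to Kostochka and R\"odl: sparsify $H$ down to a simple almost-regular sub-hypergraph of degree roughly $D/C$, then invoke Theorem~\ref{thm:aks}. This is the same reduction the paper describes, and indeed its own Lemma~\ref{cor:comb} is a ``slight generalization'' of the machinery in~\cite{KR1998}. So your approach is the right one in outline. You also correctly flag the regularization issue (Theorem~\ref{thm:aks} asks for exact regularity) and it is true that nibble arguments tolerate mild fluctuations, as the paper's own Theorem~\ref{thm:mainsec3} confirms.

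However, there is a genuine gap in the sparsification step, and you have put the hard part in the wrong place: the difficult step is not the regularization, but the concentration of the \emph{bad-edge count}. Flipping the inclusion status of a single edge $f$ can create or destroy up to $\Theta(k^2 C)$ bad edges through a fixed vertex $x$ (for every pair $\{u,v\}\subseteq f$ there may be up to $C$ retained edges through $\{u,v\}$ that also pass through $x$). Thus the per-coordinate Lipschitz constant of the bad-edge count is $\Theta(C)$, not $\Theta(1)$. A direct Chernoff bound does not apply (the indicators are not independent), and McDiarmid/Azuma and the plain certificate form of Talagrand's inequality give deviations of order $\sqrt{D\cdot\mathrm{poly}(C)}$, which exceed $D' = cD/C$ unless $C$ is as small as $D^{\Theta(1)}$ with a favourable exponent --- i.e.\ these tools fail precisely when $\gamma$ is small, which is the regime Theorem~\ref{thm:kr} is supposed to cover. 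The Lov\'asz Local Lemma does not rescue this: it only shrinks the union bound to a local neighbourhood of size $\mathrm{poly}(D)$, but it cannot improve the per-vertex tail probability itself. One could try to push this through with Kim--Vu polynomial concentration (the bad-edge count is dominated by a degree-$2$ polynomial of the edge indicators, with well-behaved partial derivatives), but that is a substantially more delicate instrument than ``standard concentration'' and is not what your sketch invokes.

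It is worth noticing that if your one-shot scheme did go through as written, it would yield a matching missing only $\O\bigl(N(D/C)^{-1/(k-1)}\bigr)$ vertices --- \emph{without} the $\delta$-loss --- which is strictly stronger than Theorem~\ref{thm:kr} and essentially matches Vu's Theorem~\ref{thm:Vu2000} without even the $\log^c D$ factor. That should itself be a warning sign. The actual Kostochka--R\"odl argument, as reflected in the paper's Lemma~\ref{cor:comb} and Appendix~\ref{appendix:simplesubhypergraphs}, avoids the Lipschitz problem by sparsifying in stages via random edge-colourings: first reduce the codegree from $C$ to $\log(D/C)$, then to $\O(\delta^{-1})$, and only then enforce simplicity by passing to the independent set of a bounded-degree conflict graph. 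Because the codegree is already small going into each stage, the Lipschitz constants are bounded and plain Azuma-type concentration suffices. The price is that the degree of the final simple sub-hypergraph is only $\Theta\bigl((D/C)^{1-\delta}/\log(D/C)\bigr)$ rather than $\Theta(D/C)$, and this $(D/C)^{\delta}$ loss is exactly where the $\delta$ in the exponent of Theorem~\ref{thm:kr} comes from. So the $\delta$ is not a cosmetic convenience: it records a genuine cost of the multi-stage reduction needed to control concentration, and a proof of Theorem~\ref{thm:kr} should explain why it appears.
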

	
	Vu \cite{vu2000} improved the above result by removing $\delta$ and the assumption $C \le D^{1-\gamma}$ at the cost of a logarithmic term in the bound on the number of uncovered vertices. 
	
	\begin{theorem}[Vu \cite{vu2000}]
		\label{thm:Vu2000}
		Let $k \ge 3$. Then there exist $D_0$ and a constant $c > 0$ such that the following holds for $D \ge D_0$.
		
		Let $H$ be a $k$-uniform $D$-regular hypergraph on $N$ vertices with codegree at most $C$. Then there is a matching in $H$ covering all but at most $\O(N(\frac{D}{C})^{-\frac{1}{k-1}}\log^cD)$ vertices.
	\end{theorem}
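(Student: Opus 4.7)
The plan is to reduce Theorem~\ref{thm:Vu2000} to Theorem~\ref{thm:aks} via a random sparsification, in the spirit of the reduction of Kostochka and R\"odl but sharper, so as to avoid the loss of $\delta$ in the exponent at the price of only a polylogarithmic factor.

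Fix a constant $a>0$ (to be determined) and set $p:=1/(C\log^{a} D)$; I may assume $pD\to\infty$ since otherwise the target bound is trivially true. Form a random subhypergraph $H_1\subseteq H$ by retaining each edge of $H$ independently with probability $p$. By a Chernoff bound, with probability $1-o(1)$ every vertex $v$ satisfies $d_{H_1}(v)=(1\pm o(1))pD$. Next, let $H_2$ be obtained from $H_1$ by deleting every edge that contains a pair of vertices of codegree $\ge 2$ in $H_1$, so that $H_2$ is simple. For any fixed vertex $v$, the expected number of edges of $H$ incident to $v$ that enter $H_1$ but are then deleted is at most
\[
D\cdot p\cdot \binom{k}{2}\cdot pC\;=\;\binom{k}{2}\,\frac{pD}{\log^{a} D}\;=\;o(pD),
\]
since for each of the $\binom{k}{2}$ pairs $\{x,y\}\subset e$ the probability of finding a second edge $e'\ne e$ of $H$ through $\{x,y\}$ in $H_1$ is at most $p(c_H(\{x,y\})-1)\le pC$. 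A Kim--Vu polynomial concentration or a suitable Talagrand inequality promotes this bound to hold simultaneously at every $v$ with high probability, so that $H_2$ is simple, $k$-uniform, and almost $(pD)$-regular.

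Finally, I apply Theorem~\ref{thm:aks} to $H_2$. Strictly speaking that statement is for exactly regular hypergraphs, but a short padding argument (adding dummy edges to equalise degrees, or deleting the small set of vertices whose degrees dip below the target threshold) or, equivalently, the almost-regular form of the Alon--Kim--Spencer nibble---whose proof is identical---handles this. Theorem~\ref{thm:aks} then yields a matching in $H_2\subseteq H$ leaving at most $\O\bigl(N(pD)^{-1/(k-1)}\bigr)$ vertices uncovered, and substituting $p=1/(C\log^{a}D)$ gives the required bound $\O\bigl(N(D/C)^{-1/(k-1)}\log^{a/(k-1)}D\bigr)$, so $c:=a/(k-1)$ suffices.

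The main obstacle is the trade-off in the cleanup step: $p$ must be chosen large enough for $pD$ to remain large (so that Theorem~\ref{thm:aks} gives a meaningful bound) yet small enough that the deletions of codegree-$\ge 2$ edges perturb each degree only in a lower-order way. The calculation above shows that any $p$ of order $1/(C\,\mathrm{polylog}(D))$ strikes this balance, and this polylogarithmic slack is precisely what is responsible for the $\log^{c}D$ factor in the conclusion. Compared with the approach behind Theorem~\ref{thm:kr}, the key gain is that codegree-$\ge 2$ pairs are dealt with directly by the cleanup step, rather than absorbed via an inflated exponent.
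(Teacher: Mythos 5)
Theorem~\ref{thm:Vu2000} is quoted from \cite{vu2000} and is not reproved in this paper, so I can only assess your argument on its own terms. There is a genuine gap in the cleanup step, and it is not a concentration issue but a systematic bias. Given $e\in H_1$ with $v\in e$, the conditional probability that $e$ is deleted is essentially $p$ times the number of edges of $H$ sharing at least two vertices with $e$; this count ranges from $0$ up to $\binom{k}{2}(C-1)$, and it depends on the local codegree structure around $e$. Consequently $\mathbb{E}[\Delta_v]$ varies across vertices between $0$ and $\Theta(p^2DC)=\Theta(pD/\log^a D)$. For instance, in a hypergraph that is a disjoint union of a codegree-$1$ part and a part with codegree $\Theta(C)$ throughout, vertices in the first part lose nothing while vertices in the second lose a $\Theta(1/\log^a D)$ fraction of their $H_1$-degree. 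Thus $H_2$ is only $(pD\pm\Theta(pD/\log^a D))$-regular, and no application of Kim--Vu or Talagrand can shrink a spread that is already present in the expectations.

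This spread breaks the final application of the nibble. The Alon--Kim--Spencer analysis (and the almost-regular analysis behind Theorem~\ref{thm:mainsec3}) copes with an initial degree spread $\Delta_0$ by equalising removal probabilities via a waste set, at a cost of $\Omega(N\Delta_0/D_0)$ additional uncovered vertices --- here $\Omega(N/\log^a D)$, which swamps the target $\O\bigl(N(D/C)^{-1/(k-1)}\log^c D\bigr)$ as soon as $D/C\gg\log^{ak}D$. Your padding suggestion does not repair this: padding with new dummy vertices inflates the vertex count by a factor $\Theta(pD/\log^a D)$, while padding within $V(H_2)$ places $\Omega(N/\log^a D)$ real vertices on fake matching edges, giving the same loss. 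If instead you shrink $p$ until the deletion bias $pC$ does not exceed the nibble error $(pD)^{-1/(k-1)}$, the optimum is $p\asymp C^{-(k-1)/k}D^{-1/k}$, which only recovers $\O\bigl(N(D/C)^{-1/k}\bigr)$ --- the pre-AKS exponent. The missing ingredient is to make the deletion probability \emph{uniform across edges}: this is exactly what the Kostochka--R\"odl reduction (the core of Lemma~\ref{cor:comb}) does by first bounding the codegree and then regularising the conflict graph, so that every surviving edge is deleted with identical probability $1-(1-1/T)^s$, keeping the degree spread at the benign concentration scale $\tilde{\O}(\sqrt{pD})$. Your one-shot deletion has no such uniformisation, and that is precisely where the argument loses more than a polylogarithmic factor. (Vu's own proof in \cite{vu2000} avoids the reduction to the simple case entirely and instead analyses a nibble that tracks codegree effects directly via polynomial concentration.)
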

	
	Vu \cite{vu2000} also proved a general result using additional information about `higher codegrees'. In particular, he defined the $j$-codegree as the maximum number of edges containing a set of $j$ vertices, and proved a general upper bound on the number of uncovered vertices as a function of the $j$-codegrees.
	
	\subsection{New results} Alon, Kim, and Spencer~\cite{AKS1997} asked whether Theorem~\ref{thm:aks} is tight. Making progress on this twenty three year old question, we improve Theorem~\ref{thm:aks} for all $k > 3$, when $N$ and $D$ are sufficiently large. Our proof involves a method that goes beyond the classical R\"odl nibble technique. 
	
	Our main result, stated below, also applies to hypergraphs with small codegree and thus improves the bounds in Theorems ~\ref{thm:kr} and ~\ref{thm:Vu2000} as well, for all $k > 3$.

	\begin{theorem}\label{thm:main3intro}
		Let $k > 3$ and let $0 < \gamma,\mu < 1$ and 
		$0 < \eta < \frac{k-3}{(k-1)(k^3 - 2k^2 - k + 4)}$. Then there exists $N_0 = N_0(k, \gamma , \eta ,\mu)$ such that the following holds for $N \geq N_0$ and $D \geq \exp(\log^\mu N)$.
		
		Let $H$ be a $k$-uniform $D$-regular hypergraph on $N$ vertices with codegree at most $C \leq D^{1-\gamma}$. Then $H$ contains a matching covering all but at most $N (\frac{D}{C})^{-\frac{1}{k-1} - \eta}$ vertices.
		
		In particular, if $H$ is simple the number of uncovered vertices is at most $N D^{-\frac{1}{k-1} - \eta}$.
	\end{theorem}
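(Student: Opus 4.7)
My plan has three stages: first, a Kostochka-R\"odl-type reduction to the simple case; second, an analysis of the R\"odl nibble that simultaneously tracks certain augmenting substructures; and third, the use of these substructures to swap out matching edges for configurations covering many previously uncovered vertices. For the reduction, since $C \le D^{1-\gamma}$, I apply the standard Kostochka-R\"odl deletion procedure to obtain a simple $k$-uniform hypergraph that is essentially regular with degree $\widetilde D \approx D/C$. A matching in this simple subhypergraph pulls back to a matching in $H$ with the same asymptotic proportion of uncovered vertices, so it suffices to prove, in the simple case, a bound of $N\widetilde D^{-1/(k-1)-\eta}$.

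On this simple $\widetilde D$-regular hypergraph $H'$, I run the R\"odl nibble for $T = \Theta(\log \widetilde D)$ rounds. Standard tracking (as in the proof of Theorem~\ref{thm:aks}) yields a matching $M$ with $|U_0| \le N \widetilde D^{-1/(k-1)}(1+o(1))$ uncovered vertices and a quasirandom residual hypergraph. In parallel I track, for each prospective matching edge $m$, the \emph{augmenting $s$-stars at $m$}: tuples $(e_1,\dots,e_s)$ of $s \ge 2$ edges of $H'$, each meeting $m$ in a single distinct vertex, pairwise disjoint outside $m$, whose remaining $(k-1)s$ vertices all land in $U_0$. Swapping $m \mapsto \{e_1,\dots,e_s\}$ reduces $|U|$ by $k(s-1)$, so once a pairwise vertex-disjoint subfamily of augmenting stars is identified it can be applied in parallel. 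A Freedman-type martingale estimate on the nibble then shows that, for a suitable choice of $s$, the number of augmenting $s$-stars is concentrated around a quantity $\gg |U_0|$ (summed over $m \in M$) and distributes quasi-uniformly over $M$.

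The final step is to extract an almost maximum pairwise vertex-disjoint subfamily of augmenting stars --- itself a matching problem in an auxiliary conflict hypergraph on stars. The quasirandomness of the nibble output allows applying a further short R\"odl-type nibble to this auxiliary hypergraph, producing a near-perfect matching of stars; applying all selected swaps simultaneously pushes $|U|$ from $N\widetilde D^{-1/(k-1)}$ down to $N\widetilde D^{-1/(k-1)-\eta}$.

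The main obstacle is the joint concentration: I must track, throughout the whole nibble, a random variable recording not just the survival of a single edge but the joint survival of an entire $s$-star of uncovered tails. This requires bounding higher-order correlations of the nibble and carefully tuning the star size $s$ and the nibble density profile in terms of $k$. The polynomial $(k-1)(k^3-2k^2-k+4)$ in the denominator of the admissible range of $\eta$ is what emerges when optimizing the star size and nibble parameters against the error from the concentration argument, and the factor $k-3$ in the numerator reflects that the augmenting-star density strictly exceeds the error tolerance only when $k > 3$; for $k = 3$ the argument degenerates and yields no improvement, consistent with the qualitatively different Alon-Kim-Spencer bound in Theorem~\ref{thm:aks}.
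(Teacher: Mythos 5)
Your proposal follows essentially the same route as the paper: reduce the small-codegree case to the simple case via a Kostochka--R\"odl random-colouring argument, run the R\"odl nibble while tracking augmenting stars in addition to the matching, form an auxiliary hypergraph whose edges are the stars, and extract a large matching of stars by running another nibble on that auxiliary hypergraph. Two implementation points are worth flagging, since they are where your description is looser than what the argument actually requires. First, the paper's augmenting stars use exactly $s = k$ edges (one through each vertex of the matching edge $e_M$), and this is not a free parameter: your gain formula $k(s-1)$ per swap is correct, but it obscures the fact that for $s < k$ each applied swap also uncovers $k-s$ vertices of $e_M$, and summed over a near-perfect family of stars this produces a set of newly uncovered vertices of the \emph{same order} as the original leftover set, wiping out the gain; only $s = k$ keeps $V(M)$ intact. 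Second, the auxiliary hypergraph of stars $H_A$ is $(1,k(k-1))$-partite and badly \emph{non}-regular (degrees on the $E(M)$-side are a factor $\Theta(D_\omega^{(1-\gamma)/(k-1)})$ smaller than on the leftover side), and the paper needs an explicit cloning step (taking $\lfloor D_R/D_L\rfloor$ vertex-disjoint copies of the leftover side) to make the auxiliary hypergraph almost regular before the KR-reduction and second nibble can be applied; a "further short R\"odl-type nibble" alone does not suffice. Finally, the polynomial $(k-1)(k^3-2k^2-k+4)$ and the factor $k-3$ in the numerator of the admissible $\eta$-range do not come from optimising the star size: the star size is fixed at $k$, and the exponent comes from optimising the nibble cutoff parameter $\gamma$ against three error sources, with $k-3$ appearing via the exponent $\gamma(\frac{1}{k-1}-\frac{1}{2})$ of the waste-set bound $(\rm M2)$ — which is why the method degenerates at $k=3$ (the waste set is too large), not because of a star-density threshold.
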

	
Theorem ~\ref{thm:main3intro} (as well as Theorems ~\ref{thm:aks}--\ref{thm:Vu2000}) also hold for hypergraphs which are close to being regular (rather than regular). We provide the precise statement in Theorem ~\ref{thm:main3proof}.\COMMENT{Vu mentioned that his results allowed the largest degree fluctuation among the results known at the time. His results work when degrees are within $D(1 \pm \delta)$ where $\delta = \Theta(C/D)^{\frac{1}{k-1}} \log^c D$. Our result works when $\delta = \Theta(C/D)^{\frac{1}{k-1}+\eta}$, so it is almost as good as his.} Moreover, Theorem~\ref{thm:main3proof} also guarantees the following `quasirandomness' property of the matching $M$ returned by Theorem~\ref{thm:main3proof}: Suppose we are given a reasonably small set $\mathcal V$ of subsets of $V(H)$. Then for every $S \in \mathcal V$, the proportion of vertices of $S$ not covered by $M$ is small. Such a result was first derived by Alon and Yuster~\cite{AY2005}. We provide better bounds than in ~\cite{AY2005} under slightly stronger assumptions. (This quasirandomness property will be important when we deduce Corollary~\ref{cor:chromindex} from Theorem~\ref{thm:main3proof}.)

\subsection{Corollaries} In this subsection we present several applications of our results. In particular, we give new bounds on the chromatic index of hypergraphs with small codegree, and improve well-known results on the size of matchings and the chromatic index of combinatorial designs.
	
	\subsubsection{Matchings in Steiner systems}
	
	A well-known example of a regular hypergraph with small codegrees is a (full) \emph{Steiner system} $S(t,k,n)$, which is a $k$-uniform hypergraph $S$ on $n$ vertices such that every set of $t$ vertices belongs to exactly one edge of $S$.  Solving a problem going back to the nineteenth century, Keevash \cite{keevash2014existence} proved the existence of (full) Steiner systems via randomised algebraic constructions. A combinatorial proof was given in \cite{GKLO2020}. Since $S(t,k,n)$ is $\binom{n-1}{t-1}/\binom{k-1}{t-1}$-regular and has codegree $\binom{n-2}{t-2}/\binom{k-2}{t-2}$, we can apply Theorem ~\ref{thm:main3intro} to show the existence of large matchings in (full) Steiner systems as follows.

	\begin{corollary}
		\label{cor:largematchingSteiner}
		Let $k > t > 1$ and let $0 < \eta < \frac{k-3}{(k-1)(k^3 - 2k^2 - k+4)}$. Then there exists $n_0 = n_0(k, t, \eta)$ such that every (full) Steiner system $S(t,k,n)$ with $n \geq n_0$ has a matching covering all but at most $n^{\frac{k-2}{k-1} - \eta}$ vertices.
	\end{corollary}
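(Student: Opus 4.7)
The plan is to apply Theorem~\ref{thm:main3intro} directly to a Steiner system $S(t,k,n)$, viewed as a $k$-uniform hypergraph. First I would compute the relevant parameters. By a standard double-counting argument, every vertex of $S(t,k,n)$ lies in exactly $D := \binom{n-1}{t-1}/\binom{k-1}{t-1}$ edges, and every pair of vertices lies in exactly $C := \binom{n-2}{t-2}/\binom{k-2}{t-2}$ edges (interpreting this as $1$ when $t=2$). A routine simplification then gives
\[
\frac{D}{C} \;=\; \frac{n-1}{k-1}.
\]

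Next I would verify the hypotheses of Theorem~\ref{thm:main3intro}. Since $D = \Theta_{k,t}(n^{t-1})$ and $C = \Theta_{k,t}(n^{t-2})$, the ratio $\log C/\log D$ tends to $(t-2)/(t-1) < 1$ as $n\to\infty$, so there exists $\gamma = \gamma(k,t) > 0$ with $C \leq D^{1-\gamma}$ once $n$ is sufficiently large. Moreover, as $D$ is polynomial in $n$ (of degree at least $1$, since $t\ge 2$), the condition $D \geq \exp(\log^{\mu} n)$ holds for any fixed $\mu < 1$ and all large $n$. Thus Theorem~\ref{thm:main3intro} applies.

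Finally, to absorb the implicit constants, I would pick some $\eta'$ with $\eta < \eta' < \frac{k-3}{(k-1)(k^3-2k^2-k+4)}$ and apply Theorem~\ref{thm:main3intro} with exponent $\eta'$. This produces a matching leaving uncovered at most
\[
n \cdot \left(\tfrac{n-1}{k-1}\right)^{-\frac{1}{k-1}-\eta'} \;=\; O_{k,t}\!\left(n^{\frac{k-2}{k-1}-\eta'}\right)
\]
vertices, and for $n \geq n_0(k,t,\eta)$ the factor $n^{\eta'-\eta}$ dominates this constant, yielding the required bound $n^{(k-2)/(k-1)-\eta}$. There is no genuine obstacle to overcome here: the whole argument is a direct parameter substitution, and the only minor subtlety is this trick of working with a slightly larger $\eta'$ to swallow the $(k-1)^{1/(k-1)+\eta'}$ constant hidden in $D/C$.
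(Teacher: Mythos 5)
Your proof is correct and follows exactly the route the paper intends: the paper states Corollary~\ref{cor:largematchingSteiner} immediately after noting that $S(t,k,n)$ is $\binom{n-1}{t-1}/\binom{k-1}{t-1}$-regular with codegree $\binom{n-2}{t-2}/\binom{k-2}{t-2}$, and the intended argument is precisely the parameter substitution you carry out, including the observation that $D/C=(n-1)/(k-1)$. The only minor point you leave implicit is that Theorem~\ref{thm:main3intro} requires $k>3$, but this is harmless since for $k=3$ (forced when $t=2$, $k=3$) the upper bound on $\eta$ is nonpositive and the statement is vacuous; the $\eta'$-trick to absorb the constant $(k-1)^{1/(k-1)+\eta}$ is the right (and standard) way to match the clean bound $n^{(k-2)/(k-1)-\eta}$.
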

	
	Corollary~\ref{cor:largematchingSteiner} improves the best known estimates for $S(2, k,  n)$ with $k > 3$ given in \cite{AKS1997}. Better bounds are known for $S(2,3,n)$, which is usually referred to as a \emph{Steiner triple system} of order $n$.  The problem of the existence of large matchings in Steiner triple systems has a long history. About forty years ago, Brouwer \cite{brouwer1981size} conjectured that every Steiner triple system of order $n$ contains a matching of size $(n -4)/3$. 
	Very recently, Keevash, Pokrovskiy, Sudakov and Yepremyan~\cite{KPSY2020} showed that any Steiner triple system of order $n$ has a matching covering all but at most $\O(\log n / \log \log n)$ vertices. This improved a sequence of previous bounds in \cite{wang1978self,lindner1978note,brouwer1981size,AKS1997}. Their method utilises the R\"odl nibble as well as robust expansion properties of edge-coloured pseudorandom graphs to find augmenting edge-coloured paths. It even applies in a more general setting: they show that $3$-uniform $n$-vertex hypergraphs satisfying certain pseudorandomness properties have a matching covering all but at most $\O(\log n / \log \log n)$ vertices.
	
	\subsubsection{Chromatic index of hypergraphs with small codegree}
	
	The \emph{chromatic index} $\chi'(H)$ of a hypergraph $H$ is the minimum number of colours needed to colour its edges so that no two edges which intersect receive the same colour. A classical theorem of Vizing states that every graph with maximum degree at most $D$ has chromatic index at most $D+1$. The corresponding problem for $k$-uniform hypergraphs with $k \geq 3$ is still open. 
	
	In 1989, Pippenger and Spencer~\cite{PippengerSpencer} proved that every $k$-uniform hypergraph with maximum degree at most $D$ and codegree $o(D)$ has the chromatic index $D+o(D)$. Sharpening this $o(D)$-term, and improving a result of Kahn \cite{kahn1996asymptotically}, the best known asymptotic bound on the chromatic index of a $k$-uniform hypergraph was shown by Molloy and Reed~\cite{MR2000}, who proved that every $k$-uniform hypergraph with maximum degree at most $D$ and codegree at most $C$ has chromatic index at most $D + \O(D(D/C)^{-1/k} (\log D / C)^4)$. We improve this result as follows.
	
	\begin{corollary}\label{cor:chromindex}
	Let $k \geq 3$, and let $D, N, C > 0$, $0 < \gamma,\mu < 1$ and $0 < \eta < \frac{k-2}{k(k^3 + k^2 - 2k + 2)}$. Then there exists $N_0 = N_0(k , \gamma , \eta, \mu)$ 
	such that the following holds for all $N \geq N_0$ and $D \geq \exp(\log^\mu N)$.
		
	Let $H$ be a $k$-uniform multi-hypergraph on $N$ vertices with maximum degree at most $D$ and codegree at most $C \leq D^{1-\gamma}$. Then the chromatic index of $H$ is at most $D + D (D/C)^{-1/k - \eta}$.
	\end{corollary}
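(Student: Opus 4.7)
The plan is to reduce Corollary~\ref{cor:chromindex} to a near-perfect matching problem in a $(k+1)$-uniform auxiliary hypergraph, apply the quasirandom, almost-regular strengthening of Theorem~\ref{thm:main3intro} (i.e.\ Theorem~\ref{thm:main3proof}) to that auxiliary hypergraph, and then colour the small residual using the bound of Molloy and Reed~\cite{MR2000}.

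Fix $\eta'$ with $\eta < \eta' < \frac{k-2}{k(k^3 + k^2 - 2k + 2)}$ and set $m := D + \lceil D(D/C)^{-1/k - \eta'} \rceil$. After padding $H$ to be $D$-regular while preserving the codegree bound $C$ (routine, using a disjoint set of dummy vertices), define an auxiliary hypergraph $H^\ast$ on $V(H^\ast) := E(H) \cup (V(H) \times [m])$, with edges
\[
f_{e,c} := \{e\} \cup \{(v,c) : v \in e\} \qquad \text{for every } (e,c) \in E(H) \times [m].
\]
Then $H^\ast$ is $(k+1)$-uniform; the vertices in $E(H)$ have degree $m$ and those in $V(H) \times [m]$ have degree $D$, so $H^\ast$ is within a factor $1 + (D/C)^{-1/k - \eta'}$ of being $m$-regular. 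Its codegree is at most $C$: among all pairs of vertices of $H^\ast$, the only ones whose codegree can exceed $1$ are pairs $\{(v,c),(v',c)\}$ sharing a colour, whose codegree equals $d_H(v,v') \le C$. A direct substitution shows that the admissible range from Theorem~\ref{thm:main3intro} in uniformity $k+1$, namely $\frac{(k+1)-3}{((k+1)-1)((k+1)^3 - 2(k+1)^2 - (k+1) + 4)}$, simplifies to exactly $\frac{k-2}{k(k^3 + k^2 - 2k + 2)}$, so the chosen $\eta'$ is admissible.

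Apply Theorem~\ref{thm:main3proof} to $H^\ast$ with the family $\mathcal{V} := \{E_v : v \in V(H)\}$, where $E_v := \{e \in E(H) : v \in e\} \subseteq V(H^\ast)$. This produces a matching $M^\ast$ in $H^\ast$ missing at most a $(D/C)^{-1/k - \eta'}$-fraction of each $E_v$, up to a lower-order factor. Each edge $e \in E(H)$ covered by $M^\ast$ inherits a colour $c_e \in [m]$ from the unique $f_{e, c_e} \in M^\ast$ through $e$, and the matching condition forces this partial colouring to be proper. Let $H^{(1)} \subseteq H$ be the residual $k$-uniform hypergraph on uncoloured edges; the bound on each $E_v$ yields $\Delta(H^{(1)}) \le D_1 := 2D(D/C)^{-1/k - \eta'}$, while the codegree of $H^{(1)}$ is still at most $C$. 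Colour $H^{(1)}$ on a disjoint palette via the Molloy--Reed bound~\cite{MR2000}, giving $\chi'(H^{(1)}) \le D_1 + \O\bigl(D_1(D_1/C)^{-1/k}(\log(D_1/C))^4\bigr) \le 2D_1$ once $D/C$ is sufficiently large (which holds since $C \le D^{1 - \gamma}$ and $D \ge \exp(\log^\mu N)$). Altogether,
\[
\chi'(H) \;\le\; m + 2D_1 \;\le\; D + \O\bigl(D(D/C)^{-1/k - \eta'}\bigr) \;\le\; D + D(D/C)^{-1/k - \eta},
\]
where the last inequality uses $\eta' > \eta$ together with $(D/C)^{-(\eta'-\eta)} \to 0$ as $N \to \infty$.

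The main obstacle is the quasirandomness step: one must verify that Theorem~\ref{thm:main3proof} admits a family $\mathcal{V}$ of size $N$ and delivers a uniform uncovered-fraction estimate on every star $E_v$, strong enough to control $\Delta(H^{(1)})$ uniformly in $v$. A secondary technicality is checking that the side hypotheses of Theorem~\ref{thm:main3proof} on $H^\ast$ (the effective ``$D$'' exceeding $\exp(\log^{\mu'}|V(H^\ast)|)$ for some $\mu' > 0$, and the codegree ratio satisfying $C \le m^{1-\gamma'}$ for some $\gamma' > 0$) transfer from those on $H$; this is routine since $|V(H^\ast)| \le 2Nm$ and $m \ge D$.
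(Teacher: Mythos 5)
Your high-level strategy matches the paper's (build a $(k+1)$-uniform incidence hypergraph, apply Theorem~\ref{thm:main3proof} with star sets in $\mathcal{V}$, and greedily finish the residual), and your verification that the $(k+1)$-uniform version of $\eta^{\diamond}$ simplifies to $\frac{k-2}{k(k^3+k^2-2k+2)}$ is correct. However, there is a genuine gap in the construction of the auxiliary hypergraph.

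By taking $m := D + \lceil D(D/C)^{-1/k-\eta'}\rceil$ copies of $V(H)$, you make the vertices of $H^*$ in $E(H)$ have degree $m$ while the clone vertices have degree $D$ (after padding). The degree fluctuation in $H^*$ is therefore of order $D(D/C)^{-1/k-\eta'}$. To apply Theorem~\ref{thm:main3proof} (for uniformity $k+1$), this fluctuation must be at most $K\,C^{1-\varepsilon} D^{\varepsilon}$. Since $D/C \geq D^{\gamma} \to \infty$, the inequality $D(D/C)^{-1/k-\eta'} \lesssim C^{1-\varepsilon} D^{\varepsilon}$ forces $\varepsilon \geq 1 - \tfrac{1}{k} - \eta'$, which in particular forces $\varepsilon > 1/2$. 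But in the $\varepsilon > 1/2$ regime of Theorem~\ref{thm:main3proof} one has $\eta_0 = \min\bigl(\eta^{\diamond},\, 1 - \tfrac{1}{k} - \varepsilon\bigr) \leq 1 - \tfrac{1}{k} - \varepsilon \leq \eta'$, so the requirement $\eta' < \eta_0$ fails. In short, paying for the extra colours up front ruins the almost-regularity of $H^*$ precisely by the amount that would have been tolerable as a residual. The fix, which is what the paper does, is to take exactly $D$ colour copies so that both sides of $H^*$ have degree $D$ up to an $O(1)$ error, apply Theorem~\ref{thm:main3proof} with any small $\varepsilon$ (the paper uses $\varepsilon = 1/4$, keeping $\eta_0 = \eta^{\diamond}$), and absorb the unavoidable slack entirely in the residual hypergraph $H''$ (where the trivial greedy bound $\chi'(H'') \leq k\Delta(H'') + 1$ already suffices; Molloy--Reed is not needed).

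A secondary issue is the padding step you dismiss as routine. The paper's Lemma~\ref{lem:embed} constructs the regularised $H'$ with $|V(H')| = (k-1)^2 D^2 N$, i.e.\ polynomially bounded in $N$ and $D$, and this control is precisely what keeps the hypothesis $D \geq \exp(\log^{\mu'}|V(H^*)|)$ valid for the auxiliary hypergraph. A naive padding with ``a disjoint set of dummy vertices'' does not obviously give this bound while simultaneously preserving $k$-uniformity, near-$D$-regularity and the codegree bound $C$; the paper achieves it via partial Steiner systems. Your sentence ``this is routine since $|V(H^*)| \le 2Nm$'' presupposes the padding adds $O(1)$-fraction many vertices, which is exactly the non-trivial part.
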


	 The above results have applications to the chromatic index of Steiner triple systems (and designs with more general parameters). Indeed, since any colour class of an edge-colouring of a Steiner triple system $H$ on $n$ vertices contains at most $\lfloor n/3 \rfloor$ edges, 
	 \begin{equation}
	 \label{eq:g(n)}
	 \chi'(H) \ge g(n) :=
	\begin{cases}
	(n+1)/2& \text{if }n\equiv 1 ({\rm mod}\:6), \\
	(n-1)/2& \text{if }n\equiv 3 ({\rm mod}\:6).
	\end{cases}
	\end{equation}

	There are several constructions of Steiner triple systems on $n$ vertices having their chromatic index between $g(n)$ and $g(n)+2$~\cite{BCHW2017, meszka2013, RW1971, VSSRRCCC1993}; it was conjectured by Meszka, Nedela and Rosa~\cite{MNR2006} that every Steiner triple system $H$ on $n > 7$ vertices satisfies $\chi'(H) \le g(n)+2$. 
	
	On the other hand, the (vertex) degree of a Steiner triple system $H$ on $n$ vertices is $(n-1)/2$. Thus an application of the Pippenger--Spencer theorem ~\cite{PippengerSpencer} discussed above implies that  $\chi'(H) \le n/2 + o(n)$. This result resolved a longstanding open problem and is asymptotically best possible by \eqref{eq:g(n)}. The result of Molloy and Reed in ~\cite{MR2000} implies that the error term $o(n)$ can be improved to $\O(n^{2/3} \log^4 n)$ and Corollary~\ref{cor:chromindex} implies that $\chi'(H) \le n/2+\O(n^{2/3-\eta})$. More generally, one can apply our results to designs with arbitrary parameters to obtain similar results and improvements.

        We now discuss applications of Corollary~\ref{cor:chromindex} to the chromatic index of Latin squares. A \emph{Latin square} $L$ is an $n \times n$ array with $n$ symbols such that each symbol appears exactly once in each row and in each column. For each $n \times n$ Latin square $L$, we define a corresponding 3-uniform $n$-regular simple hypergraph $H_L$ on $3n$ vertices as follows. 
    Let $V(H) = R \cup C \cup S$ where $R$, $C$, and $S$ are the set of rows, columns, and symbols of $L$ respectively. 
    Let $E(H)$ be the collection of triples $\{i,j,c \}$ where $i \in R$, $j \in C$, and $c \in S$ is the symbol in the $(i,j)$-th entry of $L$.

    A famous conjecture by Ryser, Brualdi, and Stein~\cite{BR1991, ryser1967, stein1975} states that $H_L$ has a matching of size $n-1$, and if $n$ is odd, then $H_L$ has a perfect matching. The current best known bound for this conjecture is $n - O(\log n / \log \log n)$ by Keevash, Pokrovskiy, Sudakov and Yepremyan~\cite{KPSY2020}.
    
    Cavenagh and Kuhl~\cite{CK2015} and Besharati, Goddyn, Mahmoodian, and Mortezaeefar~\cite{BGMM2016} posed the following conjecture regarding the chromatic index of Latin squares.
    
    \begin{conjecture}
        Let $L$ be an $n \times n$ Latin square, and let $H_L$ be the corresponding 3-uniform  hypergraph on $3n$ vertices. Then $\chi'(H_L) \leq n+2$. Moreover, if $n$ is odd, then $\chi'(H_L) \leq n+1$.
    \end{conjecture}

    Since $H_L$ is a simple hypergraph and $n$-regular, an application of the Pippenger--Spencer theorem ~\cite{PippengerSpencer} implies that  $\chi'(H) \le n + o(n)$. A result of Molloy and Reed ~\cite{MR2000} implies that the error term $o(n)$ can be improved to $\O(n^{2/3} \log^4 n)$ and Corollary~\ref{cor:chromindex} implies that $\chi'(H) \le n + \O(n^{2/3-\eta})$, thus improving it further. 
 
    For an overview of more related results, methods and open problems, we refer to the recent survey~\cite{KKKMO2021survey}.


\subsection{Organisation of the paper} In Section ~\ref{sec:Notation} we introduce our basic terminology. In Section ~\ref{sec:prelim} we begin with stating the Martingale inequality that we use throughout the paper to prove several concentration inequalities. Another tool is Lemma ~\ref{cor:comb} which allows us to find simple subhypergraphs in hypergraphs with small codegree. In Section ~\ref{proofonline} we introduce our method and give an overview of the proof of our main result (Theorem~\ref{thm:main3intro}). In Section ~\ref{sec:simple},
we analyze the classical R\"odl nibble (on an input hypergraph which is simple) to prove Theorem ~\ref{thm:mainsec3}. Theorem ~\ref{thm:mainsec3} shows that in addition to constructing a large matching, the R\"odl nibble process actually produces more complex structures (which we call `augmenting stars'). In Section ~\ref{sec:AKSimproved}, we then use these augmenting stars to significantly increase the size of the matching produced by the R\"odl nibble. This implies Theorem~\ref{thm:main3intro} in the case where $H$ is simple. In Section ~\ref{sec:KRimproved}, we use Lemma ~\ref{cor:comb} to extend this result from simple hypergraphs to hypergraphs whose codegree is small. Finally, in Section ~\ref{sec:ChromaticIndex}, we deduce Corollary~\ref{cor:chromindex} from Theorem~\ref{thm:main3proof}.

	\section{Notation}
	\label{sec:Notation}
	
	Let $[N]$ denote the set $\{1,2, \ldots, N\}$. We write $c = a \pm b$ if $a-b \leq c \leq a+b$. Given a set $S$ and an integer $i \geq 0$, let $\binom{S}{i}$ be the collection of all subsets of $S$ of size $i$. A \emph{$k$-set} is a set of size $k$. Given a graph $G$, the maximum degree of a vertex in $G$ is denoted by $\Delta(G)$.

	A \emph{hypergraph} $H$ is a pair $(V(H), E(H))$ where $V(H)$ is the vertex set and the edge set $E(H)$ is a set of subsets of $V(H)$. For convenience sometimes we write $H$ instead of $E(H)$. A multi-hypergraph $H$ is a hypergraph which is permitted to have multiple edges. We say $H$ is $k$-uniform if every edge of $H$ contains precisely $k$ vertices. The \emph{degree} of a vertex $x$ in $H$ is the number of edges containing $x$ and is denoted by $d_H(x)$. A (multi-)hypergraph is \emph{regular} if all of its vertices have the same degree, and it is \emph{$D$-regular} if the degree of each of its vertices is $D$. Given a set of vertices $U \subseteq V(H)$, we write $H[U]$ for the sub-hypergraph of $H$ induced by $U$. A \emph{matching} in $H$ is a collection of pairwise disjoint edges.  
	
	For integers $a,b \geq 1$, a (multi)-hypergraph $H$ is \emph{$(a,b)$-partite} if there exists a partition $V(H) = A \cup B$ such that $|e \cap A| = a$ and $|e \cap B| = b$ for any $e \in E(H)$. A (multi)-hypergraph $H$ is \emph{$(a \pm b)$-regular} if every vertex of $H$ has degree $a \pm b$.
	
	We usually denote the number of vertices in a hypergraph by $N$. We say that an event holds \emph{with high probability (whp)} if the probability that it holds tends to 1 as $N \to \infty$.
	
	Given functions $f$ and $g$, we write $f = \O(g)$ if $|f| \le C |g|$ for some constant $C$ (which usually depends on the uniformity $k$). We write $f = \Omega(g)$ if $f \ge c|g|$ for some constant $c > 0$ (which again usually depends on $k$). We write $f = o(g)$ if for any $c > 0$, there exist $M = M(c)$ such that $|f(N)| \leq c|g(N)|$ for $N \geq M$. Similarly, we write $f = \omega(g)$ if for any $C > 0$, there exist $M = M(C)$ such that $|f(N)| \geq C |g(N)|$ for $N \geq M$. We write $\O_{\gamma}$, $\Omega_{\gamma}$, $o_{\gamma}$, $\omega_{\gamma}$ to indicate that the implicit constant may depend on $\gamma$. 
	
	We use the asymptotic `$\ll$' notation to state our results. The constants in the hierarchies used to state our results have to be chosen from right to left. More precisely, if we claim that a result holds whenever $1/N \ll a \ll b \ll c \le 1$, then this means that there are non-decreasing functions $f : (0,1] \mapsto (0,1]$, $g : (0,1] \mapsto (0,1]$ and $h : (0,1] \mapsto (0,1]$ such that the result holds for all $0 < a,b,c \le 1$ and all $N$ with $b \le f(c)$, $a \le g(b)$
and $1/N \le h(a)$. We will not calculate these functions explicitly. Hierarchies with more constants are defined in a similar way.

	\section{Preliminaries}
	\label{sec:prelim}
	
	\subsection{Probabilistic tools}
	In this subsection we collect the large deviation inequalities we need throughout our proof. We will use a martingale inequality due to Alon, Kim and Spencer \cite{AKS1997}. For this, we assume our probability space is generated by a finite set of mutually independent Yes or No choices, indexed by $i \in I$. Let $Y$ be a random variable defined on this space. Let $p_i$ denote the probability that choice $i$ is Yes. Let $c_i$ be such that changing the choice $i$ (but keeping all other choices the same) can change $Y$ by at most $c_i$. We call $p_i(1-p_i)c_i^2$ the \emph{variance} of choice $i$. Let $T$ be an upper bound on all $c_i.$ 
	
	Suppose Paul finds the value of $Y$ by asking queries of an always truthful oracle Carole. The queries always ask whether a choice $i \in I$ is Yes or No. Paul's choice of the next query can depend on Carole's previous responses. Thus, a strategy for Paul can be represented in the form of a decision tree. A \emph{line of questioning} is a path from the root to a leaf of this tree -- a sequence of questions and responses that determine $Y$. The \emph{total variance} of a line of questioning is the sum of variances of the queries in it.

	\begin{lemma}[Martingale inequality~\cite{AKS1997}]\label{cor:martingale}
		There exists $\delta > 0$ such that if Paul has a strategy for finding $Y$ such that every line of questioning has total variance at most $\sigma^2$, then 
		\begin{equation*}
		\mathbb{P}(|Y - \mathbb{E}(Y)| > \lambda \sigma) \leq 2e^{-\frac{\lambda^2}{4}},
		\end{equation*}
		for any $0 < \lambda < 2 \sigma \delta / T$.
	\end{lemma}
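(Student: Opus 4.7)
The plan is to prove the inequality via the standard exponential-moment method for martingales with bounded differences, but with the variance (rather than only the range of the differences) controlling the tail. Let $i_1, i_2, \dots, i_n$ denote the random sequence of queries Paul makes along a given line of questioning, and let $\mathcal{F}_j$ be the $\sigma$-algebra generated by Carole's responses to the first $j$ queries. Set $Z_j := \mathbb{E}[Y \mid \mathcal{F}_j]$ and $X_j := Z_j - Z_{j-1}$, so that $(Z_j)$ is a Doob martingale with $Z_0 = \mathbb{E}[Y]$ and $Z_n = Y$.

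The key one-step estimate is as follows. Since Paul's next query $i_j$ is $\mathcal{F}_{j-1}$-measurable, conditional on $\mathcal{F}_{j-1}$ the increment $X_j$ takes exactly two values, corresponding to a Yes or No response to query $i_j$, with conditional probabilities $p_{i_j}$ and $1 - p_{i_j}$. Because flipping the answer to query $i_j$ (with all other choices fixed) changes $Y$ by at most $c_{i_j}$, the two values of $X_j$ differ by at most $c_{i_j}$. Consequently
\[
|X_j| \le c_{i_j} \le T \quad \text{and} \quad \mathbb{E}[X_j^2 \mid \mathcal{F}_{j-1}] \le p_{i_j}(1 - p_{i_j}) c_{i_j}^2,
\]
and the hypothesis on total variance along every line of questioning translates into $\sum_j \mathbb{E}[X_j^2 \mid \mathcal{F}_{j-1}] \le \sigma^2$ pointwise on the decision tree.

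From here I would run the usual Bernstein-type exponential argument. Fix an absolute constant $\delta > 0$ small enough that $e^x \le 1 + x + x^2$ on $[-\delta, \delta]$. For any $\beta > 0$ with $\beta T \le \delta$, the facts $|X_j| \le T$ and $\mathbb{E}[X_j \mid \mathcal{F}_{j-1}] = 0$ yield
\[
\mathbb{E}[e^{\beta X_j} \mid \mathcal{F}_{j-1}] \le 1 + \beta^2 \, \mathbb{E}[X_j^2 \mid \mathcal{F}_{j-1}] \le \exp\bigl(\beta^2 \, p_{i_j}(1 - p_{i_j}) c_{i_j}^2\bigr).
\]
Iterating via the tower property and invoking the total-variance bound then gives $\mathbb{E}[e^{\beta (Y - \mathbb{E} Y)}] \le e^{\beta^2 \sigma^2}$. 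Markov's inequality produces $\mathbb{P}(Y - \mathbb{E} Y > \lambda \sigma) \le e^{-\beta \lambda \sigma + \beta^2 \sigma^2}$, which I would optimize at $\beta = \lambda/(2\sigma)$ to obtain the bound $e^{-\lambda^2/4}$. The requirement $\beta T \le \delta$ becomes precisely $\lambda \le 2 \sigma \delta / T$, matching the hypothesis, and an identical argument applied to $-(Y - \mathbb{E} Y)$ handles the lower tail and contributes the factor of~$2$.

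The only genuine subtlety is that Paul's querying is adaptive, so one must check the decision-tree structure causes no trouble. But since each $i_j$ is $\mathcal{F}_{j-1}$-measurable, every conditional estimate above holds pointwise on every line of questioning, and the iterated exponential bound only requires the sum-of-conditional-variances estimate that is assumed on each line. In particular no optional-stopping machinery is needed, and the fact that different lines of questioning may have different lengths is harmless.
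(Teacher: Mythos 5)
The paper cites this lemma from \cite{AKS1997} without reproducing a proof, so there is no in-paper argument to compare against; the proof given in \cite{AKS1997} is precisely the Doob-martingale plus Bernstein-type exponential-moment argument you describe. Your proposal is correct and follows that standard route: the one-step variance bound $\mathbb{E}[X_j^2\mid\mathcal F_{j-1}]\le p_{i_j}(1-p_{i_j})c_{i_j}^2$ together with $|X_j|\le T$, the inequality $e^x\le 1+x+x^2$ on $[-\delta,\delta]$ giving $\mathbb{E}[e^{\beta X_j}\mid\mathcal F_{j-1}]\le e^{\beta^2 v_j}$, the supermartingale iteration to get $\mathbb{E}[e^{\beta(Y-\mathbb{E}Y)}]\le e^{\beta^2\sigma^2}$, and optimisation at $\beta=\lambda/(2\sigma)$ under the constraint $\beta T\le\delta$ (which is exactly $\lambda\le 2\sigma\delta/T$).
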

	
	We will also use the following.
	
	\begin{lemma}[Chernoff-Hoeffding inequality \cite{janson2000random}]\label{lem:chernoff}
		Suppose $X_1,\ldots, X_t$ are independent
		random variables taking values $0$ or $1$. Let $X := \sum_{i \in [t]}X_i$. Then
		\begin{equation*}
		\mathbb{P}(|X - \mathbb{E}(X)| \geq \lambda) \leq 2 e^{\frac{-\lambda^2}{2(\mathbb{E}(X) + \lambda/3)}}.
		\end{equation*}
		In particular, if $\lambda \ge 7 \mathbb{E}[X]$, then  $\mathbb{P}(|X - \mathbb{E}(X)| \ge \lambda) \le 2e^{-\lambda}$.
	\end{lemma}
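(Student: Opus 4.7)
The plan is to follow the standard Bernstein-type Chernoff argument via the moment generating function. First I would center the variables: set $Y_i := X_i - p_i$ where $p_i := \mathbb{E}[X_i]$, so each $Y_i \in [-p_i, 1-p_i]$ has mean $0$ and variance $\sigma_i^2 = p_i(1-p_i) \leq p_i$. A direct two-point computation (using $e^{x} \leq 1 + x + x^2 \cdot \frac{e^s - 1 - s}{s^2}$ on $[0,1]$) gives $\mathbb{E}[e^{s Y_i}] \leq \exp(p_i (e^s - 1 - s))$ for $s \geq 0$. By independence,
\[
\mathbb{E}\bigl[e^{s(X-\mathbb{E}[X])}\bigr] \;=\; \prod_i \mathbb{E}[e^{s Y_i}] \;\leq\; \exp\!\bigl((e^s-1-s)\,\mathbb{E}[X]\bigr),
\]
and Markov's inequality then yields, for every $s \geq 0$,
\[
\mathbb{P}(X - \mathbb{E}[X] \geq \lambda) \;\leq\; \exp\!\bigl(-s\lambda + (e^s-1-s)\mathbb{E}[X]\bigr).
\]

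Next I would invoke the algebraic estimate $e^s - 1 - s \leq \frac{s^2}{2(1 - s/3)}$ valid for $s \in [0,3)$, and optimise by taking $s = \lambda/(\mathbb{E}[X] + \lambda/3)$. A short calculation shows this produces the one-sided Bernstein bound
\[
\mathbb{P}(X - \mathbb{E}[X] \geq \lambda) \;\leq\; \exp\!\left(-\frac{\lambda^2}{2(\mathbb{E}[X] + \lambda/3)}\right).
\]
For the lower tail, I would run the same argument with $s < 0$, where the cleaner bound $e^s - 1 - s \leq s^2/2$ is available; this gives the analogous inequality for $\mathbb{P}(X - \mathbb{E}[X] \leq -\lambda)$, and a union bound produces the factor of $2$ in the statement.

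Finally, for the \emph{in particular} clause, when $\lambda \geq 7 \mathbb{E}[X]$ I would simply observe
\[
2\bigl(\mathbb{E}[X] + \lambda/3\bigr) \;\leq\; 2\bigl(\lambda/7 + \lambda/3\bigr) \;=\; \tfrac{20\lambda}{21} \;\leq\; \lambda,
\]
so the exponent $-\lambda^2/(2(\mathbb{E}[X]+\lambda/3))$ is at most $-\lambda$, giving the promised $2e^{-\lambda}$.

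The only mildly delicate step is selecting the algebraic bound on $e^s - 1 - s$ so that the optimised MGF estimate produces \emph{exactly} the Bernstein denominator $\mathbb{E}[X] + \lambda/3$ rather than some slightly worse constant; once that bound is in hand, everything else is routine calculus. No combinatorial or structural obstacle arises because the $X_i$ are already independent and $\{0,1\}$-valued, which is exactly the regime where the pointwise MGF estimate is sharp. Since the result is quoted from \cite{janson2000random}, the authors will not reprove it in the paper.
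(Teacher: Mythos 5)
Your proof is correct and is the standard Bernstein-type MGF argument for the Chernoff--Hoeffding inequality. The paper itself does not prove this lemma---it is quoted verbatim from the cited reference \cite{janson2000random}---so there is no in-paper proof to compare against; your argument (exponentiate, bound $\mathbb{E}[e^{sY_i}]$ by $\exp(p_i(e^s-1-s))$, use $e^s-1-s\le \frac{s^2}{2(1-s/3)}$, optimise at $s=\lambda/(\mathbb{E}[X]+\lambda/3)$, and handle the lower tail with the cruder quadratic bound) is exactly how it is derived in that textbook, and your arithmetic for the ``in particular'' clause checks out.
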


	\subsection{Useful estimates}
	We will often use the following estimates in the calculations.\COMMENT{Proofs of both propositions are in the Appendix. In the final version, we can remove them.}
	\begin{proposition}
		\label{lem:boundprod}
		Let $a, b > 0$ with $b \ne 1$. Let $x_0 , \dots , x_n , y_0 , \dots , y_n > 0$ be such that 
		\begin{itemize}
			\item[$(\rm 1)$] $x_i \leq y_i$ and $y_i = a \cdot b^i$ for $0 \leq i \leq n$.
			\item[$(\rm 2)$] If $b \in (0,1)$, then $y_0 \leq \frac{1-b}{2}$. If $b \in (1,\infty)$, then $y_i \leq \frac{1-b^{-1}}{2}$ for $0 \leq i \leq n$.
		\end{itemize}
		Then $$1 - 2\sum_{i=0}^{n} x_i \le \prod_{i=0}^{n} (1 \pm x_i) \le 1 + 2\sum_{i=0}^{n} x_i.$$
	\end{proposition}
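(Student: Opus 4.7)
The plan is to reduce the product bound to the standard estimates $\prod(1+x_i)\le e^{\sum x_i}$ and the Weierstrass inequality $\prod(1-x_i)\ge 1-\sum x_i$, after first showing that the hypotheses force $\sum_{i=0}^{n} x_i \le 1/2$. Since $x_i\le y_i$, it suffices to bound $S:=\sum_{i=0}^{n} y_i = a\sum_{i=0}^{n} b^i$ by $1/2$ in each of the two regimes for $b$.

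First I would handle the two cases for the geometric sum. If $b\in(0,1)$, then $\sum_{i=0}^n b^i \le 1/(1-b)$, so $S\le a/(1-b) = y_0/(1-b) \le 1/2$ by hypothesis~(2). If $b\in(1,\infty)$, then rewriting $\sum_{i=0}^n b^i = b^n\sum_{j=0}^n b^{-j} \le b^n/(1-b^{-1})$, we get $S \le ab^n/(1-b^{-1}) = y_n/(1-b^{-1})\le 1/2$, again by hypothesis~(2). Either way, $\sum_{i=0}^n x_i \le S \le 1/2$.

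For the upper bound, I would estimate
\[
\prod_{i=0}^{n}(1\pm x_i) \le \prod_{i=0}^{n}(1+x_i) \le \exp\!\Bigl(\sum_{i=0}^{n} x_i\Bigr),
\]
and then use the elementary inequality $e^s \le 1+2s$ for $s\in[0,1/2]$ (which follows since $f(s):=1+2s-e^s$ satisfies $f(0)=0$ and $f'(s)=2-e^s\ge 0$ on $[0,\ln 2]$). Applied with $s=\sum x_i \le 1/2$, this yields $\prod(1\pm x_i)\le 1+2\sum x_i$. For the lower bound, since each $x_i\ge 0$,
\[
\prod_{i=0}^{n}(1\pm x_i) \ge \prod_{i=0}^{n}(1-x_i) \ge 1-\sum_{i=0}^{n} x_i \ge 1-2\sum_{i=0}^{n} x_i,
\]
where the middle step is the Weierstrass product inequality, proved by a one-line induction on $n$.

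There is no real obstacle here; the only point that needs care is the geometric sum in the $b>1$ regime, where one must recognise that the correct normalising factor is $y_n$ (the largest term) rather than $y_0$, which is precisely why hypothesis~(2) is imposed on \emph{every} $y_i$ in that case instead of only on $y_0$.
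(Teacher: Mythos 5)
Your proof is correct, and it takes a genuinely different route from the paper's. The paper proves $\prod_{i=0}^{n}(1-x_i)\ge 1-\sum x_i$ and $\prod_{i=0}^{n}(1+x_i)\le 1+2\sum x_i$ by a direct induction on $n$: at the inductive step, the upper bound requires $\sum_{i=0}^{n-1}x_i\le 1/2$, which the paper verifies with the same geometric-sum estimate you use. You instead establish the uniform bound $\sum_{i=0}^{n}x_i\le 1/2$ once and then invoke two off-the-shelf facts: $\prod(1+x_i)\le e^{\sum x_i}$ combined with the elementary inequality $e^s\le 1+2s$ on $[0,1/2]$ for the upper bound, and the Weierstrass inequality $\prod(1-x_i)\ge 1-\sum x_i$ for the lower bound. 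The two arguments rely on the same underlying geometric-series estimate, but your version factors the remaining work into standard lemmas rather than re-deriving them inline, which makes the structure of the proof more transparent; the paper's version is marginally more self-contained since it never leaves the world of finite products. One small point worth making explicit in your write-up: the monotonicity step $\prod(1\pm x_i)\ge\prod(1-x_i)$ uses that every factor $1-x_i$ is positive, which follows from hypothesis (2) since it forces $x_i\le y_i<1/2$ in both regimes for $b$ (in the case $b\in(0,1)$ because $y_i\le y_0<1/2$, in the case $b>1$ because each $y_i<1/2$ directly).
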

	
	The following proposition is straightforward to prove using Taylor's theorem.
	\begin{proposition}
		\label{lem:experror}
		Let $u \in \mathbb{R}$. Then
		\begin{equation*}
		\left | e^u - \left ( 1 + \frac{u}{N} \right )^N \right | =  \O_u \left(\frac{1}{N}\right).
		\end{equation*}
	\end{proposition}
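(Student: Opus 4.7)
The plan is to take the logarithm of $(1+u/N)^N$ and apply Taylor's theorem to $\log(1+x)$ at $x = 0$. Fix $u \in \mathbb{R}$. Since the statement is asymptotic in $N$ with a $u$-dependent constant, it suffices to prove the bound for all $N \geq N_0(u)$, where $N_0(u)$ will be chosen large enough; finitely many small values of $N$ can be absorbed into the hidden constant.

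First I would require $N_0(u) \geq 4|u| + 1$, so that $|u/N| \leq 1/4$ for all $N \geq N_0(u)$. Taylor's theorem with remainder for $\log(1+x)$ on $[-1/4,\,1/4]$ then yields
\begin{equation*}
N\log\left(1 + \frac{u}{N}\right) = u - \frac{u^2}{2N} + \tilde R(u,N), \qquad |\tilde R(u,N)| \leq \frac{C|u|^3}{N^2},
\end{equation*}
for some absolute constant $C > 0$. Exponentiating gives
\begin{equation*}
\left(1+\frac{u}{N}\right)^N = e^u \cdot \exp\!\left(-\frac{u^2}{2N} + \tilde R(u,N)\right).
\end{equation*}
The argument of the outer exponential is $\O_u(1/N)$. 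Choosing $N_0(u)$ large enough that this argument has absolute value at most $1$ and invoking the elementary inequality $|e^y - 1| \leq 2|y|$ valid for $|y| \leq 1$, we conclude that the outer exponential equals $1 + \O_u(1/N)$. Multiplying through by the $u$-dependent constant $e^u$ then yields the claimed bound.

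The only genuine step is the Taylor expansion of $\log(1+x)$ to first order (with the quadratic and cubic terms kept merely to verify the error is $\O_u(1/N)$ rather than cancelled out). The bookkeeping concern is controlling the error uniformly in $N$ once $N$ exceeds a $u$-dependent threshold; since the proposition explicitly allows the constant to depend on $u$, no difficulty arises, and the proof is entirely routine Taylor estimation, as the author already indicates.
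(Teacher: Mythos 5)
Your proof is correct, and the argument is sound: once $N \geq N_0(u)$ guarantees $|u/N| \leq 1/4$, the second-order Taylor expansion of $\log(1+x)$ with explicit Lagrange remainder gives $N\log(1+u/N) = u - u^2/(2N) + \tilde R$ with $|\tilde R| = \O(|u|^3/N^2)$, and the elementary bound $|e^y - 1| \leq 2|y|$ on $[-1,1]$ (which one checks directly — equality never occurs in $(0,1]$ since $e^y - 1 \leq (e-1)y < 2y$, and the negative side is similar) finishes things. Absorbing the finitely many small $N$ into the $u$-dependent constant is legitimate because for each fixed $N$ the left side is a finite $u$-dependent quantity.

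You do, however, take a genuinely different route from the paper. The paper's argument compares the Taylor series of $e^u$ with the binomial expansion of $(1+u/N)^N$ term by term: it splits the difference into low-order terms $j \leq \lfloor\sqrt N/\log N\rfloor$, where it bounds $\frac{|u|^{j+1}}{(j+1)!}\bigl(1 - \prod_{k\leq j}(1-k/N)\bigr)$ using $\sum_k \log(1-k/N) \geq -j(j+1)/N$, and a tail which it controls via a crude remainder estimate for both series. Your log-then-exponentiate approach sidesteps the series manipulation entirely and is considerably shorter; the trade-off is that the paper's term-by-term version gives somewhat sharper information about how the error depends on $u$ (it extracts the explicit factor $|u|^2 e^{|u|}$ in the constant), whereas your constant comes bundled through $e^u \cdot \O_u(1/N)$ without that refinement. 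For the purposes of the proposition as stated, both are entirely adequate.
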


	\subsection{Finding simple subhypergraphs of hypergraphs with small codegrees}

	The next result allows us to find an almost regular simple subhypergraph in an almost regular hypergraph with small codegree. It is a slight generalization of the results of Kostochka and R\"odl~\cite{KR1998}. 
	
	\begin{lemma}\label{cor:comb}
		Let $k \geq 3$ be an integer, and let $\alpha , \beta, D > 0$, $0 < \gamma, \delta  < 1$ be real numbers satisfying 
		$D^{-1} \ll \delta \ll \alpha , \beta , \gamma , k^{-1}.$
		
		Let $H$ be a $k$-uniform multi-hypergraph such that $\alpha D \leq d_H(v) \leq \beta D$ for any $v \in V(H)$ and the codegree of $H$ is at most $C$, where $\log D \leq C \leq D^{1-\gamma}$.
		
		Then there exists $E_{\rm simp} \subseteq E$ such that the hypergraph $H_{\rm simp} := (V(H) , E_{\rm simp})$ satisfies the following conditions.
		\begin{itemize}
			\item[$(\rm i)$] $H_{\rm simp}$ is simple. (In particular, $H_{\rm simp}$ does not have multiple edges.)
			\item[$(\rm ii)$] There exists an even integer $s \in \left (1 + 2 \binom{k}{2}\delta^{-1}\:,\: 3 + 2 \binom{k}{2} \delta^{-1} \right )$ such that for every $v \in V(H)$ we have
			\begin{equation*}
			d_{H_{\rm simp}}(v) = \left ( 1 - \frac{1}{\log(D/C)} \right )^s \frac{d_H (v)}{\log(D/C) C^{1-\delta}D^{\delta}} \pm 8s \sqrt{ \frac{(s+1) d_H(v)}{C^{1-\delta} D^\delta}}.
			\end{equation*}
		\end{itemize}
	\end{lemma}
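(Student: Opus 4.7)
The plan is to adapt the iterated random-sampling framework of Kostochka and R\"odl~\cite{KR1998} to keep track of the additional parameter $\delta$. I would define a sequence of subhypergraphs $H = H_0 \supseteq H_1 \supseteq \cdots \supseteq H_s$, where $H_i$ is produced from $H_{i-1}$ by a two-stage randomised procedure: first, independently retain each edge of $H_{i-1}$ with a carefully chosen probability $p_i$; second, delete every retained edge that shares at least two vertices with some other retained edge. Setting $H_{\rm simp} := H_s$ then gives a simple subhypergraph, yielding (i) immediately from the second stage.

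To pin down (ii), I would choose $p_1, \ldots, p_s$ so that two budgets are simultaneously met: the codegrees telescope down by enough that $C \cdot \prod_i p_i \le 1$ (forcing $H_s$ to be simple with high probability), while the fraction of sampled edges at any vertex $v$ lost to conflict-deletion in round $i$ is at most $\binom{k}{2}C_{i-1}p_i \le 1/\log(D/C)$. The two constraints together fix the total degree-reduction factor at $(1-1/\log(D/C))^s \cdot (\log(D/C)\cdot C^{1-\delta}D^\delta)^{-1}$, and translate the codegree budget into the lower bound $s \gtrsim 2\binom{k}{2}\delta^{-1}$. Since the admissible interval for $s$ has length $2$, I can select an even integer inside it as required.

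Concentration of $d_{H_i}(v)$ around its expectation would be established by applying the martingale inequality (Lemma~\ref{cor:martingale}) round by round. In round $i$, the independent Yes/No choices are the edge samples of $H_{i-1}$; flipping a single edge $e$ shifts $d_{H_i}(v)$ by the $\mathbf{1}_{v \in e}$ contribution plus the number of edges through $v$ whose conflict-status with respect to $e$ changes. Since only edges sharing two vertices with $e$ are relevant, the codegree bound $C_{i-1}$ caps this effect by $O(kC_{i-1})$ per edge. Summing the resulting variances over edges through $v$ and over the $s$ rounds, together with an induction that the degree and codegree hypotheses are preserved at the start of each round, delivers the error bound $8s\sqrt{(s+1)d_H(v)/(C^{1-\delta}D^\delta)}$ in (ii).

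The main obstacle is the conflict-removal step, which creates subtle correlations between the samples of distinct edges: an edge $e$ can be removed because another edge $e'$ sharing two vertices with $e$ was retained, so the Yes/No flip of $e'$ has a multi-edge effect on $d_{H_i}(v)$. This is where the careful codegree tracking enters: the bound $C_{i-1}$ on codegrees at the start of round $i$ is exactly what makes the per-flip effect $O(kC_{i-1})$, and the resulting total variance precisely $O(sd_H(v)/(C^{1-\delta}D^\delta))$. Verifying this bound (and checking that the per-flip Lipschitz constant $T$ in Lemma~\ref{cor:martingale} remains small enough relative to the standard deviation for the martingale inequality to apply) constitutes the technical heart of the proof.
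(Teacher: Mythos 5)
Your proposal captures the flavour of the Kostochka--R\"odl machinery (iterated random sampling, tracking codegrees, martingale concentration round by round), but the specific $s$-round structure you describe is not coherent, and it mislocates where both the exponent $s$ and the factor $\left(1-\tfrac{1}{\log(D/C)}\right)^s$ come from. If in round~$1$ you retain each edge independently and then \emph{delete every retained edge that shares at least two vertices with another retained edge}, the result $H_1$ is already simple. Since $H_2\subseteq H_1$ is then automatically simple, the conflict-deletion step is vacuous in every subsequent round, so rounds $2,\ldots,s$ degenerate into plain binomial thinnings of a simple hypergraph; nothing telescopes, and the extra factor $(1-1/\log(D/C))^s$ has no mechanism to appear. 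Your simplicity argument via ``$C\cdot\prod_i p_i\le 1$'' is also only a first-moment heuristic and does not yield the deterministic simplicity the statement requires.

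In the paper's actual proof the three ingredients are separated differently and only \emph{three} probabilistic thinnings occur, not $s$. First, a single random-colouring step (Lemma~\ref{lem:part1}) drops the codegree from $C$ to $\log(D/C)$ while scaling degrees by $1/C$; a second step (Lemma~\ref{lem:part2}) drops the codegree further to the constant $2\delta^{-1}$ while scaling degrees by $(D/C)^{-\delta}$, producing $H''$ with degrees $\approx d_H(v)/(C^{1-\delta}D^\delta)$. Then one builds the \emph{conflict graph} $G$ on $E(H'')$ (two edges adjacent iff they share $\ge 2$ vertices), which has maximum degree at most $2\binom{k}{2}\delta^{-1}$, and, crucially, \emph{embeds} $G$ into an $s$-regular graph $G_{\rm reg}$ on the same vertex set using Lemma~\ref{lem:regular}. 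The integer $s$ in the statement is the degree of this regularised conflict graph, not a number of sampling rounds; its range $\left(1+2\binom{k}{2}\delta^{-1},\,3+2\binom{k}{2}\delta^{-1}\right)$ reflects exactly the codegree bound $2\binom{k}{2}\delta^{-1}$ plus the parity and slack needed for Lemma~\ref{lem:regular}. The third and final thinning (Lemma~\ref{lem:part3}) retains each edge of $H''$ with probability $1/T$ for $T=\log(D/C)$ and keeps only the edges that are \emph{isolated} in $G_{\rm reg}$ restricted to the sample; because $G_{\rm reg}$ is exactly $s$-regular, the survival probability of each retained edge is exactly $(1-1/T)^s$, which is where that factor comes from. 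This last step also yields simplicity deterministically, since an edge surviving is by construction non-adjacent in $G$ to every other surviving edge. Your martingale intuition is essentially right (the per-flip Lipschitz constant in the final round is $O(s)$, not $O(kC_{i-1})$, precisely because $G_{\rm reg}$ is $s$-regular), but the missing idea is the regular conflict-graph embedding; without it you cannot extract the clean $(1-1/T)^s$ exponent nor guarantee simplicity outright.
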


The proof of Lemma~\ref{cor:comb} consists of several probabilistic edge-colouring arguments and is very similar to the proof of Theorem 4 in~\cite{KR1998} (but omitting the final step of the proof of Theorem 4, which consists of an application of Theorem 3 in~\cite{KR1998}). Therefore, we omit the proof of Lemma~\ref{cor:comb} here, full details are given in Appendix~\ref{appendix:simplesubhypergraphs}.
	
\section{An outline of our method}
\label{proofonline}
In this section we will introduce our method by sketching the proof of Theorem ~\ref{thm:main3intro} in the case when $H$ is simple. For the proof to work, slightly more generally, we will assume $H$ is $(D \pm K D^{\varepsilon})$-regular, rather than $D$-regular (see Theorem ~\ref{thm:main2proof} for the precise statement in the case when $H$ is simple, and which also guarantees a `quasirandomness property'). Using an argument based on random edge-colourings (Lemma~\ref{cor:comb}) it is then not difficult to extend the result to the case when $H$ has small codegree. 
	
	Let us start by outlining the Nibble process considered in \cite{AKS1997}. Suppose $H$ is a $k$-uniform simple hypergraph on $N$ vertices. 
	
	\vspace{2mm}
	
	\noindent \textbf{The Nibble process.}
	The Nibble process for $H$ consists of $\zeta$ stages defined as follows. Start with $H_0 := H$ and let $M_0 := \emptyset$, $U_{-1} := U_0 := V(H)$, $N_0 := N$, $D_0 := D$ and $\Delta_0 := K D^{\varepsilon}$. Let $1 \le i \le \zeta$.
	
	\vspace{2mm}
	
	\begin{center}
		\begin{minipage}{\textwidth}
			\textbf{The $i$th stage of Nibble process.} Suppose we are given a hypergraph $H_{i-1}$ with $V(H_{i-1}) = U_{i-1}$, where $H_{i-1}$ is $(D_{i-1} \pm \Delta_{i-1})$-regular.
			
			\vspace{2mm}
			
			Select every edge of $H_{i-1}$ with probability $1/D_{i-1}$, and let $B_i$ be the set of selected edges.
			Independently, we also carefully select a random subset $W_i \subseteq U_{i-1}$. 
			The $i$th stage of the Nibble process is determined by a choice of $B_i$ and $W_i$. Given $B_i$, $W_i$, we set 
			$$M_i := \{ e \in B_i : e \cap e' = \emptyset \text{ for all } e' \in B_i \text{ with } e' \ne e\}.$$
			
			(Note that a vertex $w \in W_i$ may also lie in some edge $e \in M_i$.) The edges of $M_i$ are clearly pairwise disjoint. Now set
			$$U_i := U_{i-1} \setminus (V(M_i) \cup W_i)$$ and 
			$$H_i := H_{i-1}[U_i].$$
			We show that $H_i$ is $(D_i \pm \Delta_i)$-regular where $D_i \sim (1 - e^{- k})^{k-1}D_{i-1} \sim (1 - e^{- k})^{i(k-1)} D_0$,  $\Delta_i = o(D_i)$ and $|U_i| \sim (1 - e^{-k})|U_{i-1}|$.
			
		\end{minipage}
	\end{center}
	
	\vspace{2mm}
	
We iterate the above process for $1 \le i \le \zeta$ until we get $H_{\zeta}$ with vertex-set $U_{\zeta}$, where $\zeta$ is chosen such that 
\begin{equation}\label{eq:Domega}
    D_{\zeta} \le D_0^{\gamma} \text{ but } D_{\zeta-1} > D_0^{\gamma},  
\end{equation}
at which point we stop the process. Now note that $M := \bigcup_{1 \le i\le\zeta} M_i$ is a matching of $H$. Let $W := \bigcup_{1 \le i\le\zeta}  W_i$  be the set of \emph{waste vertices}. (The purpose of $W_i$ is to assist in bounding $\Delta_i$.) Then $U_\zeta = V(H) \setminus (V(M) \cup W)$ is the set of \emph{leftover vertices} at the end of the Nibble process. 
	
	\vspace{2mm}
	\noindent \textbf{Augmenting stars.} Our key new idea is to show that there are still many augmenting structures at the end of the Nibble process that can be used to improve $M$ and produce a larger matching. The augmenting structures we use are called \emph{augmenting stars}. These augmenting structures and the hypergraph $H_A$ associated with them are defined as follows.

	\begin{defn}[Augmenting stars]
		Let $H$ be a $k$-uniform hypergraph and let $M$ be a matching of $H$ and $W \subseteq V(H)$. 
		
		An augmenting star of $H$ with respect to $(M,W)$ is an ordered triple $(e_M , \{e_1 , \dots , e_{k} \}) \in E(M) \times \binom{E(H)}{k}$ satisfying the following conditions. 

		\begin{itemize}
			\item $e_1 , \dots , e_k \in E(H)$ are pairwise disjoint.
			\item $|e_i \cap e_M| = 1$ for $1 \leq i \leq k$ and $(e_1 \cup \dots \cup e_k) \setminus e_M \subseteq V(H) \setminus (V(M) \cup W)$.
		\end{itemize}
	\end{defn}
	Let $\mathcal{A}(H)$ denote the collection of all augmenting stars of $H$ with respect to $(M,W)$. 
	
	\begin{defn}[Multi-hypergraph of augmenting stars]
		\label{Def:HypergraphofAugStars}
		Let $M$ be a matching of a $k$-uniform hypergraph $H$ and $W \subseteq V(H)$. The multi-hypergraph $H_A$ of augmenting stars of $H$ with respect to $(M,W)$ is the $(1 , k(k-1))$-partite multi-hypergraph with the vertex-set
		$$ V(H_A) := E(M) \cup (V(H) \setminus (V(M) \cup W)) $$
		and with the multiset of edges
		$$ E(H_A) := \{ \{ e_M \} \cup ((e_1 \cup \dots \cup e_k) \setminus e_M) \: : \: (e_M , \{e_1,\dots,e_k\}) \in \mathcal{A}(H) \}.$$
	\end{defn}
	
	\vspace{2mm}
	\noindent \textbf{Improving $M$ using Augmenting stars.}
        \begin{figure}
        \centering
        \begin{tikzpicture}[scale=0.85]
        
        \filldraw[fill=gray!15, draw=black] (0,0) rectangle (6,8);
        \node at (3,-0.3) {\small Vertices in the matching $M$};
        
        \filldraw[fill=gray!15, draw=black] (0,8.5) rectangle (6,9.5);
        \node at (3,9.8) {\small Waste vertices in $W$};
        
        \filldraw[fill=gray!15, draw=black] (8,0) rectangle (9,9.5);
        \node at (8.5, -0.3) {\small $U_\zeta = V(H) \setminus (V(M) \cup W)$};
        
        \foreach \x in {0,1,2}
        {
        	\fill (0.8 + 0.2*\x, 9) circle (1.5pt);
        	\fill (2.8 + 0.2*\x, 9) circle (1.5pt);
        	\fill (4.8 + 0.2*\x, 9) circle (1.5pt);
        }
        
        \draw[fill=gray!35] (1,1.5) ellipse [x radius=0.5,y radius=1];
        \draw[fill=gray!35] (1,4) ellipse [x radius=0.5,y radius=1];
        \draw[fill=gray!35] (1,6.5) ellipse [x radius=0.5,y radius=1];
        
        \draw[fill=gray!35] (3,1.5) ellipse [x radius=0.5,y radius=1];
        \draw[fill=gray!35] (3,4) ellipse [x radius=0.5,y radius=1];
        \draw[fill=gray!35] (3,6.5) ellipse [x radius=0.5,y radius=1];
        
        \draw[fill=gray!35] (5,1.5) ellipse [x radius=0.5,y radius=1];
        \draw[fill=gray!35] (5,4) ellipse [x radius=0.5,y radius=1];
        \draw[fill=gray!35] (5,6.5) ellipse [x radius=0.5,y radius=1];
        
        \foreach \x in {0,1,2}
        {
        	\foreach \y in {0,1,2}
        	{
        		\fill (2 * \x + 1 , 2.5 * \y + 0.9) circle (0.1cm);
        		\fill (2 * \x + 1 , 2.5 * \y + 1.3) circle (0.1cm);
        		\fill (2 * \x + 1 , 2.5 * \y + 1.7) circle (0.1cm);
        		\fill (2 * \x + 1 , 2.5 * \y + 2.1) circle (0.1cm);
        	}
        }
        
        \foreach \x in {0,1,2,3}
        {
        	\filldraw[fill=gray!45] (5, 0.9+0.4*\x)--(8.4 , \x + 0.4)--(8.4 , 1.03*\x + 0.6 + 0.6) -- (5, 0.9+0.4*\x);
        	\filldraw[fill=gray!45] (5, 5.9+0.4*\x)--(8.4 , \x + 0.4 + 5)--(8.4 , 1.03*\x + 0.6 + 0.6 + 5) -- (5, 5.9+0.4*\x);
        
        	\foreach \y in {0,1,2}
        	{
        		\fill (8.3 , \x + 0.3*\y + 0.5) circle (1.5pt);
        		\fill (8.3 , \x + 0.3*\y + 5.5) circle (1.5pt);
        		\fill (8.7 , \x + 0.3*\y + 0.5) circle (1.5pt);
        		\fill (8.7 , \x + 0.3*\y + 5.5) circle (1.5pt);
        	}
        }
        \foreach \y in {0,1,2}
        {
        	\fill (8.3 , 3 + 0.3*\y + 1.5) circle (1.5pt);
        	\fill (8.7 , 3 + 0.3*\y + 1.5) circle (1.5pt);
        }
        \end{tikzpicture}
        \caption{Improving the matching $M$ using augmenting stars}
		\label{fig:augmentingM}
	\end{figure}
	Let $H_A$ be the multi-hypergraph of augmenting stars of $H$ with respect to $(M,W)$. Note that $H_A$ is $(k(k-1)+1)$-uniform and each edge of $H_A$ contains exactly one element of $L := E(M)$ and $k(k-1)$ elements of $R := V(H) \setminus (V(M) \cup W)$. Analysing the Nibble process one can (easily) show that $|L| = \Theta(N)$ and $|R| = \Theta (N D^{\frac{\gamma-1}{k-1}})$.  
	
	Suppose, by some means, we found a large matching $M_A$ in $H_A$. Then each $\{ e_M \} \cup ((e_1 \cup \dots \cup e_k) \setminus e_M) \in E(M_A)$ corresponds to an augmenting star $(e_M , \{e_1,\dots,e_k\}) \in \mathcal{A}(H)$. Thus for each $\{ e_M \} \cup ((e_1 \cup \dots \cup e_k) \setminus e_M) \in E(M_A)$, replacing the edge $e_M$ of $M$ with the edges $e_1 , \dots , e_k$  produces a larger matching $M^*$ of $H$ (see Figure ~\ref{fig:augmentingM}).
	
	Hence, our goal is to find a large matching in $H_A$. To this end, we want to show that $H_A$ is almost regular and has small codegree so that we may apply a variant of Theorem ~\ref{thm:kr}. More precisely, we prove the following crucial properties of $H_A$ (in $(\rm M3)$ of Theorem~\ref{thm:mainsec3}). 
	
	\begin{itemize}
		\item[(a)] Every vertex in $L$ has degree $(1 + \O (K D^{\varepsilon - 1} + D^{-\gamma/2} \log N)) D_{L}$ where $D_L = \Theta (D^{k \gamma})$.
		\item[(b)] Every vertex in $R$ has degree $(1 + \O (K D^{\varepsilon - 1} + D^{-\gamma/2} \log N)) D_{R}$ where $D_{R} = \Theta ( D^{k \gamma + \frac{1-\gamma}{k-1}})$.
		\item[(c)] The codegree of $H_A$ is at most $\O (D^{\gamma(k-1)} \log^2 N)$.
	\end{itemize}
	
	Looking at properties (a) and (b) one immediately notices that $H_A$ is not almost regular; the degrees of vertices in $L$ are smaller than the degrees of vertices in $R$. However, crucially, properties (a) and (b) show that the degrees of vertices in $L$ are close to one another, and the degrees of vertices in $R$ are close to one another. To overcome the problem of almost regularity, we boost the degrees of vertices in $L$ as follows. We take $D_R/D_L$ vertex-disjoint copies of $R$, say $R^1, R^2, \dots, R^{D_R / D_L}$, where $R^i := \{ v^i \: : \: v \in R \}$, and define an auxiliary hypergraph $H_A'$ with
	$ V(H_A') := L \cup (R^1 \cup \dots \cup R^{D_R / D_L})$ and for each edge $\{e\} \cup \{v_1 , \dots , v_{k(k-1)} \} \in E(H_A)$, we add
	$ \{e\} \cup \{v_1^i , \dots , v_{k(k-1)}^i \} $ to be an edge in $H_A'$ for all $1 \leq i \leq D_R / D_L$. Now the degree of any vertex in $H_A'$ is sufficiently close to $D_{R},$ so $H_A'$ is almost regular. Moreover, this boosting does not increase the codegree, so $H_A'$ also has small codegree, as desired. Thus applying Lemma ~\ref{cor:comb} we find a simple almost regular subhypergraph of $H_A'$, to which we apply Theorem ~\ref{thm:mainsec3} to find a large matching in $H_A'$, which then provides the required large matching $M_A$ in $H_A$. 
	
	A large part of this paper is devoted to proving properties (a), (b) and (c). This is done in Lemma~\ref{lem:nibble} (Nibble lemma) by carefully tracking random variables that count certain configurations (defined in Section~\ref{subsec:keyrandomvariables}) throughout the Nibble process and proving concentration inequalities for them using a martingale inequality. This lemma is at the heart of our proof.
	
	\vspace{2mm}
	\noindent \textbf{Extensions and limitations of the method.}
	It is worth noting that the bound in Theorem ~\ref{thm:main3intro} can be further improved by iterating our method as follows. 
	
	To augment the matching $M$ of $H$, our strategy was to find a large matching $M_A$ in the multi-hypergraph $H_A$ of augmenting stars of $H$ by applying Theorem~\ref{thm:mainsec3} (to $H_A'$). However, Theorem~\ref{thm:mainsec3} not only provides the matching $M_A$, but it also ensures the property $(\rm M3)$. Thus, one can consider the multi-hypergraph $H_{AA}$ of augmenting stars of $H_A$ and use $(\rm M3)$ to guarantee the properties analogous to (a), (b) and (c) for $H_{AA}$. This allows us to similarly improve $M_A$ to obtain $M^*_A$. One can then use $M^*_A$ instead of $M_A$ for improving $M$ to obtain an even larger matching in $H$. 
	
    Note that one can iterate the above procedure by considering the multi-hypergraph of augmenting stars of $H_{AA}$ again and so on, to improve the size of the matching even further. However, the uniformities of these hypergraphs grow very quickly, so we do not expect a significant improvement. Hence, for the sake of presentation, we will not optimise our bound in Theorem ~\ref{thm:main3intro}.
	
	Also note that the reason why Theorem ~\ref{thm:main3intro} does not apply when $k =3$ is that the size of the set $W$ of waste vertices that we need to remove to keep the degree error low during the Nibble process will be too large in this case.

	\section{Finding matchings with augmenting stars \texorpdfstring{\\}{} in almost regular simple hypergraphs}
	\label{sec:simple}
	
	In this section, we prove the following theorem.

	\begin{theorem}\label{thm:mainsec3}
		Let $k > 3$, $\Delta_0, D_0, N$ be integers and let $\gamma, \varepsilon \in (0,1)$ and $K > 0$ be real numbers satisfying
		\begin{align}
		\label{eq:settingupparameters}
		0 < N^{-1} \ll \gamma, \varepsilon, k^{-1} , K^{-1} < 1, \hspace{6mm} D_0 \geq \log^{\frac{8(k-1)}{\gamma(1 - \varepsilon)}} N, \hspace{3mm} \text{and} \hspace{3mm} \Delta_0 &\leq KD_0^\varepsilon.
		\end{align}
		
		Let $H$ be a $k$-uniform $(D_0 \pm \Delta_0)$-regular simple hypergraph on $N$ vertices. Let $\mathcal{V}$ be a collection of subsets of $V(H)$, where $|S| \geq \sqrt{D_0} \log N$ for every $S \in \mathcal{V}$, and $|\mathcal{V}| \leq \exp(\log^{3/2} N)$.
		
		Then there exists a matching $M$ of $H$, and a set $W \subseteq V(H)$ of waste vertices satisfying the following.
		\begin{itemize}
			\item[$(\rm M1)$] There exists $p = \Theta(D_0^{\frac{\gamma-1}{k-1}})$ such that 
			for every $S \in \mathcal{V}$, we have
			\begin{equation}\label{eqn:totaluncov}
			|S \setminus (V(M) \cup W)| = (1+o(1))p|S|.
			\end{equation}
			\item[$(\rm M2)$] For every $S \in \mathcal{V}$,
			\begin{equation}\label{eqn:totalwaste}
			|S \cap W|  = \O (|S| D_0^{-1}\Delta_0 + |S| D_0^{-\frac{1}{k-1}} D_0^{\gamma \left ( \frac{1}{k-1} - \frac{1}{2} \right )} \log N).
			\end{equation}
			\item[$(\rm M3)$] Let $H_A$ be the $(1,k(k-1))$-partite multi-hypergraph of augmenting stars of $H$ with respect to $(M,W)$. Then for any $e \in E(M)$ and $x \in V(H) \setminus (V(M) \cup W)$,
			\begin{align*}
			d_{H_A}(e) &= (1 + \O (D_0^{-1}\Delta_0 + D_0^{-\gamma/2} \log N)) D_{L},\\
			d_{H_A}(x) &= (1 + \O (D_0^{-1}\Delta_0 + D_0^{-\gamma/2} \log N)) D_{R},
			\end{align*}
			where $D_L = \Theta ( D_0^{k\gamma} )$, and $D_{R} = \Theta( D_0^{k \gamma + \frac{1-\gamma}{k-1}})$. Moreover, the codegree of $H_A$ is at most
			$\O (D_0^{\gamma(k-1)} \log^2 N)$.
		\end{itemize}
	\end{theorem}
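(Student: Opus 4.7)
The plan is to execute the Nibble process exactly as sketched in Section~\ref{proofonline}, for $\omega$ stages until $D_\omega \le D_0^\gamma < D_{\omega-1}$ (so $\omega = \Theta(\log D_0)$), and to carry through it a family of random variables whose concentration yields $(\rm M1)$--$(\rm M3)$. I set $D_i := (1-e^{-k})^{k-1} D_{i-1}$ and let $\Delta_i$ grow slightly with each stage by the one-step concentration loss, so that $\Delta_\omega = o(D_\omega)$. At stage $i$ the waste set $W_i$ is chosen to absorb every vertex whose surviving degree in $H_{i-1}[U_i]$ deviates from $D_i$ by more than the allowed target; this maintains the inductive hypothesis that $H_i := H_{i-1}[U_i]$ is $(D_i \pm \Delta_i)$-regular on $U_i$. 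Crucially, $W_i$ also absorbs the rare localised irregularities arising in the star counts tracked below.

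The technical core is an inductive concentration claim at each stage. Conditioned on the outcomes of earlier stages, the following quantities are all within a $1 + o(1)$ factor of their conditional expectations with failure probability at most $\exp(-\log^{5/4} N)$: for every surviving $v$, the degree $d_{H_i}(v)$; for every $S \in \mathcal{V}$, the sizes $|S \cap U_i|$ and $|S \cap W_i|$; and for the star bookkeeping in $(\rm M3)$, for every potential matching edge $e_M$ the number of partial star skeletons based at $e_M$ whose outer vertices lie in $U_i$, together with the analogous counts of stars through each surviving vertex $x$. Revealing the Yes/No choices defining $B_i$ and $W_i$ edge by edge as in Lemma~\ref{cor:martingale}, each individual switch appears in only polylogarithmically many of the counted configurations, thanks to simplicity of $H$, so the Lipschitz constants $c_i$ are small and the total variance along any line of questioning telescopes to give the required bound. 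A union bound over at most $N + \exp(\log^{3/2} N)$ bad events per stage and over $\omega = O(\log N)$ stages absorbs the failure probabilities, using the hypothesis $D_0 \ge \log^{8(k-1)/(\gamma(1-\varepsilon))} N$ for slack.

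Given this concentration machinery, $(\rm M1)$ is immediate from $|S \cap U_\omega| \sim (1-e^{-k})^\omega |S| = \Theta(p|S|)$ with $p := \Theta(D_0^{(\gamma-1)/(k-1)})$, and $(\rm M2)$ follows by summing $|S \cap W_i|$ over the stages. For $(\rm M3)$, regularity of $H_\omega$ together with $|U_\omega|/N = \Theta(p)$ shows that at each vertex of a matching edge $e_M$ there are $\Theta(D_\omega) = \Theta(D_0^\gamma)$ edges of $H$ lying entirely in $U_\omega$; assembling $k$ pairwise disjoint such edges yields $D_L = \Theta(D_0^{k\gamma})$, and a one-vertex-at-a-time count (first pick the edge containing $x$ whose other endpoints split as $k-2$ in $U_\omega$ and $1$ in $V(M)$, then extend around $e_M$) gives $D_R = \Theta(D_0 p^{k-2} \cdot D_\omega^{k-1}) = \Theta(D_0^{k\gamma + (1-\gamma)/(k-1)})$. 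The codegree bound is combinatorial: for any two $H_A$-vertices, simplicity of $H$ forces most of the common-star structure, leaving at most $\Theta(D_\omega^{k-1}) = \Theta(D_0^{\gamma(k-1)})$ completions, and concentration costs an extra $\log^2 N$ factor.

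The principal obstacle will be the bookkeeping at the inductive step: one must identify counting random variables whose expectations evolve multiplicatively by exactly $(1-e^{-k})^{k-1}$ per stage, while simultaneously having Lipschitz constants small enough for Lemma~\ref{cor:martingale} to deliver multiplicative $o(1)$ deviations. The star counts are especially delicate because they are products of up to $k$ individual-edge survival probabilities, so per-stage relative errors compound across $\Theta(\log D_0)$ stages; the slack in the hypothesis on $D_0$ is exactly what is needed to control the compounding. Once these variables and their variances are in place, the remaining work is standard Nibble bookkeeping.
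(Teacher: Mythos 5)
Your high-level outline follows the paper's plan closely — run the Nibble for $\omega$ stages until $D_\omega \le D_0^\gamma$, track degree- and set-intersection random variables, prove one-step concentration via Lemma~\ref{cor:martingale}, and assemble $(\rm M1)$--$(\rm M3)$ at the end. However, there are two genuine gaps.

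First, your description of the waste set $W_i$ is misconceived. You propose that $W_i$ ``absorbs every vertex whose surviving degree in $H_{i-1}[U_i]$ deviates from $D_i$'' and also ``absorbs the rare localised irregularities arising in the star counts.'' Such a $W_i$ is a function of the outcome of $B_i$, which destroys the product structure of the probability space that Lemma~\ref{cor:martingale} requires (its queries must be mutually independent Yes/No choices). It is also circular: $U_i$ depends on $W_i$, and removing one deviant vertex perturbs the surviving degrees of its neighbours, so there is no clean fixed point. The paper's $W_i$ is instead chosen \emph{randomly and independently}, with the probability $p_{W_i}(v)$ calibrated by~\eqref{eqn:pwi} so that $\mathbb{P}(v \in U_i) = 1 - p_{i-1}^*$ is \emph{uniform} over $v$. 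This uniformity is what makes the expectation recurrences multiplicative (see~\eqref{eqn:pUi} and Proposition~\ref{lem:almost_indep}); it is not a device to delete bad vertices. Your version would need a completely different concentration mechanism.

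Second, the codegree bound in $(\rm M3)$ is not proved by your argument. You assert that ``simplicity of $H$ forces most of the common-star structure, leaving at most $\Theta(D_\omega^{k-1})$ completions, and concentration costs an extra $\log^2 N$ factor.'' For $x \in E(M)$ and $y \in U_\omega$ this is essentially right, but the hard case is two vertices $x,y \in U_\omega$ appearing in distinct petals of the same star: the codegree contribution is $O(D_\omega^{k-1} + X_\omega(x,y)\,D_\omega^{k-2})$, where $X_\omega(x,y)$ counts triples $(e_1,e_2,e_3)$ with $x \in e_1$, $y \in e_2$, $e_3 \in E(M)$, $|e_1 \cap e_3| = |e_2 \cap e_3| = 1$, and the outer vertices in $U_\omega$. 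A priori $X_\omega(x,y)$ can be as large as $\Theta(D_\omega^2)$, which would give a useless codegree. Establishing $X_\omega(x,y) = O(D_\omega\log^2 N)$ is a substantial piece of the paper's proof and requires tracking \emph{two} additional random variables through the process: $Y_i(x,y)$, the number of such triples with all outer vertices in $U_i$ but no constraint on $e_3$, and $X_i(x,y)$ with the constraint $e_3 \in \bigcup_{j\le i} E(M_j)$. These have coupled recurrences ($X$ picks up a term proportional to $Y/D_{i-1}$ each round), and one must prove $(\rm L4)_i$ and $(\rm L5)_i$ jointly by induction. Your sketch does not identify these variables or the coupled recurrence, and the claim that concentration merely ``costs an extra $\log^2 N$ factor'' does not survive the compounding over $\Theta(\log D_0)$ rounds without this bookkeeping.

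Minor further point: you correctly note that relative errors compound over $\Theta(\log D_0)$ rounds, but the precise form of the recursion $\Delta_i = (1-p_{i-1}^*)^{k-1}\Delta_{i-1} + \sqrt{D_{i-1}}\log N$ is what ensures $\Delta_i = o(D_i)$ down to $D_i = D_0^\gamma$, and verifying this (Proposition~\ref{claim:bound_delta}) is where the lower bound $D_0 \ge \log^{8(k-1)/(\gamma(1-\varepsilon))}N$ is actually consumed; your sketch treats this as slack rather than as a quantitative constraint to check.
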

	
	The constants in $(\rm M3)$ implicit in $\O(\cdot)$, $\Theta(\cdot)$ do not depend on $e$ or $x$. More generally, throughout this section, the constants implicit in $\O(\cdot)$, $\Theta(\cdot)$ and $o(\cdot)$ depend only on the parameters $k$, $K$, $\gamma$, $\varepsilon$. For convenience we mostly do not indicate these dependencies.
	
	Note that the properties $(\rm M1)$ and $(\rm M2)$ bound the number of leftover vertices and waste vertices in $S \in \mathcal V$. The crucial property is $(\rm M3)$. Much of this section is devoted to proving it. In the following subsections we develop the required tools and using them we finish the proof of Theorem ~\ref{thm:mainsec3} in Section ~\ref{subsection:theorem3.1}. 
	
	\subsection{The Nibble process}
	Throughout the remainder of Section~\ref{sec:simple}, let $k > 3$, $\Delta_0$, $D_0$, $N$ be integers, and $\gamma, \varepsilon \in (0,1)$, $K > 0$ be real numbers satisfying \eqref{eq:settingupparameters}, and suppose $H$ is a $k$-uniform $(D_0 \pm \Delta_0)$-regular simple hypergraph on $N$ vertices. For convenience, define $p_{-1}^* := 0$.

	Let $1 \leq i \leq \zeta$. Suppose we are given the hypergraph $H_{i-1}$ with $V(H_{i-1}) = U_{i-1}$, and parameters $D_{i-1}, \Delta_{i-1}$ such that for all $x \in V(H_{i-1})$, 
	\begin{equation}
	\label{Hi-1degrees}
	d_{H_{i-1}}(x) = D_{i-1} \pm \Delta_{i-1}, \text{ where } \Delta_{i-1} = o(D_{i-1}).
	\end{equation}
	
	Let $B_i \subseteq E(H_{i-1})$ such that 
	$$\mathbb{P}(F \in B_i) = \frac{1}{D_{i-1}}$$
	for all $F \in E(H_{i-1})$, where the events $F \in B_i$ are mutually independent.  
	
	Let $M_i$ be the set of isolated edges in $B_i$. For any vertex $v \in U_{i-1}$, let $$p_{M_i}(v) := \mathbb{P}(v \in V(M_i))$$ and $$p_{i-1}^* := \max_{v \in U_{i-1}} p_{M_i}(v).$$

	Now let $W_i$ be a random subset of $V(H_{i-1}) = U_{i-1}$, the events $v \in W_i$ being mutually independent (and also independent of the choice of edges in $B_i$) such that
	$$\mathbb{P}(v \in W_i) = p_{W_i}(v),$$ where $p_{W_i}(v)$ is defined by 
	\begin{equation}\label{eqn:pwi}
	p_{M_i}(v)+p_{W_i}(v) -p_{M_i}(v)p_{W_i}(v) = p_{i-1}^*.
	\end{equation}
A vertex of $W_i$ is called a \emph{waste vertex}. Let $$U_i := U_{i-1} \setminus (V(M_i) \cup W_i)$$ and $$H_i := H_{i-1}[U_i].$$
	By \eqref{eqn:pwi}, for any vertex $v \in U_{i-1}$, $$\mathbb{P}(v \in V(M_i) \cup W_i) = p_{M_i}(v)+ p_{W_i}(v) - p_{M_i}(v)p_{W_i}(v) = p_{i-1}^*.$$  Thus \eqref{eqn:pwi} ensures that for any $v \in U_{i-1}$,
	\begin{equation}
	\label{eqn:pUi}
	\mathbb{P}(v \in U_i) = 1 - p_{i-1}^*.
	\end{equation}

	The following proposition estimates the probability that an edge or a vertex is in the matching $M_i$.
	
	\begin{proposition}\label{claim:matching}
		Let $1 \leq i \leq \zeta$. Suppose $H_{i-1}$ is $(D_{i-1} \pm \Delta_{i-1})$-regular, where $\Delta_{i-1} = o(D_{i-1})$. Then there are constants $C_0 = C_0(k)$ and $C_1 = C_1(k)$ independent of the index $i$ satisfying the following. 
		\begin{itemize}
		    \item[$(\rm i)$] For any $F \in E(H_{i-1})$, 
	$$ \mathbb{P}(F \in E(M_i)) = \left (1 \pm C_0 \frac{\Delta_{i-1}}{D_{i-1}} \right ) \frac{e^{- k}}{D_{i-1}}. $$
	       \item[$(\rm ii)$] 	For any $v \in U_{i-1}$, 
		$$ p_{M_i}(v) = \left (1 \pm C_1 \frac{\Delta_{i-1}}{D_{i-1}} \right )  e^{- k}. $$
		\item[$(\rm iii)$] We have
		$$ p_{i-1}^* = \left(1 \pm C_1 \frac{\Delta_{i-1}}{D_{i-1}}\right)  e^{- k}. $$
		\end{itemize}	
	\end{proposition}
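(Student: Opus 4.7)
The plan is to compute each probability directly using the defining properties of the Nibble process, exploiting the simplicity of $H$ (which is inherited by $H_{i-1}$) to control the edges incident to a fixed edge or vertex, and then approximate $(1 - 1/D_{i-1})^{kD_{i-1}}$ by $e^{-k}$ using Proposition~\ref{lem:experror}.

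For $(\rm i)$, fix $F \in E(H_{i-1})$. Because $H$ is simple and $H_{i-1}$ is an induced subhypergraph of $H$, any two distinct edges of $H_{i-1}$ meet in at most one vertex; in particular, every edge $F' \ne F$ of $H_{i-1}$ contains at most one vertex of $F$. Hence the number of edges $F' \ne F$ of $H_{i-1}$ intersecting $F$ equals $\sum_{v \in F} (d_{H_{i-1}}(v) - 1) = k(D_{i-1} \pm \Delta_{i-1}) - k$. The event $F \in E(M_i)$ is the intersection of the independent events ``$F \in B_i$'' and ``no edge meeting $F$ other than $F$ itself lies in $B_i$'', so
\[
\mathbb{P}(F \in E(M_i)) = \frac{1}{D_{i-1}}\left(1 - \frac{1}{D_{i-1}}\right)^{k(D_{i-1} \pm \Delta_{i-1}) - k}.
\]
Writing the exponent as $kD_{i-1}$ plus an error term of size $O(\Delta_{i-1})$, applying Proposition~\ref{lem:experror} to the main factor and a Taylor estimate $(1-1/D_{i-1})^{O(\Delta_{i-1})} = 1 \pm O(\Delta_{i-1}/D_{i-1})$ to the error factor (recalling $\Delta_{i-1} = o(D_{i-1})$), one obtains the stated bound with some constant $C_0 = C_0(k)$.

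For $(\rm ii)$, fix $v \in U_{i-1}$. Since matchings are collections of pairwise disjoint edges, the events $\{F \in E(M_i)\}$ for $F \ni v$ are mutually exclusive, so
\[
p_{M_i}(v) = \sum_{F \in E(H_{i-1}) : v \in F} \mathbb{P}(F \in E(M_i)) = d_{H_{i-1}}(v) \cdot \left(1 \pm C_0 \frac{\Delta_{i-1}}{D_{i-1}}\right) \frac{e^{-k}}{D_{i-1}}.
\]
Using $d_{H_{i-1}}(v) = D_{i-1} \pm \Delta_{i-1}$ and combining the error factors gives $(1 \pm C_1 \Delta_{i-1}/D_{i-1}) e^{-k}$ for an appropriate constant $C_1 = C_1(k)$. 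Finally, $(\rm iii)$ is immediate from $(\rm ii)$ since $p^*_{i-1}$ is the maximum of such quantities and each lies in the same interval.

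No step here is really an obstacle; the only mild subtlety is in part $(\rm i)$, where one must be careful that the edges of $H_{i-1}$ meeting $F$ at different vertices of $F$ are genuinely distinct (which uses simplicity) and that the exponent $k(D_{i-1} \pm \Delta_{i-1}) - k$ is handled by splitting into a main $kD_{i-1}$ piece (giving $e^{-k}$ via Proposition~\ref{lem:experror}) and a negligible $O(\Delta_{i-1})$ correction; the absorbed $-k$ contributes only an $O(1/D_{i-1})$ multiplicative error, which is swallowed by $O(\Delta_{i-1}/D_{i-1})$ since $\Delta_{i-1} \ge 1$ may be assumed (otherwise the bound is trivial).
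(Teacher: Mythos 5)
Your proof is correct and takes essentially the same route as the paper's: count the $k(D_{i-1}\pm\Delta_{i-1}-1)$ edges meeting a fixed edge $F$ using simplicity, apply Proposition~\ref{lem:experror} to convert $(1-1/D_{i-1})^{kD_{i-1}-k}$ into $e^{-k}$ with multiplicative error $O(1/D_{i-1})$, absorb the $\pm k\Delta_{i-1}$ exponent via a Taylor estimate, and then sum over the mutually exclusive events $\{F\in E(M_i)\}$ for $F\ni v$ to get $(\rm ii)$ and $(\rm iii)$. The only tiny quibble is your parenthetical ``otherwise the bound is trivial'': if $\Delta_{i-1}$ were $0$ the claimed bound would actually be false (the true probability is $e^{-k}/D_{i-1}$ only up to a factor $1\pm O(1/D_{i-1})$), so the correct remark is that one may WLOG replace $\Delta_{i-1}$ by $\max(\Delta_{i-1},1)$ — an implicit assumption the paper also makes — not that the statement becomes trivially true.
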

	\begin{proof}
		First observe that Proposition~\ref{lem:experror} implies that
		\begin{align}\label{eqn:qi}
		\left (1 -  D_{i-1}^{-1} \right )^{k D_{i-1} - k} = \left ( 1 \pm C D_{i-1}^{-1} \right )  e^{- k}
		\end{align}
		for some constant $C = C(k)$ independent of the index $i$.\COMMENT{By Proposition~\ref{lem:experror}, we have
		$\left ( 1 -  D_{i-1}^{-1} \right )^{D_{i-1}}  = e^{-1} + \O ( D_{i-1}^{-1}  )  = \left(1 + \O (D_{i-1}^{-1})\right) e^{-1}.$
		Hence 
		\begin{align*}
		\left (1 - D_{i-1}^{-1} \right )^{kD_{i-1}} &= \left( 1 + \O ( D_{i-1}^{-1} ) \right)^k e^{- k} = (1 + \O(D_{i-1}^{-1}) + \O(D_{i-1}^{-2})) e^{-k}. 
		\end{align*}
		Moreover,
		\begin{align*}
		\left ( 1 - D_{i-1}^{-1} \right )^{-k} &= 1 + k D_{i-1}^{-1} + \O ( D_{i-1}^{-2}). 
		\end{align*}
		Thus~\eqref{eqn:qi} follows by multiplying the above two equations.}
		
		Using~\eqref{eqn:qi}, now we prove the proposition. For each $F \in E(H_{i-1})$, let $t(F)$ be the number of edges in $H_{i-1}$ intersecting $F$. Since $H_{i-1}$ is $(D_{i-1} \pm \Delta_{i-1})$-regular and simple, we have $t(F) = k(D_{i-1} \pm \Delta_{i-1} -1)$. Moreover, we assumed $\Delta_{i-1} = o(D_{i-1})$. Thus we have
		\begin{align*}
		\mathbb{P}(F \in E(M_i)) &= \frac{1}{D_{i-1}} \left (1 - \frac{1}{D_{i-1}} \right )^{t(F)} 
		\overset{\eqref{eqn:qi}}{=} \frac{ e^{- k}}{D_{i-1}} \left (1 \pm \frac{C}{D_{i-1}} \right )\left (1 - \frac{1}{D_{i-1}} \right )^{\pm k \Delta_{i-1}} \\
		&= \frac{ e^{- k}}{D_{i-1}} \left (1 \pm \frac{C}{D_{i-1}} \right ) \left ( 1 \pm k \frac{\Delta_{i-1}}{D_{i-1}} + \O \left ( \frac{\Delta_{i-1}^2}{D_{i-1}^2} \right ) \right )
		= \frac{ e^{- k}}{D_{i-1}} \left (1 \pm \frac{C_0 \Delta_{i-1}}{D_{i-1}} \right )
		\end{align*}
		for some constant $C_0 = C_0(k)$ independent of the index $i$, which proves $(\rm i)$. Since $H_{i-1}$ is $(D_{i-1} \pm \Delta_{i-1})$-regular, the probability that $v \in V(M_i)$ is 
		\begin{align*}
		\sum_{F~:~v \in F} \mathbb{P}(F \in E(M_i)) &= \left ( D_{i-1} \pm \Delta_{i-1}\right ) \frac{ e^{- k}}{D_{i-1}}   \left (1 \pm \frac{C_0 \Delta_{i-1}}{D_{i-1}} \right ) = \left (1 \pm \frac{C_1 \Delta_{i-1}}{D_{i-1}} \right )  e^{- k}
		\end{align*}
		for some constant $C_1 = C_1(k)$ independent of the index $i$, proving $(\rm ii)$ and $(\rm iii)$ as desired.
	\end{proof}

	\subsection{Introducing the key random variables}
	\label{subsec:keyrandomvariables}
	Below we define the key random variables that we track throughout the Nibble process.

	\begin{defn}
		For any $0 \leq i \leq \zeta$ and $x \in V(H)$, let $D_i(x)$ be the number of edges $e \in E(H)$ incident to $x$ such that $e \setminus \{x \} \subseteq U_i$. We call such an edge $e \in E(H)$ \emph{an instance of $D_i(x)$}.
	\end{defn}
	
	\begin{defn}
		For all vertices $x \in V(H) = U_{-1}$, let us define $Z_0(x) := 0$. For $1 \leq i \leq \zeta$ and $x \in U_{i-1}$, let $Z_i(x)$ be the number of edges $e \in E(H)$ incident to $x$ such that one vertex of $e \setminus \{ x \}$ is in $\bigcup_{j \leq i} M_j$ and the remaining $k-2$ vertices of $e \setminus \{ x \}$ are in $U_i$. We call such an edge $e \in E(H)$ \emph{an instance of $Z_i(x)$}.
	\end{defn}

	\begin{defn}
		For $0 \leq i \leq \zeta$ and any distinct $x,y \in U_{i-1}$, let $Y_i(x,y)$ be the number of ordered triples $(e_1 , e_2 , e_3) \in E(H)^3$ of edges satisfying the following conditions.
		\begin{itemize}
			\item[$(\rm Y1)_i$] $x \in e_1$, $y \in e_2$ and $e_1 \cap e_2 = \emptyset$.
			\item[$(\rm Y2)_i$] $e_1 \cap e_3 = \{x' \}$, $e_2 \cap e_3 = \{y' \}$ such that $x,y,x',y'$ are distinct.
			\item[$(\rm Y3)_i$] $(e_1 \cup e_2 \cup e_3) \setminus \{x,y\} \subseteq U_i$.
		\end{itemize}
		We call such an ordered triple $(e_1 , e_2 , e_3) \in E(H)^3$ \emph{an instance of $Y_i(x,y)$}, and $e_3$ the \emph{central edge} of the ordered triple $(e_1 , e_2 , e_3)$ (see Figure~\ref{fig:x_iy_i}).
	\end{defn}
	
	For any given pair of distinct vertices $x,y \in V(H)$, we can estimate $Y_0(x,y)$ as follows. There are $d_H(x) \leq 2D_0$ choices of $e_1 \in E(H)$ incident to $x$. For each choice of $e_1$ with $e_1 \setminus \{ x \} =: \{x_1 , \dots , x_{k-1} \}$, the number of possible choices for $e_3$ is at most $d_H(x_1) + \cdots + d_H(x_{k-1}) \leq 2(k-1) D_0$. Once we have chosen $e_3$, there are $k-1$ vertices $z_1 , \dots , z_{k-1}$ in $e_3 \setminus e_1$, and for each $z_j$ ($1 \leq j \leq k-1$) there is at most one edge containing both $z_j$ and $y$ since $H$ is simple. Hence there are at most $k-1$ choices of $e_2$. In total,
	\begin{equation}\label{eqn:boundy0}
	Y_0(x,y) \leq 2D_0 \cdot 2(k-1) D_0 \cdot (k-1) \leq 4k^2 D_0^2.
	\end{equation}

	\begin{defn}
		For $0 \leq i \leq \zeta$ and any distinct $x,y \in U_{i-1}$, let $X_i(x,y)$ be the number of ordered triples $(e_1 , e_2 , e_3) \in E(H)^3$ of edges satisfying the following conditions:
		\begin{itemize}
			\item[$(\rm X1)_i$] $x \in e_1$, $y \in e_2$ and $e_1 \cap e_2 = \emptyset$.
			\item[$(\rm X2)_i$] $e_1 \cap e_3 = \{x' \}$, $e_2 \cap e_3 = \{y' \}$ such that $x,y,x',y'$ are distinct.
			\item[$(\rm X3)_i$] $(e_1 \cup e_2) \setminus \{x,x',y,y' \} \subseteq U_i$.
			\item[$(\rm X4)_i$] $e_3 \in \bigcup_{j \leq i} E(M_j)$.
		\end{itemize}
		We call such an ordered triple $(e_1 , e_2 , e_3) \in E(H)^3$ \emph{an instance of $X_i(x,y)$}, and $e_3$ the \emph{central edge} of the ordered triple $(e_1 , e_2 , e_3)$ (see Figure~\ref{fig:x_iy_i}).
	\end{defn}
	Note that $X_0(x,y) = 0$ for any distinct $x,y \in V(H)$ since $M_0 = \emptyset$.

     \begin{figure}
        \centering
        \begin{tikzpicture}[scale=0.85]
        \filldraw[fill=gray!35, draw=black] (3.8,-0.2) rectangle (6.2,2.2);
        \draw (1.8,-2.2) rectangle (4.2,0.2);
        \draw (5.8,0.2) rectangle (8.2,-2.2);

        \fill(4, 0) circle (0.1cm);
        \fill(4, 2) circle (0.1cm);
        \fill(6, 0) circle (0.1cm);
        \fill(6, 2) circle (0.1cm);
        
        \fill(2, 0) circle (0.1cm);
        \filldraw[fill=red, draw=black] (2, -2) circle (0.1cm);
        \fill(4, -2) circle (0.1cm);

        \fill(8, 0) circle (0.1cm);
        \fill(6, -2) circle (0.1cm);
        \filldraw[fill=red, draw=black] (8, -2) circle (0.1cm);
        
        \node at (5,1) {$e_3$};
        \node at (3,-1) {$e_1$};
        \node at (7,-1) {$e_2$};
        \node at (3.6,-0.4) {\small $x'$};
        \node at (6.4,-0.4) {\small $y'$};
        \node at (1.6,-2.4) {\small $x$};
        \node at (8.4,-2.4) {\small $y$};

        \draw (12.8,-0.2) rectangle (15.2,2.2);
        \draw (10.8,-2.2) rectangle (13.2,0.2);
        \draw (14.8,0.2) rectangle (17.2,-2.2);

        \fill(13, 0) circle (0.1cm);
        \fill(13, 2) circle (0.1cm);
        \fill(15, 0) circle (0.1cm);
        \fill(15, 2) circle (0.1cm);
        
        \fill(11, 0) circle (0.1cm);
        \filldraw[fill=red, draw=black] (11, -2) circle (0.1cm);
        \fill(13, -2) circle (0.1cm);

        \fill(17, 0) circle (0.1cm);
        \fill(15, -2) circle (0.1cm);
        \filldraw[fill=red, draw=black] (17, -2) circle (0.1cm);
        
        \node at (14,1) {$e_3$};
        \node at (12,-1) {$e_1$};
        \node at (16,-1) {$e_2$};
        \node at (12.6,-0.4) {\small $x'$};
        \node at (15.4,-0.4) {\small $y'$};
        \node at (10.6,-2.4) {\small $x$};
        \node at (17.4,-2.4) {\small $y$};
        \end{tikzpicture}
        \caption{An example of an instance of $X_i(x,y)$ and $Y_i(x,y)$ respectively.}\label{fig:x_iy_i}
    \end{figure}
	
	\subsection{The Nibble lemma and its analysis}
	\label{sec:Nibblelemma}
	Before stating the Nibble lemma, we define the following parameters recursively for all $0 \leq j \leq i \leq \zeta$.

	\begin{align}
	&\text{If $j = i$, then $q_{j,i} := 1$. If $j < i$, then } q_{j,i} := (1 - p_{i-1}^*)q_{j,i-1}, \label{qjidef}\\
	&D_i := (1 - p_{i-1}^*)^{k-1} D_{i-1}, \label{Didef}\\
	&\Delta_i := (1 - p_{i-1}^*)^{k-1} \Delta_{i-1} + \sqrt{D_{i-1}} \log N. \label{definingDeltai}
	\end{align}
	Recursively applying the above definition shows that 
	\begin{equation}
	\label{eq:***}
	 q_{j,i} = \prod_{u=j}^{i-1}(1 - p_u^*) \text{ and } D_i = \prod_{u=0}^{i-1} (1 - p_u^*)^{k-1} D_0 = q_{0,i}^{k-1} D_0.   
	\end{equation}
It is worth noting that \eqref{eq:Domega} implies that for any $0 \leq i < \zeta$, 
\begin{equation}
\label{eq:*}
D_i \ge D_{0}^{\gamma} = \Omega(\log^{\frac{8}{(1 - \varepsilon)}} N)   
\end{equation}
by \eqref{eq:settingupparameters}. Moreover, by Proposition~\ref{claim:matching}(iii) if $\Delta_{\zeta-1} = o(D_{\zeta-1})$ then 
$D_{\zeta} = \Theta(D_{0}^{\gamma}) = \Omega(\log^{\frac{8}{(1 - \varepsilon)}} N)$. 
	
For convenience, let $W_0 := \emptyset$. The following lemma is the heart of our proof.
	
	\begin{lemma}[Nibble lemma]\label{lem:nibble}
		For $0 \leq i \leq \zeta$, define the following statements.
		\begin{itemize}
			\item[$(\rm L1)_i$] $D_i(x) = D_i \pm \Delta_i$ for any $x \in V(H)$.
			
			\item[$(\rm L2)_i$] For each $S \in \mathcal{V}$, we have 
			\begin{itemize}
			    \item[\rm ($a$)]   $$|S \cap W_i| \leq 2C_5 \frac{|S \cap U_{i-1}| \Delta_{i-1}}{D_{i-1}} + \frac{|S \cap U_{i-1}| \log N}{\sqrt{D_{i-1}}},$$
			    \item[\rm ($b$)]  $$|S \cap U_i| = (1 \pm |S \cap U_{i-1}|^{-1/4}) |S \cap U_{i-1}| (1 - p_{i-1}^*),$$ 
			    \item[\rm ($c$)] $$|S \cap U_i| = (1+o(1))(1 - e^{-k})^{i}|S|.$$
			\end{itemize}
			\item[$(\rm L3)_i$] For any $x \in U_{i-1}$, we have\COMMENT{Note that (b) and (c) are asymptotic forms of (a), and are more useful in our applications. The reader is not supposed to immediately understand how (b) and (c) follow from (a). This is shown in the proof of Nibble lemma.} 
			\begin{align*}
			(a)\hspace{1mm} Z_i(x) =& (k-1)e^{- k} \sum_{0\leq j<i} q_{j,i}^{k-2} D_j 
			\\ &\pm  \sum_{0\leq j<i} \left[ q_{j,i}^{k-2}  \left ( \frac{C_9 Z_j(x)}{D_j} + (C_1 + 3)(k-1)e^{-k} \Delta_j \right ) + q_{j+1,i}^{k-2} \sqrt{Z_j(x) + D_j} \log N \right ],
			\\
			(b)\hspace{1mm} Z_i(x) =& (k-1)e^{- k} \sum_{0\leq j<i} q_{j,i}^{k-2} D_j + \O ( (1 - e^{-k})^{-i} + (1 - e^{-k})^{i(k-2)} \Delta_0 + (1 - e^{-k})^{\frac{i(k-2)}{2}} \sqrt{D_0} \log N ), \\
			(c)\hspace{1mm} Z_i(x) =& (1 + o(1)) (k-1) (1-(1-e^{-k})^i) (1 - e^{-k})^{i(k-2)} D_0 = \Theta( (1 - e^{-k})^{i(k-2)} D_0), \\
			(d)\hspace{1mm} Z_i(x) =& (1+o(1)) (k-1)e^{- k} \sum_{0\leq j<i} q_{j,i}^{k-2} D_j.
			\end{align*}
			
			\item[$(\rm L4)_i$] $Y_i(x,y) \leq C_7 D_i^2 \log^2 N$ for any distinct $x,y \in U_{i-1}$.
			
			\item[$(\rm L5)_i$] $X_i(x,y) \leq C_8 D_i \log^2 N$ for any distinct $x,y \in U_{i-1}$.
		\end{itemize}

		\vspace{2mm}
		
		There exist constants $C_5, C_7, C_8, C_9$ depending on $k$ (but not on the index $i$ or the vertices $x, y$) such that if the statements $(\rm L1)_j$--$(\rm L5)_j$ hold for all $0 \leq j \leq i-1$, then with probability at least $1 - e^{-\Omega(\log^2 N)}$, the statements $(\rm L1)_i$--$(\rm L5)_i$ hold.
	\end{lemma}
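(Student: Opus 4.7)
The plan is to prove all five statements $(\rm L1)_i$--$(\rm L5)_i$ simultaneously by a single induction on $i$, conditioning on the history $\mathcal{H}_{i-1}$ of the Nibble process up through step $i-1$ (so $H_{i-1}$, $U_{i-1}$, $D_{i-1}$, $\Delta_{i-1}$, and all the $M_j, W_j, Z_j, X_j, Y_j$ for $j \le i-1$ are treated as fixed and satisfy the inductive hypothesis). The randomness of step $i$ is encoded by independent binary choices: for each $F \in E(H_{i-1})$ the event $\{F \in B_i\}$ (probability $1/D_{i-1}$), and for each $v \in U_{i-1}$ the event $\{v \in W_i\}$ (probability $p_{W_i}(v)$ determined by~\eqref{eqn:pwi}). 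Each of the quantities $D_i(x)$, $|S \cap U_i|$, $Z_i(x)$, $Y_i(x,y)$, $X_i(x,y)$ is a function of these choices, and I would apply Lemma~\ref{cor:martingale} with a Paul-strategy that reveals only those choices relevant to the instances being counted, then take a union bound over all $x, y \in V(H)$ and all $S \in \mathcal{V}$ (which is why we need the failure probability to be $e^{-\Omega(\log^2 N)}$).

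For $(\rm L1)_i$ I would compute $\mathbb{E}[D_i(x)]$: an edge $e$ counted by $D_{i-1}(x)$ survives in $D_i(x)$ iff each of its $k-1$ non-$x$ vertices lies in $U_i$. By~\eqref{eqn:pUi} and a first-order expansion of the weak dependence between these events, this gives mean $(1 \pm O(\Delta_{i-1}/D_{i-1}))(1-p_{i-1}^*)^{k-1} D_{i-1}(x)$, which together with $(\rm L1)_{i-1}$ matches $D_i$ up to a small multiple of $\Delta_i$. A single-edge or single-vertex toggle changes $D_i(x)$ by $O(1)$, and since the set of relevant queries lies in the neighbourhood of $e$ for each of $D_{i-1}(x) = O(D_{i-1})$ edges, the total variance along a line of questioning is $O(D_{i-1})$, and the martingale inequality yields a deviation of $O(\sqrt{D_{i-1}} \log N)$, matching the additive term in~\eqref{definingDeltai}. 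Part $(\rm L2)_i(a)$ follows directly from Lemma~\ref{lem:chernoff} applied to the independent indicators $\mathbf{1}[v \in W_i]$, since Proposition~\ref{claim:matching}(ii)--(iii) together with~\eqref{eqn:pwi} give $p_{W_i}(v) = O(\Delta_{i-1}/D_{i-1})$; $(\rm L2)_i(b)$ is another application of Lemma~\ref{cor:martingale} to the sum $|S \cap U_i|$, and $(\rm L2)_i(c)$ follows by unfolding the recursion with Proposition~\ref{lem:boundprod} and Proposition~\ref{claim:matching}(iii).

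The main obstacle is $(\rm L3)_i$. The expectation of $Z_i(x)$ decomposes according to the step $j < i$ at which the unique $M$-vertex of a contributing edge gets matched: conditional on $\mathcal{H}_{j}$, the edges contributing to $Z_i(x)$ from step $j+1$ are edges $e \ni x$ with some $x' \in e \setminus \{x\}$ hit by $M_{j+1}$ and the remaining $k-2$ vertices surviving into $U_i$. Taking expectations using Proposition~\ref{claim:matching}(ii), $(\rm L1)_j$, and $q_{j+1,i}^{k-2}$ for the survival of the $k-2$ remaining vertices yields the leading term $(k-1)e^{-k} \sum_{j<i} q_{j,i}^{k-2} D_j$ up to the error displayed in part (a). The delicate part is the variance: a single toggle of $\mathbf{1}[F \in B_{j+1}]$ can affect many instances of $Z_i(x)$ through the $Y$-type and $X$-type configurations counted by $Y_j$ and $X_j$, and here $(\rm L4)_j$ and $(\rm L5)_j$ are exactly what is needed to control the per-query Lipschitz constants and hence the total variance, giving the $\sqrt{Z_j(x) + D_j}\log N$ factor in part (a). Parts (b)--(d) are then obtained by substituting $D_j = q_{0,j}^{k-1} D_0$ from~\eqref{eq:***}, summing a geometric-type series via Proposition~\ref{lem:boundprod}, and using $(\rm L1)_j$ together with~\eqref{eq:*} to absorb lower-order terms.

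For $(\rm L4)_i$ and $(\rm L5)_i$, the $(\rm L4)_{i-1}$ and $(\rm L5)_{i-1}$ bounds together with the survival computation reduce the expectations to $O(q_{0,i}^{2k-2} D_0^2) = O(D_i^2)$ and $O(q_{0,i}^{2k-2} D_0^2 / D_{i-1}) = O(D_i)$ respectively (the extra $1/D_{i-1}$ factor for $X_i$ comes from the central edge's probability of being in $M_i$, and earlier contributions carry over from $X_{i-1}$ by the survival of the two outer edges' unmatched vertices). Since the base bound~\eqref{eqn:boundy0} plus the assumption that $H$ is simple keeps the per-instance Lipschitz constants small, Lemma~\ref{cor:martingale} delivers the $\log^2 N$ overhead. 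Throughout, the hypothesis $D_0 \ge \log^{8(k-1)/(\gamma(1-\varepsilon))} N$ in~\eqref{eq:settingupparameters} is precisely what keeps every deviation $\lambda$ inside the range $0 < \lambda < 2\sigma\delta/T$ required by Lemma~\ref{cor:martingale} and each individual failure probability below $e^{-\Omega(\log^2 N)}$, so the union bound over $O(N^2 + |\mathcal{V}|) \le \exp(O(\log^{3/2} N))$ events closes the induction.
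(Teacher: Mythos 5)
The overall framework you describe -- one-step conditioning on the history, revealing step-$i$ randomness via Paul's queries, applying Lemma~\ref{cor:martingale}, and closing with a union bound -- is exactly what the paper does, and your treatments of $(\rm L1)_i$, $(\rm L2)_i$, and the expectation calculation in $(\rm L3)_i$ are substantively on target. But there is a genuine conceptual error in your account of the concentration step for $(\rm L3)_i$, and it is exactly the step you flag as ``the delicate part.''

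You claim that the per-query Lipschitz bound for $Z_i(x)$ requires $(\rm L4)_j$ and $(\rm L5)_j$ to control cascading through $Y$-type and $X$-type configurations, producing the $\sqrt{Z_j(x)+D_j}\log N$ error. This is inconsistent with your own setup: if you condition on the full history $\mathcal{H}_{i-1}$, the only live queries are $\{F\in B_i\}$ and $\{v\in W_i\}$ for the current stage, so there is no toggle of $\mathbf{1}[F\in B_{j+1}]$ for $j<i-1$ to worry about. In the paper's Claim~\ref{claim:eq_e}, a single toggle of a stage-$i$ query changes $V(M_i)$ (and hence $U_i$) by at most $\O_k(1)$ vertices, and then simplicity of $H$ alone guarantees at most $\O_k(1)$ instances of $Z_{i-1}(x)$ and $D_{i-1}(x)$ are affected; so the Lipschitz constant is $\O_k(1)$ and the one-step deviation is $\sqrt{Z_{i-1}(x)+D_{i-1}}\log N$. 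The sum over $j$ in $(\rm L3)_i(a)$ then appears by unfolding the recursion through $(\rm L3)_{i-1}$, not by a global martingale over the whole process. The quantities $Y_i$ and $X_i$ are never invoked in the proof of $(\rm L3)_i$: their actual role is to be tracked for their own sake, so that $(\rm L5)_\omega$ can be used at the end to bound the codegree of the augmenting-star hypergraph $H_A$ (via~\eqref{eqn:codegha}), and $(\rm L4)_j$ is needed only inside the recurrence~\eqref{eqn:expxi} that propagates $(\rm L5)_j$. If you attempted a one-shot martingale over all stages to prove $(\rm L3)_i$ directly, you would need Lipschitz bounds for long-range cascades that $Y_j$ and $X_j$ do not provide, and the argument would not close.

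A secondary imprecision: for $(\rm L4)_i$ the per-query Lipschitz constant is \emph{not} small -- Subclaim~2 in Claim~\ref{claim:eq_y} establishes that a single toggle can change $Y_i(x,y)$ by $\Theta(D_{i-1})$, because a changed vertex can sit inside $\Theta(D_{i-1})$ candidates. This is why the deviation term for $Y_i$ is $D_{i-1}\sqrt{Y_{i-1}}\log N$ rather than $\sqrt{Y_{i-1}}\log N$, and why one needs $\sigma/T = \sqrt{Y_{i-1}} = \omega(\log N)$ to apply Lemma~\ref{cor:martingale}; the paper handles this by assuming $Y_{i-1} \ge C_7 D_i^2 \log^2 N$ (else there is nothing to prove). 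For $X_i$, by contrast, the condition $(\rm T1)$ (that the central edge be in $\bigcup_j M_j$) caps the per-query effect at $\O_k(1)$; this is precisely why $X_i$ stays a factor of $D_i$ below $Y_i$. Your sketch does not distinguish these two regimes, which would matter if you tried to write out the deviations explicitly.
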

	
	\textit{Remark.} Note that $(\rm L1)_i$ not only implies that $H_i$ is $(D_i \pm \Delta_i)$-regular but also the much stronger statement that for \emph{any} vertex $x \in V(H)$ the number of edges $e$ of $H$ incident to $x$ such that $e \setminus \{x\} \subseteq U_i = V(H_i)$ is $D_i \pm \Delta_i$.
	
	\vspace{2mm}
	
	The rest of this subsection is devoted to proving Lemma~\ref{lem:nibble} (Nibble lemma). Choose constants $C_5, C_7, C_8, C_9, C_{10}$ such that \begin{equation}
	\label{eq:mainhierarchy}
	  C_{10} := 2e^{-k} \text{ and } \frac{1}{C_8} \ll \frac{1}{C_5}, \frac{1}{C_7}, \frac{1}{C_9} \ll \frac{1}{k}.
	\end{equation}
	
	Since the degree of every vertex of $H$ is $D_0 \pm \Delta_0$, $W_0 = \emptyset$, $Z_0(x) = 0$, $X_0(x,y) = 0$, $U_{-1} = U_0 = V(H)$, $p_{-1}^* = 0$ and \eqref{eqn:boundy0} holds, it is clear that $(\rm L1)_0$--$(\rm L5)_0$ hold. Hereafter we fix the index $1 \leq i \leq \zeta$ and assume that $(\rm L1)_j$--$(\rm L5)_j$ hold for all $0 \leq j \leq i-1$. 
	
	\subsubsection{Estimating the basic parameters of the Nibble process}
	
	In the following proposition we estimate the parameters  $D_j, \Delta_j$ and $q_{t,j}$ defined in \eqref{qjidef}--\eqref{definingDeltai}.
	
	\begin{proposition}\label{claim:bound_delta}
		For $1 \leq j \leq \zeta$, define the following statements.
		
		\begin{itemize}
		    \item[$(\rm A0)_j$] If $\Delta_t = o(D_t)$ for all $0 \le t < j$, then $j \le C^* \log D_0$.

			\item[$(\rm A1)_j$] For all $0 \leq t \leq j$, $q_{t,j} = (1 \pm C_2 \frac{\Delta_{j-1} \log D_0}{D_{j-1}} ) (1 -  e^{- k})^{j-t}$. In particular, $q_{t,j} = (1+o(1))(1 - e^{- k})^{j-t}$. \label{state:a1}
			\item[$(\rm A2)_j$] $D_j = (1 \pm C_3 \frac{\Delta_{j-1} \log D_0}{D_{j-1}}) (1 - e^{- k})^{j(k-1)} D_0$. In particular, $D_j = (1 + o(1)) (1 - e^{- k})^{j(k-1)} D_0$. \label{state:a2}
			\item[$(\rm A3)_j$] We have $$D_j D_0^{-1} \Delta_0 + c_4 \sqrt{D_j} \log N \leq \Delta_j \leq D_j D_0^{-1} \Delta_0 + C_4 \sqrt{D_j} \log N.$$
			Moreover, $\Delta_{j} \log D_0 = o(D_{j})$.
		\end{itemize}
		
		\vspace{2mm}
		
		There exist constants $C^*, C_2, C_3, c_4$ and $C_4$ depending on $k$ (but not on the index $j$) such that the statements $(\rm A0)_j$-- $(\rm A3)_j$ hold for all $1 \le j \le i$.
	\end{proposition}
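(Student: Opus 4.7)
The plan is to establish $({\rm A0})_j$--$({\rm A3})_j$ simultaneously by induction on $j$. For the base case $j=1$, the hypothesis $\Delta_0 \le K D_0^{\varepsilon}$ with $\varepsilon < 1$ gives $\Delta_0 = o(D_0)$, so Proposition~\ref{claim:matching}(iii) applies and yields $p_0^* = (1 \pm C_1 \Delta_0/D_0) e^{-k}$; substituting this into \eqref{qjidef}--\eqref{definingDeltai} directly verifies $({\rm A0})_1$--$({\rm A3})_1$ for any sufficiently large choice of constants.

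For the inductive step with $j \ge 2$, I first invoke $({\rm A3})_{j-1}$ to deduce $\Delta_{j-1} = o(D_{j-1})$, which permits another application of Proposition~\ref{claim:matching}(iii), giving
$$ 1 - p_{j-1}^* = (1 - e^{-k})\left(1 \pm \O(\Delta_{j-1}/D_{j-1})\right). $$
This immediately handles $({\rm A0})_j$: the hypothesis $\Delta_t = o(D_t)$ for all $t < j$ forces $p_t^* \ge e^{-k}/2$, so $D_j$ decays by a constant factor at every stage; combined with $j \le \omega$ and $D_{j-1} > D_0^{\gamma} \ge 1$ from \eqref{eq:Domega}, this yields $j = \O(\log D_0)$.

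For $({\rm A1})_j$, I would write $q_{t,j} = \prod_{u=t}^{j-1}(1 - p_u^*)$ and apply Proposition~\ref{lem:boundprod} to the factors, using the estimate above, to obtain
$$ q_{t,j} = (1-e^{-k})^{j-t}\left(1 \pm \O\left(\sum_{u=t}^{j-1} \Delta_u/D_u\right)\right). $$
The crucial observation is that $({\rm A3})_{j-1}$ yields $\Delta_u/D_u = \O(K D_0^{\varepsilon-1} + \log N/\sqrt{D_u})$, where both terms are increasing in $u$ (since $D_u$ decays geometrically). Combined with the bound $j = \O(\log D_0)$ just proved in $({\rm A0})_j$, this gives $\sum_{u=t}^{j-1} \Delta_u/D_u = \O(\log D_0 \cdot \Delta_{j-1}/D_{j-1})$, establishing $({\rm A1})_j$. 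Statement $({\rm A2})_j$ then follows at once from $D_j = q_{0,j}^{k-1} D_0$ (see \eqref{eq:***}).

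Finally, for $({\rm A3})_j$, the recursion \eqref{definingDeltai} together with $(1-p_u^*)^{k-1} = D_{u+1}/D_u$ unrolls to
$$ \Delta_j = \frac{D_j}{D_0}\Delta_0 + \sum_{t=0}^{j-1} \frac{D_j}{D_{t+1}}\sqrt{D_t}\,\log N. $$
Using $({\rm A2})_j$, each summand scales like $(1-e^{-k})^{(k-1)(j-1-t/2)} \sqrt{D_0}\,\log N$, a geometric sequence in $t$ dominated by its final term $\sqrt{D_{j-1}}\,\log N = \Theta(\sqrt{D_j})\,\log N$; this yields both the upper bound and (from the $t = j-1$ summand) the lower bound in $({\rm A3})_j$. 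The auxiliary claim $\Delta_j \log D_0 = o(D_j)$ reduces to verifying $\log D_0 \cdot \log N /\sqrt{D_j} = o(1)$, which follows from the hypothesis $D_0 \ge \log^{8(k-1)/(\gamma(1-\varepsilon))} N$ combined with $D_j = \Omega(D_0^{\gamma})$ (via $({\rm A2})_j$ and $D_{j-1} > D_0^{\gamma}$). The main bookkeeping obstacle throughout is to consistently absorb all accumulated errors from earlier stages into a single term proportional to the \emph{latest} ratio $\Delta_{j-1}/D_{j-1}$; this succeeds precisely because, in the geometric regime $D_u \sim (1-e^{-k})^{u(k-1)} D_0$, the prior errors are strictly smaller than the most recent one, so only the number-of-steps factor $\log D_0$ (accounting for the $\log D_0$ appearing in $({\rm A1})_j$ and $({\rm A2})_j$) needs to be absorbed.
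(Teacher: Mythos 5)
Your proposal is correct and follows essentially the same inductive strategy as the paper: establish $(\rm A0)_j$ from the geometric decay $D_j \le (1 - e^{-k}/2)^{j(k-1)} D_0$ and $D_{j-1} > D_0^\gamma$, derive $(\rm A1)_j$ by applying Proposition~\ref{lem:boundprod} to $q_{t,j} = \prod_{u=t}^{j-1}(1-p_u^*)$ together with the key bound $\sum_{u=t}^{j-1} \Delta_u/D_u = \O(\log D_0 \cdot \Delta_{j-1}/D_{j-1})$, and deduce $(\rm A2)_j$ from $D_j = q_{0,j}^{k-1} D_0$.

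The one place you deviate from the paper is $(\rm A3)_j$: you fully unroll the recursion to the closed form $\Delta_j = (D_j/D_0)\Delta_0 + \sum_{t<j}(D_j/D_{t+1})\sqrt{D_t}\log N$ and then argue the sum is a geometric series dominated by its $t=j-1$ term. The paper instead performs a single-step induction, plugging $(\rm A3)_{j-1}$ into $\Delta_j = (1-p_{j-1}^*)^{k-1}\Delta_{j-1} + \sqrt{D_{j-1}}\log N$ and verifying that the sandwich $D_j D_0^{-1}\Delta_0 + c_4\sqrt{D_j}\log N \le \Delta_j \le D_j D_0^{-1}\Delta_0 + C_4\sqrt{D_j}\log N$ is preserved, choosing $c_4, C_4$ so that the map $c \mapsto (1-p_{j-1}^*)^{(k-1)/2}c + 1$ leaves the interval $[c_4, C_4]$ invariant. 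Your closed-form approach is somewhat more transparent (it makes the geometric dominance of the last nibble round explicit), but it implicitly uses $(\rm A2)_u$ for \emph{all} $u \le j$ to control the ratios $D_j/D_{t+1}$ and $\sqrt{D_t}$, so one must be a little careful that the $\Theta(\cdot)$ constants produced are uniform in $j$; the paper's one-step argument avoids this by keeping the error budget explicit at each stage. Both routes succeed, and your bookkeeping remark at the end correctly identifies the delicate point.
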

	\begin{proof}
	    Choose constants $C', C^*, C_2, C_3, c_4$ and $C_4$ such that 
	    \begin{equation}
	    \label{eq:constantHierarchy}
	        \frac{1}{C_3} \ll \frac{1}{C_2} \ll \frac{1}{C'} \ll c_4, \frac{1}{C_4}, \frac{1}{C^*} \ll \frac{1}{k}.
	    \end{equation}
	    Note that by ~\eqref{eq:***} and Proposition~\ref{claim:matching}(iii)\COMMENT{To apply Proposition~\ref{claim:matching} we need that $H_t$ is $(D_t\pm\Delta_t)$-regular for all $t < j$, but this fine since we are assuming that $(\rm L1)_t$ holds.} we have $1 \le D_j \le (1-\frac{e^{-k}}{2})^{j(k-1)} D_0$. Thus $(\rm A0)_j$ holds. 
	    
        We now prove $(\rm A1)_j$--$(\rm A3)_j$ by using induction on $j$. $(\rm A1)_1$--$(\rm A3)_1$ trivially follow from the definitions \eqref{qjidef}--\eqref{definingDeltai} and Proposition~\ref{claim:matching}(iii), so we assume $j \geq 2$. Let us fix $j \le i$. Assuming $(\rm A1)_s$--$(\rm A3)_s$ hold for $1 \leq s \leq j-1$, we will show that they hold for $s = j$. 
	    
	By $(\rm A2)_s$ and $(\rm A3)_s$ for $1 \leq s \leq j-1$, we have 
	\begin{align}\label{eqn:sumdj}
            \begin{split}
            \sum_{s=t}^{j-1} \frac{\Delta_s}{D_s} &\leq \sum_{s=0}^{j-1} \frac{\Delta_s}{D_s} \leq j\frac{\Delta_0}{D_0}  + 2 C_{4}\sum_{s=0}^{j-1} (1 - e^{- k})^{\frac{-s(k-1)}{2}} D_0^{-1/2} \log N \\
            & \leq j \frac{\Delta_0}{D_0} + \frac{2 C_{4} (1 - e^{- k})^{\frac{-(j-1)(k-1)}{2}} D_0^{-1/2} \log N}{1 - (1 - e^{- k})^{(k-1)/2}} \overset{\eqref{eq:constantHierarchy}}{\leq} j C' \frac{\Delta_{j-1}}{D_{j-1}} \\
            \overset{(\rm A0)_{j-1}}&{\leq} C^*C' \frac{\Delta_{j-1}}{D_{j-1}} \log D_0. 
            \end{split}
        \end{align}
        
        Since $q_{t,j} = \prod_{u=t}^{j-1}(1 - p_u^*)$, using Proposition~\ref{claim:matching}$(\rm iii)$\COMMENT{To apply Proposition~\ref{claim:matching}, we need that $H_s$ is $(D_s \pm \Delta_s)$-regular and $\Delta_s = o(D_s)$ for all $s \le j-1$. Both of these conditions are satisfied by our assumption that $(\rm L1)_s$ and $(\rm A3)_s$ hold for all $0 \leq s \leq j-1$.} and Proposition~\ref{lem:boundprod}\COMMENT{Explanation for the second equation below: Let $x_s := C_1 \Delta_s / D_s$. By $(\rm A3)_s$, we deduce that $x_s \leq KD_0^{\varepsilon-1} + C_4 \log N / \sqrt{D_s}.$ Let $t := \min(1-\varepsilon , 1/2)$. Then we have $x_s \leq (K + C_4 \log N) D_s^{-t}$. By the induction hypothesis, we know $$0.5(1 - e^{-k})^{-st(k-1)} D_0^{-t} \leq D_s^{-t} \leq 2 (1 - e^{-k})^{-st(k-1)} D_0^{-t}.$$ Thus $x_s \leq 2(K D_0^{-t} + C_4 \log N D_0^{-t}) (1 - e^{-k})^{-st(k-1)}$. Now we let $a := 2(K + C_4 \log N )D_0^{-t}$ and $b := (1 - e^{-k})^{-t(k-1)}$, then both $x_s$ and $y_s$ satisfy the conditions of Proposition~\ref{lem:boundprod}, since $y_s \leq 2(K + C_4 \log N)D_s^{-t} = o(1)$. Indeed
		$$\log N \cdot D_s^{-t} \leq \log N \cdot D_0^{-t\gamma} \leq \log^{1-\frac{8t(k-1)}{1-\varepsilon}} N \leq \log^{1 - 4(k-1)} N = o(1).$$}, for any $1 \le t \le j$, we have
		\begin{align*}
		q_{t,j} &= \left [ \prod_{s=t}^{j-1} \left ( 1 \pm C_1 \frac{\Delta_s}{D_s} \right ) \right ] (1 - e^{- k})^{j-t} 
		= \left ( 1 \pm 2C_1 \sum_{s=t}^{j-1} \frac{\Delta_s}{D_s} \right )  (1 - e^{- k})^{j-t} \\
		\overset{\eqref{eqn:sumdj}}&{=} \left ( 1 \pm 2 C_1 C^* C' \frac{\Delta_{j-1} \log D_0}{D_{j-1}} \right ) (1 - e^{- k})^{j-t} \overset{\eqref{eq:constantHierarchy}}{=} \left ( 1 \pm C_2 \frac{\Delta_{j-1} \log D_0}{D_{j-1}} \right ) (1 - e^{- k})^{j-t},
		\end{align*}
		so the first part of ~$(\rm A1)_j$ follows. Moreover, by $(\rm A3)_{j-1}$ we have
		\begin{equation}
		\label{eq:+}
		 \Delta_{j-1} \log D_0 = o(D_{j-1}),
		\end{equation}
	so $q_{t,j} = (1+o(1))(1 - e^{- k})^{j-t}$, proving ~$(\rm A1)_j$.

		Now using $(\rm A1)_j$, we have 
		\begin{align*}
		D_j \overset{\eqref{eq:***}}&{=} q_{0,j}^{k-1} D_0 = \left ( 1 \pm C_2 \frac{\Delta_{j-1} \log D_0}{D_{j-1}} \right )^{k-1} (1 - e^{- k})^{j(k-1)} D_0 \\
		&= \left (1 \pm (k-1) C_2 \frac{\Delta_{j-1} \log D_0}{D_{j-1}} + \O \left ( \frac{C_2^2 \Delta_{j-1}^2 \log^2 D_0 }{D_{j-1}^2} \right ) \right ) (1 - e^{- k})^{j(k-1)} D_0\\
		\overset{\eqref{eq:constantHierarchy},\eqref{eq:+}}&{=} \left (1 \pm C_3 \frac{\Delta_{j-1} \log D_0}{D_{j-1}} \right ) (1 - e^{- k})^{j(k-1)} D_0.
		\end{align*}
		Thus the first part of ~$(\rm A2)_j$ holds. The second part follows by using \eqref{eq:+}.		
		
		It remains to prove $(\rm A3)_j$. Note that
		\begin{align*}
		\Delta_j  \overset{\eqref{definingDeltai}}&{=} (1 - p_{j-1}^*)^{k-1} \Delta_{j-1} + \sqrt{D_{j-1}} \log N \\
		 \overset{(\rm A3)_{j-1}}&{\leq} (1 - p_{j-1}^*)^{k-1} D_{j-1} D_0^{-1} \Delta_0 + C_4 (1 - p_{j-1}^*)^{k-1} \sqrt{D_{j-1}} \log N + \sqrt{D_{j-1}} \log N\\
		 \overset{\eqref{Didef}}&{\leq} D_{j} D_0^{-1} \Delta_0 + [(1 - p_{j-1}^*)^{\frac{k-1}{2}} C_4 + (1 - p_{j-1}^*)^{-\frac{k-1}{2}}] \sqrt{D_{j}} \log N\\
		\overset{\eqref{eq:constantHierarchy}}&{\leq} D_{j} D_0^{-1} \Delta_0 + C_4 \sqrt{D_j} \log N.
		\end{align*}

		Similarly, we have\COMMENT{\begin{align*}
		\Delta_j & = (1 - p_{j-1}^*)^{k-1} \Delta_{j-1} + \sqrt{D_{j-1}} \log N \\
		& \geq (1 - p_{j-1}^*)^{k-1} D_{j-1} D_0^{-1} \Delta_0 + c_4 (1 - p_{j-1}^*)^{k-1} \sqrt{D_{j-1}} \log N + \sqrt{D_{j-1}} \log N\\
		& \geq D_{j} D_0^{-1} \Delta_0 + [(1 - p_{j-1}^*)^{\frac{k-1}{2}} c_4 + 1] \sqrt{D_{j}} \log N\\
		& \geq D_{j} D_0^{-1} \Delta_0 + c_4 \sqrt{D_j} \log N
		\end{align*}} $\Delta_j \geq D_{j} D_0^{-1} \Delta_0 + c_4 \sqrt{D_j} \log N$. This completes the proof of the first part of $(\rm A3)_j$. Now combining this with the fact that $\Delta_0 \leq K D_0^\varepsilon$, we have 
		$$\Delta_j \leq K D_j^{\varepsilon} + \Theta (\sqrt{D_j} \log N) = \O(D_j^{\max(\varepsilon , 1/2)} \log N) \overset{\eqref{eq:*}}{=} \O(D_j^{1 - \min(\frac{7-7\varepsilon}{8} , \frac{3+\varepsilon}{8})}),$$\COMMENT{$D_j^{\max(\varepsilon , 1/2)} \log N \overset{\eqref{eq:settingupparameters}}{\leq} D_j^{\max(\varepsilon , 1/2)} D_0^{\frac{\gamma(1-\varepsilon)}{8}}\overset{\eqref{eq:*}}{\le} D_j^{\max(\varepsilon , 1/2)} D_j^{\frac{(1-\varepsilon)}{8}} \le D_j^{\max(\frac{1+7\varepsilon}{8} , \frac{5-\varepsilon}{8})} = D_j^{1 - \min(\frac{7-7\varepsilon}{8} , \frac{3+\varepsilon}{8})}.$} since $D_0 \geq \log^{8/\gamma(1-\varepsilon)} N$. Thus $\Delta_{j} \log D_0 = o(D_{j})$ by $(\rm A0)_j$ and $(\rm A2)_j$, proving $(\rm A3)_j$ and completing the proof of the proposition.
	\end{proof}

Recall that $C_5 = C_5(k)$ was chosen in \eqref{eq:mainhierarchy}.
	
	\begin{proposition}\label{claim:waste}
		For every $v \in U_{i-1}$, 
		\begin{equation}\label{eqn:wasteprob}
		p_{W_i}(v) \leq C_5 \frac{\Delta_{i-1}}{D_{i-1}}.
		\end{equation}
		Thus for every $S \in \mathcal{V}$, we have
		\begin{equation}\label{eqn:wasteexpectation}
		\mathbb{E}[|S \cap W_i|] \leq C_5 \frac{\Delta_{i-1}}{D_{i-1}} |S \cap U_{i-1}|.    
		\end{equation}
	\end{proposition}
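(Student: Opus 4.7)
The plan is to unwind the defining identity \eqref{eqn:pwi} to obtain an explicit formula for $p_{W_i}(v)$, and then bound the numerator and denominator using Proposition~\ref{claim:matching}. Rewriting \eqref{eqn:pwi} as $p_{W_i}(v)(1 - p_{M_i}(v)) = p_{i-1}^* - p_{M_i}(v)$ gives
\[
p_{W_i}(v) = \frac{p_{i-1}^* - p_{M_i}(v)}{1 - p_{M_i}(v)}.
\]
Since we are proving the inductive step of the Nibble lemma, we may assume $(\rm L1)_{i-1}$ holds, which together with Proposition~\ref{claim:bound_delta}$(\rm A3)_{i-1}$ tells us that $H_{i-1}$ is $(D_{i-1} \pm \Delta_{i-1})$-regular with $\Delta_{i-1} = o(D_{i-1})$, so Proposition~\ref{claim:matching} applies.

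The key quantitative input is parts $(\rm ii)$ and $(\rm iii)$ of Proposition~\ref{claim:matching}: both $p_{M_i}(v)$ and $p_{i-1}^*$ equal $(1 \pm C_1 \Delta_{i-1}/D_{i-1})e^{-k}$. Subtracting these bounds gives
\[
p_{i-1}^* - p_{M_i}(v) \;\le\; 2 C_1 e^{-k} \cdot \frac{\Delta_{i-1}}{D_{i-1}},
\]
while the same bound on $p_{M_i}(v)$ together with $\Delta_{i-1}/D_{i-1} = o(1)$ gives $1 - p_{M_i}(v) \ge 1 - 2e^{-k}$, a positive constant depending only on $k$. Dividing yields $p_{W_i}(v) \le C_5 \Delta_{i-1}/D_{i-1}$ for a suitable constant $C_5 = C_5(k)$ (which is consistent with the choice made in \eqref{eq:mainhierarchy}). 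Since $W_i \subseteq U_{i-1}$, the expectation bound \eqref{eqn:wasteexpectation} then follows immediately by linearity of expectation, summing $p_{W_i}(v)$ over $v \in S \cap U_{i-1}$.

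There is no real obstacle here: the proof is a direct algebraic manipulation, and the only subtle point is ensuring that the induction hypothesis is strong enough to invoke Proposition~\ref{claim:matching}, which it is via Proposition~\ref{claim:bound_delta}$(\rm A3)_{i-1}$.
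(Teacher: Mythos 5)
Your proof is correct and matches the paper's argument essentially verbatim: both solve \eqref{eqn:pwi} for $p_{W_i}(v)$, bound the numerator by $2C_1 e^{-k}\Delta_{i-1}/D_{i-1}$ via Proposition~\ref{claim:matching}(ii),(iii), and bound the denominator below by a positive constant using $\Delta_{i-1}=o(D_{i-1})$; the only cosmetic difference is that the paper lower-bounds the denominator by $(1-e^{-k})/2$ while you use $1-2e^{-k}$, both of which are $\Theta_k(1)$. The expectation bound by linearity over $v\in S\cap U_{i-1}$ is exactly what the paper's ``Thus'' is implicitly invoking.
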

	\begin{proof}
		By Proposition~\ref{claim:matching}, we have
		\begin{equation*}
		p_{W_i}(v) \overset{\eqref{eqn:pwi}}{=} \frac{p_{i-1}^* - p_{M_i}(v)}{1 - p_{M_i}(v)} \leq \frac{2 C_1 \Delta_{i-1} D_{i-1}^{-1}  e^{- k}}{1 - e^{- k} \pm C_1 \Delta_{i-1} D_{i-1}^{-1}  e^{- k}}.
		\end{equation*}
		Since $\Delta_{i-1} = o(D_{i-1})$ (by Proposition~\ref{claim:bound_delta} $(\rm A3)_{i-1}$) we have $1 - e^{- k} \pm C_1 \Delta_{i-1} D_{i-1}^{-1}  e^{- k} \ge (1 - e^{- k})/2$. Thus $p_{W_i}(v) \leq C_5 \frac{\Delta_{i-1}}{D_{i-1}}$, as desired.
	\end{proof}
	
	In the next subsection we will present two results that play a key role in the proof of the Nibble lemma.
	
    \subsubsection{Almost independence of events}
    The following proposition is a generalisation of~\cite[Claim 1]{AKS1997}.
    It is heavily used in the proof of the Nibble lemma to show that certain events of whether a given set of vertices are in $U_i$, $V(M_i)$ are `nearly' independent. 
    The key difference between Claim 1 of \cite{AKS1997} and the lemma below is that here we remove the assumption that the vertices are contained in a hyperedge, which is essential for our applications. Recall that $C_9 = C_9(k)$ was chosen in \eqref{eq:mainhierarchy}.
	
    \begin{proposition}[Almost independence]\label{lem:almost_indep}
        Let $0 \leq n_1 , n_2 \leq 3k$ be integers, and let $x_1 , \dots , x_{n_1},$ $y_1,\dots , y_{n_2} \in U_{i-1}$ be distinct vertices. Then
	\begin{align*}
        &\mathbb{P} \left (\bigwedge_{j=1}^{n_1}(x_j \in U_i) \:\wedge\:\bigwedge_{j=1}^{n_2} (y_j \in V(M_i)) \right) = \left (1 \pm \frac{C_9}{D_{i-1}} \right ) \prod_{j=1}^{n_1} \mathbb{P}(x_j \in U_i) \prod_{j=1}^{n_2} \mathbb{P}(y_j \in V(M_i)).
	\end{align*}
    \end{proposition}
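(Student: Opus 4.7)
My plan is to prove the proposition in three stages.

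Stage 1 is a factorisation that removes the contribution of $W_i$. Since $W_i$ is chosen independently of $B_i$ (hence of $M_i$), and the events $\{v \in W_i\}$ are mutually independent across vertices, the event $\{x_j \in U_i\}$ is the intersection of the independent events $\{x_j \notin V(M_i)\}$ and $\{x_j \notin W_i\}$. Therefore
\[
\mathbb{P}\Bigl(\bigwedge_{j}(x_j\in U_i)\wedge\bigwedge_{j}(y_j\in V(M_i))\Bigr)=\prod_{j}\mathbb{P}(x_j\notin W_i)\cdot\mathbb{P}\Bigl(\bigwedge_{j}(x_j\notin V(M_i))\wedge\bigwedge_{j}(y_j\in V(M_i))\Bigr),
\]
and $\prod_j\mathbb{P}(x_j\in U_i)$ factors analogously. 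After cancelling the $\mathbb{P}(x_j\notin W_i)$ factors, the proposition reduces to proving the same near-factorisation for the event depending only on $V(M_i)$.

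Stage 2 establishes the core base case: for any distinct $z_1,\dots,z_n\in U_{i-1}$ with $n\le 6k$,
\[
\mathbb{P}\Bigl(\bigwedge_j (z_j\in V(M_i))\Bigr)=(1\pm O_k(1/D_{i-1}))\prod_{j}\mathbb{P}(z_j\in V(M_i)).
\]
I expand the left-hand side using $\mathbb{1}[z_j\in V(M_i)]=\sum_{F\in E_{z_j}}\mathbb{1}[F\in M_i]$ and multiply over $j$, obtaining $\sum_{(F_1,\dots,F_n):F_j\in E_{z_j}}\mathbb{P}(F_j\in M_i\ \forall j)$. Tuples with pairwise disjoint $F_j$ dominate; tuples in which some $F_j$'s coincide force several $z_j$'s into a single edge, which by simplicity of $H_{i-1}$ is determined uniquely by any two of those vertices, and these contribute only $O(1/D_{i-1})$ relative error. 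For a disjoint tuple, $\mathbb{P}(F_j\in M_i\ \forall j)=D_{i-1}^{-n}(1-D_{i-1}^{-1})^{|E^*|}$ where $E^*:=\bigcup_j N_{F_j}\setminus\{F_1,\dots,F_n\}$ and $N_{F}$ denotes the set of edges of $H_{i-1}$ intersecting $F$. Simplicity yields the uniform overlap bound $|N_{F_j}\cap N_{F_{j'}}|\le k^2$ (each edge meeting disjoint $F_j,F_{j'}$ is determined by a pair in $F_j\times F_{j'}$), and higher-order intersections are $O_k(1)$ as well; since $n\le 6k$, inclusion–exclusion gives $|\bigcup_j N_{F_j}|=\sum_j|N_{F_j}|\pm O_k(1)$. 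Therefore $(1-D_{i-1}^{-1})^{|E^*|}=(1\pm O(1/D_{i-1}))\prod_j(1-D_{i-1}^{-1})^{|N_{F_j}|-1}$, so the probability factors term-by-term. Extending the sum from disjoint tuples to all tuples contributes only $O(1/D_{i-1})$ relative error, since simplicity limits the non-disjoint tuples to $O(D_{i-1}^{n-1})$ in count (once $F_j$ is fixed, at most $k$ edges through $z_{j'}$ intersect it) while each tuple contributes $O(D_{i-1}^{-n})$.

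Stage 3 handles the original event by inclusion–exclusion on the $x_j$'s:
\[
\mathbb{P}\Bigl(\bigwedge_j (x_j\notin V(M_i))\wedge\bigwedge_l (y_l\in V(M_i))\Bigr)=\sum_{S\subseteq[n_1]}(-1)^{|S|}\mathbb{P}\Bigl(\bigwedge_{j\in S}(x_j\in V(M_i))\wedge\bigwedge_l(y_l\in V(M_i))\Bigr).
\]
Each summand involves at most $|S|+n_2\le 6k$ distinct vertices in $U_{i-1}$, so by the base case from Stage~2 it equals $(1\pm O(1/D_{i-1}))\prod_{j\in S}\mathbb{P}(x_j\in V(M_i))\prod_l\mathbb{P}(y_l\in V(M_i))$. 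The main terms aggregate via $\sum_{S}(-1)^{|S|}\prod_{j\in S}\mathbb{P}(x_j\in V(M_i))=\prod_j\mathbb{P}(x_j\notin V(M_i))$, and the error terms sum in absolute value to at most $O(1/D_{i-1})\cdot 2^{n_1}\prod_l\mathbb{P}(y_l\in V(M_i))$. Since Proposition~\ref{claim:matching}(ii) combined with $\Delta_{i-1}=o(D_{i-1})$ gives $\mathbb{P}(x_j\notin V(M_i))=\Theta(1)$, the relative error remains $O(1/D_{i-1})$. Combined with Stage~1, this proves the proposition with the constant $C_9$ coming from the implicit $O_k(\cdot)$ constant.

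The main obstacle is the uniformity of the overlap estimate in Stage~2: the $\binom{n}{2}$ pairwise-intersection terms $|N_{F_j}\cap N_{F_{j'}}|$ must aggregate to an absolute constant depending only on $k$, so that the correction factor $(1-D_{i-1}^{-1})^{-O_k(1)}$ collapses to a clean $(1\pm O(1/D_{i-1}))$. This depends crucially on both the simplicity of $H$ (bounding each pairwise overlap) and on the hypothesis $n_1+n_2\le 6k$ (bounding the number of overlap terms); without either assumption the method would not yield a $1/D_{i-1}$-level error.
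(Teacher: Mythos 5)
Your proof is correct and follows essentially the same route as the paper's own argument (Appendix~B). Both proofs share the same skeleton: establish a near-product formula for probabilities of disjoint-edge selection (your $(1-D_{i-1}^{-1})^{|E^*|}$ versus the paper's $t(F_1,\dots,F_s)$ expansion, with the same $O_k(1)$ overlap bound from simplicity); expand $\mathbb{P}(\bigwedge z_j \in V(M_i))$ as a sum over edge tuples and control the non-disjoint tuples separately; then apply inclusion--exclusion and use that the main term $\prod_j\mathbb{P}(x_j \notin V(M_i))\prod_l\mathbb{P}(y_l\in V(M_i))=\Theta_k(1)$ to convert an absolute $O(D_{i-1}^{-1})$ error into a relative one. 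The one genuine (though minor) structural difference is your Stage~1: you factor the $W_i$-contribution out cleanly at the start, after which the inclusion--exclusion runs only over the $\{x_j\in V(M_i)\}$ events, whereas the paper keeps the $W_i$-events inside the inclusion--exclusion (the $3^{2n_1+n_2}$ terms over subsets of $\{x_j\in V(M_i)\}\cup\{x_j\in W_i\}\cup\{y_l\in V(M_i)\}$) and only exploits independence of $W_i$ term-by-term. Your arrangement is slightly leaner and reduces the number of terms to track, but it does not change the error estimate or any of the substance of the argument.
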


    The proof of Proposition~\ref{lem:almost_indep} is similar to the proof of Claim 1 in \cite{AKS1997}. Thus, we omit it here. Full details are given in Appendix~\ref{appendix:almostindependence}.
	
	\vspace{2mm}

	Recall that for the $i$th stage of the Nibble process, our probability space is generated from mutually independent events of the form $F \in B_i$ and $w \in W_i$. To prove concentration for various random variables in our analysis of the Nibble lemma, we will use Lemma \ref{cor:martingale} (Martingale inequality) where `Paul' determines a random variable by asking whether each choice of the form $F \in B_i$ or $w \in W_i$ is Yes or No. The following observation (which follows from $(\rm L1)_{i-1}$) bounds the number of such queries Paul needs in order to determine whether a vertex of $H_{i-1}$ is in $U_i$.
	
	\begin{proposition}
		\label{PaulsQueries}
		Let $y \in U_{i-1} = V(H_{i-1})$. Then Paul needs at most $\O(D_{i-1})$ queries of whether an edge is in $B_i$ to determine whether $y$ is in $V(M_i)$ and one query to determine whether $y$ is in $W_i$.
		
		Hence, to determine whether $y$ is in $U_i$, Paul needs at most $\O(D_{i-1})$ queries of whether an edge is in $B_i$, and one query of whether a vertex is in $W_i$.
	\end{proposition}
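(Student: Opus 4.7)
The plan is to exhibit an explicit decision tree (strategy) for Paul and bound the number of queries along any branch, using the near-regularity of $H_{i-1}$ guaranteed by $(\rm L1)_{i-1}$.

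First, by $(\rm L1)_{i-1}$ (together with Proposition~\ref{claim:bound_delta}~$(\rm A3)_{i-1}$ which gives $\Delta_{i-1} = o(D_{i-1})$), every vertex of $H_{i-1}$ has degree at most $D_{i-1} + \Delta_{i-1} = \O(D_{i-1})$. Let $E_y := \{F \in E(H_{i-1}) : y \in F\}$, so $|E_y| = \O(D_{i-1})$. Paul's strategy for determining whether $y \in V(M_i)$ proceeds in two rounds. In round one, he queries, for each $F \in E_y$, whether $F \in B_i$; this uses $|E_y| = \O(D_{i-1})$ queries. If no $F \in E_y$ lies in $B_i$, or if at least two distinct $F, F' \in E_y$ lie in $B_i$, then Paul concludes $y \notin V(M_i)$ (in the second case, $F$ and $F'$ both meet at $y$, so neither is isolated in $B_i$). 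Otherwise, there is a unique $F \in E_y \cap B_i$, and Paul proceeds to round two.

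In round two, Paul determines whether this unique $F$ is isolated in $B_i$. The set of edges of $H_{i-1}$ that meet $F$ (other than $F$ itself) is contained in $\bigcup_{v \in F} E_v \setminus \{F\}$, which has size at most $k \cdot (D_{i-1} + \Delta_{i-1}) = \O(D_{i-1})$. Paul queries each such edge; if none of them is in $B_i$, then $F \in M_i$ and hence $y \in V(M_i)$; otherwise $F \notin M_i$ and, since $F$ was the only edge of $E_y$ in $B_i$, $y \notin V(M_i)$. The total number of queries along any line of questioning is therefore $\O(D_{i-1})$.

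For the membership $y \in W_i$, the single Yes/No choice associated to $y$ is, by construction, mutually independent of all other choices in the probability space, so Paul needs exactly one query to determine $y \in W_i$. Finally, since $y \in U_i$ if and only if $y \notin V(M_i)$ and $y \notin W_i$, Paul simply concatenates the two strategies above: he runs the matching strategy to decide $y \in V(M_i)$ (using $\O(D_{i-1})$ edge queries) and then makes one additional vertex query to decide $y \in W_i$. The only subtle point in the write-up is verifying that two-round adaptivity is allowed in the framework (which it is, since Paul's next query may depend on previous responses), and that the bound $\O(D_{i-1})$ is uniform over all branches of the decision tree; both follow directly from $(\rm L1)_{i-1}$ and the bound on $|E_y|$, so no real obstacle arises.
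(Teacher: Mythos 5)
Your proof is correct and follows essentially the same two-round strategy as the paper's own (commented-out) proof: first query the $\O(D_{i-1})$ edges incident to $y$, and if exactly one lies in $B_i$, query the $\O(D_{i-1})$ edges meeting it to check isolation, then add one query for $W_i$. The only cosmetic difference is that you invoke Proposition~\ref{claim:bound_delta}~$(\rm A3)_{i-1}$ explicitly to justify $D_{i-1}+\Delta_{i-1}=\O(D_{i-1})$, which is fine but not strictly necessary.
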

	\COMMENT{\begin{proof}
		By $(\rm L1)_{i-1}$, the degree of any vertex in $H_{i-1}$ is $\O(D_{i-1})$. To determine whether $y \in U_{i-1}$ is in $V(M_i)$, Paul asks $\O(D_{i-1})$ queries of whether each of the edges incident to $y$ in $E(H_{i-1})$ is in $B_i$.  If  at least two (or none) of these edges belong to $B_i$, then clearly $y \not \in V(M_i)$. Otherwise, exactly one of these edges $e'$ is in $B_i$, and to determine whether $e'$ is in $E(M_i)$ it suffices for Paul to ask $\O(D_{i-1})$ queries of whether the edges of $H_{i-1}$ sharing a vertex with $e'$ are in $B_i$ (where the number of such edges is again at most $\O(D_{i-1})$ by $(\rm L1)_{i-1}$). Thus in total, to determine whether $y$ is in $V(M_i)$ Paul needs at most $\O(D_{i-1})$ queries of whether an edge is in $B_i$. To determine whether $y \in W_i$, Paul only needs one query of whether a vertex is in $W_i$. Finally note that to determine whether $y \in U_i$, it suffices to determine whether $y \in V(M_i)$ and whether $y \in W_i$. This proves the lemma. 
	\end{proof}}

    \subsubsection{Proof of Lemma~\ref{lem:nibble} (Nibble lemma)}
	\subsubsection*{\textbf{Step 1.} Proving that $(\rm L1)_{i}$ holds with high probability}
	
	Note that $D_i(x)$ is the number of instances $e$ of $D_{i-1}(x)$ such that $e \setminus \{x\} \subseteq U_i$. Hence by \eqref{eqn:pUi} and  Proposition~\ref{lem:almost_indep}, 
	
    \begin{align}\label{eqn:exp_deg}
        \begin{split}    
            \mathbb{E}[D_i(x)] &= (1 \pm C_9 D_{i-1}^{-1}) (1 - p_{i-1}^*)^{k-1}D_{i-1}(x) \\
            \overset{(\rm L1)_{i-1}}&{=}  (1 \pm C_9 D_{i-1}^{-1}) (1 - p_{i-1}^*)^{k-1} (D_{i-1} \pm \Delta_{i-1}) \\ 
            &= (1 - p_{i-1}^*)^{k-1} D_{i-1} \pm (1 - p_{i-1}^*)^{k-1} (\Delta_{i-1} + 2C_9).
        \end{split}
    \end{align}
    Note that in the last equality we used $\Delta_{i-1} = o(D_{i-1})$ (which holds by Proposition~\ref{claim:bound_delta}).
    \begin{claim}\label{claim:L1}
        With probability $1 - e^{-\Omega(\log^2 N)}$, $(\rm L1)_{i}$ holds.
    \end{claim}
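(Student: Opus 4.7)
Step one is to observe that the expectation calculation is already done: from \eqref{eqn:exp_deg} together with the definitions \eqref{Didef} and \eqref{definingDeltai},
\[
\mathbb E\bigl[D_i(x)\bigr] = D_i \pm \bigl((1-p_{i-1}^*)^{k-1}\Delta_{i-1} + \O(1)\bigr) = D_i \pm \bigl(\Delta_i - \sqrt{D_{i-1}}\log N + \O(1)\bigr).
\]
So it suffices to prove that $|D_i(x) - \mathbb E[D_i(x)]|\le (1-o(1))\sqrt{D_{i-1}}\log N$ holds with probability $1-e^{-\Omega(\log^2 N)}$ for a fixed $x$; since $N=e^{\O(\log N)}=e^{o(\log^2 N)}$, a union bound over the $N$ vertices then delivers $(\rm L1)_i$ at the claimed failure rate.

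For the deviation I would apply the Martingale inequality (Lemma~\ref{cor:martingale}) on the product space generated by the independent indicators $\{F\in B_i\}_{F\in E(H_{i-1})}$ and $\{w\in W_i\}_{w\in U_{i-1}}$, with Paul's strategy the one implicit in Proposition~\ref{PaulsQueries}: iterate through the $\O(D_{i-1})$ edges $e\ni x$ and, for each $y\in e\setminus\{x\}$, query whether $y\in W_i$ and then the $\O(D_{i-1})$ further edge indicators needed to decide whether $y\in V(M_i)$. Only choices in this local region of $x$ can produce $c_i\ne 0$. Vertex queries contribute variance $\O(1)\cdot p_{W_i}(w)=\O(\Delta_{i-1}/D_{i-1})$ each by \eqref{eqn:wasteprob}, summing to $o(D_{i-1})$ over the $\O(kD_{i-1})$ neighbours of $x$ using Proposition~\ref{claim:bound_delta}$(\rm A3)_{i-1}$. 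The delicate case is the edge queries, since flipping a single $F\in B_i$ could a priori alter the $M_i$-status of $F$ together with every edge of $B_i$ meeting $F$, potentially perturbing $\Theta(D_{i-1})$ neighbours of $x$. To sidestep this I would pass to the Chernoff-type event $\mathcal E$ (Lemma~\ref{lem:chernoff}) that every edge $F$ in the local region meets at most $\O(\log N)$ elements of $B_i$; a union bound over the $\O(D_{i-1}^2)$ local edges shows $\mathbb P(\mathcal E^c)=e^{-\Omega(\log^2 N)}$. Under $\mathcal E$, simplicity of $H$ forces the change per edge query to be at most $T=\O(k^2\log N)$ (each affected $M_i$-edge contains at most $k$ neighbours of $x$, each sitting on a unique edge through $x$, by simplicity), and a direct summation over edge queries then yields total edge variance $\sigma^2=\O(D_{i-1})$.

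Feeding $\sigma=\O(\sqrt{D_{i-1}})$, $T=\O(k^2\log N)$ and $\lambda=\Theta(\log N)$ into Lemma~\ref{cor:martingale} yields deviation $\lambda\sigma=\O(\sqrt{D_{i-1}}\log N)$ with failure probability $2e^{-\lambda^2/4}=e^{-\Omega(\log^2 N)}$, while the side-condition $\lambda<2\sigma\delta/T$ reduces to $D_{i-1}\gg\log^6 N$, which holds by \eqref{eq:*} and \eqref{eq:settingupparameters}. Combined with the expectation estimate and the union bound over $x\in V(H)$, this assembles $(\rm L1)_i$. The main obstacle I anticipate is precisely the bound on $T$: simplicity of $H$ is crucial (without it a single flip could propagate through many co-edges at a shared vertex pair), and so is the conditioning on $\mathcal E$ (without which the per-query change would scale with $D_{i-1}$ and the side-condition of Lemma~\ref{cor:martingale} would fail).
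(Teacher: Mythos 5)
Your overall scaffold matches the paper's: compute $\mathbb E[D_i(x)]$ via \eqref{eqn:exp_deg}, apply Lemma~\ref{cor:martingale} with Paul's strategy from Proposition~\ref{PaulsQueries}, and union bound over $x \in V(H)$. But there is a genuine gap in how you handle the Lipschitz bound $T$ for edge queries, and it stems from a misconception about how $M_i$ can change.

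You worry that flipping a single indicator $F \in B_i$ ``could a priori alter the $M_i$-status of $F$ together with every edge of $B_i$ meeting $F$,'' potentially touching $\Theta(D_{i-1})$ neighbours of $x$. This is not what happens. Since $M_i$ is by definition the set of \emph{isolated} edges of $B_i$, removing $F$ from $B_i$ can only add new isolated edges, and any newly isolated edge $e'$ must intersect $F$ and be disjoint from every other edge of $B_i \setminus \{F\}$; in particular all the newly isolated edges are pairwise disjoint and all meet the $k$-set $F$, so there are at most $k$ of them. Symmetrically, adding $F$ can remove at most $k$ pairwise disjoint edges of $M_i$ that meet $F$. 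Hence $E(M_i)$ changes by at most $k+1$ edges, $V(M_i)$ by $\O_k(1)$ vertices, and — using simplicity of $H$ to bound the number of instances of $D_{i-1}(x)$ through any given vertex by one — $D_i(x)$ changes by $\O_k(1)$. So $T = \O_k(1)$ holds \emph{unconditionally}, which is exactly what the paper uses. No event $\mathcal E$ is needed.

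Beyond being unnecessary, your conditioning on $\mathcal E$ also leaves a hole: Lemma~\ref{cor:martingale} requires a uniform bound $T$ on all increments $c_i$, and restricting attention to a high-probability event does not give that without an explicit truncation argument or a version of the inequality built to tolerate a bad event. Furthermore, with $T = \O(\log N)$ the ``direct summation'' claim that the total edge variance is $\O(D_{i-1})$ does not follow: $\O(D_{i-1}^2)$ edge queries each of variance $\O(\log^2 N / D_{i-1})$ gives a worst-case $\O(D_{i-1}\log^2 N)$, and with that $\sigma$ your choice $\lambda = \Theta(\log N)$ overshoots the target deviation $\sqrt{D_{i-1}}\log N$. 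The correct route is the one the paper takes: $T = \O_k(1)$ gives per-query variance $\O_k(1)/D_{i-1}$ for edge queries and $\O_k(1)\cdot \Delta_{i-1}/D_{i-1}$ for vertex queries, summing to $\sigma^2 = \O(D_{i-1})$, after which $\lambda = \Theta(\log N)$ delivers the claim.
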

    \begin{subproof}
        Let $x \in V(H)$. By $(\rm L1)_{i-1}$ and $(\rm A3)_{i-1}$ of Proposition~\ref{claim:bound_delta}, note that 
	\begin{equation*}
            0.5 D_{i-1} \leq D_{i-1}(x) = D_{i-1} \pm \Delta_{i-1} \leq 2 D_{i-1}.
	\end{equation*}
 
        We will prove the claim by applying the Martingale inequality (Lemma~\ref{cor:martingale}). Indeed, to determine the random variable $D_i(x)$, for each instance $e$ of $D_{i-1}(x)$, Paul has to determine whether each of the $k-1$ vertices of $e \setminus \{x\}$ is in $U_i$. By Proposition ~\ref{PaulsQueries}, this can be done by asking at most $\O(D_{i-1} \cdot D_{i-1}(x)) = \O(D_{i-1}^2)$ queries of whether an edge is in $B_i$ and $\O(D_{i-1}(x)) = \O(D_{i-1})$ queries of whether a vertex is in $W_i$. The former is yes with probability $1/D_{i-1}$ and the latter is yes with probability at most $\O(\Delta_{i-1}/D_{i-1})$ by Proposition~\ref{claim:waste}. Moreover, we claim that changing the response to a query (keeping all others the same), changes $D_i(x)$ by at most $\O_k(1)$. Indeed, changing whether an edge $e$ is in $B_i$ changes $V(M_i)$ (and thus $U_i$) by at most $\O_k(1)$ vertices, as there are at most $k$ pairwise disjoint edges in $H_{i-1}$ that intersect $e$. Changing whether a vertex is in $W_i$ changes $U_i$ by at most one vertex. Since $H_{i-1}$ is simple, it follows that $D_i(x)$ changes by at most $\O_k(1)$. 
	
        Hence, we can choose $\sigma = \Theta(\sqrt{D_{i-1}})$ in Lemma~\ref{cor:martingale}. 
        By choosing $\lambda = \Theta(\log N)$ with an appropriate multiplicative constant and since $\sqrt{D_{i-1}} = \omega(\log N)$ by ~\eqref{eq:*}, we obtain that with probability at least $1 - e^{-\Omega_{k}(\log^2 N)}$, 
	\begin{align*}
            D_i(x) &= \mathbb{E}[D_i(x)] \pm \frac{\sqrt{D_{i-1}} \log N}{2}\\ 
            \overset{\eqref{eqn:exp_deg}}&{=} (1 - p_{i-1}^*)^{k-1} D_{i-1} \pm \left ( (1 - p_{i-1}^*)^{k-1} \Delta_{i-1} + \sqrt{D_{i-1}} \log N \right ) \overset{~\eqref{Didef},~\eqref{definingDeltai}}{=} D_i \pm \Delta_i.
        \end{align*}
	By the union bound, with probability at least
        $1 - |V(H)|e^{-\Omega_{k}(\log^2 N)} = 1 - e^{-\Omega_{k}(\log^2 N)},$
        we have $D_i(x) = D_i \pm \Delta_i$ for all $x \in V(H)$. This proves $(\rm L1)_{i}$.
    \end{subproof}
    
    \subsubsection*{\textbf{Step 2.} Proving that $(\rm L2)_{i}$ holds with high probability}
	
	\begin{claim}\label{claim:L2}
		With probability at least $1 - e^{-\Omega( \log^2 N )}$, $(\rm L2)_{i}$ holds.
	\end{claim}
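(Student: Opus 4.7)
The overall plan is to prove each of the three sub-statements (a), (b), (c) of $(\rm L2)_i$ for a single $S\in\mathcal V$ with failure probability $e^{-\Omega(\log^2 N)}$, and then take a union bound over the $|\mathcal V|\le\exp(\log^{3/2}N)$ choices of $S$. Throughout I will freely use the induction hypothesis $(\rm L2)_{i-1}$ (in particular part (c) giving $|S\cap U_{i-1}|=(1+o(1))(1-e^{-k})^{i-1}|S|$), the estimates in Proposition~\ref{claim:bound_delta} (so that $\Delta_{i-1}\log D_0=o(D_{i-1})$ and $i\le C^*\log D_0$), and the lower bound $|S|\ge\sqrt{D_0}\log N$.

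For part (a), the events $\{v\in W_i\}_{v\in U_{i-1}}$ are mutually independent and each has probability at most $C_5\Delta_{i-1}/D_{i-1}$ by Proposition~\ref{claim:waste}, so $|S\cap W_i|$ is a sum of independent Bernoullis with mean $\mu\le C_5|S\cap U_{i-1}|\Delta_{i-1}/D_{i-1}$. I will invoke Lemma~\ref{lem:chernoff} with additive deviation $\lambda:=|S\cap U_{i-1}|\log N/\sqrt{D_{i-1}}$, splitting into the cases $\lambda\ge 7\mu$ (tail $e^{-\lambda}$) and $\lambda<7\mu$ (tail $e^{-\lambda^2/(4\mu)}$). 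In both cases the lower bound on $|S\cap U_{i-1}|$ derived from the induction hypothesis and $|S|\ge\sqrt{D_0}\log N$ forces the exponent to be $\Omega(\log^2 N)$.

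Part (b) requires the Martingale inequality (Lemma~\ref{cor:martingale}). First, Proposition~\ref{lem:almost_indep} applied to the singleton event gives $\mathbb{P}(v\in U_i)=(1\pm C_9/D_{i-1})(1-p_{i-1}^*)$, hence by linearity $\mathbb{E}[|S\cap U_i|]=(1+o(1))(1-p_{i-1}^*)|S\cap U_{i-1}|$. Paul's strategy for determining $|S\cap U_i|$ is, for each $v\in S\cap U_{i-1}$, to carry out the $\O(D_{i-1})$ edge queries plus the one vertex query of Proposition~\ref{PaulsQueries}. Because $H_{i-1}$ is simple, flipping a single edge query changes $V(M_i)$ by $\O_k(1)$ vertices and hence changes $|S\cap U_i|$ by $\O_k(1)$, while flipping a vertex query changes $|S\cap U_i|$ by at most $1$. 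Each edge query has Yes-probability $1/D_{i-1}$ and thus contributes $\O(1/D_{i-1})$ to the variance; each vertex query contributes $\O(\Delta_{i-1}/D_{i-1})$. Summing over all queries gives total variance $\sigma^2=\O(|S\cap U_{i-1}|)$ (crucially linear, not quadratic). Taking $\lambda=\Theta(|S\cap U_{i-1}|^{1/4})$ produces deviation $\lambda\sigma=\Theta(|S\cap U_{i-1}|^{3/4})$ — which, divided by $(1-p_{i-1}^*)|S\cap U_{i-1}|$, yields exactly the factor $|S\cap U_{i-1}|^{-1/4}$ in (b) — with tail probability $\exp(-\Omega(\sqrt{|S\cap U_{i-1}|}))=e^{-\Omega(\log^2 N)}$ by the lower bound on $|S\cap U_{i-1}|$.

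Part (c) I will deduce from (b) by telescoping: write
\[
\frac{|S\cap U_i|}{|S|}=\prod_{j=0}^{i-1}\frac{|S\cap U_j|}{|S\cap U_{j-1}|}
\]
with $U_{-1}:=V(H)$, apply (b) at each step, and use Proposition~\ref{claim:matching}(iii) to replace $1-p_j^*$ by $(1+o(1))(1-e^{-k})$; finally Proposition~\ref{lem:boundprod} controls the product of the resulting $(1\pm o(1))$ factors, using $i\le C^*\log D_0$ and the fact that the $|S\cap U_j|^{-1/4}$ terms decay geometrically fast (since $|S\cap U_j|$ grows geometrically in $j$ from above). The main obstacle I anticipate is the variance bookkeeping in (b): it is essential that a single edge query, even though it can influence the $U_i$-status of several vertices of $S$ simultaneously, does not inflate the per-query Lipschitz constant beyond $\O_k(1)$ — this is where simplicity of $H_{i-1}$ enters — and that the aggregated variance across the $\O(D_{i-1}|S\cap U_{i-1}|)$ edge queries remains $\O(|S\cap U_{i-1}|)$ rather than blowing up by a factor of $D_{i-1}$.
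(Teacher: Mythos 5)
Your overall architecture matches the paper's: Chernoff--Hoeffding for (a), the Martingale inequality with $\sigma=\Theta(\sqrt{|S\cap U_{i-1}|})$ and $\lambda=\Theta(|S\cap U_{i-1}|^{1/4})$ for (b), telescoping plus Proposition~\ref{lem:boundprod} for (c), and a union bound over $\mathcal V$. Your variance bookkeeping in (b) is also exactly right: $\O(|S\cap U_{i-1}|D_{i-1})$ edge queries each of variance $\O(1/D_{i-1})$, plus $|S\cap U_{i-1}|$ vertex queries of variance $\O(\Delta_{i-1}/D_{i-1})$, giving $\sigma^2=\O(|S\cap U_{i-1}|)$.

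However, there is a genuine gap in your treatment of (a). You invoke Lemma~\ref{lem:chernoff} with pure additive deviation $\lambda=|S\cap U_{i-1}|\log N/\sqrt{D_{i-1}}$ and claim the resulting exponent $\lambda^2/(4\mu)$ is $\Omega(\log^2 N)$ whenever $\lambda<7\mu$. That is false in general: since $\mu\le C_5|S\cap U_{i-1}|\Delta_{i-1}/D_{i-1}$, one has $\lambda^2/\mu\ge |S\cap U_{i-1}|\log^2 N/(C_5\Delta_{i-1})$, which is $\Omega(\log^2 N)$ only if $|S\cap U_{i-1}|=\Omega(\Delta_{i-1})$. Taking $|S|=\sqrt{D_0}\log N$ (the smallest allowed) and $\Delta_0=KD_0^\varepsilon$ with $\varepsilon$ close to $1$, already at $i=1$ one has $|S\cap U_0|=|S|=\sqrt{D_0}\log N\ll\Delta_0$, so the exponent degenerates to $o(1)$ and the tail bound is useless. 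The statement (a) is deliberately weak enough to allow the fix: it bounds $|S\cap W_i|$ by $2\mathbb{E}|S\cap W_i|+\lambda$, not by $\mathbb{E}|S\cap W_i|+\lambda$. So one should apply Chernoff with deviation $\mathbb{E}|S\cap W_i|+\lambda$ (in the regime $\lambda<7\mathbb{E}|S\cap W_i|$ this gives exponent $\Omega(\mathbb{E}|S\cap W_i|)\ge\Omega(\lambda/7)=\Omega(\log^2 N)$), rather than bare $\lambda$. This ``$2\mathbb{E}$'' slack is precisely why the constant $2C_5$ appears in $(\rm L2)_i$(a).

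A secondary, more cosmetic issue: in (b) you write $\mathbb{E}[|S\cap U_i|]=(1+o(1))(1-p_{i-1}^*)|S\cap U_{i-1}|$, attributing an error of $\O(1/D_{i-1})$ from Proposition~\ref{lem:almost_indep}. If such a multiplicative error were actually present, it would swamp the target relative error $|S\cap U_{i-1}|^{-1/4}$ whenever $|S\cap U_{i-1}|>D_{i-1}^4$ (which can occur since $|S|$ may be as large as $N$ while $D_0$ may be as small as $\exp(\log^\mu N)$). Fortunately there is no such error: by the construction of $W_i$ via \eqref{eqn:pwi}, equation \eqref{eqn:pUi} gives $\mathbb{P}(v\in U_i)=1-p_{i-1}^*$ exactly, hence $\mathbb{E}|S\cap U_i|=(1-p_{i-1}^*)|S\cap U_{i-1}|$ with no correction factor. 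You should use \eqref{eqn:pUi} directly instead of Proposition~\ref{lem:almost_indep}, which for a single event is a tautology and contributes nothing here.

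Finally, the $\O_k(1)$ Lipschitz constant for flips of $B_i$-queries follows from $k$-uniformity (at most $k$ pairwise disjoint edges can meet a fixed edge, so $V(M_i)$ changes by at most $k(k+1)$ vertices), not from simplicity of $H_{i-1}$; simplicity is needed elsewhere in the Nibble analysis but not for this particular bound.
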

	\begin{subproof}
	    Let us fix an $S \in \mathcal{V}$. By~\eqref{eqn:pUi}, we have
		\begin{align}\label{eqn:expui1}
		\mathbb{E}|S \cap U_i| = |S \cap U_{i-1}| (1 - p_{i-1}^*).
		\end{align}
		For any $1 \leq j \leq i-1$, we assumed $(\rm L2)_j$ holds, so we have 
		\begin{align}
		    |S \cap U_{j}| &= |S \cap U_{j-1}| \cdot \left ( 1 \pm |S \cap U_{j-1}|^{-1/4} \right ) (1 - p_{j-1}^*) {\rm ,\:\:and} \label{eqn:first_l2j}\\
		    |S \cap U_j| &= (1+o(1))(1 - e^{-k})^j |S| \overset{(\rm A2)_j}{=} (1+o(1)) |S| (D_j D_0^{-1})^{\frac{1}{k-1}} \geq \Omega(D_0^{\frac{1}{2} - \frac{1}{k-1}}), \label{eqn:second_l2j}
		\end{align}
	since $|S| \geq \sqrt{D_0} \log N$ by the assumption of Theorem~\ref{thm:mainsec3} and $D_j \geq D_0^{\gamma} \ge 1$ by \eqref{eq:Domega}.
		Hence by recursively applying \eqref{eqn:first_l2j} and then using 
		Proposition~\ref{lem:boundprod} and Proposition~\ref{claim:bound_delta}$(\rm A1)_{i-1}$, we obtain\COMMENT{In the summation below, it is worth noting that $|S \cap U_j|$ behaves like a geometric series by~\eqref{eqn:second_l2j}, so the summation is bounded by the last term regardless of the bound on $\zeta$.}
		\begin{equation}\label{eqn:ui-1}
		|S \cap U_{i-1}| \overset{\eqref{eq:***}}{=} \left (1 \pm 2 \sum_{j=0}^{i-2} |S \cap U_j|^{-1/4} \right ) q_{0,i-1} |S| \overset{\eqref{eqn:second_l2j}}{=} (1+o(1))(1 - e^{- k})^{i-1}|S|.
		\end{equation}
		By Proposition~\ref{claim:bound_delta} we have $D_{i-1} = (1+o(1))(1-e^{-k})^{(i-1)(k-1)}D_0$. Combining this with ~\eqref{eqn:ui-1} and our assumption that $|S| \geq \sqrt{D_0} \log N$, we obtain that
		\begin{equation*}
		\frac{|S \cap U_{i-1}| \log N}{\sqrt{D_{i-1}}} = (1+o(1)) (1 - e^{-k})^{(i-1)\left ( 1 - \frac{k-1}{2} \right )} \frac{|S| \log N}{\sqrt{D_0}} = \Omega(\log^2 N).
		\end{equation*}
		Hence by Proposition~\ref{claim:waste} and Lemma~\ref{lem:chernoff} (Chernoff-Hoeffding inequality), with probability at least $1 - e^{-\Omega(\log^2 N)}$.\COMMENT{If $\frac{|S \cap U_{i-1}| \log N}{\sqrt{D_{i-1}}} \geq 7 \mathbb{E}|S \cap W_i|$, then by Chernoff we have the error probability $\exp \left (- \frac{|S \cap U_{i-1}| \log N}{\sqrt{D_{i-1}}} \right ) \leq \exp(-\log^2 N)$. Otherwise, by Chernoff we have the error probability $\exp(-\mathbb{E}|S \cap W_i|) \leq \exp \left (-\Omega\left(\frac{|S \cap U_{i-1}| \log N}{\sqrt{D_{i-1}}}\right) \right ) \leq \exp(-\Omega(\log^2 N))$. Note that we added the extra coefficient 2 in front of $\mathbb{E}|S \cap W_i|$ to be able to use Chernoff for either case.}
		\begin{equation*}
		|S \cap W_i| \leq 2\mathbb{E}|S \cap W_i| + \frac{|S \cap U_{i-1}| \log N}{\sqrt{D_{i-1}}} \overset{\eqref{eqn:wasteexpectation}}{\leq} 2C_5 \frac{\Delta_{i-1}}{D_{i-1}} |S \cap U_{i-1}| + \frac{|S \cap U_{i-1}| \log N}{\sqrt{D_{i-1}}}.
		\end{equation*}
	
	Since $|\mathcal{V}| \le \exp(\log^{3/2}N)$ by the assumption of Theorem~\ref{thm:mainsec3}, we can take the union bound over all $S \in \mathcal V$ to deduce that (\rm a) of $(\rm L2)_{i}$ holds. Now note that\COMMENT{We have changed the 5 to 8 in the lower bound on $D_0$ in ~\eqref{eq:settingupparameters} in order to satisfy this inequality.}
		\begin{align}
		\label{ratioUD}
        |S \cap U_{i-1}|^{1/4} \overset{\eqref{eq:settingupparameters}, \eqref{eqn:second_l2j}}{\geq} \log N.
		\end{align}
		
		Now we will use the Martingale inequality (Lemma~\ref{cor:martingale}) to prove a concentration inequality for the random variable $|S \cap U_i|$. By Proposition ~\ref{PaulsQueries}, $|S \cap U_i|$ is determined by at most $\O(|S \cap U_{i-1}|D_{i-1})$  queries of whether an edge $X \in B_i$ and at most $|S \cap U_{i-1}|$ queries of whether a vertex $w \in W_i$. The first one is yes with probability $1/D_{i-1}$ and the second one is yes with probability at most $\O(\Delta_{i-1}/D_{i-1})$ (by Proposition ~\ref{claim:waste}). Moreover, changing whether $X \in B_i$ changes $|S \cap U_i|$ by at most $\O(k^2) = \O_k(1)$, as there are at most $k$ pairwise disjoint edges in $H_{i-1}$ that intersect $X$ and changing whether $w \in W_i$, of course, only changes $|S \cap U_i|$ by at most one. Thus we can apply Lemma~\ref{cor:martingale} with $\sigma = \Theta(|S \cap U_{i-1}|^{1/2})$ and $\lambda = \Theta(|S \cap U_{i-1}|^{1/4})$ (with an appropriately chosen multiplicative constant factor), to obtain that with probability at least $1 - e^{-\Omega(|S \cap U_{i-1}|^{1/2})} \overset{\eqref{ratioUD}}{=} 1 - e^{-\Omega(\log^2 N)}$,
		
		\begin{align*}
		|S \cap U_i| = \mathbb{E}|S \cap U_i| \pm (1-p_{i-1}^*)|S \cap U_{i-1}|^{3/4} \overset{\eqref{eqn:expui1}}{=} \left ( 1 \pm |S \cap U_{i-1}|^{-1/4} \right ) (1 - p_{i-1}^*)|S \cap U_{i-1}|.
		\end{align*}
	Hence by~\eqref{eqn:ui-1} and Proposition~\ref{claim:matching}$(\rm iii)$, with probability at least $1 - e^{-\Omega(\log^2 N)}$, we have
		\begin{align*}
		    |S \cap U_i| =  (1+o(1))(1 - e^{-k})^{i} |S|.
		\end{align*}
		Hence by taking the union bound over all $S \in \mathcal{V}$, we deduce that (b) and (c) of $(\rm L2)_i$ hold as well. 
	\end{subproof}

	\subsubsection*{\textbf{Step 3.} Proving $(\rm L3)_{i}$ holds with high probability}

	For any $x \in U_{i-1}$, note that $Z_i(x)$ is the sum of 
	\begin{itemize}
		\item the number of instances $e$ of $Z_{i-1}(x)$ such that the $k-2$ vertices of $e \setminus\{x\} \cap U_{i-1}$ are in $U_i$, and
		\item the number of instances $e$ of $D_{i-1}(x)$ such that one vertex of $e \setminus \{ x \}$ is in $V(M_i)$ and the remaining $k-2$ vertices of $e \setminus \{ x \}$ are in $U_i$.
	\end{itemize}
	Thus we have the following recurrence relation by Proposition~\ref{claim:matching}$(\rm ii)$, \eqref{eqn:pUi} and Proposition~\ref{lem:almost_indep}.
	\begin{align}\label{eqn:expei}
	\mathbb{E} Z_i(x) &= \left ( 1 \pm \frac{C_9}{D_{i-1}} \right ) (1 - p_{i-1}^*)^{k-2} \left ( Z_{i-1}(x) + \left (1 \pm \frac{C_1 \Delta_{i-1}}{D_{i-1}} \right ) (k-1)e^{-k} D_{i-1}(x) \right ).
	\end{align}
	
	\begin{claim}\label{claim:eq_e}
		With probability at least $1 - e^{-\Omega(\log^2 N)}$, for any $x \in U_{i-1}$,
		\begin{equation*}
		Z_i(x) = \mathbb{E}Z_i(x) \pm \sqrt{Z_{i-1}(x) + D_{i-1}} \log N.
		\end{equation*}
	\end{claim}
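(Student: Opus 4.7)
The plan is to apply the Martingale inequality (Lemma~\ref{cor:martingale}) to the random variable $Z_i(x)$ for a fixed $x \in U_{i-1}$, and then take a union bound over $x \in U_{i-1}$. The decomposition of $Z_i(x)$ stated just before the claim shows that $Z_i(x)$ is determined by (a) for each instance $e$ of $Z_{i-1}(x)$, whether the $k-2$ vertices of $(e \setminus \{x\}) \cap U_{i-1}$ all lie in $U_i$, and (b) for each instance $e$ of $D_{i-1}(x)$, the pattern of which vertex of $e \setminus \{x\}$ (if any) enters $V(M_i)$ and whether the remaining vertices stay in $U_i$. Both tasks are addressed vertex-by-vertex using Proposition~\ref{PaulsQueries}.

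Next I would bound Paul's line of questioning. By Proposition~\ref{PaulsQueries}, each relevant vertex costs $O(D_{i-1})$ edge queries (each Yes with probability $1/D_{i-1}$) and one $W_i$-query (Yes with probability $O(\Delta_{i-1}/D_{i-1})$ by Proposition~\ref{claim:waste}). Since the number of instances considered is $Z_{i-1}(x) + D_{i-1}(x) = O(Z_{i-1}(x) + D_{i-1})$ (using $(\rm L1)_{i-1}$), the total number of edge queries is $O((Z_{i-1}(x)+D_{i-1}) D_{i-1})$ and the total number of vertex queries is $O(Z_{i-1}(x)+D_{i-1})$. Because $H_{i-1}$ is simple, flipping any single edge-query modifies $V(M_i)$ by at most $O_k(1)$ vertices and thus changes $Z_i(x)$ by $T = O_k(1)$; flipping a vertex-query changes $Z_i(x)$ by at most $1$. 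Summing the per-query variances yields a total variance bound
\[
\sigma^2 = O\!\left( (Z_{i-1}(x)+D_{i-1}) D_{i-1} \cdot \tfrac{1}{D_{i-1}} \right) + O\!\left( (Z_{i-1}(x)+D_{i-1}) \cdot \tfrac{\Delta_{i-1}}{D_{i-1}} \right) = O(Z_{i-1}(x)+D_{i-1}),
\]
where the second term is absorbed via $\Delta_{i-1} = o(D_{i-1})$ from Proposition~\ref{claim:bound_delta}$(\rm A3)_{i-1}$.

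With $\sigma = \Theta(\sqrt{Z_{i-1}(x)+D_{i-1}})$ and $\lambda = \Theta(\log N)$ (with a sufficiently small implicit constant), Lemma~\ref{cor:martingale} gives
\[
|Z_i(x) - \mathbb{E} Z_i(x)| \le \sqrt{Z_{i-1}(x)+D_{i-1}}\,\log N
\]
with probability at least $1 - 2e^{-\lambda^2/4} = 1 - e^{-\Omega(\log^2 N)}$. The hypothesis $\lambda < 2\sigma\delta/T$ of Lemma~\ref{cor:martingale} is satisfied because $\sqrt{D_{i-1}} = \omega(\log N)$: indeed $D_{i-1} \ge D_0^{\gamma} \gg \log^{4/(1-\varepsilon)} N$ by \eqref{eq:*} and the assumption \eqref{eq:settingupparameters} on $D_0$. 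Taking a union bound over the at most $N$ vertices $x \in U_{i-1}$ preserves the probability bound $1 - N e^{-\Omega(\log^2 N)} = 1 - e^{-\Omega(\log^2 N)}$.

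The main technical obstacle is verifying that $T = O_k(1)$ in the Lipschitz analysis: one must check that flipping a single $B_i$-query does not cascade into many changes in the count $Z_i(x)$. This is where simplicity of $H$ is essential, since it ensures a single added or removed edge in $B_i$ only disturbs a bounded number of ``competing'' edges in $V(M_i)$, and hence alters the membership of only $O_k(1)$ vertices in $U_i$; consequently only $O_k(1)$ of the at most $Z_{i-1}(x)+D_{i-1}(x)$ counted instances can switch status.
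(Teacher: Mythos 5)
Your proposal is correct and follows essentially the same route as the paper: the same decomposition of $Z_i(x)$ into the instances of $Z_{i-1}(x)$ and of $D_{i-1}(x)$, Paul's queries bounded via Proposition~\ref{PaulsQueries}, the $O_k(1)$ Lipschitz bound from simplicity, the variance bound $\sigma^2 = O(Z_{i-1}(x)+D_{i-1})$, $\lambda = \Theta(\log N)$, and a union bound over $x$. (The only slip is cosmetic: \eqref{eq:*} gives $D_{i-1} \geq D_0^\gamma = \Omega(\log^{8/(1-\varepsilon)}N)$, not $\log^{4/(1-\varepsilon)}N$, but either exponent suffices for $\sqrt{D_{i-1}} = \omega(\log N)$.)
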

	\begin{subproof}
		We will use the Martingale inequality (Lemma ~\ref{cor:martingale}) to prove the claim. 
	 
		To determine the random variable $Z_i(x)$, Paul must determine \begin{itemize}
			\item[(a)] for each instance $e$ of $Z_{i-1}(x)$, whether each of the $k-2$ vertices of $e \setminus \{x\} \cap U_{i-1}$ is in $U_i$ (or equivalently whether each of those vertices is in $V(M_i)$ or $W_i$), and
			\item[(b)] for each instance $e$ of $D_{i-1}(x)$, whether each of the $k-1$ vertices of $e \setminus \{x \}$ is in $V(M_i)$ or $W_i$.
		\end{itemize}
		
		Since the number of vertices involved in (a) and (b) is at most $\O(Z_{i-1}(x) + D_{i-1}(x))$, it follows from Proposition ~\ref{PaulsQueries} that Paul can determine $Z_i(x)$ by asking $\O((Z_{i-1}(x) + D_{i-1}(x))D_{i-1})$ queries of whether an edge is in $B_i$ and $\O(Z_{i-1}(x) + D_{i-1}(x))$ queries of whether a vertex is in $W_i$. The former is yes with probability $1/D_{i-1}$ and the latter is yes with probability at most $\O(\Delta_{i-1}/D_{i-1})$ by Proposition ~\ref{claim:waste}. Moreover, note that changing the answer to any of the queries (keeping all others the same) changes $Z_i(x)$ by at most $\O_k(1)$.  Indeed, changing whether $e \in B_i$ changes $V(M_i)$ (and $U_i$) by at most $\O(k^2) = \O_k(1)$ vertices as there are at most $k$ pairwise disjoint edges in $H_{i-1}$ that intersect $e$, and changing whether $w \in W_i$ changes $U_i$ by at most one vertex. Hence, as $H_{i-1}$ is simple, at most $\O_k(1)$ instances of $Z_{i-1}(x)$ and $D_{i-1}(x)$ are affected by such a change, implying that $Z_i(x)$ changes by at most $\O_k(1)$, as desired.
		
		Thus we can apply Lemma ~\ref{cor:martingale} with $\sigma = \Theta(\sqrt{Z_{i-1}(x) + D_{i-1}(x)}) \overset{(\rm L1)_{i-1}}{=} \Theta(\sqrt{Z_{i-1}(x) + D_{i-1}})$ (thus $\sigma = \omega(\log N)$ by \eqref{eq:*}) and $\lambda = \Theta(\log N)$ with an appropriately chosen multiplicative constant, to obtain that with probability at least $1 - e^{-\Omega(\log^2 N))}$, 
		$$  Z_i(x) = \mathbb{E}Z_i(x) \pm \sqrt{Z_{i-1}(x) + D_{i-1}} \log N. $$
		Claim~\ref{claim:eq_e} now follows by taking the union bound. 
	
	\end{subproof}

    \begin{claim}\label{claim:bounde}
        With probability $1 - e^{-\Omega(\log^2 N)}$, $(\rm L3)_i$ holds.
    \end{claim}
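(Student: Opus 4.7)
The plan is to derive $(\rm L3)_i$ by conditioning on the conclusion of Claim~\ref{claim:eq_e} (which holds with probability $1-e^{-\Omega(\log^2 N)}$) and then arguing deterministically. I would prove (a) by induction on $i$, with base case $i=0$ trivial since $Z_0(x)=0$ and the right-hand sum is empty. Parts (b)--(d) would then follow as asymptotic simplifications using Proposition~\ref{claim:bound_delta} together with the already-proven $(\rm L3)_j$ for $j<i$.

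For the inductive step of (a), combine the concentration bound $Z_i(x)=\mathbb{E}Z_i(x)\pm\sqrt{Z_{i-1}(x)+D_{i-1}}\log N$ from Claim~\ref{claim:eq_e} with the recurrence \eqref{eqn:expei}, substitute $D_{i-1}(x)=D_{i-1}\pm\Delta_{i-1}$ from $(\rm L1)_{i-1}$, and plug in the inductive hypothesis $(\rm L3)_{i-1}(a)$ for $Z_{i-1}(x)$. The identity $(1-p_{i-1}^*)^{k-2}q_{j,i-1}^{k-2}=q_{j,i}^{k-2}$ (and its shift for $q_{j+1,\cdot}$) makes the main terms coalesce into $(k-1)e^{-k}\sum_{0\le j<i}q_{j,i}^{k-2}D_j$, while the old error of $(\rm L3)_{i-1}(a)$ multiplied by $(1-p_{i-1}^*)^{k-2}$ supplies the $j<i-1$ summands in the error of (a). The fresh contributions from the factors $(1\pm C_9/D_{i-1})$, $(1\pm C_1\Delta_{i-1}/D_{i-1})$, the $\pm\Delta_{i-1}$ slack in $D_{i-1}(x)$, and the $\sqrt{Z_{i-1}(x)+D_{i-1}}\log N$ from Claim~\ref{claim:eq_e} together form the $j=i-1$ summand (using $q_{i-1,i}^{k-2}=(1-p_{i-1}^*)^{k-2}$ and $q_{i,i}^{k-2}=1$). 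The constant $C_1+3$ is chosen to absorb all of these contributions, including a residual $O(1)$ term like $C_9(k-1)e^{-k}$ which can be folded into $(k-1)e^{-k}\Delta_{i-1}$ because $\Delta_{i-1}=\omega(1)$ by Proposition~\ref{claim:bound_delta}$(\rm A3)_{i-1}$.

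For (b)--(d), substitute the estimates $D_j=(1+o(1))(1-e^{-k})^{j(k-1)}D_0$ and $q_{j,i}=(1+o(1))(1-e^{-k})^{i-j}$ from Proposition~\ref{claim:bound_delta} into (a). Using $q_{j,i}^{k-2}D_j=(1+o(1))(1-e^{-k})^{i(k-2)+j}D_0$ and the geometric identity $\sum_{0\le j<i}(1-e^{-k})^j=(1-(1-e^{-k})^i)/e^{-k}$ immediately yields (c), and (d) follows once the error in (b) is shown to be $o$ of the main term $\Theta((1-e^{-k})^{i(k-2)}D_0)$. To bound the error sum in (b), invoke the already-proven $(\rm L3)_j(c)$ to get $Z_j(x)=\Theta((1-e^{-k})^{j(k-2)}D_0)$ (so in particular $Z_j(x)+D_j=\Theta(Z_j(x))$ for $j\ge 1$), and use Proposition~\ref{claim:bound_delta}$(\rm A3)_j$ for $\Delta_j$. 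Each of the three pieces in the error sum then becomes a geometric-like series in $j$ whose general term grows as $j\to i-1$, so the sum is dominated by its $j=i-1$ contribution, producing the three summands $(1-e^{-k})^{-i}$, $(1-e^{-k})^{i(k-2)}\Delta_0$, and $(1-e^{-k})^{i(k-2)/2}\sqrt{D_0}\log N$ in (b).

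The main obstacle is the bookkeeping of constants in the inductive step of (a): one must verify carefully that the fresh $\Delta_{i-1}$ contributions from the $(1\pm C_1\Delta_{i-1}/D_{i-1})$ factor and from $D_{i-1}(x)=D_{i-1}\pm\Delta_{i-1}$, together with the absorbed $O(1)$ residues, combine into a bound of shape $(C_1+3)(k-1)e^{-k}\Delta_{i-1}$ rather than into something larger that would break the induction over $i$. A related subtlety in passing from (a) to (b) is that a usable bound on the $Z_j(x)/D_j$ term in the error is not immediate from (a) alone; one must invoke the already-proved $(\rm L3)_j(c)$ for $j<i$ to close the argument without circularity.
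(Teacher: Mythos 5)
Your proposal matches the paper's argument essentially step for step: conditioning on Claim~\ref{claim:eq_e}, substituting the recurrence \eqref{eqn:expei} and $(\rm L1)_{i-1}$, invoking $(\rm L3)_{i-1}(a)$ inductively so that the factor $(1-p_{i-1}^*)^{k-2}$ shifts $q_{j,i-1}^{k-2}$ into $q_{j,i}^{k-2}$ and the fresh error terms become the $j=i-1$ summand, absorbing the $O(1)$ residue into $(C_1+3)\Delta_{i-1}$ via $\Delta_{i-1}=\omega(1)$, and then deriving (b)--(d) by feeding $(\rm L3)_j(c)$ and Proposition~\ref{claim:bound_delta} into geometric-series estimates of the error sum. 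You also correctly identify the non-circular use of $(\rm L3)_j(c)$ for the $Z_j(x)/D_j$ term, which is precisely how the paper bounds it (via \eqref{eqn:ejbound}), so this is the same proof.
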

    \begin{subproof}
    Let $x \in U_{i-1}$. Assuming $(\rm L1)_j$--$(\rm L5)_j$ hold for all $0 \le j \le i-1$, we have to show that $(\rm L3)_i$ holds with high probability. By Claim~\ref{claim:eq_e} and $(\rm L1)_{i-1}$, with probability $1 - e^{-\Omega(\log^2 N))}$,
    \begin{align*}
        Z_i(x) & = & & \mathbb{E}Z_i(x) \pm \sqrt{Z_{i-1}(x) + D_{i-1}} \log N \\
        \overset{\eqref{eqn:expei}}&{=}& & \left ( 1 \pm \frac{C_9}{D_{i-1}} \right ) (1 - p_{i-1}^*)^{k-2} \left ( Z_{i-1}(x) + \left (1 \pm \frac{C_1 \Delta_{i-1}}{D_{i-1}} \right )\left (1 \pm \frac{\Delta_{i-1}}{D_{i-1}} \right ) (k-1)e^{-k} D_{i-1} \right ) \\
        & & & \pm \sqrt{Z_{i-1}(x) + D_{i-1}} \log N.
    \end{align*}
    
    Together with Proposition~\ref{claim:bound_delta}$(\rm A3)_{i-1}$, this implies that $Z_i(x) = A_1 + A_2 + A_3 + A_4 + A_5$, where
    \begin{align}
        A_1&:= (1 - p_{i-1}^*)^{k-2} (Z_{i-1}(x) + (k-1)e^{-k} D_{i-1}), \label{eqn:ei_prem}\\
        A_2&:= \pm \frac{C_9}{D_{i-1}} (1 - p_{i-1}^*)^{k-2} Z_{i-1}(x), \label{eqn:ei_second} \\
        A_3&:= \pm \frac{C_9}{D_{i-1}} (1 - p_{i-1}^*)^{k-2} \left (1 \pm \frac{C_1 \Delta_{i-1}}{D_{i-1}} \right ) \left (1 \pm \frac{\Delta_{i-1}}{D_{i-1}} \right )(k-1) e^{-k} D_{i-1} \label{eqn:ei_third}\\ &= \pm C_9 (1+o(1)) (k-1) e^{-k} (1 - p_{i-1}^*)^{k-2}, \nonumber\\
        A_4&:= \pm (1 - p_{i-1}^*)^{k-2} \left ( \frac{C_1 \Delta_{i-1}}{D_{i-1}} + \frac{\Delta_{i-1}}{D_{i-1}} + \frac{C_1 \Delta_{i-1}^2}{D_{i-1}^2} \right ) (k-1) e^{-k} D_{i-1} \label{eqn:ei_fourth}\\ &= \pm (C_1 + 1 + o(1)) (k-1) e^{-k} (1 - p_{i-1}^*)^{k-2} \Delta_{i-1}, \nonumber\\
        A_5&:= \pm \sqrt{Z_{i-1}(x) + D_{i-1}} \log N. \label{eqn:ei_fifth}
    \end{align}
		
    By $(\rm L3)_{i-1}$ and \eqref{qjidef}, we have $A_1 = A_1^1+A_1^2$, where
    \begin{align}
        A_1^1 &:= (k-1) e^{-k} \sum_{0\leq j<i}q_{j,i}^{k-2} D_j, \label{eqn:ei_mainterm} \\
        A_1^2 &:= \pm \sum_{0\leq j<i-1} \left [ q_{j,i}^{k-2}  \left ( \frac{C_9 Z_j(x)}{D_j} + (C_1+3)(k-1)e^{-k} \Delta_j \right ) + q_{j+1,i}^{k-2} \sqrt{Z_j(x) + D_j} \log N \right ]. \label{eqn:ei_sixth}
    \end{align}
    Observe that $A_3+A_4 = \pm (C_1+3)(k-1) e^{-k} (1 - p_{i-1}^*)^{k-2} \Delta_{i-1}$ by Proposition~\ref{claim:bound_delta} $(\rm A3)_{i-1}$. 
    Thus, 	
    \begin{equation*}
        A_1^2+A_2+ A_3+A_4+A_5=\pm \sum_{0\leq j<i} \left [ q_{j,i}^{k-2}  \left ( \frac{C_9 Z_j(x)}{D_j} + (C_1+3)(k-1)e^{-k} \Delta_j \right ) + q_{j+1,i}^{k-2} \sqrt{Z_j(x) + D_j} \log N \right ].
    \end{equation*}		
    Combining this with \eqref{eqn:ei_mainterm}, and using that $Z_i(x) = A_1^1+ A_1^2 + A_2 + A_3 + A_4 + A_5$ we get
    \begin{align}\label{eqn:ei_comp}
        \begin{split}
            Z_i(x) &= (k-1)e^{- k} \sum_{0\leq j<i} q_{j,i}^{k-2} D_j \\ 
            &\pm \sum_{0\leq j<i} \left [ q_{j,i}^{k-2}  \left ( \frac{C_9 Z_j(x)}{D_j} + (C_1+3)(k-1)e^{-k} \Delta_j \right ) + q_{j+1,i}^{k-2} \sqrt{Z_j(x) + D_j} \log N \right ]. 
        \end{split}
    \end{align}
    
    This proves (\rm a). For convenience let us redefine $B_1 := A_1^1$ and $B_2 := A_1^2 + A_2 + A_3 + A_4 + A_5$. Then by \eqref{eqn:ei_comp}, $Z_i(x) =  B_1 + B_2$.
    
    It remains to prove (\rm b)--(\rm d) by estimating $B_1$ and $B_2$. First let us prove (\rm b) by showing that $B_2 = \O ((1 - e^{-k})^{-i} + (1 - e^{-k})^{i(k-2)} \Delta_0 + (1 - e^{-k})^{\frac{i(k-2)}{2}} \sqrt{D_0} \log N).$ 
    Since this is trivial when $i=0$, we may assume that $i \geq 1$. 

    For $0 \leq j \leq i-1$, by $(\rm c)$ of $(\rm L3)_j$, we have
    \begin{equation}\label{eqn:ejbound}
        Z_j(x) \leq 2(k-1) (1 - e^{-k})^{j(k-2)}D_0.
    \end{equation}
    Therefore, the summand
    $$q_{j,i}^{k-2}  \left ( \frac{C_9 Z_j(x)}{D_j} + (C_1+3)(k-1)e^{-k} \Delta_j \right ) + q_{j+1,i}^{k-2} \sqrt{Z_j(x) + D_j} \log N$$
    of $B_2$ is the sum of the following terms (which are estimated below using \eqref{eqn:ejbound} and Proposition~\ref{claim:bound_delta}).
    \begin{align*}
        q_{j,i}^{k-2} \frac{C_9 Z_j(x)}{D_j} & \leq & & 4C_9 (k-1) (1 - e^{-k})^{i(k-2) - j(k-1)},\\
        q_{j,i}^{k-2} (C_1+3)(k-1)e^{-k} \Delta_j & \leq & & (C_1+3)(k-1)e^{-k} (1+o(1)) (1 - e^{-k})^{(i-j)(k-2)}\\
        & & & \times (1+o(1)) \left [ (1 - e^{-k})^{j(k-1)}\Delta_0 + C_4 (1 - e^{-k})^{\frac{j(k-1)}{2}} \sqrt{D_0} \log N \right ]\\
        & \leq & & 2(C_1+3)(k-1)e^{-k} (1 - e^{-k})^{i(k-2)}\\
        & & & \times \left [ (1 - e^{-k})^j \Delta_0 + (1 - e^{-k})^{-\frac{j(k-3)}{2}} C_4 \sqrt{D_0} \log N \right ],\\
        q_{j+1,i}^{k-2} \sqrt{Z_j(x) + D_j} \log N & \leq & & (1+o(1)) \frac{(1-e^{-k})^{(i-j)(k-2)}}{(1 - e^{-k})^{k-2}} \sqrt{2k(1 - e^{-k})^{j(k-2)} D_0} \log N \\
        & \leq & & \frac{2 \sqrt{2k} \sqrt{D_0} \log N}{(1 - e^{-k})^{k-2}} (1 - e^{-k})^{i(k-2) - \frac{j(k-2)}{2}}.
    \end{align*}
    \COMMENT{In the estimation of the last term we used $Z_j(x) + D_j \leq 2 k(1 - e^{-k})^{j(k-2)} D_0$ by~\eqref{eqn:ejbound} and Proposition~\ref{claim:bound_delta}.}
    Hence $B_2$ is the sum of the following terms: 
    \begin{align*}
        \sum_{j<i} q_{j,i}^{k-2} \frac{C_9 Z_j(x)}{D_j} & \leq & & 4C_9 (k-1) \frac{(1 - e^{-k})^{-i} }{(1 - e^{-k})^{-(k-1)} - 1},\\
        \sum_{j<i} q_{j,i}^{k-2}(C_1+3)(k-1) e^{-k} \Delta_j & \leq & & 2(C_1+3)(k-1)\\
        & & & \times \left [ ( 1 - e^{-k})^{i(k-2)}\Delta_0 + C_4 e^{-k} \sqrt{D_0} \log N \frac{(1 - e^{-k})^{\frac{i(k-1)}{2}}}{(1 - e^{-k})^{-\frac{k-3}{2}} - 1} \right ],\\
        \sum_{j<i} q_{j+1,i}^{k-2} \sqrt{Z_j(x) + D_j} \log N & \leq & & \frac{2 \sqrt{2k} \sqrt{D_0} \log N}{(1 - e^{-k})^{k-2}} \frac{(1 - e^{-k})^{\frac{i(k-2)}{2}}}{(1 - e^{-k})^{-\frac{k-2}{2}} - 1}.
    \end{align*}
    Hence,
    \begin{align}\label{errorterm}
        B_2 \le \O \left ( (1 - e^{-k})^{-i} + (1 - e^{-k})^{i(k-2)} \Delta_0 + (1 - e^{-k})^{\frac{i(k-2)}{2}} \sqrt{D_0} \log N \right )
    \end{align}
    proving (b). To prove (c) note that by Proposition~\ref{claim:bound_delta},
    \begin{align}
        B_1 = (k-1)e^{- k} \sum_{0\leq j<i} q_{j,i}^{k-2} D_j &= (1+o(1)) (k-1) e^{-k} \sum_{j<i} (1 - e^{-k})^{(i-j)(k-2)} (1 - e^{-k})^{j(k-1)}D_0 \nonumber\\
        &= (1+o(1)) (k-1) (1 - (1 - e^{-k})^{i} )(1 - e^{-k})^{i(k-2)} D_0. \label{eqn:ei_main}
    \end{align}
    Then it follows from ~\eqref{eqn:ei_main} and ~\eqref{errorterm} that $B_1$ is asymptotically larger\COMMENT{Note that $(1 - e^{-k})^{i(k-2)}D_0 \gg (1 - e^{-k})^{\frac{i(k-2)}{2}} \sqrt{D_0} \log N$ is equivalent to $D_i \gg (1 - e^{-k})^{i/2} \sqrt{D_i} \log N$, by multiplying both sides with $(1 - e^{-k})^i$. This is in turn equivalent to $D_i \gg (1 - e^{-k})^{i}  \log^2 N$, which holds if $i < \zeta$ since $D_i \ge D_0^{\gamma} \gg \log^2 N$ by ~\eqref{eq:*} and our assumption on $D_0$. If $i = \zeta$ then we have $D_{\zeta} = \Theta(D_0^{\gamma}) \gg \log^2 N$ by $(\rm A3)_{i-1}$ and the sentence after ~\eqref{eq:*}. Also note that $(1 - e^{-k})^{i(k-2)}D_0 \gg (1 - e^{-k})^{-i}$ is equivalent to $D_i \gg 1$, again, by multiplying both sides with $(1 - e^{-k})^i$.} than $B_2$. Indeed the ratio of the third term on the right hand side of \eqref{errorterm} and $B_1$ is $\O((1 - e^{-k})^{\frac{-i(k-2)}{2}} \frac{\log N}{\sqrt{D_0}}) = \O((1 - e^{-k})^{\frac{i}{2}} \frac{\log N}{\sqrt{D_i}}) = \O(\frac{\log N}{\sqrt{D_i}}) = o(1)$ using that $D_i \ge \Omega(\log^{\frac{8}{(1 - \varepsilon)}} N) $ by ~\eqref{eq:*} and the sentence after it. The ratio of the first term on the right hand side of \eqref{errorterm} and $B_1$ is estimated similarly. This completes the proof of (c) and also implies (d). Hence this finishes the proof of the claim.
\end{subproof}

	\subsubsection*{\textbf{Step 4.} Proving $(\rm L4)_{i}$ holds with high probability}
	
	For any distinct $x,y \in U_{i-1}$, note that $Y_i(x,y)$ is the number of instances $(e_1,e_2,e_3)$ of $Y_{i-1}(x,y)$ that satisfy $(e_1 \cup e_2 \cup e_3) \setminus \{x,y\} \subseteq U_i$ (namely 
	$(\rm Y_3)_i$). Hence by ~\eqref{eqn:pUi}, and Proposition~\ref{lem:almost_indep}, we have the following recurrence relation.
	\begin{align}
	\label{eq:recuurenceYi}
	\mathbb{E}Y_i(x,y) &= \left ( 1 \pm \frac{C_9}{D_{i-1}} \right ) (1 - p_{i-1}^*)^{3k-4} Y_{i-1}(x,y).
	\end{align}

	\begin{claim}\label{claim:eq_y}
		 With probability at least $1 - e^{-\Omega(\log^2 N)}$, the following holds. For any distinct $x,y \in U_{i-1}$,
		\begin{equation*}
		Y_i(x,y) = \mathbb{E}Y_{i}(x,y) \pm  D_{i-1} \sqrt{Y_{i-1}(x,y)} \log N,
		\end{equation*}
		provided that $ Y_{i-1}(x,y) = \omega(\log^2 N)$.
	\end{claim}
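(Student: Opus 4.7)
My plan is to prove Claim~\ref{claim:eq_y} by the Martingale inequality (Lemma~\ref{cor:martingale}), following the same template as Claims~\ref{claim:L1}, \ref{claim:L2} and \ref{claim:eq_e}. Note that \eqref{eq:recuurenceYi} already furnishes $\mathbb{E}[Y_i(x,y)]$, so the task is purely one of concentration. For Paul's strategy, to determine $Y_i(x,y)$ he iterates through the $Y_{i-1}(x,y)$ instances $(e_1,e_2,e_3)$ and, for each one, verifies that every vertex of $(e_1\cup e_2\cup e_3)\setminus\{x,y\}$ remains in $U_i$. By Proposition~\ref{PaulsQueries} each such vertex is resolved with $\O(D_{i-1})$ edge queries (each Yes with probability $1/D_{i-1}$) and one vertex query (Yes with probability $\O(\Delta_{i-1}/D_{i-1})$ by Proposition~\ref{claim:waste}), giving at most $\O(Y_{i-1}(x,y)\,D_{i-1})$ edge queries and $\O(Y_{i-1}(x,y))$ vertex queries in total.

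The crux of the argument is bounding the per-query Lipschitz constant $T$. Toggling whether a vertex lies in $W_i$ alters $U_i$ by one vertex, while toggling whether an edge lies in $B_i$ alters $V(M_i)\cup U_i$ by at most $\O(k^2)$ vertices, since at most $k$ pairwise disjoint edges of $H_{i-1}$ can meet any given edge. For each affected vertex $v\in U_{i-1}$, I claim the number of instances of $Y_{i-1}(x,y)$ containing $v$ is $\O(k^2 D_{i-1})$: splitting according to whether $v\in e_1$, $v\in e_2$ or $v\in e_3\setminus(e_1\cup e_2)$, simplicity of $H$ forces two of the three edges once $v$ and an appropriate pivot vertex within its edge are chosen, leaving only $\O(D_{i-1})$ choices (using $(\rm L1)_{i-1}$ to bound degrees in $H_{i-1}$) for the one remaining edge through $v$. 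Consequently $T=\O(k^4 D_{i-1})=\O(D_{i-1})$; this counting step is the principal obstacle.

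With this Lipschitz bound, every edge query contributes variance $\O((1/D_{i-1})\cdot D_{i-1}^2)=\O(D_{i-1})$ and every vertex query contributes variance $\O(\Delta_{i-1}\,D_{i-1})$; since $\Delta_{i-1}=o(D_{i-1})$ by Proposition~\ref{claim:bound_delta}$(\rm A3)_{i-1}$, the total variance is dominated by edge queries and satisfies $\sigma^2=\O(Y_{i-1}(x,y)\,D_{i-1}^2)$. I then apply Lemma~\ref{cor:martingale} with $\sigma=\Theta(\sqrt{Y_{i-1}(x,y)}\,D_{i-1})$ and $\lambda=\Theta(\log N)$, chosen so that $\sigma\lambda=\O(D_{i-1}\sqrt{Y_{i-1}(x,y)}\log N)$, yielding $|Y_i(x,y)-\mathbb{E}Y_i(x,y)|\le \O(D_{i-1}\sqrt{Y_{i-1}(x,y)}\log N)$ for a fixed pair $(x,y)$ with failure probability $e^{-\Omega(\log^2 N)}$. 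The admissibility condition $\lambda<2\sigma\delta/T$ from Lemma~\ref{cor:martingale} reduces to $\sqrt{Y_{i-1}(x,y)}=\omega(\log N)$, which is precisely the hypothesis $Y_{i-1}(x,y)=\omega(\log^2 N)$ of the claim. A union bound over the at most $N^2$ pairs $(x,y)$ in $U_{i-1}$ then gives the simultaneous statement.
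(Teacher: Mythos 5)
Your proposal is correct and follows essentially the same route as the paper: you bound the number of queries via Proposition~\ref{PaulsQueries}, establish the Lipschitz constant $T=\O(D_{i-1})$ by the same counting argument (bounding the number of instances of $Y_{i-1}(x,y)$ through any fixed vertex $v$ by $\O(D_{i-1})$ via the case split $v\in e_1$, $v\in e_2$, $v\in e_3\setminus(e_1\cup e_2)$ together with simplicity and $(\rm L1)_{i-1}$), and then apply Lemma~\ref{cor:martingale} with the same $\sigma$, $T$, and $\lambda$. One small phrasing caveat: in each case it is not always the edge through $v$ that carries the $\O(D_{i-1})$ freedom (e.g.\ when $v\in e_1$, $e_1$ is uniquely determined and the $\O(D_{i-1})$ choices are for $e_3$), but your counting bound is nonetheless correct.
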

	\begin{subproof}
		We will use the Martingale inequality (Lemma ~\ref{cor:martingale}) to prove the claim. Let $x,y \in U_{i-1}$ be distinct vertices. Recall that $Y_i(x,y)$ is the number of instances $(e_1,e_2,e_3)$ of $Y_{i-1}(x,y)$ such that $(e_1 \cup e_2 \cup e_3) \setminus \{x,y\} \subseteq U_i$. We call all the instances of $Y_{i-1}(x,y)$ \emph{candidates}. \\
		
		{\bf Subclaim 1}. \textit{Paul needs at most $\O(Y_{i-1}(x,y) D_{i-1})$ queries of whether an edge is in $B_i$ and at most $\O(Y_{i-1}(x,y))$ queries of whether a vertex is in $W_i$ in order to determine $Y_i(x,y)$.} \\
		
		Note that $(\rm L1)_{i-1}$ and Proposition ~\ref{PaulsQueries} immediately imply Subclaim 1.\COMMENT{To determine $Y_i(x,y)$, for each candidate $(e_1,e_2,e_3)$, Paul must determine whether each of the vertices of $(e_1 \cup e_2 \cup e_3) \setminus \{x,y\}$ is in $U_i$. Since the number of candidates is $Y_{i-1}(x,y)$, and there are at most $3k$ vertices in each of these candidates, Proposition ~\ref{PaulsQueries} implies Subclaim 1.}\\

		{\bf Subclaim 2}. \textit{If we change the response to a single query (keeping all others the same), then $Y_i(x,y)$ changes by at most $\O(D_{i-1})$.} \\

		Using $(\rm L1)_{i-1}$ and the fact that $H_{i-1}$ is simple, it is easy to show the following. 
		\begin{equation}
		\label{candidateswithv}
		\parbox{.85\textwidth}{For any given vertex $v \in U_{i-1}\setminus \{x,y\}$, the number of candidates $(e_1, e_2, e_3)$ such that $v \in e_1 \cup e_2 \cup e_3 \setminus \{x,y\}$ is $\O(D_{i-1})$.} \tag{$*$}
		\end{equation}
		\COMMENT{Fix $v \in U_{i-1}\setminus \{x,y\}$. Firstly, we count the number of candidates $(e_1, e_2, e_3)$ such that $v \in e_1 \setminus \{x\}$. There is at most one edge containing both $v$ and $x$ (since $H_{i-1}$ is simple), so there is at most one choice for $e_1$. Given such a choice of $e_1$, the number of possible choices for $e_3$ is at most $\O(D_{i-1})$ as there are at most $\O(D_{i-1})$ edges incident to $e_1$ by $(\rm L1)_{i-1}$. For each such choice of $e_3$ with $e_3 \setminus e_1 = \{z_1 , \dots , z_{k-1}\}$, there is at most one edge containing both $z_j$ and $y$ (for $1 \le j \le k-1$) since $H_{i-1}$ is simple. Hence there are at most $k-1$ choices of $e_2$. In total, the number of candidates $(e_1, e_2, e_3)$ with $v \in e_1 \setminus \{x\}$ is $\O(D_{i-1})$. Similarly, the number of candidates $(e_1, e_2, e_3)$ such that $v \in e_2 \setminus \{y\}$ is $\O(D_{i-1})$. It remains to count the number of candidates $(e_1, e_2, e_3)$ such that $v \in e_3 \setminus (e_1 \cup e_2)$. Since the degree of $x$ is $\O(D_{i-1})$ (by $(\rm L1)_{i-1}$), there are $\O(D_{i-1})$ choices for $e_1$. For each such choice of $e_1$ with $e_1 \setminus \{x\} = \{x_1, x_2, \ldots, x_{k-1}\}$, $e_3$ must contain $v$ as well as $x_j$ for some $1 \le j \le k-1$. Thus there are at most $k-1$ choices of $e_3$ as $H_{i-1}$ is simple. Finally, again using that $H_{i-1}$ is simple, given such a choice of $e_3$, there are at most $\O_k(1)$ choices for $e_2$ since $e_2$ must contain $y$ and $e_2 \cap e_3 \ne \{y\}$. Thus the number of candidates $(e_1, e_2, e_3)$ such that $v \in e_3 \setminus (e_1 \cup e_2)$ is $\O(D_{i-1})$. Adding up these estimates proves ($*$), as desired.}
		Now we prove the subclaim using ($*$). First consider queries asking if a vertex is in $W_i$. Changing whether a given vertex $v$ is in $W_i$ affects at most $\O(D_{i-1})$ candidates by ($*$). Thus in this case $Y_i(x,y)$ changes by at most $\O(D_{i-1})$. Now consider queries asking whether if an edge, say $e'$, is in $B_i$. There are at most $k$ pairwise disjoint edges that intersect with $e'$, so changing whether $e'$ is in $B_i$ changes $V(M_i)$ by at most $k^2$ vertices, and each of those vertices affects at most $\O(D_{i-1})$ candidates by ($*$). Thus, $Y_i(x,y)$ changes by at most $\O(D_{i-1})$ in this case as well, proving Subclaim 2.

		Thus using Subclaims 1 and 2, Proposition ~\ref{claim:waste}, and the fact that an edge is in $B_i$ with probability $1/D_{i-1}$, we can apply Lemma ~\ref{cor:martingale} with $\sigma = \Theta(D_{i-1}\sqrt{Y_{i-1}(x,y)})$ and $T = \Theta(D_{i-1})$ (so $\sigma/T = \omega(\log N)$ by our assumption) and $\lambda = \Theta(\log N)$ with an appropriately chosen multiplicative constant, to obtain that with probability at least $1 - e^{-\Omega(\log^2 N)}$,
		\begin{equation*}
		Y_i(x,y) = \mathbb{E}Y_{i}(x,y) \pm D_{i-1}\sqrt{Y_{i-1}(x,y)}\log N.
		\end{equation*}
		By the union bound, with probability at least
		$1 - |U_{i-1}|^2e^{-\Omega(\log^2 N)} = 1 - e^{-\Omega(\log^2 N)}$,
		the claim holds.
	\end{subproof}

	\begin{claim}\label{claim:boundy}
		With probability at least $1 - e^{-\Omega(\log^2 N)}$, $(\rm L4)_i$ holds.
	\end{claim}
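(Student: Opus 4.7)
Fix distinct $x, y \in U_{i-1}$; the goal is to show that $Y_i(x,y) \le C_7 D_i^2 \log^2 N$ fails with probability at most $e^{-\Omega(\log^2 N)}$, after which a union bound over the at most $N^2$ choices of $(x,y)$ gives the claim. The starting observation is the monotonicity $Y_i(x,y) \le Y_{i-1}(x,y)$: since $U_i \subseteq U_{i-1}$, condition $(\mathrm{Y}3)_i$ is strictly stronger than $(\mathrm{Y}3)_{i-1}$, so every instance of $Y_i(x,y)$ is an instance of $Y_{i-1}(x,y)$ (and $Y_{i-1}(x,y)$ is defined because $x,y \in U_{i-1} \subseteq U_{i-2}$). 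This handles the ``small'' regime for free.

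Based on this I would split into two cases. If $Y_{i-1}(x,y) \le \log^3 N$, monotonicity gives $Y_i(x,y) \le \log^3 N$ deterministically; since $D_i \ge D_0^{\gamma} = \Omega(\log^{8/(1-\varepsilon)} N)$ by~\eqref{eq:*} and the remark following it, we have $C_7 D_i^2 \log^2 N \gg \log^3 N$ and the target bound holds. Otherwise $Y_{i-1}(x,y) = \omega(\log^2 N)$, so Claim~\ref{claim:eq_y} applies and yields, with probability $1 - e^{-\Omega(\log^2 N)}$,
\[
Y_i(x,y) \le \mathbb{E} Y_i(x,y) + D_{i-1}\sqrt{Y_{i-1}(x,y)}\log N.
\]
Substituting the recurrence~\eqref{eq:recuurenceYi} and the inductive bound $Y_{i-1}(x,y) \le C_7 D_{i-1}^2 \log^2 N$ from $(\mathrm{L}4)_{i-1}$, and writing $q := 1 - p_{i-1}^*$, this becomes
\[
Y_i(x,y) \le \bigl((1+o(1))\, q^{3k-4} C_7 + \sqrt{C_7}\bigr)\, D_{i-1}^2 \log^2 N.
\]
Since $D_i^2 = q^{2(k-1)} D_{i-1}^2$, the target $Y_i(x,y) \le C_7 D_i^2 \log^2 N$ reduces to the deterministic inequality $(1+o(1))\, q^{3k-4} + 1/\sqrt{C_7} \le q^{2(k-1)}$.

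Everything rests on this final inequality, which is where the essential content of the proof lies. The gap $q^{2(k-1)} - q^{3k-4} = q^{2(k-1)}(1 - q^{k-2})$ is a positive constant depending only on $k$: Proposition~\ref{claim:matching}(iii) gives $q = 1 - (1 \pm o(1)) e^{-k}$, so $q$ is bounded away from both $0$ and $1$, and $k - 2 \ge 2$ ensures $1 - q^{k-2}$ is a positive $k$-dependent constant. Choosing $C_7 = C_7(k)$ sufficiently large in the hierarchy~\eqref{eq:mainhierarchy} makes $1/\sqrt{C_7}$ smaller than this gap, and a union bound over the $O(N^2)$ pairs finishes the argument. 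The main (and essentially only) obstacle is absorbing the concentration error $D_{i-1}\sqrt{Y_{i-1}}\log N$, which is of the same order $D_{i-1}^2 \log^2 N$ as the target up to the factor $1/\sqrt{C_7}$; this absorption works precisely because the decay rate $q^{3k-4}$ of $\mathbb{E} Y_i$ is strictly faster than the decay rate $q^{2(k-1)}$ of $D_i^2$, a gap guaranteed by the assumption $k > 3 > 2$.
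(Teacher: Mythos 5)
Your proof is correct and follows essentially the same route as the paper: split into a trivial small case via the monotonicity $Y_i(x,y)\le Y_{i-1}(x,y)$, then in the large case apply Claim~\ref{claim:eq_y} together with the recurrence~\eqref{eq:recuurenceYi} and the inductive bound $(\mathrm{L}4)_{i-1}$, and close by observing that the exponent gap $3k-4 > 2(k-1)$ lets a large $C_7$ absorb the $\sqrt{C_7}$ error term. The only cosmetic difference is your case threshold ($\log^3 N$ instead of $C_7 D_i^2\log^2 N$), which is harmless since both guarantee the $\omega(\log^2 N)$ hypothesis of Claim~\ref{claim:eq_y}.
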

	\begin{subproof}
		
		Recall that $C_7 = C_7(k)$ was chosen in \eqref{eq:mainhierarchy}. We may assume that $Y_{i-1}(x,y) \geq C_7 D_i^2 \log^2 N = \omega(\log^2 N)$, otherwise
		\begin{equation*}
		Y_{i}(x,y) \leq Y_{i-1}(x,y) < C_7 D_i^2 \log^2 N
		\end{equation*}
		and the claim holds. Hence by Claim~\ref{claim:eq_y}, with probability at least $1 - e^{-\Omega(\log^2 N)}$,
		\begin{align*}
		Y_i(x,y) &= \mathbb{E}Y_i(x,y) \pm \sqrt{Y_{i-1}(x,y)} D_{i-1} \log N \\ 
		\overset{\eqref{eq:recuurenceYi}}&{=}\left ( 1 \pm \frac{C_9}{D_{i-1}} \right ) (1 - p_{i-1}^*)^{3k-4} Y_{i-1}(x,y) \pm \sqrt{Y_{i-1}(x,y)} D_{i-1} \log N\\
		\overset{(\rm L4)_{i-1}}&{\leq} (1 + o(1)) (1 - p_{i-1}^*)^{3k-4} C_7 D_{i-1}^2 \log^2 N + \sqrt{C_7} D_{i-1}^2 \log^2 N \\
		\overset{\eqref{Didef}}&{\leq} \left [ (1 + o(1))(1 - p_{i-1}^*)^{k-2} C_7 + \frac{\sqrt{C_7}}{(1 - p_{i-1}^*)^{2k-2}} \right ] D_i^2 \log^2 N\\
		& < C_7 D_i^2 \log^2 N
		\end{align*}
		by \eqref{eq:mainhierarchy} and the fact that $1 - p_{i-1}^* = (1 + o(1))(1 - e^{-k})$ by Propositions ~\ref{claim:matching} and ~\ref{claim:bound_delta}. This completes the proof of the claim.	
	\end{subproof}
	
	\subsubsection*{\textbf{Step 5.} Proving $(\rm L5)_{i}$ holds with high probability}
	
	For any distinct $x,y \in U_{i-1}$, note that $X_i(x,y)$ is the sum of
	\begin{itemize}
		\item the number of instances $(e_1,e_2,e_3)$ of $X_{i-1}(x,y)$ that satisfy $(e_1 \cup e_2) \setminus (e_3 \cup \{x,y \}) \subseteq U_i$, and 
		\item the number of instances $(e_1,e_2,e_3)$ of $Y_{i-1}(x,y)$ that satisfy $(e_1 \cup e_2) \setminus (e_3 \cup \{x,y \}) \subseteq U_i$ and $e_3 \in E(M_i)$.
	\end{itemize}
	
	By \eqref{eqn:pUi} and Proposition~\ref{lem:almost_indep}, the expected value of the former number is $ \left ( 1 \pm \frac{C_9}{D_{i-1}} \right ) (1 - p_{i-1}^*)^{2k-4} X_{i-1}(x,y)$, and  by Proposition~\ref{claim:matching} the expected value of the latter number is at most $(1 \pm C_0 \Delta_{i-1} D_{i-1}^{-1}) \frac{e^{-k}}{D_{i-1}} Y_{i-1}(x,y) \le \frac{C_{10}}{D_{i-1}} Y_{i-1}(x,y)$ by \eqref{eq:mainhierarchy}. Hence we have the following recurrence relation.
	\begin{align}\label{eqn:expxi}
	\mathbb{E}X_i(x,y) &\leq \left ( 1 \pm \frac{C_9}{D_{i-1}} \right ) (1 - p_{i-1}^*)^{2k-4} X_{i-1}(x,y) + \frac{C_{10}}{D_{i-1}} Y_{i-1}(x,y).
	\end{align}
	
	\begin{claim}\label{claim:eq_x}
		With probability at least $1 - e^{-\Omega(\log^2 N)}$, the following holds. 
		For any distinct $x,y \in U_{i-1}$, 
		\begin{equation*}
		X_i(x,y) = \mathbb{E}X_{i}(x,y) \pm \sqrt{X_{i-1}(x,y) + Y_{i-1}(x,y)} \log N,
		\end{equation*}
		provided that $X_{i-1}(x,y) + Y_{i-1}(x,y) = \omega(\log^2 N)$.
	\end{claim}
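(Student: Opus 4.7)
The natural approach is to apply the Martingale inequality (Lemma~\ref{cor:martingale}) in the spirit of the proof of Claim~\ref{claim:eq_y}, with Paul asking queries about membership in $B_i$ and $W_i$. Fix distinct $x,y \in U_{i-1}$ and call a triple $(e_1,e_2,e_3) \in E(H)^3$ an \emph{$X$-candidate} if it is an instance of $X_{i-1}(x,y)$, and a \emph{$Y$-candidate} if it is an instance of $Y_{i-1}(x,y)$. By definition, $X_i(x,y)$ equals the number of $X$-candidates with $(e_1 \cup e_2) \setminus (e_3 \cup \{x,y\}) \subseteq U_i$ plus the number of $Y$-candidates with $e_3 \in E(M_i)$ and $(e_1 \cup e_2) \setminus (e_3 \cup \{x,y\}) \subseteq U_i$. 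By Proposition~\ref{PaulsQueries} and $(\rm L1)_{i-1}$, Paul can determine $X_i(x,y)$ using $\O((X_{i-1}(x,y)+Y_{i-1}(x,y))D_{i-1})$ queries of the form ``is $F \in B_i$?'' (each Yes with probability $1/D_{i-1}$) and $\O(X_{i-1}(x,y)+Y_{i-1}(x,y))$ queries of the form ``is $w \in W_i$?'' (each Yes with probability $\O(\Delta_{i-1}/D_{i-1})$ by Proposition~\ref{claim:waste}).

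The crucial point --- and what makes the bound sharper than the $D_{i-1}\sqrt{Y_{i-1}(x,y)}\log N$ error of Claim~\ref{claim:eq_y} --- is that flipping any single query changes $X_i(x,y)$ by only $T = \O_k(1)$. Indeed, flipping a $B_i$-query about some $F$ modifies $E(M_i)$ by at most $\O_k(1)$ edges and $V(M_i)$ by at most $\O_k(1)$ vertices. The change in the $X$-candidate contribution is $\O_k(1)$ per toggled vertex $v$: since every $X$-candidate has a fixed central edge $e_3 \in \bigcup_{j<i} E(M_j)$, simplicity of $H$ forces the edge $e_1 \ni \{v,x\}$ to be unique, after which there are at most $(k-1)^2$ consistent completions via the already-determined matched $e_3$ incident to $e_1$ and the edge $e_2 \ni y$. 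The change in the $Y$-candidate contribution is bounded analogously: given a toggled $v$ and the unique $e_1 \ni \{v,x\}$, the edge $e_3 \in E(M_i)$ must meet $e_1$, and since $M_i$ is a matching there are at most $k-1$ choices, after which $e_2$ is determined by $y$ and a vertex of $e_3$. Additionally, each toggled edge of $E(M_i)$ is the central edge of at most $k^2 = \O_k(1)$ $Y$-candidates. Flipping a $W_i$-query changes $U_i$ by one vertex and is handled by the same vertex-counting argument.

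With $T = \O_k(1)$, the total variance along any line of questioning is
\[
\O\bigl((X_{i-1}(x,y)+Y_{i-1}(x,y))D_{i-1}\cdot \tfrac{1}{D_{i-1}}\bigr) + \O\bigl((X_{i-1}(x,y)+Y_{i-1}(x,y))\tfrac{\Delta_{i-1}}{D_{i-1}}\bigr) = \O(X_{i-1}(x,y)+Y_{i-1}(x,y)),
\]
so we may take $\sigma = \Theta(\sqrt{X_{i-1}(x,y)+Y_{i-1}(x,y)})$, which is $\omega(\log N)$ by the hypothesis $X_{i-1}(x,y)+Y_{i-1}(x,y) = \omega(\log^2 N)$. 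Applying Lemma~\ref{cor:martingale} with $\lambda = \Theta(\log N)$ (for an appropriately chosen multiplicative constant) yields the desired concentration with probability at least $1 - e^{-\Omega(\log^2 N)}$ for each fixed pair, and a union bound over the at most $N^2$ pairs $(x,y) \in U_{i-1}^2$ finishes the proof. The main obstacle is the refined $\O_k(1)$ Lipschitz analysis, which genuinely requires all three structural inputs simultaneously: simplicity of $H$, the fact that central edges of $X$-candidates belong to previous matchings, and that $E(M_i)$ is itself a matching --- any one of these on its own would leave each vertex in $\Omega(D_{i-1})$ contributing candidates and recover only the weaker bound of Claim~\ref{claim:eq_y}.
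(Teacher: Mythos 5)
Your proposal is correct and follows essentially the same route as the paper: identify the candidates as the instances of $X_{i-1}(x,y)$ and $Y_{i-1}(x,y)$, bound the number of queries via Proposition~\ref{PaulsQueries} and $(\rm L1)_{i-1}$, establish the $\O_k(1)$ Lipschitz constant, and apply Lemma~\ref{cor:martingale} with $\sigma = \Theta(\sqrt{X_{i-1}(x,y)+Y_{i-1}(x,y)})$ followed by a union bound. The only cosmetic difference is organisational: you split the Lipschitz analysis into the $X$-candidate and $Y$-candidate contributions and argue each separately, whereas the paper treats them uniformly by observing that only candidates satisfying $(\rm T1)$ (i.e.\ $e_3 \in \bigcup_{j\le i}E(M_j)$, which is a matching) can ever contribute to $X_i(x,y)$, and bounding the number of such candidates through any fixed affected vertex by $\O_k(1)$ — the same three structural facts you isolate, just packaged in one case rather than two.
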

	\begin{subproof}
		We will use the Martingale inequality (Lemma ~\ref{cor:martingale}) to prove the claim. Let $x,y \in U_{i-1}$ be distinct vertices. Note that $X_i(x,y)$ is the number of instances $(e_1,e_2,e_3)$ of $X_{i-1}(x,y)$ and $Y_{i-1}(x,y)$ such that both the events 
		\begin{itemize}
			\item[$(\rm T1)$] $e_3 \in \bigcup_{j \leq i} E(M_j)$ and
			\item[$(\rm T2)$] $(e_1 \cup e_2) \setminus (e_3 \cup \{x,y \}) \subseteq U_i$
		\end{itemize}
		hold. We call all the instances of $X_{i-1}(x,y)$ and $Y_{i-1}(x,y)$ \emph{candidates}. \\
		
		{\bf Subclaim 1}. \textit{Paul needs at most $\O((X_{i-1}(x,y) + Y_{i-1}(x,y)) D_{i-1})$ queries of whether an edge is in $B_i$ and at most $\O((X_{i-1}(x,y) + Y_{i-1}(x,y))$ queries of whether a vertex is in $W_i$ in order to determine $X_i(x,y)$.} \\
		
	    Subclaim 1 follows easily from $(\rm L1)_{i-1}$ and Proposition ~\ref{PaulsQueries}.\COMMENT{Since the total number of candidates is $X_{i-1}(x,y) + Y_{i-1}(x,y)$, it suffices to show that Paul needs at most $\O(D_{i-1})$ queries of whether an edge is in $B_i$ and at most $\O_k(1)$ queries of whether a vertex is in $W_i$ to determine whether each of these candidates satisfies $(\rm T1)$ and $(\rm T2)$.
		Let us fix a candidate $(e_1 , e_2 , e_3)$. If $e_3 \in \bigcup_{j < i} E(M_j)$, then $(\rm T1)$ automatically holds. Otherwise, the event that $e_3$ is in $E(M_i)$ is determined by the queries of whether edges in $E(H_{i-1})$ sharing a vertex with $e_3$ are in $B_i$. Since the number of such edges is $\O(D_{i-1})$ by $(\rm L1)_{i-1}$, Paul needs $\O(D_{i-1})$ queries of whether an edge is in $B_i$ to check whether $(\rm T1)$ holds for this candidate.
		In order to check whether $(\rm T2)$ holds, one must verify whether each vertex $w \in (e_1 \cup e_2) \setminus (e_3 \cup \{x,y \})$ is in $U_i$. Thus Proposition ~\ref{PaulsQueries} implies that Paul needs at most $\O(D_{i-1})$ queries of whether an edge is in $B_i$ and $\O_k(1)$ queries of whether a vertex is in $W_i$ to verify whether $(\rm T2)$ holds for this candidate.  This completes the proof of Subclaim 1.}\\

		{\bf Subclaim 2}. \textit{If we change the response to a single query (keeping all others the same), then $X_i(x,y)$ changes by at most $\O_k(1)$.} \\

		First let us consider queries asking if a vertex is in $W_i$.  Changing whether a given vertex $v$ is in $W_i$ does not affect whether $(\rm T1)$ events hold for the candidates. Moreover, there are at most $\O_k(1)$ candidates $(e_1,e_2,e_3)$ such that $v \in (e_1 \cup e_2) \setminus (e_3 \cup \{x,y \})$ and $(\rm T1)$ holds, due to the following reason. By symmetry, we may assume that $v \in e_1 \setminus (e_3 \cup \{x,y \})$. There is exactly one choice of $e_1$ (as $H$ is simple), at most $k$ choices of $e_3$ that satisfy $(\rm T1)$, $k-1$ choices of the vertex in $e_2 \cap e_3$, and given such a vertex, there is at most one choice of $e_2$ (as $H$ is simple).
		Thus, changing whether $v \in W_i$ affects at most $\O_k(1)$ candidates. Thus $X_i(x,y)$ changes by at most $\O_k(1)$.

		Now we focus on queries asking if an edge, say $e'$, is in $B_i$.
		There are at most $k$ pairwise disjoint edges in $H_{i-1}$ that intersect with $e'$, so changing whether $e'$ is in $B_i$ changes $E(M_i)$ by at most $k+1$ edges. Call such edges \emph{affected}. Each affected edge is the central edge $e_3$ of at most $\O_k(1)$ candidates $(e_1, e_2, e_3)$. Indeed, if the central edge $e_3$ and vertices $x, y$ are fixed, there are at most $k$ choices of the vertex in $e_1 \cap e_3$ and at most $k-1$ choices of the vertex in $e_2 \cap e_3$, and for each such choice, there is at most one choice for $e_1$ or $e_2$ (as $H$ is simple and $x \in e_1$, $y \in e_2$). Thus changing whether $e'$ is in $B_i$ affects at most $\O_k(1)$ candidates by affecting their $(\rm T1)$ events.

		It remains to count the number of candidates for which $(\rm T2)$ events are affected.  Note that changing whether $e'$ is in $B_i$ changes
		$V(M_i)$ by at most $k^2$ vertices. Let us call those vertices \emph{affected}. Recall that, for a candidate to contribute towards $X_i(x,y)$, it is necessary that $(\rm T1)$ holds for it. Since $H_{i-1}$ is simple, for each affected vertex $v$, there are at most $\O_k(1)$ candidates $(e_1,e_2,e_3)$ such that $v \in (e_1 \cup e_2) \setminus (e_3 \cup \{x,y \}) $, and $(\rm T1)$ holds.\COMMENT{by the same reason as in the first paragraph of this subclaim. 
		By symmetry, we may assume that $v \in e_1 \setminus (e_3 \cup \{x,y \})$. There is exactly one choice of $e_1$ (as $H$ is simple), at most $k$ choices of $e_3$ that satisfy $(\rm T1)$, $k-1$ choices of the vertex in $e_2 \cap e_3$, and given such a vertex, at most one choice of $e_2$ (as $H$ is simple).} Thus, $X_i(x,y)$ changes by at most $\O_k(1)$. This completes the proof of Subclaim 2. \\
		
		Thus using Subclaims 1 and 2, Proposition ~\ref{claim:waste}, and the fact that an edge is in $B_i$ with probability $1/D_{i-1}$, we can apply Lemma ~\ref{cor:martingale} with $\sigma = \Theta(\sqrt{X_{i-1}(x,y) + Y_{i-1}(x,y)})$ (so $\sigma = \omega(\log N)$ by the assumption of Claim~\ref{claim:eq_x}) and $\lambda = \Theta(\log N)$ with an appropriately chosen multiplicative constant, to obtain that with probability at least $1 - e^{-\Omega(\log^2 N)}$,
		\begin{equation*}
		X_i(x,y) = \mathbb{E}X_{i}(x,y) \pm \sqrt{X_{i-1}(x,y) + Y_{i-1}(x,y)}\log N.
		\end{equation*}

		By the union bound, with probability at least
		$1 - |U_{i-1}|^2e^{-\Omega(\log^2 N)} = 1 - e^{-\Omega(\log^2 N)},$
		the claim holds.
	\end{subproof}
	
	\begin{claim}\label{claim:boundx}
		With probability at least $1 - e^{-\Omega(\log^2 N)}$, $(\rm L5)_i$ holds.
	\end{claim}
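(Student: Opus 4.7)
The plan is to mirror the proof of Claim~\ref{claim:boundy}, using the concentration bound of Claim~\ref{claim:eq_x} together with the expectation recurrence \eqref{eqn:expxi} and the induction hypotheses $(\rm L4)_{i-1}, (\rm L5)_{i-1}$. Fix distinct $x,y \in U_{i-1}$. First I would dispose of the degenerate regime where the hypothesis of Claim~\ref{claim:eq_x} fails, i.e.\ $X_{i-1}(x,y) + Y_{i-1}(x,y) = O(\log^2 N)$. Here one observes that every instance $(e_1,e_2,e_3)$ counted by $X_i(x,y)$ is inherited either from an $X_{i-1}(x,y)$-instance (when $e_3 \in \bigcup_{j \le i-1} E(M_j)$, since $(\rm X3)_i$ implies $(\rm X3)_{i-1}$) or from a $Y_{i-1}(x,y)$-instance (when $e_3 \in E(M_i)$, noting that $e_3 \subseteq U_{i-1}$ then). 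Hence $X_i(x,y) \le X_{i-1}(x,y) + Y_{i-1}(x,y) = O(\log^2 N) \le C_8 D_i \log^2 N$, using $D_i \ge D_0^\gamma \gg 1$ from~\eqref{eq:*}.

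In the main regime, Claim~\ref{claim:eq_x} combined with \eqref{eqn:expxi} yields, with probability $1 - e^{-\Omega(\log^2 N)}$,
\begin{equation*}
X_i(x,y) \le (1+o(1))(1-p_{i-1}^*)^{2k-4} X_{i-1}(x,y) + \frac{C_{10}}{D_{i-1}} Y_{i-1}(x,y) + \sqrt{X_{i-1}(x,y) + Y_{i-1}(x,y)}\,\log N.
\end{equation*}
Substituting $(\rm L5)_{i-1}$ and $(\rm L4)_{i-1}$ (the latter dominating under the square root since $D_{i-1} \gg 1$), and rewriting $D_{i-1} = D_i/(1-p_{i-1}^*)^{k-1}$ via~\eqref{Didef}, the right-hand side is at most
\begin{equation*}
(1+o(1))\left[(1-p_{i-1}^*)^{k-3} C_8 + \frac{C_{10}C_7 + \sqrt{C_7}}{(1-p_{i-1}^*)^{k-1}}\right] D_i \log^2 N.
\end{equation*}

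The decisive step, and the one I expect to be the main obstacle, is verifying that the bracketed coefficient is strictly smaller than $C_8$; this is exactly where the hypothesis $k > 3$ enters. Indeed, since $1-p_{i-1}^* = (1+o(1))(1-e^{-k})$ by Propositions~\ref{claim:matching} and~\ref{claim:bound_delta}, the shrinkage factor $(1-p_{i-1}^*)^{k-3}$ is a constant strictly less than $1$ (whereas for $k=3$ it would equal $1$ and the argument would break), yielding a multiplicative slack of $C_8\bigl(1 - (1-e^{-k})^{k-3}\bigr)$. The hierarchy in~\eqref{eq:mainhierarchy}, which places $1/C_8$ far below $1/C_7$ and the $k$-dependent constant $C_{10}$, ensures this slack absorbs the additive term $(C_{10}C_7 + \sqrt{C_7})/(1-e^{-k})^{k-1}$, so that $X_i(x,y) < C_8 D_i \log^2 N$. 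A union bound over the $O(N^2)$ pairs $x,y \in U_{i-1}$ preserves the exceptional probability as $e^{-\Omega(\log^2 N)}$, completing the proof of $(\rm L5)_i$.
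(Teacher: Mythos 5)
Your proposal follows essentially the same route as the paper: dispose of the case $X_{i-1}(x,y)+Y_{i-1}(x,y) = \O(\log^2 N)$ by the deterministic inequality $X_i(x,y) \le X_{i-1}(x,y)+Y_{i-1}(x,y)$, then in the main regime feed the recurrence \eqref{eqn:expxi} and Claim~\ref{claim:eq_x} into $(\rm L4)_{i-1}$ and $(\rm L5)_{i-1}$, rewrite $D_{i-1}$ in terms of $D_i$ via \eqref{Didef}, and close the induction using the strict shrinkage factor $(1-p_{i-1}^*)^{k-3}<1$ (requiring $k>3$) together with the hierarchy $1/C_8 \ll 1/C_7$. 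The only deviations — $\sqrt{C_7}$ in place of the paper's $\sqrt{2C_7}$, and an extra union bound that is already built into Claim~\ref{claim:eq_x} — are immaterial.
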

	\begin{subproof}

		We may assume that
		\begin{equation*}
		X_{i-1}(x,y) + Y_{i-1}(x,y) = \omega(\log^2 N),
		\end{equation*}
		since otherwise $X_i(x,y) \leq X_{i-1}(x,y) + Y_{i-1}(x,y) = \O(\log^2 N)$ and the claim automatically holds.

		In the rest of the proof of Claim ~\ref{claim:boundx}, for convenience, we write $X_{i-1}$ and $Y_{i-1}$ instead of $X_{i-1}(x,y)$ and $Y_{i-1}(x,y)$, respectively.
		
		By~\eqref{eqn:expxi}, Claim~\ref{claim:eq_x}, $(\rm L4)_{i-1}$ and $(\rm L5)_{i-1}$ the following inequality holds with probability at least $1 - e^{-\Omega(\log^2 N)}$,
		\begin{align*}
		X_i(x,y) & \leq \left ( 1 \pm \frac{C_{9}}{D_{i-1}} \right ) (1 - p_{i-1}^*)^{2k-4} X_{i-1} + C_7C_{10} D_{i-1} \log^2 N + \sqrt{X_{i-1} + Y_{i-1}} \log N \\
		& \leq \left ( 1 \pm \frac{C_{9}}{D_{i-1}} \right ) (1 - p_{i-1}^*)^{2k-4} X_{i-1} + C_7 C_{10} D_{i-1} \log^2 N + \sqrt{2 C_7} D_{i-1} \log^2 N\\
		\overset{\eqref{Didef}}&{\leq} \left [(1 + o(1)) (1 - p_{i-1}^*)^{k-3}C_8 + \frac{C_7 C_{10} + \sqrt{2C_7}}{(1 - p_{i-1}^*)^{k-1}} \right ] D_i \log^2 N\\
		& < C_8 D_i \log^2 N,
		\end{align*}
		where in the second and third inequality we used $X_{i-1} \leq C_8 D_{i-1} \log^2 N$ and $Y_{i-1} \leq C_7 D_{i-1}^2 \log^2 N$ by $(\rm L4)_{i-1}$ and $(\rm L5)_{i-1}$, and the last inequality follows by 
		~\eqref{eq:mainhierarchy} and the fact that $1 - p_{i-1}^* = (1 + o(1))(1 - e^{-k})$ by Propositions ~\ref{claim:matching} and ~\ref{claim:bound_delta}. This completes the proof of the claim.
	\end{subproof}
	This concludes the proof of Lemma~\ref{lem:nibble} (Nibble lemma). \qed
	
	\subsection{Proof of Theorem~\ref{thm:mainsec3}}
	\label{subsection:theorem3.1}
	Iteratively applying Lemma~\ref{lem:nibble} (Nibble lemma), we obtain that with non-zero probability $(\rm L1)_i$--$(\rm L5)_i$ hold for all $1 \leq i \leq \zeta$. Consider some outcome of the Nibble process for which $(\rm L1)_i$--$(\rm L5)_i$ hold for all $1 \leq i \leq \zeta$.
	
	Let $M := M_1 \cup \dots \cup M_\zeta$ and $W := W_1 \cup \dots \cup W_\zeta$. Then $U_\zeta = V(H) \setminus (V(M) \cup W).$ We will first show that $(\rm M1)$ of Theorem~\ref{thm:mainsec3} holds. Note that Proposition~\ref{claim:bound_delta} implies that for all $1 \le i \le \zeta$ and all $0 \le t \le i$,
	\begin{align}
	q_{t,i} &= (1+o(1)) (1 -  e^{- k})^{i-t}, \label{eqn:boundqjcor}\\
	D_i &= (1+o(1)) (1 - e^{- k})^{i(k-1)} D_0. \label{eqn:bounddjcor}
	\end{align}
	
	\begin{claim}\label{claim:bounddomega}
		$(1 - e^{-k})^{\zeta(k-1)} D_0 = \Theta (D_0^{\gamma})$ and $D_\zeta = \Theta (D_0^{\gamma})$.
	\end{claim}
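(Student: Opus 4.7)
The plan is to deduce both asymptotic equalities directly from the stopping rule \eqref{eq:Domega} together with the estimate $(\rm A2)$ of Proposition~\ref{claim:bound_delta}, which, once we know $(\rm L1)_i$--$(\rm L5)_i$ hold for $0\le i\le\omega$, gives us the expression
\[
D_j = (1+o(1))(1-e^{-k})^{j(k-1)} D_0
\]
for each $1 \le j \le \omega$. The main idea is that $D_j$ decreases by the multiplicative factor $(1-e^{-k})^{k-1}(1+o(1))$ at each step (cf.\ \eqref{Didef} and Proposition~\ref{claim:matching}(iii)), so consecutive values of $D_j$ differ only by a bounded multiplicative constant; this will let us ``sandwich'' $D_\omega$ between $D_0^\gamma$ and a constant multiple of $D_0^\gamma$.

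First I would apply the stopping rule \eqref{eq:Domega}: by definition, $D_\omega \le D_0^\gamma$ while $D_{\omega-1}>D_0^\gamma$. Combining the second of these with \eqref{Didef} and Proposition~\ref{claim:matching}(iii) (which is available because $\Delta_{\omega-1} = o(D_{\omega-1})$ by Proposition~\ref{claim:bound_delta}$(\rm A3)_{\omega-1}$) yields
\[
D_\omega = (1+o(1))(1-e^{-k})^{k-1}D_{\omega-1} \ge (1-o(1))(1-e^{-k})^{k-1} D_0^\gamma = \Omega(D_0^\gamma).
\]
Together with the upper bound $D_\omega \le D_0^\gamma$, this proves $D_\omega = \Theta(D_0^\gamma)$, which is the second assertion of the claim.

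For the first assertion, I would substitute the asymptotic formula from Proposition~\ref{claim:bound_delta}$(\rm A2)_\omega$, namely
\[
D_\omega = (1+o(1))(1-e^{-k})^{\omega(k-1)}D_0,
\]
into the conclusion $D_\omega = \Theta(D_0^\gamma)$ just derived. This immediately gives $(1-e^{-k})^{\omega(k-1)} D_0 = \Theta(D_0^\gamma)$, completing the proof. There is no real obstacle here: all the analytic work is already contained in Proposition~\ref{claim:bound_delta} and Proposition~\ref{claim:matching}, and the argument is essentially just pushing the definition of $\omega$ through these estimates.
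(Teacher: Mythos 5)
Your proposal is correct and follows essentially the same route as the paper. The paper's proof chains the same ingredients—the stopping rule $D_{\omega-1}>D_0^\gamma\ge D_\omega$ together with the asymptotic $D_i=(1+o(1))(1-e^{-k})^{i(k-1)}D_0$ from Proposition~\ref{claim:bound_delta}$(\rm A2)_i$—into a single sandwich inequality, whereas you first extract $D_\omega=\Theta(D_0^\gamma)$ from one more application of the one-step recurrence \eqref{Didef} and Proposition~\ref{claim:matching}(iii) and then substitute the $(\rm A2)_\omega$ formula; the two are equivalent reorganisations of the same argument.
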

	\begin{subproof}
		By the definition of $\zeta$, we have  $D_\zeta \leq D_0^\gamma$, and $D_{\zeta-1} > D_0^\gamma$. Thus we have
		\begin{equation}\label{eqn:d_omega}
		\frac{(1 - e^{-k})^{k-1} D_0^\gamma}{4} < \frac{(1 - e^{-k})^{k-1}D_{\zeta - 1}}{4} \overset{\eqref{eqn:bounddjcor}}{\leq} \frac{(1 - e^{-k})^{\zeta(k-1)} D_0}{2}  \overset{\eqref{eqn:bounddjcor}}{\leq} D_\zeta \leq D_0^\gamma,
		\end{equation}
		proving the claim.
	\end{subproof}
	
	Now define $p := (D_\zeta / D_0)^{\frac{1}{k-1}}$. By~\eqref{eqn:bounddjcor} and Claim~\ref{claim:bounddomega}, we have $p = (1+o(1)) (1-e^{-k})^{\zeta} = \Theta(D_0^{\frac{\gamma-1}{k-1}})$. Hence, by $(\rm L2)_\zeta$(c) we have 
	\begin{equation*}
	|S \cap U_{\zeta}| = (1+o(1))(1 - e^{-k})^{\zeta} |S| = (1+o(1)) p |S|
	\end{equation*}
	for every $S \in \mathcal{V}$. This proves $(\rm M1)$.
	
	\begin{claim}
		$(\rm M2)$ holds.
	\end{claim}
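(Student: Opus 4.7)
The strategy is to write $|S \cap W| = \sum_{i=1}^{\omega} |S \cap W_i|$ and bound each summand using $(\rm L2)_i(a)$, then control the resulting two geometric-like sums using the estimates of $(\rm L2)_{i-1}(c)$ and Proposition~\ref{claim:bound_delta}. The first will be a convergent (decreasing) geometric sum in $i$; the second, crucially because $k>3$, will be an increasing geometric sum dominated by its last term $i=\omega$, which is where Claim~\ref{claim:bounddomega} is applied to evaluate $(1-e^{-k})^{\omega}$.

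More precisely, I would begin by invoking $(\rm L2)_i(a)$, which gives
\[
|S \cap W_i| \;\le\; 2C_5 \frac{|S \cap U_{i-1}| \Delta_{i-1}}{D_{i-1}} + \frac{|S \cap U_{i-1}| \log N}{\sqrt{D_{i-1}}}.
\]
Plug in $\Delta_{i-1}/D_{i-1} \le D_0^{-1}\Delta_0 + C_4 \log N / \sqrt{D_{i-1}}$ from Proposition~\ref{claim:bound_delta}$(\rm A3)_{i-1}$, absorb the resulting $\log N / \sqrt{D_{i-1}}$ term into the second summand, and then substitute $|S \cap U_{i-1}| = (1+o(1))(1-e^{-k})^{i-1}|S|$ from $(\rm L2)_{i-1}(c)$, and $D_{i-1} = (1+o(1))(1-e^{-k})^{(i-1)(k-1)}D_0$ from Proposition~\ref{claim:bound_delta}$(\rm A2)_{i-1}$. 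This gives
\[
|S \cap W_i| \;=\; \O\!\left( |S|(1-e^{-k})^{i-1} D_0^{-1}\Delta_0 \;+\; \frac{|S|\log N}{\sqrt{D_0}}\,(1-e^{-k})^{-(i-1)(k-3)/2} \right).
\]

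Next, I would sum both contributions over $1 \le i \le \omega$. The first contribution telescopes into a standard geometric series with ratio $1-e^{-k}<1$, yielding a total of $\O(|S|D_0^{-1}\Delta_0)$, which matches the first term in the target bound. The second contribution is the key: since $k>3$, the ratio $(1-e^{-k})^{-(k-3)/2}>1$, so the sum is dominated by its last term at $i=\omega$, giving
\[
\O\!\left( \frac{|S|\log N}{\sqrt{D_0}}\,(1-e^{-k})^{-(\omega-1)(k-3)/2} \right) \;=\; \O\!\left( \frac{|S|\log N}{\sqrt{D_0}} \left((1-e^{-k})^{\omega}\right)^{-(k-3)/2} \right).
\]
By Claim~\ref{claim:bounddomega} and $(\rm A2)_\omega$, $(1-e^{-k})^{\omega} = \Theta(D_0^{(\gamma-1)/(k-1)})$, so this becomes $\O(|S| D_0^{-1/2 + (1-\gamma)(k-3)/(2(k-1))} \log N)$. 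A short arithmetic check shows that the exponent equals $-\tfrac{1}{k-1} + \gamma\bigl(\tfrac{1}{k-1}-\tfrac{1}{2}\bigr)$, matching the target term. Adding the two contributions completes $(\rm M2)$.

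The only subtle point (and the step I would be most careful about) is handling the geometric series in the second term correctly: the exponent $-(k-3)/2$ is negative only when $k>3$, and it is this sign that forces the sum to be dominated by the $i=\omega$ term rather than the $i=1$ term. This is precisely the structural reason why the waste bound (and hence the whole proof) works only for $k>3$, matching the remark at the end of Section~\ref{proofonline}. No new concentration arguments are needed here—everything follows from the conclusions $(\rm L2)_i$ already proved by the Nibble lemma together with the parameter estimates in Proposition~\ref{claim:bound_delta}.
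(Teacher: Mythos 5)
Your proposal is correct and follows essentially the same route as the paper's proof: bound each $|S\cap W_i|$ via $(\rm L2)_i(a)$ together with $(\rm A3)_{i-1}$, $(\rm A2)_{i-1}$ and $(\rm L2)_{i-1}(c)$, observe that the $\Delta_0$-contribution is a decreasing geometric series while (because $k>3$) the $\log N$-contribution is an increasing one dominated by its $i=\omega$ term, and then convert $(1-e^{-k})^{\omega}$ to a power of $D_0$ via Claim~\ref{claim:bounddomega}. Your closing arithmetic identification of the exponent with $-\tfrac{1}{k-1}+\gamma\bigl(\tfrac{1}{k-1}-\tfrac{1}{2}\bigr)$ checks out, and your remark on why $k>3$ is structurally necessary matches the paper's discussion.
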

	\begin{subproof}
		For any $1 \leq i \leq \zeta$, $(\rm L2)_i$(\rm a) implies that 
		\begin{equation}
		\label{eq:swi}
		  |S \cap W_i| = \O \left(|S \cap U_{i-1}|\frac{\Delta_{i-1}}{D_{i-1}} + |S \cap U_{i-1}| \frac{\log N}{\sqrt{D_{i-1}}}\right). 
		\end{equation}
		By Proposition~\ref{claim:bound_delta} $(\rm A3)_{i-1}$ and $(\rm L2)_{i-1}$(\rm c) we have
		\begin{align}\label{eqn:boundratio}
            \begin{split}
		&|S \cap U_{i-1}| \left ( \frac{\Delta_{i-1}}{D_{i-1}} + \frac{\log N}{\sqrt{D_{i-1}}} \right ) \\ &\overset{\eqref{eqn:bounddjcor}}{=} \Theta ((1 - e^{-k})^{i-1}|S| \cdot D_0^{-1}\Delta_0 + (1 - e^{-k})^{(i-1)-\frac{(i-1)(k-1)}{2}} |S| D_0^{-1/2} \log N),
            \end{split}
		\end{align}
		where the constant implicit in $\Theta (\cdot)$ is independent of the index $i$. Hence we have,\COMMENT{Explanation for \eqref{eqn:boundw}: Summing the decreasing geometric series we get $\sum_{i=0}^{\zeta-1} (1 - e^{-k})^i = \O_k(1)$. Note that since $k > 3$ the sum $\sum_{i=0}^{\zeta-1}(1 - e^{-k})^{i - \frac{i(k-1)}{2}}$ is an increasing geometric series, so the last term dominates; i.e., we get $\Theta((1 - e^{-k})^{\zeta - \frac{\zeta(k-1)}{2}})$. Thus $\sum_{i=0}^{\zeta-1}(1 - e^{-k})^{i - \frac{i(k-1)}{2}} |S| D_0^{-1/2} \log N  ) = \Theta((1 - e^{-k})^{\zeta - \frac{\zeta(k-1)}{2}}) |S| D_0^{-1/2} \log N = \Theta((1 - e^{-k})^{\zeta}D_{\zeta}^{-1/2}|S|\log N)$. By Claim ~\ref{claim:bounddomega}, $(1 - e^{-k})^{\zeta} =D_0^{(\gamma-1)/(k-1)}$ and $D_{\zeta} = \Theta(D_0^{\gamma})$, so $\Theta((1 - e^{-k})^{\zeta}D_{\zeta}^{-1/2}|S|\log N) = \Theta(D_0^{(\gamma-1)/(k-1)} D_0^{-\gamma/2}|S|\log N)$.
		}
		\begin{align}\label{eqn:boundw}
            \begin{split}
		|S \cap W| &= \sum_{i=1}^{\zeta} |S \cap W_i| \overset{\eqref{eq:swi}}{=} \sum_{i=1}^{\zeta} \O\left(|S \cap U_{i-1}| \frac{\Delta_{i-1}}{D_{i-1}} + |S \cap U_{i-1}| \frac{\log N}{\sqrt{D_{i-1}}}\right) \\
		\overset{\eqref{eqn:boundratio}}&{=} \O \left ( \sum_{i=0}^{\zeta-1} (1 - e^{-k})^i |S| \cdot D_0^{-1} \Delta_0 + \sum_{i=0}^{\zeta-1}(1 - e^{-k})^{i - \frac{i(k-1)}{2}} |S| D_0^{-1/2} \log N  \right ) \\
		&= \O ( |S| D_0^{-1}\Delta_0 + |S| D_0^{-\frac{1}{k-1}} D_0^{\gamma \left ( \frac{1}{k-1} - \frac{1}{2} \right ) } \log N), 
            \end{split}
		\end{align}
		where in \eqref{eqn:boundw} we used Claim ~\ref{claim:bounddomega} and the fact that $\sum_{i=0}^{\zeta-1}(1 - e^{-k})^{i - \frac{i(k-1)}{2}} |S| D_0^{-1/2} \log N  ) = \Theta((1 - e^{-k})^{\zeta - \frac{\zeta(k-1)}{2}}) |S| D_0^{-1/2} \log N = \Theta((1 - e^{-k})^{\zeta}D_{\zeta}^{-1/2}|S|\log N)$ by Proposition~\ref{claim:bound_delta}. This completes the proof of the claim.
	\end{subproof}
	
	\begin{claim}\label{claim:bddeomega}
		For any $x \in U_\zeta$, let $Z_\zeta := (k-1) e^{-k} \sum_{j<\zeta}q_{j,\zeta}^{k-2} D_j$ be the main term of $Z_\zeta(x)$. Then
		\begin{equation}
		\label{Zomegax}
		     Z_\zeta(x) = \left ( 1 + \O \left ( \frac{\Delta_\zeta}{D_\zeta} \right ) \right ) Z_\zeta.
		\end{equation}
	\end{claim}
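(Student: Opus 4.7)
The plan is to apply $(\rm L3)_\omega$(a) from Lemma~\ref{lem:nibble} to write $Z_\omega(x) = Z_\omega \pm E_\omega$, where $E_\omega$ is the triple error sum indexed by $j < \omega$, and then verify $E_\omega = \O((\Delta_\omega/D_\omega) Z_\omega)$ term by term. Setting $p_j := (1-e^{-k})^j$ for brevity, I would first invoke Proposition~\ref{claim:bound_delta} to replace each of $q_{j,\omega}$, $D_j$ and $\Delta_j$ by its asymptotic form in terms of $p_j$, $D_0$ and $\Delta_0$, and use $(\rm L3)_j$(c) to replace $Z_j(x)$ by $\Theta(p_j^{k-2}D_0)$. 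This immediately gives $Z_\omega = \Theta(p_\omega^{k-2}D_0) = \Theta(D_\omega/p_\omega)$. Combining this with $(\rm A3)_\omega$, which says $\Delta_\omega = \Theta(D_\omega\Delta_0/D_0 + \sqrt{D_\omega}\log N)$, the target bound takes the explicit form $(\Delta_\omega/D_\omega)Z_\omega = \Theta(p_\omega^{k-2}\Delta_0 + p_\omega^{(k-3)/2}\sqrt{D_0}\log N)$.

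After these substitutions, each of the three summands in $E_\omega$ becomes a geometric-type series in $p_j$. Crucially, when $k > 3$ the exponent of $p_j$ in each resulting series has a definite sign, so the sum collapses to its dominant term: the first term $j=0$ for decreasing series, the last term $j=\omega-1$ for increasing ones. A direct calculation then yields $\sum_j q_{j,\omega}^{k-2}(C_9 Z_j(x)/D_j) = \Theta(1/p_\omega)$, $\sum_j q_{j,\omega}^{k-2}\Delta_j = \Theta(p_\omega^{k-2}\Delta_0 + \sqrt{D_\omega}\log N)$, and $\sum_j q_{j+1,\omega}^{k-2}\sqrt{Z_j(x)+D_j}\log N = \Theta(\sqrt{D_\omega/p_\omega}\log N)$. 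Comparing each with the target bound and using $D_\omega = \Theta(D_0^\gamma)$ from Claim~\ref{claim:bounddomega} (which guarantees $\sqrt{D_\omega}\log N \gg 1$, and hence absorbs the stray $1/p_\omega$ factor) then delivers the desired bound on $E_\omega$.

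The main obstacle is essentially bookkeeping: keeping track of the powers of $p_\omega$ across the three summations and verifying that all inequalities go in the right direction. One subtle point is that $(\rm L3)_j$(c) is asymptotic and guarantees $Z_j(x) = \Theta(p_j^{k-2}D_0)$ only once $j$ exceeds a fixed constant; but the finitely many small-$j$ contributions are each $\O(1)$ and thus negligible against the target, which is $\omega(1)$ by $D_\omega = \Theta(D_0^\gamma)$. It is also worth noting that the hypothesis $k > 3$ enters crucially: when $k = 3$, the series $\sum_j p_\omega^{k-2}p_j^{-(k-3)/2}\sqrt{D_0}\log N$ degenerates (its common ratio becomes $1$), and the simple dominant-term argument no longer suffices without an additional logarithmic factor.
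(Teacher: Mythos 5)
Your argument is correct, and the underlying ideas are the same as the paper's, but you take a redundant detour: you start from $(\rm L3)_\omega$(a) and then re-derive, via geometric-series estimates in $p_j := (1-e^{-k})^j$, exactly the bound
\[
  Z_\omega(x) = Z_\omega + \O\bigl((1 - e^{-k})^{-\omega} + (1 - e^{-k})^{\omega(k-2)} \Delta_0 + (1 - e^{-k})^{\omega(k-2)/2} \sqrt{D_0} \log N\bigr),
\]
which is already available as $(\rm L3)_\omega$(b), and whose proof from (a) is carried out once and for all in Claim~\ref{claim:bounde}. The paper simply quotes (b), writes $Z_\omega = \Theta(D_0^{\gamma(1 - \frac{1}{k-1}) + \frac{1}{k-1}})$ via Claim~\ref{claim:bounddomega}, and compares each of the three error terms directly against $(\Delta_\omega/D_\omega)Z_\omega$ using $\Delta_\omega/D_\omega = \Delta_0/D_0 + \Theta(D_0^{-\gamma/2}\log N)$; your computations of the three sums, your identities $Z_\omega = \Theta(D_\omega/p_\omega)$ and $(\Delta_\omega/D_\omega)Z_\omega = \Theta(p_\omega^{k-2}\Delta_0 + p_\omega^{(k-3)/2}\sqrt{D_0}\log N)$, and your use of $D_\omega = \Theta(D_0^\gamma)$ to absorb the $\Theta(1/p_\omega)$ term all check out. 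One small inaccuracy in your caveat about small $j$: $(\rm L3)_j$(c) already gives $Z_j(x) = \Theta(p_j^{k-2}D_0)$ for every $j \geq 1$ (the $o(1)$ there is with respect to $N \to \infty$, not $j \to \infty$, and the prefactor $1 - (1-e^{-k})^j$ is bounded away from $0$ once $j \geq 1$), so the only degenerate index is $j = 0$, where $Z_0(x) = 0$ and the contribution is trivially handled; the claim does not require $j$ to exceed an unspecified constant.
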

	\begin{subproof}

		We have 
		\begin{equation}
		\label{eq:Zomega}
		  Z_\zeta \overset{(\rm L3)_{\zeta}(c),(d)}{=} \Theta ( (1 - e^{-k})^{\zeta(k-2)} D_0 ) \overset{\text{Claim}~\ref{claim:bounddomega}}{=} \Theta (D_0^{\gamma \left (1 - \frac{1}{k-1} \right ) + \frac{1}{k-1}}).
		\end{equation}
		By $(\rm L3)_{\zeta}$(b),
		$$Z_\zeta(x) =  Z_\zeta + \O ( (1 - e^{-k})^{-\zeta} + (1 - e^{-k})^{\zeta(k-2)} \Delta_0 + (1 - e^{-k})^{\frac{\zeta(k-2)}{2}} \sqrt{D_0} \log N).$$
		We estimate each of these error terms as follows:\COMMENT{The first equation of the second line below is obtained by rearranging the first equation of \eqref{eq:Zomega}. The first equation of the third line below is obtained as follows. $(1 - e^{-k})^{\frac{\zeta(k-2)}{2}} \sqrt{D_0} \log N \overset{\eqref{eq:Zomega}}{=} Z_{\zeta}^{1/2}\log N = Z_{\zeta}^{-1/2} \log N \cdot Z_{\zeta}$ which is equal to  $D_0^{-\frac{1}{2(k-1)} - \frac{k-2}{2(k-1)}\gamma }\log N \cdot Z_\zeta$ by the second equation of \eqref{eq:Zomega}.}
		\begin{align*}
		(1 - e^{-k})^{-\zeta} \overset{\text{Claim}~\ref{claim:bounddomega}}&{=} \Theta(D_0^{\frac{1-\gamma}{k-1}}) \overset{\eqref{eq:Zomega}}{=} \O(D_0^{-\gamma} \cdot Z_\zeta) \leq \frac{\Delta_\zeta}{D_\zeta} Z_\zeta, \\
		(1 - e^{-k})^{\zeta(k-2)} \Delta_0 \overset{\eqref{eq:Zomega}}&{=} \O \left ( \frac{\Delta_0}{D_0} Z_\zeta \right ) \leq \O \left ( \frac{\Delta_\zeta}{D_\zeta} Z_\zeta \right ),\\
		(1 - e^{-k})^{\frac{\zeta(k-2)}{2}} \sqrt{D_0} \log N \overset{\eqref{eq:Zomega}}&{=} \O \left ( D_0^{-\frac{1}{2(k-1)} - \frac{k-2}{2(k-1)}\gamma }\log N \cdot Z_\zeta \right ) \leq \O \left ( \frac{\Delta_\zeta}{D_\zeta} Z_\zeta \right ),
		\end{align*}
		where\COMMENT{Note that $D_0^{-\frac{1}{2(k-1)} - \frac{k-2}{2(k-1)}\gamma} < D_0^{-\gamma/2}$ since $\gamma < 1$.} in each of the above inequalities we used $\frac{\Delta_\zeta}{ D_\zeta} = \frac{\Delta_0}{ D_0} +  \Theta (D_0^{-\gamma/2} \log N)$ (which holds by Proposition~\ref{claim:bound_delta} and Claim~\ref{claim:bounddomega}). Adding up the above estimates proves the claim.
	\end{subproof}

	Let $H_A$ be the multi-hypergraph of augmenting stars of $H$ with respect to $(M,W)$ (see Definition~\ref{Def:HypergraphofAugStars}). Recall that $V(H) \setminus (V(M) \cup W) = U_{\zeta}$, so $V(H_A) = E(M) \cup U_{\zeta}$. Similarly as in Section~\ref{proofonline}, let $L := E(M)$ and $R := U_{\zeta}$. The following three claims prove $(\rm M3)$.
	
	\begin{claim}
		For any $e \in E(M)$,
		\begin{equation}\label{eqn:bounddegleft}
		d_{H_A}(e) = (1 + \O ( D_0^{-1}\Delta_0 + D_0^{-\gamma/2} \log N)) D_{L},
		\end{equation}
		where the constants implicit in $\O(\cdot)$ and $D_L = \Theta (D_0^{k\gamma})$ do not depend on $e$.
	\end{claim}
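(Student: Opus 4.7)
I would fix $e_M \in E(M)$, write $e_M = \{x_1,\dots,x_k\}$, and introduce the sets $A_j := \{e \in E(H) : x_j \in e,\; e \setminus \{x_j\} \subseteq U_\omega\}$ for $1 \le j \le k$. Since the $k$ edges of any augmenting star at $e_M$ meet $e_M$ in $k$ pairwise distinct single vertices, there is a natural bijection between augmenting stars at $e_M$ and tuples $(e_1,\dots,e_k) \in A_1 \times \dots \times A_k$ with $e_1,\dots,e_k$ pairwise disjoint; hence $d_{H_A}(e_M)$ equals the number of such pairwise disjoint tuples. The remark following $(\rm L1)_\omega$, applied to each $x_j \in V(H)$, yields $|A_j| = D_\omega(x_j) = D_\omega \pm \Delta_\omega$.

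The first step will be to extract the main term: the total number of ordered tuples (ignoring the disjointness constraint) is $\prod_{j=1}^k |A_j| = D_\omega^k \bigl(1 \pm \O(\Delta_\omega/D_\omega)\bigr)$, where I use $\Delta_\omega/D_\omega = o(1)$, which follows from Proposition~\ref{claim:bound_delta}$(\rm A3)_\omega$ together with Claim~\ref{claim:bounddomega}. The second step is to bound the loss from pairs of intersecting edges. For $i \ne \ell$, I would estimate $B(x_i,x_\ell) := |\{(e,f) \in A_i \times A_\ell : e \cap f \ne \emptyset\}|$ as follows: if $v \in e \cap f$ then $v \in e \setminus \{x_i\} \subseteq U_\omega$, so in particular $v \notin \{x_i,x_\ell\}$; since $H$ is simple, $f$ is determined by the pair $\{x_\ell,v\}$, and there are at most $k-1$ choices of $v$ once $e$ is fixed. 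This yields $B(x_i,x_\ell) \le (k-1)|A_i| = \O(D_\omega)$, so by a union bound over the $\binom{k}{2}$ pairs the number of tuples that fail the disjointness condition is at most $\O(D_\omega^{k-1})$.

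Setting $D_L := D_\omega^k = \Theta(D_0^{k\gamma})$ (by Claim~\ref{claim:bounddomega}), the above gives
\[
d_{H_A}(e_M) \;=\; D_L \Bigl(1 \pm \O\Bigl(\tfrac{\Delta_\omega}{D_\omega} + \tfrac{1}{D_\omega}\Bigr)\Bigr).
\]
I would then convert the error into the stated form using Proposition~\ref{claim:bound_delta}$(\rm A3)_\omega$ and Claim~\ref{claim:bounddomega}, which give $\Delta_\omega/D_\omega = \O(D_0^{-1}\Delta_0 + D_0^{-\gamma/2}\log N)$, while $1/D_\omega = \O(D_0^{-\gamma})$ is easily absorbed into $\O(D_0^{-\gamma/2}\log N)$.

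The main obstacle is the sharper overlap estimate $B(x_i,x_\ell) = \O(D_\omega)$, since the naive bound based on raw degrees only yields $\O(D_0)$, which is far too weak. It is crucial to exploit both the simplicity of $H$ \emph{and}, more importantly, the $U_\omega$-constraint built into the definition of $A_j$, which forces any overlap vertex to lie in $U_\omega$ and reduces the count from $d_H(x_i)$ to $|A_i|$. Once this is in place, the rest is essentially bookkeeping using the parameter estimates established earlier in the proof.
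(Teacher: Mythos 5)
Your proof is correct and takes essentially the same approach as the paper's. Both count, for each $v_j \in e_M$, the edges $e_j$ with $v_j \in e_j$ and $e_j \setminus \{v_j\} \subseteq U_\omega$ (giving $D_\omega(v_j) = D_\omega \pm \Delta_\omega$ via $(\rm L1)_\omega$) and use simplicity to control the loss from pairwise disjointness; the paper packages the overlap correction as a $\pm\O_k(1)$ term in each sequential factor, while you compute it as a global $\O(D_\omega^{k-1})$ additive term via the bound $B(x_i,x_\ell)=\O(D_\omega)$ --- these are equivalent bookkeeping choices, with the final conversion to the stated error form via Proposition~\ref{claim:bound_delta}$(\rm A3)_\omega$ and Claim~\ref{claim:bounddomega} identical in both.
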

	\begin{subproof}
		Fix an arbitrary element $e \in E(M)$. Recall that $d_{H_A}(e)$ is the number of augmenting stars $(e_M , \{e_1,\dots , e_k \}) \in \mathcal{A}(H)$ such that $e_M = e$.
		
		Now let $e = \{v_1 , \dots , v_k \}$. For each $1 \leq j \leq k$, the number of choices for $e_j \in E(H)$ such that $\{v_j \} = e \cap e_j$ and $e_1 , \dots , e_j$ are disjoint is $D_\zeta \pm \Delta_\zeta \pm \O_k(1)$ since $H$ is simple and $\rm (L1)_{\zeta}$ holds. Thus,
		\begin{align*}
		d_{H_A}(e) = \prod_{j=1}^{k} (D_\zeta \pm (\Delta_\zeta + \O_k(1)))  = \left ( 1 + \O \left ( \frac{\Delta_\zeta}{D_\zeta} \right ) \right )  D_\zeta^k.
		\end{align*}
		Hence the claim follows by Proposition~\ref{claim:bound_delta} $\rm (A3)_{\zeta}$ and Claim~\ref{claim:bounddomega}.
	\end{subproof}
	
	\begin{claim}
		For any $x \in U_\zeta$,
		\begin{equation}\label{eqn:bounddegright}
		d_{H_A}(x) = (1 + \O ( D_0^{-1}\Delta_0 + D_0^{-\gamma/2} \log N)) D_{R},
		\end{equation}
		where the constants implicit in $\O(\cdot)$ and $D_R = \Theta (D_0^{k\gamma + \frac{1-\gamma}{k-1}})$ do not depend on the vertex $x$.
	\end{claim}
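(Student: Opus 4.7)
The plan is to mirror the proof of \eqref{eqn:bounddegleft} for $d_{H_A}(e)$, with one extra ingredient supplied by Claim~\ref{claim:bddeomega}. Fix $x \in U_\omega$ and consider an augmenting star $(e_M,\{e_1,\dots,e_k\})$ contributing to $d_{H_A}(x)$. Since $e_1,\dots,e_k$ are pairwise disjoint and $x \in U_\omega = V(H)\setminus(V(M)\cup W)$, exactly one of these edges contains $x$; call it $f$. The defining conditions of an augmenting star force $f\ni x$, $|f\cap e_M|=1$, and $f\setminus e_M \subseteq U_\omega$. Writing $x' := f\cap e_M$, the vertex $x'$ lies in $V(M)$ while the other $k-2$ vertices of $f\setminus\{x\}$ lie in $U_\omega$; these are precisely the conditions defining the instances of $Z_\omega(x)$. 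Hence the number of admissible $f$ is exactly $Z_\omega(x)$, and once $f$ is fixed, both $x'$ (uniquely determined, as $H$ is simple) and $e_M$ (the matching edge through $x'$) are determined.

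Next I count the remaining unordered $(k-1)$-set $\{g_2,\dots,g_k\}$ with $f$ and $e_M$ fixed. Each $g_j$ must meet $e_M$ in one vertex $v_j \in e_M\setminus\{x'\}$, satisfy $g_j\setminus\{v_j\}\subseteq U_\omega$, and be pairwise disjoint and disjoint from $f\setminus\{x'\}$. Since the $g_j$ are uniquely labelled by the vertices $v_j$ they cover, such unordered sets biject with the assignments $v_j \mapsto g_j$, so no symmetry factor arises. At each sequential step $(\rm L1)_\omega$ produces $D_\omega \pm \Delta_\omega$ candidate edges through $v_j$ with the other $k-1$ vertices in $U_\omega$, and simplicity of $H$ forces the discard of only $\O_k(1)$ of them to avoid intersections with the edges already fixed. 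This is the same calculation performed for $d_{H_A}(e)$, giving
\[
d_{H_A}(x) = Z_\omega(x)\cdot (D_\omega \pm \Delta_\omega \pm \O_k(1))^{k-1} = \left(1 + \O\!\left(\tfrac{\Delta_\omega}{D_\omega}\right)\right) Z_\omega\, D_\omega^{k-1},
\]
where the second equality uses $\Delta_\omega = o(D_\omega)$ from Proposition~\ref{claim:bound_delta}$(\rm A3)_\omega$ together with the estimate $Z_\omega(x) = (1+\O(\Delta_\omega/D_\omega))Z_\omega$ supplied by Claim~\ref{claim:bddeomega}.

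To finish, set $D_R := Z_\omega D_\omega^{k-1}$. Combining the estimate $Z_\omega = \Theta(D_0^{(\gamma(k-2)+1)/(k-1)})$ from \eqref{eq:Zomega} with $D_\omega = \Theta(D_0^\gamma)$ from Claim~\ref{claim:bounddomega} yields $D_R = \Theta(D_0^{k\gamma + (1-\gamma)/(k-1)})$, as required; the error term is then $\Delta_\omega/D_\omega = \O(D_0^{-1}\Delta_0 + D_0^{-\gamma/2}\log N)$ by Proposition~\ref{claim:bound_delta}$(\rm A3)_\omega$ and Claim~\ref{claim:bounddomega}. There is no serious obstacle: the combinatorial bookkeeping is a direct reuse of the $e \in E(M)$ argument, and the only genuinely new input is the observation that the count of valid ``first edges'' through $x$ is precisely $Z_\omega(x)$, a quantity already tracked throughout the Nibble process and controlled by Claim~\ref{claim:bddeomega}. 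The mildest care-point is ruling out overcounting in the sequential selection of $\{g_2,\dots,g_k\}$, which is automatic because distinct $g_j$ cover distinct vertices of $e_M\setminus\{x'\}$.
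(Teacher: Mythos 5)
Your proposal is correct and mirrors the paper's own proof essentially verbatim: both identify the first edge $e_1$ (your $f$) through $x$ as an instance of $Z_\omega(x)$, then count the $(k-1)$-fold extensions by the vertices of $e_M \setminus \{x'\}$ using $(\rm L1)_\omega$, and finish with Claim~\ref{claim:bddeomega} and Proposition~\ref{claim:bound_delta}$(\rm A3)_\omega$. The only (harmless) difference is that you make explicit the labelling bijection preventing overcounting of the unordered set $\{e_2,\dots,e_k\}$, which the paper leaves implicit.
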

	\begin{subproof}
		Fix an arbitrary vertex $x \in U_{\zeta} \subseteq V(H_A)$. Recall that $d_{H_A}(x)$ is the number of  augmenting stars $(e_M , \{e_1 , \dots , e_k \}) \in \mathcal{A}(H)$ such that $x \in e_1 \cup \dots \cup e_k$. Since $e_1 , \dots , e_k$ are disjoint, $x$ lies in exactly one of them. Without loss of generality, assume that $x \in e_1$. 
		
		There are $Z_\zeta(x)$ such choices for $e_1$, and given such a choice of $e_1$, let $x_1$ be the unique vertex in $e_1 \cap V(M)$ and $e_M \in E(M)$ be the edge that contains $x_1$. 
			
		Let $e_M \setminus \{x_1 \}:= \{x_2 , \dots , x_{k} \}$. For each $2 \leq j \leq k$, we have to choose an edge $e_j \in E(H)$ that contains $x_j$ such that $e_j$ is disjoint from $e_1 \cup \dots \cup e_{j-1}$, $|e_j \cap e_M| = 1$ and $e_j \setminus e_M \subseteq U_\zeta$. Since $H$ is simple, there are at most $\O_k(1)$ edges that contain $x_j$ and intersect $e_1 \cup \dots \cup e_{j-1}$, so the number of such choices of $e_j$ is $D_\zeta(x_j) + \O_k(1) = D_{\zeta} \pm \Delta_{\zeta} + \O_k(1)$ by $(\rm L1)_{\zeta}$. Hence the number of choices of $\{e_2 , \dots , e_{k} \}$ is 
		$$(D_{\zeta} \pm \Delta_{\zeta} + \O_k(1))^{k-1} = \left(1 + \O\left(\frac{\Delta_{\zeta}}{D_{\zeta}}\right) \right) D_\zeta^{k-1}. $$
		Thus there are 
		\begin{equation}
		    \label{case1dHA}
			\left ( 1 + \O \left ( \frac{\Delta_\zeta}{D_\zeta} \right ) \right ) Z_\zeta(x) D_\zeta^{k-1} 
		\end{equation}
		augmenting stars $(e_M ,\{e_1,e_2,\dots , e_k \}) \in \mathcal{A}(H)$ such that $x \in e_1 \cup \dots \cup e_k$.
		Therefore, by Proposition~\ref{claim:bound_delta}$(\rm A3)_{\zeta}$ and Claim ~\ref{claim:bounddomega}, we obtain
		\begin{equation*}
		d_{H_A}(x) \overset{\eqref{Zomegax}, \eqref{case1dHA}}{=}  \left (1 + \O \left ( \frac{\Delta_\zeta}{D_\zeta} \right ) \right ) Z_\zeta D_\zeta^{k-1} \overset{\eqref{eq:Zomega}}{=} (1 + \O (D_0^{-1}\Delta_0 + D_0^{-\gamma/2} \log N) ) D_{R},
		\end{equation*}
		where $D_{R} = \Theta (D_0^{k\gamma + \frac{1-\gamma}{k-1}})$ and the constants implicit in $\O(\cdot)$ and $\Theta(\cdot)$ do not depend on the vertex $x$, as desired.
	\end{subproof}
	
	\begin{claim}
		The codegree of $H_A$ is $\O (D_0^{\gamma(k-1)} \log^2 N)$.
	\end{claim}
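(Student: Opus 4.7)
The plan is to bound the codegree of $H_A$ by a direct case analysis according to the type of the pair of vertices, using simplicity of $H$ together with the bounds $(\rm L1)_\omega$ and $(\rm L5)_\omega$ from the Nibble lemma.

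Since every edge of $H_A$ contains exactly one element of $L = E(M)$, any pair of vertices in $L$ has codegree $0$. It therefore remains to handle two cases: a pair $(e_M, x)$ with $e_M \in L$ and $x \in R = U_\omega$, and a pair $(x,y)$ with $x,y \in R$. In both cases an edge of $H_A$ through the given pair corresponds to an augmenting star $(e_M', \{e_1,\dots,e_k\})$ whose matching edge $e_M'$ and covered non-matching vertices include the prescribed pair. Throughout, I will repeatedly use that since $H$ is simple there is at most one edge through any two given vertices, so the number of extensions is tightly controlled, together with $D_\omega = \Theta(D_0^\gamma)$ and $\Delta_\omega = o(D_\omega)$ from Claim~\ref{claim:bounddomega} and Proposition~\ref{claim:bound_delta}.

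\textbf{Case 1:} Fix $e_M \in L$ and $x \in R$. Then $x$ lies in some $e_j$, which up to relabelling I take to be $e_1$, and $e_1$ must meet $e_M$ in exactly one vertex $x_1$. By simplicity of $H$, for each of the $k$ choices of $x_1 \in e_M$ there is at most one edge through $x$ and $x_1$, giving $\O(1)$ choices for $e_1$. Given $e_1$, the edges $e_2,\dots,e_k$ are successively chosen so that $e_j$ contains a distinct vertex of $e_M \setminus \{x_1\}$ and its remaining vertices lie in $U_\omega \setminus (e_1 \cup \dots \cup e_{j-1})$. By $(\rm L1)_\omega$, there are $D_\omega \pm \Delta_\omega \pm \O_k(1) = \O(D_0^\gamma)$ such choices for each $e_j$, giving $\O(D_0^{\gamma(k-1)})$ total extensions.

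\textbf{Case 2:} Fix distinct $x,y \in R$. If $x$ and $y$ lie in the same $e_j$, say $e_1$, then simplicity of $H$ pins down $e_1$, and hence $e_M$ (via the unique vertex of $e_1 \cap e_M$) up to $\O(1)$ choices; the remaining edges $e_2,\dots,e_k$ contribute $\O(D_0^{\gamma(k-1)})$ as in Case 1. Otherwise $x$ and $y$ lie in distinct edges, say $x \in e_1$ and $y \in e_2$, and the triple $(e_1,e_2,e_M)$ is then precisely an instance of $X_\omega(x,y)$: conditions $(\rm X1)_\omega$--$(\rm X4)_\omega$ follow from the definition of augmenting star together with $x,y \in U_\omega$ and $e_M \in E(M) = \bigcup_{j \le \omega} E(M_j)$. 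Hence by $(\rm L5)_\omega$ the number of such triples is at most $C_8 D_\omega \log^2 N = \O(D_0^\gamma \log^2 N)$. For each such triple, the remaining edges $e_3,\dots,e_k$ contribute $\O(D_\omega^{k-2}) = \O(D_0^{\gamma(k-2)})$ choices, giving $\O(D_0^{\gamma(k-1)} \log^2 N)$ extensions.

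Adding the contributions of the two cases gives the desired bound $\O(D_0^{\gamma(k-1)} \log^2 N)$ on the codegree of $H_A$. There is no real obstacle here: the work of bounding the configurations $(e_1,e_2,e_M)$ has already been absorbed into the Nibble lemma via $X_\omega(x,y)$, and every other step is a routine application of simplicity and the almost-regularity of $H_\omega$ recorded in $(\rm L1)_\omega$.
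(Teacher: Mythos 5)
Your proof is correct and follows essentially the same decomposition as the paper's: the $L$--$L$ pair is trivial, the $L$--$R$ pair and the same-$e_j$ subcase of the $R$--$R$ pair each give $\O(D_\omega^{k-1})$ by simplicity and $(\rm L1)_\omega$, and the distinct-$e_j$ subcase is controlled via $X_\omega(x,y)$ and $(\rm L5)_\omega$. Your verification that the triple $(e_1,e_2,e_M)$ satisfies $(\rm X1)_\omega$--$(\rm X4)_\omega$ is an explicit spelling-out of what the paper takes for granted, but the argument is the same.
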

	\begin{subproof}
		Let $x,y \in V(H_A) = E(M) \cup U_{\zeta}$ be distinct vertices. If $x,y \in E(M)$, then the codegree of $\{x,y\}$ in $H_A$ is $0$. If $x \in E(M)$ and $y \in U_{\zeta}$, then the codegree of $\{x,y\}$ in $H_A$ is equal to the number of augmenting stars $(e_M , \{e_1 , \dots , e_k\})$ of $H$ with respect to $(M,W)$ where $x = e_M$ and $y \in \bigcup_{i=1}^{k} e_i \setminus e_M$. If $y \in e_j$ for some $j$ with $1 \le j \le k$, the number of choices for $e_j$ is $\O_k(1)$ since $H$ is simple. The number of choices of each $e_i$ with $i \ne j $ is $\O(D_{\zeta})$ by 
		$(\rm L1)_{\zeta}$. Thus the total number of such choices of augmenting stars $(e_M , \{e_1 , \dots , e_k\})$ is $ \O(D_\zeta^{k-1}) = \O(D_0^{\gamma(k-1)})$ by Claim ~\ref{claim:bounddomega}.

		It remains to bound the codegree of $\{x,y\}$ for $x, y \in U_{\zeta}$, which is the number of augmenting stars $(e_M , \{e_1 , \dots , e_k\})$ of $H$ with respect to $(M,W)$, where $x,y \in \bigcup_{i=1}^{k} e_i \setminus e_M$.

		Firstly, note that the number of augmenting stars where $x,y \in e_j \setminus e_M$, for some $1 \le j \le k$ is $\O(D_\zeta^{k-1})$. Indeed,  as $H$ is simple, there is at most one choice for $e_j$, and the number of choices for each $e_i$ with $i \ne j$ is $\O(D_{\zeta})$ by $(\rm L1)_{\zeta}$. 
		Secondly, the number of augmenting stars where $x \in e_{j_1} \setminus e_M$ and $y \in e_{j_2} \setminus e_M$ for $j_1 \ne j_2$ is $\O(X_\zeta(x,y) D_\zeta^{k-2})$. Indeed, the number of choices for such triples $(e_M, e_{j_1}, e_{j_2})$ is $X_\zeta(x,y)$ by definition, and the number of choices for each $e_i$ with $i \not \in \{j_1, j_2\}$ is $\O(D_{\zeta})$ by $(\rm L1)_{\zeta}$. Hence in total, the codegree of $\{x, y\}$ in $H_A$ is at most
		\begin{equation}\label{eqn:codegha}
		\O( D_\zeta^{k-1} + X_\zeta(x,y) D_\zeta^{k-2} ) \overset{(\rm L5)_\zeta}{=} \O( D_\zeta^{k-1} + D_\zeta \log^2 N D_\zeta^{k-2} ) = \O( D_0^{\gamma(k-1)} \log^2 N),
		\end{equation}
		by Claim~\ref{claim:bounddomega}, where the constant implicit in $\O(\cdot)$ does not depend on the choice of $x$ and $y$, as desired.
	\end{subproof}

	\section{\texorpdfstring{Using augmenting stars to find large matchings \\ in almost regular simple hypergraphs}{Using augmenting stars to find large matchings in almost regular simple hypergraphs}}
	\label{sec:AKSimproved}

	Using Theorem ~\ref{thm:mainsec3}, in this section we prove the following result which implies Theorem~\ref{thm:main3intro} in the case when $H$ is simple (by taking $\mathcal{V} = \{V(H)\}$).
	
	\begin{theorem}\label{thm:main2proof}
		Let $k > 3$ be an integer, let $0 < \varepsilon <  1 - \frac{1}{k-1}$, and let 
		\begin{equation}\label{def:eta_0}
		\eta_0 := \min \left ( \frac{k-3}{(k-1)(k^3 - 2k^2 - k+4)},\:\:1 - \frac{1}{k-1} - \varepsilon \right ).
		\end{equation}
		Let $0 < \eta < \eta_0,$ 
		and $\mu,K > 0$. Then there exists $N_0 = N_0(\varepsilon, \eta , \mu , k , K)$ such that the following holds.
		
		Let $H$ be a $k$-uniform $(D \pm K D^\varepsilon)$-regular simple hypergraph on $N$ vertices, where $N \geq N_0$ and $D \geq \exp (\log^\mu N)$. Let $\mathcal{V}$ be a collection of subsets of $V(H)$ such that $|\mathcal{V}| \leq \exp( \log^{4/3} N)$ and for each $S \in \mathcal{V}$, we have $|S| \geq \sqrt{D} \log N$.
		
		Then there is a matching in $H$ covering all but at most $|S| D^{-\frac{1}{k-1} - \eta}$ vertices of $S$, for every $S \in \mathcal{V}$.
	\end{theorem}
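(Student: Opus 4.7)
The plan is to follow the strategy sketched in Section~\ref{proofonline}: run the R\"odl nibble on $H$ via Theorem~\ref{thm:mainsec3} to produce not only a first matching $M$ but also a well-structured hypergraph $H_A$ of augmenting stars; then find a large matching inside (a regularised, simplified variant of) $H_A$ by applying Theorem~\ref{thm:mainsec3} a second time; and finally use this matching to upgrade $M$ along the augmenting stars. I would fix an auxiliary parameter $\gamma \in (0,1)$ depending only on $k$ and $\eta$, apply Theorem~\ref{thm:mainsec3} to $H$ with this $\gamma$, $D_0 := D$, $\Delta_0 := KD^\varepsilon$, and the given collection $\mathcal{V}$, and obtain $M$, a waste set $W$, and a $(1,k(k-1))$-partite multi-hypergraph $H_A$ with vertex parts $L = E(M)$ and $R = V(H)\setminus(V(M)\cup W)$. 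Properties $(\rm M1)$--$(\rm M3)$ then give $|S \setminus (V(M)\cup W)| = (1+o(1))p|S|$ with $p = \Theta(D^{(\gamma-1)/(k-1)})$, the desired bound on $|S\cap W|$, and almost-regularity of $H_A$ with left degrees $\Theta(D^{k\gamma})$, right degrees $\Theta(D^{k\gamma+(1-\gamma)/(k-1)})$, and codegree $O(D^{\gamma(k-1)}\log^2 N)$.

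The second stage is to find a large matching in $H_A$. Since the degrees in $L$ and $R$ differ, I would first homogenise by setting $r := \lceil D_R/D_L\rceil$, taking $r$ disjoint copies $R^1,\dots,R^r$ of $R$, and letting $H_A'$ be the hypergraph on $L \cup R^1 \cup \cdots \cup R^r$ obtained by replacing each edge $\{e\}\cup\{v_1,\dots,v_{k(k-1)}\}$ of $H_A$ by its $r$ lifts $\{e\}\cup\{v_1^i,\dots,v_{k(k-1)}^i\}$. Every vertex of $H_A'$ then has degree $(1+o(1))D_R$, and the codegree of $H_A'$ equals that of $H_A$, so in particular remains $\le D_R^{1-\gamma^*}$ for an appropriate $\gamma^* > 0$. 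Applying Lemma~\ref{cor:comb} to $H_A'$ (with small parameter $\delta$) extracts an almost-regular simple subhypergraph $H_A''$, to which Theorem~\ref{thm:mainsec3} can be applied again, now with uniformity $k(k-1)+1$, a fresh auxiliary $\gamma''$, and the induced collection $\mathcal{V}' := \{S_A : S \in \mathcal{V}\}$, where $S_A := \bigcup_{i=1}^{r}\{v^i : v \in S\cap R\}$. One must verify that $|S_A| \ge \sqrt{D_R''}\log N$ and that $|\mathcal{V}'| \le \exp(\log^{4/3}N)$, both of which follow from the lower bound $|S| \ge \sqrt{D}\log N$ and from $(\rm M1)$.

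The output is a matching $M_A''$ in $H_A''$, which pulls back through the lifts to a set $M_A$ of vertex-disjoint augmenting stars of $H$. Forming $M^*$ by, for every $(e_M, \{e_1,\dots,e_k\}) \in M_A$, removing $e_M$ from $M$ and adding $e_1,\dots,e_k$, gives a matching which for each $S \in \mathcal{V}$ leaves uncovered only $S \cap W$ together with those $v \in S \cap R$ whose $r$ copies $v^1,\dots,v^r$ are \emph{all} untouched by $M_A''$. Using the quasirandomness of $M_A''$ with respect to $\mathcal{V}'$, the combined tally of such $v$ is a factor $p' = \Theta((D_R'')^{(\gamma''-1)/(k(k-1))})$ smaller than $|S\cap R|$, so the final uncovered portion of $S$ is bounded by $(1+o(1))p\cdot p' \cdot |S|$ plus the waste terms from $(\rm M2)$ and from the second Nibble.

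The main obstacle is arithmetic rather than structural: one must choose $\gamma, \gamma'', \delta, \varepsilon$ so that $p\cdot p'$, together with the various waste contributions of order $|S|D^{-1}\Delta_0$ and $|S|D^{-1/(k-1)}D^{\gamma(1/(k-1)-1/2)}\log N$ from the first nibble and their analogues from the second, all fall below $|S|D^{-1/(k-1)-\eta}$. Optimising the exponent of $p\cdot p'$ in $\gamma$ (subject to the waste being negligible, which forces a lower bound on $\gamma$) turns out to produce precisely the threshold $\eta_0 = \tfrac{k-3}{(k-1)(k^3-2k^2-k+4)}$ in \eqref{def:eta_0}; below this threshold every feasible choice of parameters works and above it the waste from the first nibble begins to dominate. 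A secondary bookkeeping issue is checking that $|S_A|$ is large enough and that the almost-regularity error coming out of Lemma~\ref{cor:comb} fits the hypothesis \eqref{eq:settingupparameters} of Theorem~\ref{thm:mainsec3} in its second invocation; this is where the restriction $D \ge \exp(\log^\mu N)$ is used.
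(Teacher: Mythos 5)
Your high-level plan — run the nibble via Theorem~\ref{thm:mainsec3}, regularise the star hypergraph $H_A$ by cloning the $R$-side, simplify via Lemma~\ref{cor:comb}, nibble again, and use the resulting matching to upgrade $M$ along augmenting stars — is the same as the paper's. But the step where you pass from the matching $M_A''$ of $H_A''$ back to a set of vertex-disjoint augmenting stars of $H$ has a genuine gap, and the accounting you build on top of it is not justified.

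The issue is that $M_A''$ is a matching in $H_A''$, whose vertex set uses $r$ disjoint copies $R^1,\dots,R^r$ of $R$. Two edges of $M_A''$ that live in different copies, say $\{e_M\}\cup\{w^1 : w\in\tilde A\}$ with $\tilde A\subseteq R$ and $\{e_M'\}\cup\{u^2 : u\in\tilde B\}$ with $\tilde B\subseteq R$, are disjoint in $H_A''$ even when $\tilde A\cap\tilde B\ne\emptyset$. If, say, $v\in\tilde A\cap\tilde B$, then both edges project to augmenting stars of $H$ that both claim the vertex $v$, so they cannot both be used to form $M^*$. Your claim that $M_A''$ "pulls back through the lifts to a set $M_A$ of vertex-disjoint augmenting stars of $H$" is therefore false in general, and the follow-on claim that $M^*$ leaves uncovered exactly those $v\in S\cap R$ whose $r$ copies are \emph{all} untouched is not available: even if some copy $v^i$ is covered, the augmenting star covering $v^i$ may be unusable because it collides with another chosen star. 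Moreover, even setting aside that obstruction, the quasirandomness property $(\rm M1)$ from Theorem~\ref{thm:mainsec3} controls the \emph{total} number of uncovered vertices of a set in $\mathcal{V}'$; it does not distribute the uncovered copies across the $r$ clones of a given $v$, so it gives no leverage on "all $r$ copies untouched."

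The paper resolves this by taking only the subhypergraph $M''$ of $M'$ induced on $L\cup R^1$, identifying $R^1$ with $R$. This is a matching of $H_A$ essentially for free: each edge of $H_A'$ has all its $R$-vertices in a single copy $R^i$, so the edges of $M'$ meeting $R^1$ are already contained in $L\cup R^1$, and restricting to one copy eliminates the collision problem. Correspondingly, the sets fed into the second application of Theorem~\ref{thm:mainsec3} should be the single-copy sets $T^1 = \{v^1 : v\in T\}$ with $T = S\setminus(V(M)\cup W)$ (plus the case $S = V(H)$, which your proposal omits but is needed to control $|R|$), not the unions $S_A$ over all $r$ copies. With this change, the number of $T$-vertices not covered by $M''$ is exactly the number of $T^1$-vertices not covered by $M'$, which $(\rm M1)$ and $(\rm M2)$ applied to $T^1$ do bound as $\sim p'|T|$, and the rest of your parameter optimisation then goes through as sketched.
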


	\begin{proof}
		Let us first define
		\begin{equation}
		\label{def:gamma}
		  \gamma := \min\left ( \frac{2}{k^3-2k^2 -k +4} \:,\: \frac{2(k-1)(1-\varepsilon)-2}{k-3} \right ) \text{ and } \gamma' := \min \left ( \frac{1}{4(k-1)} ,\: \eta_0 - \eta \right ). 
		\end{equation}

		Let us choose $N_0 \in \mathbb{N}$ and $\delta$ such that
		\begin{align}
		0 &< {N_0}^{-1} \ll \delta \ll \gamma , \varepsilon,\: 1 - \frac{1}{k-1} - \varepsilon,\: \mu ,\eta, \eta_0 - \eta , k^{-1} , K^{-1} < 1, \label{eqn:hierachy}
		\end{align}
	    In the rest of the proof, the implicit constants in $\O(\cdot)$ and  $\Theta(\cdot)$ will only depend on the parameters $\delta, \eta, \varepsilon,\mu,k,K$.

		Let $H$ be a $k$-uniform $(D \pm KD^\varepsilon)$-regular simple hypergraph on $N$ vertices, where $D \geq \exp(\log^\mu N)$.

		Applying Theorem~\ref{thm:mainsec3} to $H$ (with $D = D_0$, $\Delta_0 = K D^\varepsilon$, and $\mathcal{V} \cup \{V(H) \}$ playing the role of $\mathcal V$) , we obtain a matching $M$ of $H$ and a set of waste vertices $W \subseteq V(H)$ such that the following hold.
		\begin{itemize}
			\item[$(\rm M1)_H$] $|S \setminus (V(M) \cup W)| = \Theta (|S| D^{\frac{\gamma-1}{k-1}})$ for every $S \in \mathcal{V} \cup \{V(H) \}$.
			
			\item[$(\rm M2)_H$] $|S \cap W| = \O ( K D^{\varepsilon-1} |S| + |S| D^{-\frac{1}{k-1}} D^{\gamma \left ( \frac{1}{k-1} - \frac{1}{2} \right )} \log N )$ for every $S \in \mathcal{V} \cup \{ V(H) \}$.
			
			\item[$(\rm M3)_H$] For the $(1,k(k-1))$-partite multi-hypergraph $H_A$ of augmenting stars of $H$ with respect to $(M,W)$ the following holds. For any $e \in E(M)$ and $x \in V(H) \setminus (V(M) \cup W)$,
			\begin{align*}
			d_{H_A}(e) &= (1 + \O (K D^{\varepsilon - 1} + D^{-\gamma/2} \log N)) D_{L},\\
			d_{H_A}(x) &= (1 + \O (K D^{\varepsilon - 1} + D^{-\gamma/2} \log N)) D_{R},
			\end{align*}
			where $D_L = \Theta (D^{k \gamma})$, and $D_{R} = \Theta ( D^{k \gamma + \frac{1-\gamma}{k-1}} )$. 
			Moreover, the codegree $C_{H_A}$ of $H_A$ is at most $\O (D^{\gamma(k-1)} \log^2 N)$.
		\end{itemize}
		
		Let $C_0 = \Theta (D^{\gamma(k-1)} \log^2 N)$ be an upper bound on $C_{H_A}$. Let $L := E(M)$ and $R := V(H) \setminus (V(M) \cup W)$ be the two parts of $H_A$. Note that by $(\rm M1)_H$ and $(\rm M2)_H$, $|L| = \Theta(N)$ and $|R| = \Theta (N D^{\frac{\gamma-1}{k-1}})$. Let 
		\begin{align*}
		\mathcal{V}' := \{S \setminus (V(M) \cup W) \: : \: S \in \mathcal{V} \cup \{V(H) \} \} \subseteq 2^R.
		\end{align*}
		Now we define an auxiliary $(1,k(k-1))$-partite multi-hypergraph $H_A'$ by taking many vertex-disjoint copies of $R$ as follows. For $1 \leq i \leq \lfloor D_R / D_L \rfloor$, let $R^i := \{ v^i \: : \: v \in R \}$. We may define
		$$ V(H_A') := L \cup (R^1 \cup \dots \cup R^{\lfloor D_R / D_L \rfloor})$$
		and for any edge $e \cup \{v_1 , \dots , v_{k(k-1)} \} \in E(H_A)$, we add
		$e \cup \{v_1^i , \dots , v_{k(k-1)}^i \}$
		to be an edge in $E(H_A')$ for all $1 \leq i \leq \lfloor D_R / D_L \rfloor$. Then by $(\rm M3)_H$ for any $e \in L$,
		\begin{align*}
		    d_{H_A'}(e) & = (1 + \O (KD^{\varepsilon-1} + D^{-\gamma/2} \log N)) D_{L} \lfloor D_R/D_L \rfloor\\
		    & = (1 + \O (KD^{\varepsilon-1} + D^{-\gamma/2} \log N + D_L / D_R)) D_{R}= (1 + \O (D^{\frac{\gamma-1}{k-1}} + D^{-\gamma/2} \log N)) D_{R},
		\end{align*}
		where in the last equation we used $\varepsilon - 1 < -\frac{1}{k-1} < \frac{\gamma-1}{k-1}$ and that $D_L/D_R = \Theta(D^{\frac{\gamma-1}{k-1}})$. For any $v \in R$ and $1 \leq i \leq \lfloor D_R / D_L \rfloor$, we have $d_{H_A'}(v^i) = d_{H_A}(v)$. 
		
		In summary, for any $x \in V(H_A')$,
		$$ d_{H_A'}(x) = (1 + \O (D^{\frac{\gamma-1}{k-1}} + D^{-\gamma/2} \log N)) D_{R}.$$
		Hence $H_A'$ is almost regular. Moreover, the codegree of $H_A'$ is still at most $C_{H_A} \le C_0$.

        Note that for each  $T \in \mathcal{V}'$ we have $T \subseteq R$, and thus we can define $T^1 := \{ v^1 \: : \: v \in T \}$ and
        \begin{align*}
            \mathcal{V}_1' := \{T^1 \: : \: T \in \mathcal{V}' \}.
        \end{align*}
		Now we may apply Lemma~\ref{cor:comb} to $H_A'$.\COMMENT{$\gamma = 1/k$ works since $D_R^{1-1/k} \gg D^{k \gamma (1-1/k)} = D^{\gamma(k-1)}$.} Since $H_A'$ is almost regular and has codegree at most $C_0$, we can find a simple spanning subhypergraph $H_A''$ of $H_A'$ such that every vertex $x$ has degree
		\begin{align*}
		\left (1 - \frac{1}{\log(D_R / C_0)} \right )^s \frac{d_{H_A'}(x)}{\log(D_R / C_0) C_0^{1-\delta} D_R^{\delta}} \pm 8s \sqrt{\frac{(s+1) d_{H_A'}(x)}{C_0^{1-\delta} D_R^{\delta}}}
		\end{align*}
		for some even integer $s \in (1 + 2\binom{k}{2}\delta^{-1} , 3 + 2\binom{k}{2}\delta^{-1})$.
		
		Thus every vertex $x \in V(H_A'')$ has degree $D_{H_A''} \pm \Delta_{H_A''}$ in $H_A''$, where 
		\begin{align*}
		D_{H_A''} &:= \left ( 1 - \frac{1}{\log(D_R / C_0)} \right )^s \frac{D_R^{1-\delta}}{\log(D_R / C_0) C_0^{1-\delta}} = \Theta \left ( \frac{D^{(1-\delta)\left (\gamma + \frac{1-\gamma}{k-1} \right ) }}{\log D \log^{2(1-\delta)} N}   \right ),\\
		\Delta_{H_A''} &:= \Theta \left ( \frac{( D^{\frac{\gamma-1}{k-1}} + D^{-\frac{\gamma}{2}} \log N) D^{(1-\delta) \left ( \gamma + \frac{1-\gamma}{k-1} \right) }}{\log D \log^{2(1-\delta)} N} \right ),
		\end{align*}
		since $\delta \ll \gamma , k^{-1}$. \COMMENT{In order to satisfy the equation above on $\Delta_{H_A''}$, we should have
			$$ \frac{(D^{\frac{\gamma-1}{k-1}} + D^{-\gamma/2} \log N ) D^{(1-\delta)\left ( \gamma + \frac{1-\gamma}{k-1} \right )}}{\log D \log^{2(1-\delta)} N} \gtrsim \sqrt{\frac{D^{(1-\delta) \left ( \gamma + \frac{1-\gamma}{k-1} \right )}}{\log^{2(1-\delta)} N}} $$
			which is equivalent to
			$$ (D^{\frac{\gamma-1}{k-1}} + D^{-\gamma/2} \log N)D^{\frac{1-\delta}{2} \left ( \gamma + \frac{1-\gamma}{k-1} \right )} \gtrsim \log D \cdot \log^{(1-\delta)} N, $$
			where this holds if the exponent on $D$ in the left-hand-side is positive. In particular, this holds when $-\frac{\gamma}{2} + \frac{1-\delta}{2} \left (\gamma + \frac{1-\gamma}{k-1} \right ) > 0$ is satisfied, which holds since $\delta \ll \gamma , k^{-1}$.
		}

		Note that $|V(H_A'')| = \Theta(N)$. Since $D \geq \exp(\log^\mu N)$, we can apply\COMMENT{Since $|V(H_A'')| < N$, we might no longer have $|\mathcal{V}_1'| \le \exp(\log^{3/2}N)$, so we adjust the statement of our theorem by changing $3/2$ to $4/3$} Theorem~\ref{thm:mainsec3} to the $(k(k-1)+1)$-uniform simple hypergraph $H_A''$ (with $\mathcal{V}_1'$ 
		and $\gamma'$ playing the role of $\mathcal V$ and $\gamma$ respectively, and with some 
		$\varepsilon' \in (0,1)$ satisfying $1-\varepsilon' \ll \gamma , 1-\gamma , k^{-1}$  
		 playing the role of $\varepsilon$\COMMENT{We have to choose $\varepsilon$ in Theorem~\ref{thm:mainsec3} larger than $ \max \left\{1 -\frac{\frac{1-\gamma}{k-1}}{(\gamma+\frac{1-\gamma}{k-1})(1-\delta)},1 -\frac{\gamma/4}{(\gamma+\frac{1-\gamma}{k-1})(1-\delta)}\right\}$. This uses that $\log N \ll D$.}) to obtain a matching $M'$ of $H_A''$ and $W' \subseteq V(H_A'')$ such that the following holds for every $T^1 \in \mathcal{V}_1'$ where $T^1 = \{ v^1 \: : \: v \in T \}$ and $T = S \setminus (V(M) \cup W)$ for some $S \in \mathcal{V} \cup \{V(H) \}$.
		\begin{align}
		|T^1 \setminus (V(M') \cup W')| + |T^1 \cap W'| 
		&= \O (|T| D_{H_A''}^{\frac{\gamma' - 1}{k(k-1)}} + |T| D_{H_A''}^{-1} \Delta_{H_A''} + |T| D_{H_A''}^{-\frac{1}{k(k-1)}} D_{H_A''}^{\gamma' \left ( \frac{1}{k(k-1)} - \frac{1}{2} \right )} \log N ) \nonumber \\
		\overset{(\rm M1)_H}&{=} \O (|S|D^{\frac{\gamma-1}{k-1}} D_{H_A''}^{\frac{\gamma' - 1}{k(k-1)}} + |S|D^{\frac{\gamma-1}{k-1}} D_{H_A''}^{-1} \Delta_{H_A''} \nonumber \\
		& \hspace{10mm} + |S|D^{\frac{\gamma-1}{k-1}} D_{H_A''}^{-\frac{1}{k(k-1)}} D_{H_A''}^{\gamma' \left ( \frac{1}{k(k-1)} - \frac{1}{2} \right )} \log N) \nonumber \\
		&= \O (|S|D^{\frac{\gamma-1}{k-1}} D_{H_A''}^{\frac{\gamma' - 1}{k(k-1)}} + |S|D^{\frac{\gamma-1}{k-1}} D_{H_A''}^{-1} \Delta_{H_A''}) \label{eqn:finaluncov},
		\end{align}
		where in \eqref{eqn:finaluncov} we used\COMMENT{Note that $D^{\frac{\gamma-1}{k-1}} D_{H_A''}^{\frac{\gamma' - 1}{k(k-1)}} \gg D^{\frac{\gamma-1}{k-1}} D_{H_A''}^{-\frac{1}{k(k-1)}} D_{H_A''}^{\gamma' \left ( \frac{1}{k(k-1)} - \frac{1}{2} \right )} \log N$ provided $D \geq \exp(\log^\mu N)$} $D \geq \exp(\log^\mu N)$.
		
		Let $M''$ be the subhypergraph of $M'$ induced by $L \cup R^1 = E(M) \cup R^1$. Identifying $R^1$ and $R$, the matching $M''$ can be viewed as a matching of $H_A$ and the number of vertices in $T$ not covered by $M''$ is bounded by~\eqref{eqn:finaluncov}, which is asymptotically at most\COMMENT{$|S|D^{\frac{\gamma-1}{k-1}} D_{H_A''}^{\frac{\gamma'-1}{k(k-1)}}$ is at most $|S| D^{\frac{\gamma-1}{k-1}} D^{\frac{\gamma' - 1}{k(k-1)}(1-\delta)\left ( \gamma + \frac{1-\gamma}{k-1} \right )} \log N$. This simplifies to $|S|D^{-\frac{1}{k-1} - \frac{1}{k(k-1)^2}} D^{\gamma \left ( \frac{1}{k-1} - \frac{1}{k(k-1)} + \frac{1}{k(k-1)^2} \right )} D^{f_{k,\gamma} (\delta,\gamma')} \log N$ if we collect all the terms in $\delta$ and $\gamma'$ into $f_{k,\gamma} (\delta,\gamma')$. This yields the first term of the maximum in~\eqref{eqn:aux_uncovered}. The next two terms of the maximum in~\eqref{eqn:aux_uncovered}, are from $|S|D^{\frac{\gamma-1}{k-1}} D_{H_A''}^{-1} \Delta_{H_A''}$, which is asymptotically $|S|D^{-\frac{1}{k-1}}D^{\frac{2\gamma-1}{k-1}} + |S|D^{-\frac{1}{k-1}}D^{\frac{\gamma}{k-1}-\frac{\gamma}{2}} \log N$, hence we can replace the sum of these two terms by the maximum of both terms asymptotically.}
		\begin{equation}\label{eqn:aux_uncovered}
		|S|D^{-\frac{1}{k-1}} \cdot \O \left ( \max \left ( D^{-\frac{1}{k(k-1)^2}} D^{\gamma \left ( \frac{1}{k-1} - \frac{1}{k(k-1)} + \frac{1}{k(k-1)^2} \right )} D^{f_{k,\gamma} (\delta , \gamma')}, \:\: D^{\frac{2\gamma-1}{k-1}}, \:\: D^{\gamma \left ( \frac{1}{k-1} - \frac{1}{2} \right )}  \right ) \right ) \log N,
		\end{equation}
		where
		\begin{equation}\label{eqn:fkg}
		0 < f_{k,\gamma}(\delta , \gamma') := \frac{(1-\delta)\gamma' + \delta}{k(k-1)} \left (\gamma + \frac{1-\gamma}{k-1} \right ) \leq \frac{ \gamma' +  \delta}{k(k-1)} \leq \frac{\eta_0 - \eta}{2}.
		\end{equation}
		
		Since $\gamma < \frac{1}{2}$, it is straightforward to check that\COMMENT{Multiplying by $k-1$ and rearranging we get 
		$\gamma (1 + \frac{1}{k} - \frac{1}{k(k-1)}) < 1 - \frac{1}{k(k-1)}$. Multiplying again by $k(k-1)$ we get $\gamma(k(k-1)+(k-1)-1) < k(k-1)-1$ which simplifies to
		$\gamma < \frac{k^2-k-1}{k^2-2}$. This inequality holds since $\gamma <  \frac{1}{2}$ and $k > 3$.}
		\begin{equation*}
		    -\frac{1}{k(k-1)^2} + \gamma \left ( \frac{1}{k-1} - \frac{1}{k(k-1)} + \frac{1}{k(k-1)^2} \right ) > \frac{2\gamma - 1}{k-1}.
		\end{equation*}
		Hence~\eqref{eqn:aux_uncovered} is at most
		\begin{equation}\label{eqn:aux_uncovered_rev}
		|S|D^{-\frac{1}{k-1}} \cdot \O \left ( \max \left ( D^{-\frac{1}{k(k-1)^2}} D^{\gamma \left ( \frac{1}{k-1} - \frac{1}{k(k-1)} + \frac{1}{k(k-1)^2} \right )} D^{\frac{\eta_0 - \eta}{2}},  \:\: D^{\gamma \left ( \frac{1}{k-1} - \frac{1}{2} \right )}  \right ) \right ) \log N.
		\end{equation}

		Each $\{e_M \} \cup A \in E(M'')$ corresponds to an augmenting star $(e_M , \{e_1 , \dots , e_k \}) \in \mathcal{A}(H)$, where $A = (e_1 \cup \dots \cup e_k) \setminus e_M$. 
		Thus for each $\{e_M \} \cup A \in E(M'')$ with $A = (e_1 \cup \dots \cup e_k) \setminus e_M$, we can replace $e_M$ in $M$ with the edges $e_1 , \dots , e_k$. Let $M^*$ be the resulting matching. Then $M^*$ covers all the vertices of $V(M) \cup \bigcup_{F \in E(M'')} (F \cap R)$.

		In summary, for each $S \in \mathcal{V} \cup \{V(H) \}$, any vertex of $S$ not covered by our augmented matching $M^*$ is either a vertex of
		$S \cap W$
		or it is a vertex of $T$ not covered by $M''$ where $T = S \setminus (V(M) \cup W)$. Hence the total number of vertices of $S$ not covered by $M^*$ is at most the sum of $|S \cap W|$ (bounded in ~$(\rm M2)_H$) and~\eqref{eqn:aux_uncovered_rev}. This sum is asymptotically at most
		\begin{equation}\label{eqn:aux_uncovered_2}
		|S|D^{-\frac{1}{k-1}} \cdot \O \left ( \max \left ( D^{-\frac{1}{k(k-1)^2}} D^{\gamma \left ( \frac{1}{k-1} - \frac{1}{k(k-1)} + \frac{1}{k(k-1)^2} \right )} D^{\frac{\eta_0 - \eta}{2}}, \:\: D^{\varepsilon - 1 + \frac{1}{k-1}}, \:\: D^{\gamma \left ( \frac{1}{k-1} - \frac{1}{2} \right )} \right ) \right ) \log N.
		\end{equation}
		We now analyse~\eqref{eqn:aux_uncovered_2}. Let 
		\begin{equation*}
		    F(x) := \max \left ( D^{-\frac{1}{k(k-1)^2}} D^{x \left ( \frac{1}{k-1} - \frac{1}{k(k-1)} + \frac{1}{k(k-1)^2} \right )}, \:\: D^{\varepsilon - 1 + \frac{1}{k-1}} \:\: , \:\: D^{x \left ( \frac{1}{k-1} - \frac{1}{2} \right )} \right )
		\end{equation*}
		be defined on $(0,1)$. First we claim that\COMMENT{Moreover, $F(x)$ is minimised at $x = \gamma$, see Section~\ref{sec:graph} in Appendix.}
		\begin{equation}\label{eqn:gamma0}
		F(\gamma) = D^{-\eta_0}.
		\end{equation}

		Indeed, let $\gamma_{1,3} := \frac{2}{k^3-2k^2 -k +4}$ and $\gamma_{2,3} := \frac{2(k-1)(1-\varepsilon)-2}{k-3}$. Then $\gamma = \min(\gamma_{1,3} , \gamma_{2,3})$ by~\eqref{def:gamma}. Note that the first term and the third term of $F(x)$ are equal\COMMENT{$-\frac{1}{k(k-1)^2}+ x \left ( \frac{1}{k-1} - \frac{1}{k(k-1)} + \frac{1}{k(k-1)^2} \right ) = x \left ( \frac{1}{k-1} - \frac{1}{2} \right )$ simplifies to $x(\frac{1}{2} - \frac{1}{k(k-1)}+\frac{1}{k(k-1)^2}) = x(\frac{k(k-1)^2-2(k-1)+2}{2k(k-1)^2}) = x(\frac{(k^3-2k^2+k)-2k+2+2}{2k(k-1)^2}) = x(\frac{k^3-2k^2 -k +4}{2k(k-1)^2}) = \frac{1}{k(k-1)^2}$ which gives that $x = \frac{2}{k^3-2k^2 -k +4}$.} at $x = \gamma_{1,3}$ and the second term and the third term of $F(x)$ are equal\COMMENT{$\varepsilon - 1 + \frac{1}{k-1} = x \left ( \frac{1}{k-1} - \frac{1}{2} \right )= x \left ( \frac{3-k}{2(k-1)}\right )$ simplifies to  $x = \frac{2((\epsilon-1)(k-1)+1)}{3-k} = \frac{2(1 -\epsilon)(k-1)-2}{k-3} $} at $x = \gamma_{2,3}$. Moreover,
		\begin{equation}
		\label{eq:blah}
		 D^{-\frac{1}{k(k-1)^2}} D^{\gamma_{1,3} \left ( \frac{1}{k-1} - \frac{1}{k(k-1)} + \frac{1}{k(k-1)^2} \right )} = D^{\gamma_{1,3} \left ( \frac{1}{k-1} - \frac{1}{2} \right )} = D^{-\frac{k-3}{(k-1)(k^3 - 2k^2 - k + 4)}}.   
		\end{equation}
		We will consider two cases. If $\gamma_{1,3} < \gamma_{2,3}$, then at $x = \gamma =  \gamma_{1,3}$, the first and third term of $F(x)$ are equal and are both larger than the second term because\COMMENT{we used in the inequality that the exponent $\frac{1}{k-1}-\frac{1}{2}$ is negative when $k > 3$ and also that $\gamma_{1,3} < \gamma_{2,3}$} $D^{-\frac{k-3}{(k-1)(k^3 - 2k^2 - k + 4)}} \overset{\eqref{eq:blah}}{=} D^{\gamma_{1,3} \left ( \frac{1}{k-1} - \frac{1}{2} \right )} > D^{\gamma_{2,3} \left ( \frac{1}{k-1} - \frac{1}{2} \right )} = D^{\varepsilon - 1 + \frac{1}{k-1}}.$ Hence ~\eqref{eqn:gamma0} holds in this case by~\eqref{def:eta_0}.
		
		If $\gamma_{1,3} \geq \gamma_{2,3}$, then at $x = \gamma = \gamma_{2,3}$, the second and third term of $F(x)$ are equal and are both at least as large as the first term because\COMMENT{in the first inequality we used that $\frac{1}{k-1} - \frac{1}{k(k-1)} + \frac{1}{k(k-1)^2}>0$ and that $\gamma_{1,3} \geq \gamma_{2,3}$}
		\begin{align*}
		    D^{-\frac{1}{k(k-1)^2}} D^{\gamma_{2,3} \left ( \frac{1}{k-1} - \frac{1}{k(k-1)} + \frac{1}{k(k-1)^2} \right )} &\leq D^{-\frac{1}{k(k-1)^2}} D^{\gamma_{1,3} \left ( \frac{1}{k-1} - \frac{1}{k(k-1)} + \frac{1}{k(k-1)^2} \right )} \overset{\eqref{eq:blah}}{=} D^{\gamma_{1,3} \left ( \frac{1}{k-1} - \frac{1}{2} \right )} \\ &\leq D^{\gamma_{2,3} \left ( \frac{1}{k-1} - \frac{1}{2} \right )} = D^{\varepsilon - 1 + \frac{1}{k-1}}.
		\end{align*}
		Hence equation~\eqref{eqn:gamma0} holds in this case as well by~\eqref{def:eta_0}.

		Thus by~\eqref{eqn:aux_uncovered_2}, the number of vertices in $S \in \mathcal{V} \cup \{V(H) \}$ not covered by $M^*$ is at most
		\begin{equation*}
		|S|D^{-\frac{1}{k-1}} \cdot \O(D^{\frac{\eta_0 - \eta}{2}} \log N) \cdot F(\gamma) \overset{\eqref{eqn:gamma0}}{<} |S| D^{-\frac{1}{k-1} - \eta},
		\end{equation*}
		since $\O (\log N) < D^{\frac{\eta_0 - \eta}{2}}$ by~\eqref{eqn:hierachy} and the fact that $D \geq \exp(\log^\mu N)$. This completes the proof of Theorem~\ref{thm:main2proof}.
	\end{proof}
	
	\section{Nearly perfect matchings in almost regular hypergraphs \texorpdfstring{\\}{} with small codegree}
	\label{sec:KRimproved}
	Using Lemma~\ref{cor:comb} and Theorem~\ref{thm:main2proof}, in this section we prove the following general theorem which implies Theorem~\ref{thm:main3intro} by setting $\mathcal{V} := \{V(H)\}$.\COMMENT{It is worth considering if we should move the proof of Theorem ~\ref{thm:main3proof} and/or proof of Lemma~\ref{cor:comb} to the Appendix.} 	Theorem \ref{thm:main3proof} sharpens a result of Alon and Yuster \cite{AY2005} under slightly stronger assumptions. It is used in Section~\ref{sec:ChromaticIndex} to prove new bounds on the chromatic index of hypergraphs with small codegree.
	
	\begin{theorem}\label{thm:main3proof}
		Let $k > 3$, $D,N$ be integers, $C,K > 0$, $\varepsilon \in (0,  1 - \frac{1}{k-1})$ and  $0 < \gamma,\mu < 1$ be real numbers.\COMMENT{In Vu's paper, his theorem can be only applied when $\varepsilon < 1 - \frac{1}{k-1}$ as in our theorem. Maybe the theorem won't be true if $\varepsilon > 1 - \frac{1}{k-1}$; it would be nice if there is an example to show this.} Let $\eta^{\diamond} := \frac{k-3}{(k-1)(k^3 - 2k^2 - k + 4)}$ and
		\[\eta_0 := 
		\begin{cases}
		\min \left ( \eta^{\diamond} \: , \: 1 - \frac{1}{k-1} - \varepsilon \right )& \text{if }\varepsilon > 1/2, \\
		\eta^{\diamond}& \text{otherwise}
		\end{cases}
		\]
		and $\eta \in (0,\eta_0)$. Then there exists $N_0 = N_0(k , K , \gamma , \varepsilon ,\eta, \mu)$ such that for any integer $N \geq N_0$, the following holds, provided that $D \geq \exp(\log^\mu N)$ and $C \leq D^{1-\gamma}$.
		
		Let $H$ be a $k$-uniform
		$(D \pm KC^{1-\varepsilon}D^\varepsilon)$-regular
		multi-hypergraph on $N$ vertices with codegree at most $C$. Let $\mathcal{V}$ be a collection of subsets of $V(H)$ such that $|S| \geq \sqrt{D/C} \log N$ for each $S \in \mathcal{V}$ and $|\mathcal{V}| \leq \exp(\log^{4/3} N)$.
		
		Then there is a matching in $H$ that covers all but at most $|S| (D/C)^{-\frac{1}{k-1} - \eta}$ vertices in $S$, for every $S \in \mathcal{V}$.
	\end{theorem}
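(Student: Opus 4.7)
The plan is to reduce Theorem~\ref{thm:main3proof} to the simple case Theorem~\ref{thm:main2proof} by extracting a simple almost-regular subhypergraph $H_{\rm simp}$ of $H$ using Lemma~\ref{cor:comb}, and then applying Theorem~\ref{thm:main2proof} to $H_{\rm simp}$ with the same collection $\mathcal{V}$. Since any matching of $H_{\rm simp}$ is a matching of $H$, this will yield the required matching.

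First I would fix a small auxiliary parameter $\delta$ in the hierarchy $1/N \ll \delta \ll \eta_0 - \eta,\; 1-\tfrac{1}{k-1}-\varepsilon,\; \gamma,\; \mu,\; k^{-1},\; K^{-1}$, and apply Lemma~\ref{cor:comb} with this $\delta$ (with $\alpha,\beta$ close to $1$, which is fine since the degree fluctuation $KC^{1-\varepsilon}D^{\varepsilon}$ is $o(D)$). This yields a simple subhypergraph $H_{\rm simp}$ in which every vertex has degree $D_{\rm simp} \pm \Delta_{\rm simp}$ with $D_{\rm simp} = \Theta((D/C)^{1-\delta}/\log(D/C))$, and with $\Delta_{\rm simp}$ at most the larger of the error propagated from the fluctuation of $d_H(\cdot)$ (which is of order $D_{\rm simp}\,(D/C)^{-(1-\varepsilon)}$) and the square-root term $O\bigl(D_{\rm simp}^{1/2} \log^{1/2}(D/C)\bigr)$. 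A direct comparison shows that $\Delta_{\rm simp} \le K' D_{\rm simp}^{\varepsilon'}$ for some $K'>0$ and some $\varepsilon' < 1-\tfrac{1}{k-1}$: when $\varepsilon \le 1/2$ the square-root term dominates and $\varepsilon' \approx 1/2$, while when $\varepsilon > 1/2$ the propagated-fluctuation term dominates, giving $\varepsilon' \approx 1 - (1-\varepsilon)/(1-\delta)$, which remains below $1-\tfrac{1}{k-1}$ by the choice $\delta \ll 1-\tfrac{1}{k-1}-\varepsilon$.

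Next I would pick $\eta' \in (\eta,\eta_0)$ with $\eta'-\eta$ much larger than $\delta$, and apply Theorem~\ref{thm:main2proof} to $H_{\rm simp}$ with parameters $(\varepsilon',\eta',K',\mu')$ and the same family $\mathcal{V}$. The hypotheses are easily verified: $|S| \ge \sqrt{D/C}\log N \ge \sqrt{D_{\rm simp}}\log N$, since $D_{\rm simp} \le D/C$; $D_{\rm simp} \ge \exp(\log^{\mu'} N)$ follows from $D \ge \exp(\log^{\mu} N)$ together with $C \le D^{1-\gamma}$; and the admissibility of $\eta'$ against the $\eta_0$ of Theorem~\ref{thm:main2proof} (computed with $\varepsilon'$ in place of $\varepsilon$) is precisely what was arranged. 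The theorem then produces a matching $M$ of $H_{\rm simp}$ (hence of $H$) leaving at most $|S| D_{\rm simp}^{-1/(k-1)-\eta'}$ vertices of each $S \in \mathcal{V}$ uncovered, and a short calculation using $D_{\rm simp} = \Theta((D/C)^{1-\delta}/\log(D/C))$ shows
\[
D_{\rm simp}^{-\tfrac{1}{k-1}-\eta'} \;\le\; (D/C)^{-\tfrac{1}{k-1}-\eta},
\]
since $(1-\delta)(\tfrac{1}{k-1}+\eta') \ge \tfrac{1}{k-1}+\eta$ by $\delta \ll \eta'-\eta$, with the logarithmic factor easily absorbed. The main obstacle is simply the careful juggling of parameters: $\delta$ must be small enough simultaneously to keep $\varepsilon'$ strictly below $1-\tfrac{1}{k-1}$, to leave sufficient room in $\eta_0-\eta$, and to absorb the $\log(D/C)$ losses incurred by Lemma~\ref{cor:comb}. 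All three requirements read $\delta \ll \cdot$, so they are mutually consistent under the chosen hierarchy.
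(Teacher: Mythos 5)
Your proposal matches the paper's proof essentially step for step: both apply Lemma~\ref{cor:comb} to extract a simple almost-regular subhypergraph $H_{\rm simp}$ with $D_{\rm simp}\approx (D/C)^{1-\delta}/\log(D/C)$, then invoke Theorem~\ref{thm:main2proof} with perturbed parameters $\varepsilon'$ and $\eta'$, where the hierarchy $\delta\ll \eta_0-\eta, 1-\tfrac1{k-1}-\varepsilon, \gamma, \mu$ is arranged exactly as you describe (the paper takes $\eta^* := \tfrac{\tfrac1{k-1}+\eta}{1-\delta}-\tfrac1{k-1}$ for the equality case and you take a nearby $\eta'$, which is equivalent). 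The one step you omit is the regime $C<\log D$: Lemma~\ref{cor:comb} requires $\log D\le C$, so for small codegree (which the hypotheses allow) one must first enlarge $C$ to $\log D$ before applying the lemma, after which the calculation is unchanged; the paper treats this in a short final paragraph.
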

	\begin{proof}
        Choose $N_0 \in \mathbb{N}$ and a new constant $\delta$ such that  
$0 < N_0^{-1} \ll \delta \ll \gamma, \eta , \eta_0-\eta, \varepsilon,  1- \frac{1}{k-1} - \varepsilon , \mu , k^{-1}, K^{-1}$ and such that $\delta < 2 \varepsilon -1$ if $\varepsilon > 1/2$. Define
		\[\widetilde{\eta_0} := 
		\begin{cases}
		\min \left ( \eta^{\diamond} \: , \: 1 - \frac{1}{k-1} - \frac{\varepsilon - \delta/2}{1 - \delta/2} \right )& \text{if }\varepsilon > 1/2, \\
	\eta^{\diamond} & \text{otherwise}.
		\end{cases}
		\]
		Our choice of $\delta$ implies that\COMMENT{Since $\frac{\frac{1}{k-1} + \eta}{1-\delta} - \frac{1}{k-1}$ tends to $\eta$ and $\widetilde{\eta_0}$ tends to $\eta_0$ as $\delta \to 0$, this is easy to verify by the continuity of the functions with respect to $\delta$ near 0.}
		\begin{equation}\label{eqn:assump_delta}
		\frac{\varepsilon - \delta/2}{1 - \delta/2} < 1 - \frac{1}{k-1} \:\:\:\text{and}\:\:\:\frac{\frac{1}{k-1} + \eta}{1-\delta} - \frac{1}{k-1} < \widetilde{\eta_0}.
		\end{equation}

		In the proof below, the implicit constants in $\O(\cdot)$ and  $\Theta(\cdot)$ will only depend on the parameters $\delta,\gamma,\varepsilon,\mu,k,K$.
		
		We divide the proof into two cases depending on whether $\log D \leq C \leq D^{1-\gamma}$ or $C < \log D$. 
		
		First consider the case $\log D \leq C \leq D^{1-\gamma}$. By applying Lemma~\ref{cor:comb} with $\delta/2$ playing the role of $\delta$, there exists a simple $k$-uniform $N$-vertex subhypergraph $H_{\rm simp}$ of $H$ such that for any $v \in V(H)$,
		\begin{equation}\label{eqn:degsimp}
		d_{H_{\rm simp}}(v) = \left (1 - \frac{1}{\log(D/C)} \right )^s \left ( \frac{D^{1-\delta/2}}{\log(D/C) C^{1-\delta/2}} \pm \frac{KD^{\varepsilon - \delta/2}}{\log(D/C) C^{\varepsilon-\delta/2}} \right ) \pm 8s \sqrt{\frac{(s+1)d_H (v)}{C^{1-\delta/2} D^{\delta/2}}},
		\end{equation}
		where $s \in (1 + 4 \binom{k}{2} \delta^{-1}\: , \:3 + 4\binom{k}{2}\delta^{-1})$. In particular, $H_{\rm simp}$ is $(D^* \pm \Delta^*)$-regular, where
		\begin{equation}\label{eqn:deg_estimate}
		D^* = \Theta \left ( \frac{(D/C)^{1-\delta/2}}{\log D} \right ) \:\:\:\: \text{and} \:\:\:\:
		\Delta^* = \Theta \left ( \max \left (\frac{(D/C)^{\varepsilon - \delta/2}}{\log D} \: , \: (D/C)^{\frac{1-\delta/2}{2}}\right ) \right ).
		\end{equation}
		
		Since $\delta < 2\varepsilon - 1$ if $\varepsilon > 1/2$, we have
		\[\Delta^* =
		\begin{cases}
		\Theta( (D/C)^{\frac{1-\delta/2}{2}})& \text{for }\varepsilon \leq 1/2, \\
		\Theta \left ( \frac{(D/C)^{\varepsilon - \delta/2}}{\log D} \right )&\text{otherwise.}
		\end{cases}
		\]
		Let $\eta^* := \frac{\frac{1}{k-1} + \eta}{1-\delta} - \frac{1}{k-1}$. Then $\eta^* < \widetilde{\eta_0}$ by~\eqref{eqn:assump_delta}. We also let
		\[\varepsilon^* =
		\begin{cases}
		\frac{1}{2} + \frac{\delta}{4}& \text{for }\varepsilon \leq 1/2, \\
		\frac{\varepsilon - \delta/2}{1 - \delta/2}&\text{otherwise.}
		\end{cases}
		\]
		Then we have $\Delta^* \leq (D^*)^{\varepsilon^*}$. In either case, we have
		$\widetilde{\eta_0} = \min \left ( \eta^{\diamond} \: , \: 1 - \frac{1}{k-1} - \varepsilon^* \right ).$ Moreover, \eqref{eqn:assump_delta} implies that $\varepsilon^* < 1 - \frac{1}{k-1}$. Since $\eta^* < \widetilde{\eta_0}$ and $|S| \geq \sqrt{D^*} \log N$ for each $S \in \mathcal{V}$, we may apply Theorem~\ref{thm:main2proof} to $H_{\rm simp}$ (with $\varepsilon^*$, $\eta^*$ and $\mu/2$ playing the roles of $\varepsilon$, $\eta$ and $\mu$), to obtain a matching in $H_{\rm simp}$ which covers all but at most
		\begin{equation}\label{eqn:final_uncov_cod}
		|S| (D^*)^{-\frac{1}{k-1} - \eta^*} < |S| (D/C)^{-(1-\delta)\left ( \frac{1}{k-1} + \eta^* \right )} \leq |S| (D/C)^{-\frac{1}{k-1} - \eta}
		\end{equation}
		vertices in $S$, for every $S \in \mathcal{V}$.
		
		For the case $C < \log D$, we may enlarge the codegree bound $C$ to $\log D$ and apply the analogous statements as above, where $D^* = \Theta \left ( \frac{D^{1 - \delta/2}}{(\log D)^{2 - \delta/2}} \right )$. As in~\eqref{eqn:final_uncov_cod}, the hypergraph $H_{\rm simp}$ contains a matching covering all but at most
		\begin{equation*}
		|S| (D^*)^{-\frac{1}{k-1} - \eta^*} < |S|D^{-(1 - \delta)\left ( \frac{1}{k-1} + \eta^* \right )} \leq |S|D^{-\frac{1}{k-1} - \eta}
		\end{equation*}
		vertices in $S$, for every $S \in \mathcal{V}$.
	\end{proof}
	
	\section{Chromatic index of hypergraphs with small codegree}
	\label{sec:ChromaticIndex}
	
	Before proving Corollary~\ref{cor:chromindex}, we need the following lemma which allows us to embed a given hypergraph into an almost regular hypergraph with not too many vertices while preserving both the maximum degree and codegree. This allows us to apply Theorem~\ref{thm:main3proof} (which applies only to almost regular hypergraphs).\COMMENT{ It is important for us that $|V(H')|$ is small in the lemma below as we will see in the proof of Corollary 1.8. Otherwise there is a trivial inductive proof to construct $H'$. We do not know if such a lemma with a good bound on $|V(H')|$ is known.}
	
	\begin{lemma}\label{lem:embed}
	For any integer $k \geq 3$, there exist $N_0 = N_0(k)$,  $D_0 = D_0(k)$, and $K = K(k)$ satisfying the following. For any $N \geq N_0$, $D \geq D_0$ and $C \in \mathbb{N}$, let $H$ be an $N$-vertex $k$-uniform multi-hypergraph with maximum degree at most $D$ and codegree at most $C$. There exists a $k$-uniform multi-hypergraph $H'$ such that 
	\begin{itemize}
	    \item $H \subseteq H'$, 
	    \item every vertex $v \in V(H')$ has degree between $D - K$ and $D$,
	    \item $H'$ has codegree at most $C$,
	    \item $|V(H')| = (k-1)^2 D^2 N$.
	\end{itemize}
	 Moreover, if $H$ has no multiple edges then $H'$ also has no multiple edges.
	\end{lemma}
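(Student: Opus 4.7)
The plan is to construct $H'$ by introducing, for each vertex $v$ of $H$, a disjoint cluster $A_v$ of $(k-1)^2 D^2 - 1$ fresh auxiliary vertices, and then adding edges in two stages: first, stars centered at each $v$ using auxiliaries of $A_v$ to pad $v$'s degree up to exactly $D$; second, a near-regular simple $k$-uniform hypergraph $F_v$ on each $A_v$ to lift the auxiliaries' degrees into $[D - K, D]$. Setting $V(H') := V(H) \sqcup \bigsqcup_{v \in V(H)} A_v$ yields $|V(H')| = (k-1)^2 D^2 N$ exactly, and the pairwise disjointness of the clusters $A_v$ will be exploited repeatedly to keep codegrees under control.

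In the first stage, for each $v \in V(H)$ I would fix a subset $B_v \subseteq A_v$ of size $(k-1)D$ partitioned into $D$ groups $G_v^1, \ldots, G_v^D$ of size $k-1$ each, and add the edges $\{v\} \cup G_v^j$ for $j = 1, \ldots, D - d_H(v)$. After this stage every $v \in V(H)$ has degree exactly $D$, every auxiliary in a used group $G_v^j$ has degree $1$, and all other auxiliaries still have degree $0$. No pair of vertices receives more than one new edge; moreover, because the $A_v$'s are pairwise disjoint, no codegree between a vertex of $V(H)$ and an auxiliary in a foreign cluster, nor between auxiliaries in distinct clusters, will ever be created at any later step.

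In the second stage, on each cluster $A_v$ separately I would add a simple $k$-uniform hypergraph $F_v$ with target degree $D - d^{(1)}(u)$ at each $u \in A_v$, where $d^{(1)}(u) \in \{0,1\}$ is the degree of $u$ accumulated in stage $1$. The existence of such an $F_v$, whose degrees match the target up to a constant error depending only on $k$ and whose codegree is at most $1$, should follow from standard results on approximate designs on $|A_v| = (k-1)^2 D^2 - 1 \gg D$ vertices --- for instance, one can start from a near-$D$-regular simple $k$-uniform hypergraph on $A_v$ produced by Keevash's theorem on partial designs or by a Pippenger--Spencer/R\"odl nibble argument, delete one edge through each of the $O(D)$ vertices of stage-$1$ degree $1$ to bring them down, and then patch the small number of resulting degree deficits by a finite absorbing step.

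This second-stage construction is the main obstacle, and is slightly more delicate in the simple case $C = 1$: there one must additionally forbid $F_v$ from containing any pair $\{u,u'\}$ that lies in some stage-$1$ group $G_v^j$, so that the within-cluster codegree stays at most $1$. These forbidden pairs number only $O(D)$ per cluster, against $\Theta(D^4)$ available pairs, and can be incorporated into the nibble/absorbing argument at essentially no cost. Once $F_v$ is in hand, verification is routine: every vertex of $H'$ has degree in $[D-K, D]$ for some $K = K(k)$ absorbing the constant slack; codegrees inside $V(H)$ are unchanged from $H$; codegrees of pairs between two different clusters, or between $V(H)$ and a foreign cluster, are zero; and codegrees inside a single $A_v$ receive at most $1$ from stage $1$ and at most $1$ from $F_v$, giving at most $2 \le C$ in the multi-hypergraph case and at most $1$ in the simple case by the extra constraint above.
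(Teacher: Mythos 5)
Your plan is structurally different from the paper's, but the second stage has a real gap that the paper avoids. The paper does not attach private clusters to each vertex; instead it takes $T := (k-1)^2D^2$ vertex-disjoint copies of $H$ and, for each original vertex $v$, adds a gadget hypergraph $H_{D-d_H(v)}$ across the $T$ clones of $v$. The key point is that all clones of $v$ have identical degree deficit $D-d_H(v)$, so a single \emph{regular} gadget suffices, and that gadget is built explicitly as a vertex-disjoint union of Steiner system fragments obtained from Wilson's theorem (each fragment being $S(2,k,M+t)$ with $t<k(k-1)$ vertices removed, giving degree $\frac{M-1}{k-1}\pm O(k^2)$). Your star-padding in stage~1 cleverly achieves the analogous normalisation (every $v \in V(H)$ reaches degree exactly $D$), but you then need a simple $k$-uniform hypergraph $F_v$ on $\Theta(D^2)$ vertices with degrees in $[D-K,D]$ for a \emph{constant} $K=K(k)$.

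This is where the argument breaks. The R\"odl nibble and the Pippenger--Spencer machinery produce partial designs with degrees $(1-o(1))\cdot\frac{|A_v|-1}{k-1}$, i.e.\ an error that is $o(D)$ but not $O(1)$; they simply do not give the constant-error regularity the lemma demands. Keevash's theorem (and Wilson's, which is what is actually needed and suffices here) gives \emph{exact} $S(2,k,n)$ for admissible $n$, but that forces degree exactly $\frac{n-1}{k-1}$; to hit degree $\approx D$ on a cluster of $\Theta(D^2)$ vertices you must partition the cluster into $\Theta(D)$ blocks of size $\approx (k-1)D$ and place a Steiner fragment on each --- which is precisely the paper's construction of $H_d$. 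Your ``delete one edge per degree-$1$ auxiliary, then absorb'' repair is also left unquantified: there are $\Theta(D)$ such deletions, each lowering $k-1$ other degrees, and without a spreading argument a vertex could lose $\omega(1)$ edges; moreover the strict upper bound $d_{H'}(v)\le D$ in the lemma requires the gadget degrees to be one-sidedly below the target, which is why the paper aims $H_d$ at degree $d-k$ rather than $d$. None of these issues is fatal to your overall architecture --- targeting a uniform degree $D-K_0$ on each $A_v$ via disjoint Steiner fragments, rather than the per-vertex target $D-d^{(1)}(u)$, would close the gap and also dispense with the deletion/absorption step --- but as written, the central existence claim for $F_v$ is not justified by the tools you cite.
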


Since it is straightforward to prove Lemma~\ref{lem:embed} based on the existence of Steiner systems with parameters $(2,k,n)$
(which was proved by Wilson~\cite{wilson1972_1, wilson1972_2, wilson1975}), we present its proof in Appendix~\ref{appendix:embed}. 

Now we are ready to
deduce Corollary~\ref{cor:chromindex} as follows: the existence of the desired edge-colouring of $H$ follows from the existence of a suitable matching in an auxiliary `incidence' hypergraph $H'$. We find such a matching via Theorem~\ref{thm:main3proof}.

	\begin{proof}[Proof of Corollary~\ref{cor:chromindex}]
	We may assume that $D$ is an integer. By Lemma~\ref{lem:embed}, there exists a $k$-uniform hypergraph $H'$ such that $H \subseteq H'$, every vertex of $H'$ has degree between $D - K$ and $D$ for some $K = K(k)$, the codegree of $H'$ is at most $C$, and\COMMENT{Here we just express  both~\eqref{eq:boundVH'} and~\eqref{eqn:verticesh0} by polynomials in $D$ and $N$, which suffices for our purpose. We will only need the bound $\log |V(H_0)| =  \O(\max(\log N , \log D))$ which follows from~\eqref{eqn:verticesh0}.}
	\begin{equation}
	\label{eq:boundVH'}
	   |V(H')| = (k-1)^2 D^2 N.
	\end{equation}
	Hence it suffices to bound the chromatic index of $H'$ as it is at least the chromatic index of $H$.
	
	Now let us define a $(k+1)$-uniform auxiliary hypergraph $H_0$ as follows. The vertex-set $V(H_0)$ is the disjoint union $E(H') \cup V^1 \cup \dots \cup V^{D}$, where $V^i := \{v^i \: : \: v \in V(H') \}$ is a copy of $V(H')$ for $1 \leq i \leq D$. The edge-set $E(H_0)$ is defined as
	\begin{align*}
	    E(H_0) := \{ e v_1^i v_2^i \dots v_k^i \: : \: e = \{v_1,\dots,v_k \} \in E(H') \: , \: 1 \leq i \leq D \}.
	\end{align*}
	Then it is straightforward to check that every $e \in E(H')$ has degree exactly $D$ in $H_0$, and for every $v \in V(H')$ and $1 \leq i \leq D$, we have $d_{H_0}(v^i) = d_{H'} (v)$. Moreover, $H_0$ has codegree at most $C$ and 
	\begin{equation}\label{eqn:verticesh0}
	|V(H_0)| \leq 2D |V(H')| \overset{\eqref{eq:boundVH'}}{=} 2(k-1)^2 D^3 N.
	\end{equation}

	Also note that every matching $M$ of $H_0$ corresponds to a partial edge-colouring of $H'$ with $D$ colours by colouring, for each edge $e v_1^i \dots v_k^i \in E(M)$,
	the edge $e \in E(H')$ with colour $i$.
	
	Let $\mathcal{V} := \{ \{v^1 , \dots , v^D \} \: : \: v \in V(H') \}$ and $\eta' := \frac{\eta_0 + \eta}{2}$, where $\eta_0 := \frac{k-2}{k(k^3 + k^2 - 2k + 2)}$. Note that $|\mathcal{V}| = |V(H')|$, so $|\mathcal{V}| \le \exp(\log^{4/3} |V(H_0)|)$. Also note that for every $S \in \mathcal{V}$, by ~\eqref{eqn:verticesh0}, the inequality $|S| \geq \sqrt{D/C} \log |V(H_0)|$ is satisfied since $|S| = D$ and we assumed that $D \geq \exp(\log^{\mu} N)$.\COMMENT{Note that $|S|= D \geq \sqrt{D/C} \log |V(H_0)|$ holds, if $ \log |V(H_0)| \le \sqrt{D}$. As $\log |V(H_0)| \overset{~\eqref{eqn:verticesh0}}{=}  \O(\max(\log N , \log D))$, the only non-trivial part to check is that $\log N \ll \O(\sqrt{D})$ but this holds by our assumption that $D \geq \exp(\log^\mu N)$.} Also note that this assumption and ~\eqref{eqn:verticesh0} imply that
	$D \geq \exp(\log^{\mu/2} |V(H_0)|)$.\COMMENT{We know $D \geq \exp(\log^\mu N)$ by our assumption, which implies $D \geq \exp(\log^{\mu/2} |V(H_0)|)$, since we have $\log^{\mu/2} |V(H_0)| = \O(\max(\log^{\mu/2} N , \log^{\mu/2} D))$ by~\eqref{eqn:verticesh0} and $\mu \in (0,1)$.} So we can apply Theorem~\ref{thm:main3proof} to $H_0$ (with $\mu/2$, $\eta'$ and 1/4 playing the roles of $\mu$, $\eta$ and $\varepsilon$), to obtain a matching $M$ that covers all but at most $D(D/C)^{-1/k - \eta'}$ vertices in $S$, for every $S \in \mathcal{V}$.  In other words, there exists a partial edge-colouring of $H'$ with $D$ colours, where each vertex $v \in V(H')$ is incident to at most $D(D/C)^{-1/k - \eta'}$ non-coloured edges. Let $H''$ be the subhypergraph of $H'$ consisting only of non-coloured edges of $H'$; thus $H''$ has maximum degree at most $D(D/C)^{-1/k - \eta'}$. Since every edge of $H''$ shares a vertex with at most $k(D(D/C)^{-1/k - \eta'} - 1)$ other edges, $H''$ has chromatic index at most
	\begin{equation*}
	    k(D(D/C)^{-1/k - \eta'} - 1) + 1 \leq kD(D/C)^{-1/k - \eta'} < D(D/C)^{-1/k - \eta}.
	\end{equation*}
	Hence, to colour $H''$ one needs less than $D(D/C)^{-1/k - \eta}$ new colours (different from the $D$ colours used for the partial edge-colouring of $H'$),  implying that $H'$ has chromatic index at most $D + D(D/C)^{-1/k - \eta}$, as desired.
	\end{proof}

\section*{Acknowledgements}
We are grateful to the anonymous referee for their thoughtful comments and suggestions.
		
\bibliographystyle{amsplain}

\providecommand{\bysame}{\leavevmode\hbox to3em{\hrulefill}\thinspace}
\providecommand{\MR}{\relax\ifhmode\unskip\space\fi MR }
\providecommand{\MRhref}[2]{%
  \href{http://www.ams.org/mathscinet-getitem?mr=#1}{#2}
}
\providecommand{\href}[2]{#2}

\appendix
\section{Finding simple subhypergraphs of hypergraphs with small codegrees}
\label{appendix:simplesubhypergraphs}
	
	We will use the following four lemmas taken directly from Lemma 7  and the proofs of Lemmas 4--6 in \cite{KR1998}, where they are stated for hypergraphs, but they are also applicable to multi-hypergraphs, since the proofs are based on random edge-colourings.
	
	\begin{lemma}\label{lem:part1}
		Let $k \geq 3$ be an integer, and $\alpha , \beta, D > 0$, $0 <\gamma < 1$ be real numbers satisfying $D^{-1} \ll \alpha , \beta , \gamma , k^{-1}$, and let $C$ be an integer satisfying $\log D \leq C \leq D^{1-\gamma}$.
		
		Let $H$ be a $k$-uniform multi-hypergraph with codegree at most $C$ and $\alpha D \leq d_H(v) \leq \beta D$ for any $v \in V(H)$. Then there exists $E' \subseteq E(H)$ such that the multi-hypergraph $H' := (V(H) , E')$ satisfies the following.
		\begin{itemize}
			\item[$(\rm 1)$] The codegree of $H'$ is at most $\log(D/C)$.
			\item[$(\rm 2)$] $d_{H'}(v) = \frac{d_H(v)}{C} \pm 4\sqrt{\frac{d_H(v)}{C} \log D}$.
		\end{itemize}
	\end{lemma}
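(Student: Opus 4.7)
The plan is to construct $H'$ by a single random edge coloring. Color each edge of $H$ independently and uniformly at random with one of $C$ colors and let $E' \subseteq E(H)$ consist of all edges receiving (say) color $1$; thus each edge of $H$ lies in $E'$ independently with probability $1/C$. Then in expectation $d_{H'}(v) = d_H(v)/C$ for every vertex $v$, while the codegree of any pair $\{u,v\}$ in $H'$ is at most $c_{uv}/C \leq 1$ in expectation, where $c_{uv} \leq C$ is its codegree in $H$. I will show that both properties (1) and (2) hold simultaneously with positive probability.

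For property (2), fix $v \in V(H)$: then $d_{H'}(v)$ is a sum of $d_H(v)$ independent Bernoulli$(1/C)$ variables with mean $\mu_v := d_H(v)/C \geq \alpha D^\gamma$. Since $\mu_v \gg \log D$, the choice of deviation $\lambda = 4\sqrt{\mu_v \log D}$ sits well inside the small-deviation regime $\lambda \ll \mu_v$, so Lemma~\ref{lem:chernoff} (Chernoff--Hoeffding) gives a failure probability at most $2e^{-\Theta(\log D)}$ per vertex. A union bound over all vertices then yields (2).

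For property (1), fix a pair $\{u,v\}$ with codegree $c_{uv} \leq C$ in $H$. The codegree in $H'$ is Binomial$(c_{uv}, 1/C)$ with mean at most $1$, to be compared against the target threshold $\log(D/C) \geq \gamma \log D$, which is much larger. The standard large-deviation Chernoff bound $\mathbb{P}[X \geq t] \leq (e\,\mathbb{E}[X]/t)^t$ with $t = \log(D/C)$ gives failure probability at most $(e/\log(D/C))^{\log(D/C)}$, which is super-polynomially small in $D$. A union bound over the $\O_k(DN)$ pairs with positive codegree in $H$ (equivalently, pairs lying in a common edge) completes the verification of (1).

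Combining the two events, with positive probability the random $E'$ simultaneously satisfies (1) and (2), yielding the required $H'$. The main (minor) technical point is ensuring both union bounds survive in the intended parameter range: the factor $\log D$ inside the deviation for (2) and the gap between the mean codegree (at most $1$) and the target threshold $\log(D/C)$ in (1) are calibrated precisely so that the failure probabilities $\O(N D^{-\Omega(1)})$ and $\O(DN \cdot D^{-\Omega(\log\log D)})$ are both less than $1/2$ for the relevant range of $N$.
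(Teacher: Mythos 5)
Your random-coloring construction (each edge to $E'$ independently with probability $1/C$) is the right idea and your Chernoff estimates are correct as far as they go: for a fixed $v$ one gets a failure probability $\Theta(D^{-4})$ for property~(2), and super-polynomially small failure probability for property~(1). The gap is the union bound. The lemma places no upper bound on $N = |V(H)|$ in terms of $D$, and in the applications $N$ is in fact allowed to be super-polynomial in $D$ (Theorem~\ref{thm:main3intro} only requires $D \geq \exp(\log^\mu N)$ with $\mu < 1$, i.e.\ $N$ can be as large as $\exp(\log^{1/\mu} D)$). Thus a union bound over the $N$ degree events yields $\Theta(N D^{-4})$, which is not less than $1$ in the stated parameter regime; the same problem affects the codegree union bound. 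Your closing remark that the bounds ``survive for the relevant range of $N$'' is precisely what fails.

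The right tool here is the Lov\'asz Local Lemma, exploiting the locality of the bad events. Let $A_v$ be the event that $d_{H'}(v)$ deviates too much and $B_{uv}$ the event that the codegree of $\{u,v\}$ in $H'$ exceeds $\log(D/C)$. Then $A_v$ is a function only of the colors of edges incident to $v$, and $B_{uv}$ only of the colors of edges through both $u$ and $v$. Hence $A_v$ is mutually independent of all bad events indexed by vertices or pairs outside $\bigcup_{e \ni v} e$: the dependency degree of $A_v$ is at most $(k-1)\beta D + \binom{k-1}{2}\beta D = O_{k,\beta}(D)$, and that of $B_{uv}$ is at most $C(k-2) + C\binom{k}{2} = O_{k,\gamma}(D^{1-\gamma})$. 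With maximum failure probability $O(D^{-4})$ and dependency degree $O_{k,\beta}(D)$, the symmetric Local Lemma gives a positive-probability good outcome for arbitrary $N$. This local-dependency step is exactly the ingredient missing from your argument, and it is what the Kostochka--R\"odl proof (which the paper cites rather than reproduces) supplies.
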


	\begin{lemma}\label{lem:part2}
		Let $k \geq 3$ be an integer, and let $\alpha,\beta, D > 0$, $0 < \delta < 1/3$ be real numbers satisfying $D^{-1} \ll \alpha , \beta , \delta , k^{-1}$.
		
		Let $H$ be a $k$-uniform multi-hypergraph with codegree at most $\log D$ and $\alpha D \leq d_H(v) \leq \beta D$ for any $v \in V(H)$. Then there exists $E' \subseteq E(H)$ such that $H' := (V(H) , E')$ satisfies the following.
		\begin{itemize}
			\item[$(\rm 1)$] The codegree of $H'$ is at most $2 \delta^{-1}$.
			\item[$(\rm 2)$] $d_{H'}(v) = \frac{d_H(v)}{D^\delta} \pm 4\sqrt{\frac{d_H(v)}{D^\delta} \log D}$.
		\end{itemize}
	\end{lemma}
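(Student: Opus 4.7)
The plan is to mimic the random edge-colouring strategy used in the proof of Lemma~\ref{lem:part1}, but now with $D^{\delta}$ colours (instead of $C$) in order to exploit the stronger hypothesis that the codegree of $H$ is already at most $\log D$. Concretely, I assign to each edge $e \in E(H)$ a colour in $\{1,\dots,\lceil D^\delta\rceil\}$ independently and uniformly at random, and let $E'$ be the set of edges receiving colour~$1$. I will verify that both (1) and (2) hold for $H' := (V(H), E')$ with positive probability, so a good choice of colouring exists.

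For the degree estimate~(2), fix $v \in V(H)$: then $d_{H'}(v)$ is a sum of $d_H(v)$ independent Bernoulli$(D^{-\delta})$ random variables with mean $d_H(v)/D^\delta \geq \alpha D^{1-\delta}$. Applying Lemma~\ref{lem:chernoff} with $\lambda := 4\sqrt{(d_H(v)/D^\delta)\log D}$ yields a failure probability of $D^{-\Omega(1)}$, and a union bound over $v \in V(H)$ controls the degree condition across all vertices.

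For the codegree estimate~(1), fix a pair $\{u,v\}$ with codegree $c := c_H(u,v) \leq \log D$. The codegree of $\{u,v\}$ in $H'$ is distributed as $\mathrm{Bin}(c, D^{-\delta})$, and choosing the integer $t := \lceil 2\delta^{-1}\rceil+1$ (so that $t\delta \geq 2+\delta$), a direct binomial tail bound gives
\[
\mathbb{P}\bigl(c_{H'}(u,v) > 2\delta^{-1}\bigr) \;\leq\; \binom{c}{t}\, D^{-\delta t} \;\leq\; \frac{(\log D)^t}{D^{2+\delta}}.
\]
Union-bounding over all pairs with positive codegree in $H$ (there are at most $\binom{k}{2}|E(H)| = O(D\,|V(H)|)$ such pairs, using the degree hypothesis) and combining with the degree estimate, both (1) and (2) hold simultaneously with positive probability, provided $|V(H)|$ is not too large relative to $D$---which is the regime in which this lemma is applied inside Lemma~\ref{cor:comb}.

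The main obstacle is controlling the codegree: the expected codegree in $H'$ is $O(\log D / D^\delta)$, much smaller than $1$, so no useful Chernoff bound centred at the mean is available. The key idea is the bare-hands binomial moment bound $\mathbb{P}(\mathrm{Bin}(n,p)\geq t)\leq\binom{n}{t}p^t$ with $t$ chosen just above $2/\delta$, so that the factor $D^{-t\delta}$ saves a polynomial power in $D$ strictly beyond $D^{-2}$ and thereby beats the $O(D\,|V(H)|)$ union bound. The hypotheses $\delta < 1/3$ and $\delta \ll k^{-1}$ ensure that $t$ is a well-defined bounded integer and that the hierarchy of parameters closes cleanly; no martingale argument is needed.
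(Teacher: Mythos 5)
Your random colouring strategy (one colour class out of $\lceil D^{\delta}\rceil$) and the concentration calculations are the right framework, and indeed the paper points to Kostochka--R\"odl, whose proof is also a random edge-colouring. But the argument you give has a genuine gap in how you conclude.

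The union bound does not close in the regime the paper cares about, and your caveat that ``this is the regime in which the lemma is applied inside Lemma~\ref{cor:comb}'' is not correct. The hypothesis of Lemma~\ref{lem:part2} places no upper bound on $|V(H)|$ beyond what $\alpha,\beta,\delta,k$ dictate for $D$, and when Lemma~\ref{cor:comb} is invoked in Theorems~\ref{thm:main2proof} and~\ref{thm:main3proof}, we only have $D \geq \exp(\log^{\mu} N)$ with $\mu < 1$. This allows $N$ as large as $\exp(\log^{1/\mu} D)$, i.e.\ \emph{super-polynomial} in $D$ (and likewise for the parameter playing the role of $D$ when Lemma~\ref{lem:part2} is applied to the output of Lemma~\ref{lem:part1}). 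Your degree bound has failure probability $D^{-\Theta(1)}$ per vertex and your codegree bound has failure probability $(\log D)^{O(1)} D^{-(2+\delta)}$ per pair, so the union over $N$ vertices and $O(DN)$ pairs diverges as soon as $N$ exceeds a fixed polynomial in $D$. The missing ingredient is the Lov\'asz Local Lemma: each bad event (a vertex degree deviating, or a pair codegree exceeding $2\delta^{-1}$) is determined by the colours of $O(D)$ edges in the neighbourhood of the relevant vertex or pair, hence it is mutually independent of all but $O(D)$ other bad events. With bad-event probability $p = O((\log D)^{O(1)} D^{-(2+\delta)})$ and dependency degree $d = O(D)$ one gets $e p (d+1) \to 0$, so the Local Lemma produces a good colouring regardless of $N$. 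This is what the underlying Kostochka--R\"odl argument does.

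A secondary, smaller issue: you choose $t = \lceil 2\delta^{-1}\rceil + 1$ so that $t\delta \geq 2+\delta$, but the event you need to bound is $\{c_{H'}(u,v) > 2\delta^{-1}\}$, which equals $\{c_{H'}(u,v) \geq \lfloor 2\delta^{-1}\rfloor + 1\}$. Applying the tail bound at the larger threshold $t$ gives an \emph{upper} bound for the wrong (smaller) event, not the one you want. Setting $t := \lfloor 2\delta^{-1}\rfloor + 1$ fixes this; one still has $t\delta > 2$ (with a positive gap depending only on $\delta$), which is all that is required for the Local Lemma step above.
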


	\begin{lemma}\label{lem:part3}
		Let $k \geq 3$ be an integer, and let $\alpha,\beta, D > 0$ be real numbers satisfying $D^{-1} \ll \alpha , \beta , k^{-1}$. Let $s$ and $T$ be integers satisfying $2k \leq s \leq \log^2 D$ and $s (\log D)^{1/2} \leq T < D^{1/3}$.
		
		Let $H$ be a $k$-uniform multi-hypergraph such that for any $v \in V(H)$, $\alpha D \leq d_H(v) \leq \beta D$ and let $E_H(v)$ be the set of edges $e \in E(H)$ containing $v$. Let $G$ be an $s$-regular graph with vertex set $E(H)$. Then there exists $E' \subseteq E(H)$ such that for any vertex $v$ of the multi-hypergraph $H' := (V(H) , E')$, the number of $e \in E_H(v)$ that are isolated vertices in $G[E']$ is
		\begin{equation*}
		\frac{d_H(v)}{T} \left (1 - \frac{1}{T} \right )^s \pm 4s \sqrt{\frac{(s+1)d_H(v)}{T} \log D}.
		\end{equation*}
	\end{lemma}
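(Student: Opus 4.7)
The natural approach is to pick $E'$ by independent sampling and then apply concentration vertex-by-vertex. Form $E'$ by including each edge $e\in E(H)$ independently with probability $1/T$. Fix $v\in V(H)$ and let $X_v$ be the number of $e\in E_H(v)$ that are isolated in $G[E']$. Since the $s$ $G$-neighbours of $e$ are distinct edges of $H$ whose membership in $E'$ is independent of $e$'s, the edge $e$ is isolated in $G[E']$ iff $e\in E'$ and none of its $G$-neighbours lie in $E'$, so
\[
\mathbb{E}[X_v]=d_H(v)\cdot\frac{1}{T}\Bigl(1-\frac{1}{T}\Bigr)^{s},
\]
which matches the main term of the desired estimate.

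Concentration is obtained from Lemma~\ref{cor:martingale}. To determine $X_v$, Paul queries, for each $e\in E_H(v)$ and each of its $s$ neighbours in $G$, whether that edge lies in $E'$; this is at most $(s+1)d_H(v)$ queries, each yes with probability $1/T$. The Lipschitz bound is the key bookkeeping point: flipping a single query toggles $X_v$ by at most $s+1$, because the flipped edge itself contributes at most $1$ (if it lies in $E_H(v)$) and at most $s$ edges of $E_H(v)$ to which it is $G$-adjacent may change their isolation status. Hence each query has variance at most $(s+1)^{2}/T$ and $T_{\max}=s+1$, giving total variance $\sigma^{2}\le (s+1)^{3}d_H(v)/T$. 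Choosing $\lambda$ so that $\lambda\sigma=4s\sqrt{(s+1)d_H(v)\log D/T}$ yields $\lambda^{2}=\Theta(\log D)$, and the admissibility condition $\lambda<2\sigma\delta/T_{\max}$ reduces to $\log D\ll s\,d_H(v)/T$, which holds comfortably from $d_H(v)\ge\alpha D$, $T<D^{1/3}$, and $s\ge 2k$. Therefore
\[
\mathbb{P}\!\left(\Bigl|X_v-\mathbb{E}[X_v]\Bigr|>4s\sqrt{\frac{(s+1)d_H(v)\log D}{T}}\right)\le 2e^{-\Omega(\log D)},
\]
and by rescaling $\lambda$ by a constant factor the implicit constant in the exponent can be made as large as desired.

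A union bound over $v\in V(H)$ then produces, with positive probability, a single $E'$ for which the stated estimate holds at every vertex simultaneously; the desired $E'$ exists. The role of the hypotheses is transparent from the calculation: $s\ge 2k$ and $d_H(v)\ge\alpha D$ push the admissibility condition through, $s\le \log^{2}D$ and $T<D^{1/3}$ keep $\sigma/T_{\max}$ well above $\sqrt{\log D}$, and the lower bound $T\ge s(\log D)^{1/2}$ is exactly what is needed so that the deviation term $4s\sqrt{(s+1)d_H(v)\log D/T}$ is dominated by the main term $d_H(v)(1-1/T)^{s}/T$, ensuring the conclusion is nontrivial.

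The main obstacle is the Lipschitz bookkeeping in the martingale application: one has to see that changing a single coin flip cannot toggle the isolation status of arbitrarily many edges through $v$, but only of those $G$-adjacent to the flipped edge, giving the crucial bound $s+1$ in place of the naïve $d_H(v)$. Once this bound is in place, the variance estimate $\sigma^{2}=O(s^{3}d_H(v)/T)$ is tight enough for Lemma~\ref{cor:martingale} to deliver the prescribed deviation with room for the vertex-level union bound.
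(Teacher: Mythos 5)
Your construction of $E'$ and the per-vertex analysis are essentially the intended ones: the paper does not reprove Lemma~\ref{lem:part3} but imports it from~\cite{KR1998}, whose argument is a random edge-colouring with $T$ colours, and keeping a single colour class is exactly your independent $1/T$-sampling. The expectation computation, the Lipschitz bound $s+1$ (only the flipped edge and its $s$ $G$-neighbours can change isolation status), the resulting variance bound $O\bigl((s+1)^3 d_H(v)/T\bigr)$, and the admissibility check for Lemma~\ref{cor:martingale} are all correct.

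The genuine gap is the final step, the union bound over $v\in V(H)$. The prescribed error term $4s\sqrt{(s+1)d_H(v)\log D/T}$ is only about $4\sqrt{\log D}$ standard deviations, so the failure probability per vertex that Lemma~\ref{cor:martingale} can deliver is $e^{-\Theta(\log D)}=D^{-\Theta(1)}$, and this cannot be boosted: enlarging $\lambda$ would enlarge the deviation beyond what the lemma allows. However, the hypotheses place no upper bound on $|V(H)|$ in terms of $D$ (only $D^{-1}\ll\alpha,\beta,k^{-1}$), and in the paper's actual application of Lemma~\ref{cor:comb} inside Theorem~\ref{thm:main3proof} one only has $D\ge\exp(\log^\mu N)$ with $\mu<1$, so the number of vertices can be super-polynomial in $D$; then $|V(H)|\cdot D^{-O(1)}$ does not tend to $0$ and ``with positive probability all vertices are good'' does not follow. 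The repair, which is what the random edge-colouring proofs in~\cite{KR1998} rely on, is to replace the union bound by the Lov\'asz Local Lemma: the event for $v$ is determined by the choices for the at most $(s+1)d_H(v)\le(s+1)\beta D$ edges at $G$-distance at most one from $E_H(v)$, so each bad event is mutually independent of all but at most $k(s+1)^2\beta D\le D^{1+o(1)}$ others, and $e\cdot D^{-\Theta(1)}\cdot D^{1+o(1)}<1$ once $D$ is large. With that substitution your argument goes through; as written, it proves the lemma only under the additional assumption that $|V(H)|$ is polynomial in $D$.
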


	\begin{lemma}\label{lem:regular}
		Let $G$ be a graph with maximum degree $D$. If $|V(G)| > 2D^3$, then for any even integer $d$ with $D < d < |V(G)|$, there is a $d$-regular graph $G'$ containing $G$ such that $V(G') = V(G)$.\COMMENT{The lemma above is stated in~\cite{KR1998} without proof. We might consider including a reference to the proof but we could not find it.}
	\end{lemma}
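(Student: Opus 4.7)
My plan is to prove Lemma~\ref{lem:regular} in two stages: a greedy edge-addition followed by a short sequence of $2$-switches to fix any residual deficiency. Write $n := |V(G)|$ and $f(v) := d - d_G(v)$; since $d > D \geq d_G(v)$, we have $f(v) \geq 1$ for every $v$ and $\sum_v f(v) = dn - 2|E(G)|$ is even because $d$ is even. The task reduces to constructing a simple graph $F$ on $V(G)$, edge-disjoint from $G$, with $d_F(v) = f(v)$ for every $v$; then $G \cup F$ is the required $d$-regular supergraph.

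In the greedy stage I initialise $G' := G$ and repeatedly add an edge $uv$ whenever there exist distinct $u, v$ with $d_{G'}(u), d_{G'}(v) < d$ and $uv \notin E(G')$. This preserves $G \subseteq G'$ and $\Delta(G') \leq d$, and the process terminates. Let $U := \{v : d_{G'}(v) < d\}$; if $U = \emptyset$ we are done, otherwise the stopping rule forces $U \setminus \{u\} \subseteq N_{G'}(u)$ for every $u \in U$, so $U$ spans a clique in $G'$ and $|U| \leq d$. In the second stage I dispose of $U$ via $2$-switches: for $u \in U$ with $f_{G'}(u) \geq 2$ I will locate an edge $xy \in E(G') \setminus E(G)$ with $x, y \notin \{u\} \cup N_{G'}(u)$, delete $xy$, and add both $ux$ and $uy$ (increasing $d_{G'}(u)$ by two while leaving every other degree untouched and keeping $G \subseteq G'$); for $u$ with odd residual deficiency the parity of $\sum_v f_{G'}(v)$ supplies a partner $v \in U$ of odd deficiency and I perform the twin swap, deleting some $xy \in E(G') \setminus E(G)$ lying outside both closed neighbourhoods and adding $ux$ and $vy$. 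Iterating drives $U$ to $\emptyset$.

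The key feasibility count is as follows. At any stage the number of edges of $G'$ incident to $\{u\} \cup N_{G'}(u)$ (and the analogous set for $v$ in the twin swap) is $O(d^2)$, while $|E(G') \setminus E(G)| = |E(G')| - |E(G)| \geq \tfrac{1}{2}((d-D)n - d^2)$, since the at least $n - |U| \geq n - d$ saturated vertices each contribute at least $d - D$ added edges; the hypothesis $n > 2D^3$ makes the supply dominate the $O(d^2)$ forbidden set whenever $d$ is not comparable to $n$. The main obstacle is the regime where $d$ is of the same order as $n$: then the saturated set $V \setminus U$ can be too small to guarantee $\Omega(n)$ removable added edges, and the direct counting fails. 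To dispatch this regime I plan to dualise, seeking instead an $(n-1-d)$-regular spanning subgraph of $\overline{G}$ whose edge-complement in $K_n$ is the desired $G'$; since $\delta(\overline{G}) \geq n - 1 - D$ is much larger than the target regularity $n - 1 - d$ under $n > 2D^3$, classical regular-factor results for graphs of high minimum degree (combined with the parity of $(n-1-d)n$, which holds because $d$ is even) produce such a subgraph directly.
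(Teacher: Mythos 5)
Your first two stages are fine and are indeed the standard folklore route (the paper itself offers no proof of this lemma -- it is quoted from \cite{KR1998}, where it is also stated without proof -- so I am judging your argument on its own): the greedy completion keeps $G\subseteq G'$ and $\Delta(G')\le d$, the unsaturated set $U$ becomes a clique of size at most $d$, deficiencies never increase under your switches, and both the single $2$-switch and the twin swap are legitimate provided a usable edge $xy\in E(G')\setminus E(G)$ avoiding the relevant closed neighbourhoods exists. The problem is the coverage of that feasibility step. Your supply bound $\tfrac12\bigl((d-D)n-d^2\bigr)$ against the forbidden set of size $\Theta(d^2)$ (up to $2(d+1)d$ edges for the twin swap) requires roughly $n\gtrsim d^2/(d-D)$; when $d\ge 2D$ this is $n\gtrsim 8d$, so the counting prong only covers $d$ up to a small constant fraction of $n$ (about $n/8$), not ``whenever $d$ is not comparable to $n$''. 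Already at $d\approx n/4$ the count is inconclusive, so a second prong must take over well below $d=\Theta(n)$.

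That is where the genuine gap lies: the dualisation is only a restatement, and the appeal to ``classical regular-factor results for graphs of high minimum degree'' does not cover the range it is asked to cover. For $n/8\lesssim d\lesssim n/2$ the dual target regularity $r=n-1-d$ lies between roughly $n/2$ and $7n/8$, while the classical results you could be invoking stop far short of this: iterated extraction of Hamilton cycles and perfect matchings from $\overline{G}$ (Dirac/Nash--Williams style) only yields $r$-factors up to about $\delta(\overline G)-n/2\approx n/2-D$, and Katerinis-type minimum-degree theorems require $n\ge 4r-5$, i.e.\ $r\le (n+5)/4$. The statement you actually need -- every graph of minimum degree $n-1-D$ has an $r$-factor for \emph{every} $r\le n-1-D$ with $rn$ even -- is precisely the lemma itself after complementation, so citing it as classical is circular unless you name a specific theorem and check its hypotheses (for instance, verify Tutte's $f$-factor condition for $\overline G$ with $f\equiv r$, or use the non-classical results on large even-regular spanning subgraphs of graphs with $\delta\ge n/2$ together with Petersen's $2$-factorisation, plus a separate perfect matching to handle the parity case where $r$ is odd, which happens exactly when $n$ is even). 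As written, the middle regime of $d$ is handled by neither prong, so the proof is incomplete.
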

	
	Combining the four lemmas above, we are now ready to prove Lemma \ref{cor:comb}.

	\begin{proof}[Proof of Lemma \ref{cor:comb}]
		Let $H$ be a $k$-uniform multi-hypergraph satisfying all the conditions of the corollary. By Lemma~\ref{lem:part1}, there exists $E' \subseteq E(H)$ such that the $k$-uniform multi-hypergraph $H' := (V(H) , E')$ has codegree $C' \leq \log(D/C)$, and for any $v \in V(H')$, 
		\begin{equation}\label{eqn:degh'}
		d_{H'}(v) = \frac{d_H(v)}{C} \pm 4\sqrt{\frac{d_H(v)}{C} \log D}.
		\end{equation}\COMMENT{Since $C \leq D^{1-\gamma}$, note that
		\begin{equation*}\label{eqn:compareh'}
	 4\sqrt{\frac{d_H(v)}{C} \log D} \leq \frac{d_H(v)}{2C}.
		\end{equation*}}
		Then $\frac{\alpha D}{2C} \le \frac{d_H(v)}{2C}  \le d_{H'}(v) \le \frac{3d_H(v)}{2C} \le \frac{\beta D}{2C}$, so we may apply Lemma~\ref{lem:part2} to the multi-hypergraph $H'$ (where $D/C$ plays the role of $D$ in Lemma~\ref{lem:part2}) to show that there exists $E'' \subseteq E'$ such that the $k$-uniform multi-hypergraph $H'' := (V(H) , E'')$ has codegree at most $2\delta^{-1}$ and for any $v \in V(H'')$, 
		\begin{align}
		d_{H''}(v) &= \frac{d_H(v)}{C^{1-\delta} D^\delta} \pm \frac{4C^{\delta}}{D^\delta} \sqrt{\frac{d_H(v)}{C} \log D} \pm 4 \sqrt{ \frac{1.5 d_H(v)}{C^{1-\delta} D^\delta} \log (D/C)} \nonumber \\
		&= \frac{d_H(v)}{C^{1-\delta} D^\delta} \pm 10 \sqrt{ \frac{ d_H(v)}{C^{1-\delta}D^\delta} \log (D/C)}.
		\label{eqn:degh''}
		\end{align}
\COMMENT{Since $C \leq D^{1-\gamma}$, note that
		\begin{equation*}\label{eqn:compareh''}
		10 \sqrt{ \frac{d_H(v)}{C^{1-\delta}D^\delta} \log (D/C)} \leq 	\frac{d_H(v)}{2C^{1-\delta} D^\delta}.
		\end{equation*}}
		Let $G$ be a graph with $V(G) := E''$ such that $e_1 \ne e_2 \in E''$ are adjacent if and only if $|e_1 \cap e_2| \geq 2$. Since the codegree of $H''$ is at most $2\delta^{-1}$, we have $\Delta(G) \le 2 \binom{k}{2} \delta^{-1}$. Hence by Lemma~\ref{lem:regular} there exists an $s$-regular graph $G_{\rm reg}$ with vertex set $E''$ such that $G_{\rm reg}$ contains $G$, where $s$ is an even integer between $1 + 2\binom{k}{2}\delta^{-1}$ and $3 + 2\binom{k}{2} \delta^{-1}$.\COMMENT{since $|V(G)| = |E''|$ is much larger than $2 \binom{k}{2}\delta^{-1}$}
		
		Applying Lemma~\ref{lem:part3} to $H''$ (with $(D/C)^{1-\delta}$, $\log(D/C)$, $G_{\rm reg}$ playing the role of $D$, $T$ and $G$ in the statement of Lemma~\ref{lem:part3}), we obtain
		$E^* \subseteq E''$ such that the number of edges $e \in E_{H''}(v)$ that are isolated vertices in $G_{\rm reg}[E^*]$ is 
		\begin{align*}
		&\left (1 - \frac{1}{\log(D/C)} \right )^s \frac{ d_{H''}(v)}{\log(D/C)} \pm 4s \sqrt{(s+1) d_{H''}(v)} \\
		\overset{\eqref{eqn:degh''}}&{=} \left (1 - \frac{1}{\log(D/C)} \right )^s \frac{ d_H(v)}{\log(D/C)C^{1-\delta}D^\delta} \pm 8s \sqrt{\frac{(s+1)d_H(v)}{C^{1-\delta}D^\delta}}.
		\end{align*}

Let $E_{\rm simp}$ be the collection of $e \in E^*$ that are isolated in $G_{\rm reg}[E^*]$. Then by the definition of $G_{\rm reg}$, the $k$-uniform hypergraph $H_{\rm simp} := (V(H) , E_{\rm simp})$ is a simple hypergraph satisfying $(\rm i)$ and $(\rm ii)$, as desired.
	\end{proof}
	
\section{Almost independence of events}
\label{appendix:almostindependence}
\begin{proof}[Proof of Proposition~\ref{lem:almost_indep}]
	Let $s$ be an arbitrary integer with $1 \le s \le n_1 + n_2 \leq 6k$. If $F_1, F_2, \ldots, F_{s}$ are disjoint edges in $H_{i-1}$, then 
		\begin{align}
		\mathbb{P}\left (\bigwedge_{j = 1}^{s} (F_j \in E(M_i)) \right ) &= \left ( \frac{1}{D_{i-1}} \right )^s \left (1 - \frac{1}{D_{i-1}} \right )^{t(F_1,\dots,F_s)}, \label{eqn:probwedge}\\
		\prod_{j = 1}^{s} \mathbb{P}(F_j \in E(M_i)) &= \left ( \frac{1}{D_{i-1}} \right )^s \left (1 - \frac{1}{D_{i-1}} \right )^{t(F_1)+\dots+t(F_s)}, \label{eqn:probprod}
		\end{align}
		where $t(F_1 , \dots , F_s)$ is the number of edges in $E(H_{i-1}) \setminus \{ F_1 , \dots , F_s \}$ intersecting at least one of $F_1 , \dots , F_s$, and $t(F_j)$ is the number of edges in $E(H_{i-1}) \setminus \{ F_j \}$ intersecting $F_j$ for $1 \leq j \leq s$. 
		
		Since $H_{i-1}$ is simple, we have $0 \leq t(F_1) + \dots + t(F_s) - t(F_1 , \dots , F_s) \leq s^2 k^2 \leq 36k^4$ as there are at most $k^2$ edges intersecting both  $F_{j_1}$ and $F_{j_2}$ for $1 \leq j_1 \ne j_2 \leq s$. Comparing~\eqref{eqn:probwedge} and~\eqref{eqn:probprod},
		\begin{align}
		\mathbb{P}\left (\bigwedge_{j = 1}^{s} (F_j \in E(M_i)) \right ) &= \left (1 - \frac{1}{D_{i-1}} \right )^{t(F_1,\dots,F_s) - (t(F_1)+\dots+t(F_s))} \prod_{j = 1}^{s} \mathbb{P}(F_j \in E(M_i)) \nonumber \\
		&= \left (1 \pm \frac{36k^4 + 1}{D_{i-1}} \right ) \prod_{j = 1}^{s} \mathbb{P}(F_j \in E(M_i)).\label{eq:comparingFi}
		\end{align}

		Let $z_1, z_2, \ldots, z_s \in U_{i-1}$ be distinct vertices.
		Then 
		\begin{equation}
		S := \prod_{j=1}^{s} \mathbb{P}(z_j \in V(M_i)) = \sum_{(F_1, F_2, \ldots, F_s)} \prod_{j=1}^s \mathbb{P}(F_j \in E(M_i)) \label{eqn:sumnormal},
		\end{equation}
		where the sum here runs over all $s$-tuples of edges $(F_1, F_2, \ldots, F_s)$ such that $z_j \in F_j$. By $(\rm L1)_{i-1}$ and Proposition~\ref{claim:matching},
		\begin{itemize}
			\item[$(\rm B1)$] there are $\Theta(D_{i-1}^s)$ such $s$-tuples $(F_1 , \dots , F_s)$, 
			\item[$(\rm B2)$] $\prod_{j=1}^{s} \mathbb{P}(F_j \in E(M_i)) = \Theta(D_{i-1}^{-s})$ for each $s$-tuple $(F_1 , \dots , F_s)$, and thus
			\item[$(\rm B3)$]  $S = \Theta_{k}(1)$.
		\end{itemize}
		On the other hand, 
		\begin{equation}
		S^* := \mathbb{P} \left (
		\bigwedge_{j=1}^{s} (z_j \in V(M_i)) \right ) = \sum^*_{(F_1, F_2, \ldots, F_s)}\mathbb{P} \left (
		\bigwedge_{j=1}^{s} (F_j \in E(M_i)) \right), \label{eqn:sum*}
		\end{equation}
		where the sum here runs over all  $s$-tuples of edges $(F_1, F_2, \ldots, F_s)$ such that $z_j \in F_j$ where $F_j \cap F_{j'} = \emptyset$ whenever $F_j \not = F_{j'}$.

		\begin{claim}
		For any $s$ with $1 \le s \le n_1 + n_2$, $S^* = S (1 + \O(D_{i-1}^{-1})).$
		\end{claim}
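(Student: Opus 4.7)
The plan is to compare the two sums term-by-term and show that the ``missing'' contribution from non-disjoint tuples together with the factor $(1 \pm O(D_{i-1}^{-1}))$ from \eqref{eq:comparingFi} yields only a lower-order correction.

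First, I would rewrite $S^*$ using the comparison \eqref{eq:comparingFi}: since the sum in \eqref{eqn:sum*} ranges only over pairwise disjoint $s$-tuples $(F_1,\dots,F_s)$, equation \eqref{eq:comparingFi} gives
\[
S^* = \left(1 \pm \frac{36k^4+1}{D_{i-1}}\right) \sum^*_{(F_1,\dots,F_s)} \prod_{j=1}^{s}\mathbb{P}(F_j \in E(M_i)).
\]
So the task reduces to showing $\sum^* = (1 \pm O(D_{i-1}^{-1}))\,S$, i.e.\ that the non-disjoint contribution to \eqref{eqn:sumnormal} is only an $O(D_{i-1}^{-1})$-fraction of $S$.

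Next, I would bound the number of $s$-tuples $(F_1,\dots,F_s)$ with $z_j \in F_j$ that are \emph{not} pairwise disjoint. Fix a pair $1 \le j_1 < j_2 \le s$ and count the tuples where $F_{j_1} \cap F_{j_2} \ne \emptyset$. Using $(\rm L1)_{i-1}$, there are $\Theta(D_{i-1})$ choices for $F_{j_1}$; once $F_{j_1}$ is fixed, any edge $F_{j_2}$ with $z_{j_2} \in F_{j_2}$ and $F_{j_1} \cap F_{j_2} \ne \emptyset$ is determined by a common vertex $w \in F_{j_1}$, giving at most $k$ choices for $w$ and then at most one choice for $F_{j_2}$ since $H_{i-1}$ is simple (so $z_{j_2}$ and $w$ determine $F_{j_2}$). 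The remaining $s-2$ indices each contribute $\Theta(D_{i-1})$ choices via $(\rm L1)_{i-1}$. Summing over the $\binom{s}{2} = O_k(1)$ choices of $(j_1,j_2)$ gives at most $O_k(D_{i-1}^{s-1})$ non-disjoint tuples in total.

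Finally, combining this count with $(\rm B2)$, the non-disjoint contribution to \eqref{eqn:sumnormal} is $O_k(D_{i-1}^{s-1}) \cdot \Theta(D_{i-1}^{-s}) = O_k(D_{i-1}^{-1})$, which by $(\rm B3)$ is $O_k(S/D_{i-1})$. Thus $\sum^* = S(1 \pm O(D_{i-1}^{-1}))$, and substituting back gives $S^* = S(1 \pm O(D_{i-1}^{-1}))$ as claimed. No real obstacle arises here: the key ingredients are the already-established degree bound $(\rm L1)_{i-1}$, the simplicity of $H_{i-1}$ (which is what keeps the number of non-disjoint tuples down by a factor of $D_{i-1}$), and the uniform estimates $(\rm B1)$--$(\rm B3)$; the small-codegree/disjointness counting is the only nontrivial step, and it is essentially the same argument used in \eqref{eqn:boundy0}.
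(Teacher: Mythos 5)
Your argument has a genuine gap, stemming from a misreading of the definition of $\sum^*$. The condition stated in \eqref{eqn:sum*} is that $F_j \cap F_{j'} = \emptyset$ \emph{whenever $F_j \ne F_{j'}$}, which explicitly allows repeated edges; this is unavoidable, since if $z_{j_1}$ and $z_{j_2}$ lie in the same edge of $M_i$ then $F_{j_1} = F_{j_2}$ in the decomposition of the event $\bigwedge_j (z_j \in V(M_i))$. Such tuples do occur whenever some edge of $H_{i-1}$ contains two of the $z_j$'s (the paper's ``special edges''); simplicity of $H_{i-1}$ only bounds their number, it does not eliminate them. Consequently your first step --- applying \eqref{eq:comparingFi} term-by-term to $\sum^*$ --- is invalid on these terms: \eqref{eq:comparingFi} presupposes that $F_1,\dots,F_s$ are disjoint (and hence distinct) edges, and for a tuple with, say, $F_{j_1}=F_{j_2}$ the ratio $\mathbb{P}(\bigwedge_j F_j \in E(M_i)) \big/ \prod_j \mathbb{P}(F_j \in E(M_i))$ is of order $D_{i-1}$, not $1 \pm O(D_{i-1}^{-1})$. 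The paper's proof handles exactly this by splitting the tuples into three classes ($S_1$ disjoint and distinct, $S_2$ distinct but intersecting, $S_3$ with a repeated edge) and running a separate count for $Q_3$ via the sets $T_\ell$ of tuples with $\ell < s$ distinct edges, using the small number of special edges. That extra piece is needed: the repeated-edge contribution to each side is only a lower-order \emph{additive} error $O(D_{i-1}^{-1})$, which is absorbed because $S = \Theta(1)$, but your argument as written never addresses it.

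There is also a smaller slip in your count of non-disjoint tuples: when $z_{j_2} \in F_{j_1}$, the choice $w = z_{j_2}$ does not determine $F_{j_2}$ (any of the $\Theta(D_{i-1})$ edges through $z_{j_2}$ is possible). The overall bound $O(D_{i-1}^{s-1})$ is saved only because this situation forces $F_{j_1}$ to be one of the $O_k(1)$ special edges, so the step ``at most one choice for $F_{j_2}$ since $H_{i-1}$ is simple'' needs to be supplemented by that observation.
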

		
		\begin{subproof}
		
		Every $s$-tuple $(F_1, F_2, \ldots, F_s)$ with the property that $z_j \in F_j$ for $j \in [s]$ falls into one of the following three types: Let $S_1$ be the set of $s$-tuples where $F_1, F_2, \ldots, F_s$ are disjoint and distinct,
	let $S_2$ be the set of $s$-tuples where $F_1, F_2, \ldots, F_s$ are distinct but $F_j \cap F_{j'} \ne \emptyset$ for some distinct $j, j' \in [s]$, and let $S_3$ be the set of $s$-tuples where $F_j = F_{j'}$ for some distinct $j, j' \in [s]$. 
	
		Note that $|S - S^*| \le Q_1+Q_2+Q_3$ where $$Q_i = \left|\sum_{(F_1, F_2, \ldots, F_s) \in S_i} \prod_{j=1}^s \mathbb{P}(F_j \in E(M_i)) - \sum^*_{(F_1, F_2, \ldots, F_s) \in S_i}\mathbb{P} \left (
		\bigwedge_{j=1}^{s} (F_j \in E(M_i)) \right)\right|.$$

		Let us first estimate $Q_2$.  There are no terms corresponding to $s$-tuples of $S_2$ in $\sum^*$ but they exist in $\sum$. Hence by $(\rm B2)$, we have $Q_2 \le |S_2| \cdot \Theta(D_{i-1}^{-s}).$ Since $|S_2| \le \binom{s}{2}k \cdot (D_{i-1} \pm \Delta_{i-1})^{s-1} = \O(D_{i-1}^{s-1})$, we obtain  $Q_2 \le \O(D_{i-1}^{-1})$.

		Now let us estimate $Q_3$. An edge is called \emph{special} if it contains at least two of the vertices $z_1, z_2, \ldots, z_s$. The number of special edges is at most $s^2 \le 36 k^2$. Let $T_{\ell}$ be the set of $s$-tuples $(F_1, F_2, \ldots, F_s)$ in $S_3$ such that there are exactly $\ell$ distinct edges among $F_1, F_2, \ldots, F_s$. Then $|T_{\ell}| = \O(D_{i-1}^{\ell - 1}) $ -- indeed, one of these $\ell$ edges is a special edge, so there are at most $s \cdot 36k^2 \leq 216k^3$ choices for this edge (and its position), and the remaining $\ell - 1$ edges (and their positions) can be chosen in at most $\binom{s}{\ell - 1} (D_{i-1} \pm \Delta_{i-1})^{\ell - 1} = \O(D_{i-1}^{\ell-1})$ ways.\COMMENT{since for each of those edges $F$ there is a vertex $z \in \{z_1, z_2, \ldots, z_s\}$ such that $z \in F$.} Finally, having chosen the $\ell$ edges and their positions, there are $\O(1)$ ways to complete this into an $s$-tuple in $T_{\ell}$. For any such $s$-tuple in $T_{\ell}$, by Proposition~\ref{claim:matching},\COMMENT{There is a subtlety here. The term in $\sum$ would be at most $\O(D_{i-1}^{-s})$ and the term in $\sum^*$ would be at most $\O(D_{i-1}^{-\ell})$, so the difference is at most $\O(D_{i-1}^{-\ell})$, as $\ell < s$.} 
		
		\begin{equation}
		  \left|\prod_{j=1}^s \mathbb{P}(F_j \in E(M_i)) - \mathbb{P} \left (
		\bigwedge_{j=1}^{s} (F_j \in E(M_i)) \right)\right| = \O(D_{i-1}^{-\ell}).
		\end{equation}

		Thus, 
		$$ Q_3 = \sum_{\ell = 1}^{s-1} |T_{\ell}| \cdot \O(D_{i-1}^{-\ell}) =  \O(D_{i-1}^{-1}).$$

		Finally, for any $s$-tuple $(F_1, F_2, \ldots, F_s)$ in $S_1$, by \eqref{eq:comparingFi}, we know that $$\left|\prod_{j=1}^s \mathbb{P}(F_j \in E(M_i)) -\mathbb{P} \left (
		\bigwedge_{j=1}^{s} (F_j \in E(M_i)) \right )\right| = \O(D_{i-1}^{-1}) \prod_{j=1}^s \mathbb{P}(F_j \in E(M_i)).$$ Therefore, $Q_1 \le \O(D_{i-1}^{-1})|S|$. Thus $|S - S^*| \le Q_1+Q_2+Q_3 \le \O(D_{i-1}^{-1}) |S|$. This combined with $(\rm B3)$ proves the claim.
		\end{subproof}
		
		Now let $t$ be an arbitrary integer with $1 \le t \le n_1 \le 3k$. Since the event $w_j \in W_i$ is independent of all the other events, it follows that for distinct vertices $w_1, \dots , w_t \in U_{i-1}$ (where a vertex $w_j$ is allowed to be contained in $\{z_1 , \dots , z_s\}$),
		\begin{align}
		\mathbb{P} \left (
		\bigwedge_{j=1}^{s} (z_j \in V(M_i)) \wedge \bigwedge_{j=1}^{t} (w_j \in W_i) \right ) 
		&= (1 + \O(D_{i-1}^{-1}))\prod_{j=1}^{s} \mathbb{P}(z_j \in V(M_i)) \prod_{j=1}^{t} \mathbb{P}(w_j \in W_i) \nonumber\\
		&= \prod_{j=1}^{s} \mathbb{P}(z_j \in V(M_i))\prod_{j=1}^{t} \mathbb{P}(w_j \in W_i) + \O(D_{i-1}^{-1} ). \label{eq:comparing}
		\end{align}
		We are now ready to prove the lemma. For $x_j \in U_{i-1}$, the event that $x_j \in U_i$ is equivalent to the event that $x_j \not \in V(M_i)$ and $x_j \not \in W_i$. Then by the inclusion-exclusion principle the desired probability $\mathbb{P} \left (\bigwedge_{j=1}^{n_1}(x_j \in U_i) \:\wedge\:
		\bigwedge_{j=1}^{n_2} (y_j \in V(M_i))\right )$ can be written as the sum and difference of probabilities of conjunctions of various subsets of the set of events $\{x_j \in V(M_i), x_j \in W_i \mid 1 \le j \le n_1 \} \cup \{y_j \in V(M_i) \mid 1 \le j \le n_2\}$. The number of these terms is at most $3^{2n_1+n_2} = \O(1)$, so using \eqref{eq:comparing} for each of these terms, we obtain that the desired probability is equal to $\prod_{j=1}^{n_1} \mathbb{P}(x_j \in U_i) \cdot \prod_{j=1}^{n_2} \mathbb{P}(y_j \in V(M_i))+ \O(D_{i-1}^{-1})$ which can be written in the form given in the proposition, since $\prod_{j=1}^{n_1} \mathbb{P}(x_j \in U_i) \prod_{j=1}^{n_2} \mathbb{P}(y_j \in V(M_i)) \sim (1-e^{-k})^{n_1}(e^{-k})^{n_2}$ is bounded away from zero, by \eqref{eqn:pUi} and Proposition ~\ref{claim:matching}. 
\end{proof}	

\section{Embedding a hypergraph into an almost regular hypergraph}
\label{appendix:embed}

	\begin{proof}[Proof of Lemma \ref{lem:embed}]
	Choose $N_0 , D_0$ and $K$ sufficiently large compared to $k$. We may also assume that $D \in \mathbb{N}$.
	
    It follows from well-known results~\cite{wilson1972_1, wilson1972_2, wilson1975} that there exists $N_1 = N_1(k)$ such that a Steiner system $S(2,k,M)$ exists as long as $M \geq N_1$ and both $M-1$ and $M(M-1)$ are divisible by $k-1$ and $k(k-1)$, respectively. For any $M \geq N_1$, there exists an integer $0 \leq t < k(k-1)$ such that $M+t$ satisfies these divisibility conditions. Let $S(M)$ be the $M$-vertex hypergraph obtained by removing any $t$ vertices from $S(2,k,M+t)$. Hence $S(M)$ is simple and each vertex of $S(M)$ has degree between $\frac{M-1}{k-1} - k(k-2)$ and $\frac{M-1}{k-1} + k$.\COMMENT{Note that $S(2,k,M+t)$ has degree exactly $\frac{M-1+t}{k-1}$, hence the degree of $S(M)$ would be between $\frac{M-1}{k-1} - k(k-2) \leq \frac{M-1+t}{k-1} - t$ and $\frac{M-1+t}{k-1} \leq \frac{M-1}{k-1} + k$}

	For each integer $N_1 \leq d \leq D$, we construct a simple hypergraph $H_d$ such that $|V(H_d)| = (k-1)^2 D^2$ and every vertex of $H_d$ has degree between $d-(k+1)(k-1)$ and $d$ as follows. Let $\ell := \lceil (k-1)D^2 / (d-k) \rceil$ and $a_1 \geq \dots \geq a_{\ell}$ be a sequence such that each $a_i$ is either $(k-1)(d-k)-1$ or $(k-1)(d-k)$, and $\sum_{i=1}^{\ell} a_i = (k-1)^2 D^2$.\COMMENT{ where such sequence exists since $\lceil \frac{(k-1)^2 D^2}{(k-1)(d-k)} \rceil \geq (k-1)(d-k)$} Now define $H_d$ by taking the vertex-disjoint union of $S(a_1) , \dots , S(a_\ell)$. Then we deduce that each vertex of $H_d$ has degree between $d-(k+1)(k-1) \leq \frac{(k-1)(d-k) - 2}{k-1} - k(k-2)$ and $\frac{(k-1)(d-k)-1}{k-1} + k \leq d$.
	
	Now we define our desired multi-hypergraph $H'$ as follows. Let us first take the union of $T := (k-1)^2 D^2$ vertex-disjoint copies of $H$. 
	
	For each $v \in V(H)$ with $d_H(v) \leq D - N_1$, let $v^1 , \dots , v^T$ be the $T$ clone vertices of $v \in V(H)$ in $H'$, and extend $H'$ by making $H'[v^1 , \dots , v^T]$ induce a copy of $H_{D - d_H(v)}$. Since every vertex of $H_{D - d_H(v)}$ has degree between $D - d_H(v) - (k+1)(k-1)$ and $D - d_H(v)$, we then have
	$D - (k+1)(k-1) \leq d_{H'}(v^i) \leq D$ for $1 \le i \le T$.

	If $v^1 , \dots , v^T$ are the $T$ clone vertices of some  vertex $v \in V(H)$ with $d_H(v) > D - N_1$, then
	$d_{H'}(v^i) = d_H(v) > D - N_1$ for $1 \le i \le T$. 
	
	Therefore, for any vertex $v \in V(H')$, we have
	\begin{equation*}
	    D - \max(N_1 , (k+1)(k-1)) \leq d_{H'}(v) \leq D.
	\end{equation*}
	It is also straightforward to check that the codegree of $H'$ is at most the codegree of $H$, and $|V(H')| = TN = (k-1)^2 D^2 N$. By the construction of $H'$, if $H$ has no multiple edges then $H'$ also has no multiple edges. This completes the proof of the lemma.
	\end{proof}




\end{document}